\documentclass[a4paper]{article}
\usepackage{mathrsfs}
\usepackage{bbm}
\usepackage{cite}
\usepackage{amsfonts}
\usepackage{amsmath,amssymb}
\usepackage{enumitem}
\usepackage{latexsym}
\usepackage{graphicx}
\usepackage{subfigure}
\usepackage{dcolumn}
\usepackage{bm}
\usepackage{fancyhdr}
\usepackage{mathrsfs}
\usepackage{wasysym}
\usepackage{ifpdf}
\usepackage[
  bookmarks=true,
  bookmarksopen=true,
  breaklinks=true,
  colorlinks=true,
  linkcolor=blue,anchorcolor=blue,
  citecolor=blue,filecolor=blue,
  menucolor=blue,
  urlcolor=blue]{hyperref}
\newcommand{\varep}{\varepsilon}
\def\Xint#1{\mathchoice
      {\XXint\displaystyle\textstyle{#1}}%
      {\XXint\textstyle\scriptstyle{#1}}%
      {\XXint\scriptstyle\scriptscriptstyle{#1}}%
      {\XXint\scriptscriptstyle\scriptscriptstyle{#1}}%
                   \!\int}
\def\XXint#1#2#3{{\setbox0=\hbox{$#1{#2#3}{\int}$}
         \vcenter{\hbox{$#2#3$}}\kern-.5\wd0}}

\def\dashint{\Xint-}
\def\R{\mathbb{R}}
\def\e{\varepsilon}

\setcounter{MaxMatrixCols}{10} \numberwithin{equation}{section}
\newtheorem{theorem}{Theorem}[section]
\newtheorem{lemma}[theorem]{Lemma}
\newtheorem{corollary}[theorem]{Corollary}

\newtheorem{remark}[theorem]{Remark}

\newenvironment{proof}[1][Proof]{\noindent\textbf{#1.} }{\hfill $\Box$}

\allowdisplaybreaks
\allowdisplaybreaks \numberwithin{equation}{section}
\makeatletter\setlength{\textwidth}{15.0cm}
\setlength{\oddsidemargin}{1.0cm}
\setlength{\evensidemargin}{1.0cm} \setlength{\textheight}{21.0cm}
\pagestyle{myheadings}\markboth{$~$ \hfill {\rm } \hfill $~$} {$~$
\hfill {\rm } \hfill$~$}

\linespread{1.2}

\begin{document}

\author{Jun Geng$^1$,
Bojing Shi$^{1}$ \thanks {E-mail addresses: gengjun@lzu.edu.cn (Jun Geng), shibj@lzu.edu.cn (Bojing Shi).}\\
$^{1}$  School of Mathematics and Statistics, Lanzhou University,\\
Lanzhou, Gansu 730000, China}
\date{}
\title{\textbf{Quantitative estimates in almost periodic homogenization of parabolic systems\thanks{The first author was supported in part by the National Natural Science Foundation of China (12371096) and the Fundamental Research Funds for the Central Universities (lzujbky-2024-oy05), and the second author was supported in part by the China Postdoctoral Science Foundation (2022M721438).}}}

\maketitle
\begin{abstract}

We consider a family of second-order parabolic operators $\partial_t+\mathcal{L}_\varepsilon$ in divergence form with rapidly oscillating, time-dependent and almost-periodic coefficients. We establish uniform interior and boundary H\"older and Lipschitz estimates as well as convergence rate. The estimates of fundamental solution and Green's function are also established. In contrast to periodic case, the main difficulty is that the corrector equation
$
(\partial_s+\mathcal{L}_1)(\chi^\beta_{j})=-\mathcal{L}_1(P^\beta_j)
$ in $\mathbb{R}^{d+1}$
may not be solvable in the almost periodic setting for linear functions $P(y)$ and $\partial_t \chi_S$ may not in $B^2(\mathbb{R}^{d+1})$.
 Our results are new even in the case of time-independent coefficients.

\noindent \textbf{Key words}: Almost periodic coefficient; Parabolic systems; Approximate correctors; Homogenization

\noindent \textbf{Mathematics Subject Classification}: 35B27, 35K40
\end{abstract}

\section{\bf Introduction}
The primary purpose of this paper is to investigate the convergence rates in $L^2$, boundary H\"older and boundary Lipschitz estimates for a family of parabolic operators $\partial_t+\mathcal{L}_\varepsilon$ with rapidly oscillating and time-dependent almost periodic coefficients subject to Dirichlet boundary data. Specifically,
let $\Omega\subset\mathbb{R}^d$ be a bounded domain and $u_\varepsilon\in L^2(0,T;H^1(\Omega))$ be the weak solution of the Dirichlet problem:
$(\partial_t+\mathcal{L}_\varepsilon)u_\varepsilon=F$ in $\Omega\times(0,T)$ and $u_\varepsilon=g$ on $\partial\Omega\times(0,T)$ as well as $u_\varepsilon=h$ on $\Omega\times\{0\}$,
where
\begin{equation*}
\mathcal{L}_\varepsilon=-\text{div}\left[A\left(\frac{x}{\varepsilon},\frac{t}{\varepsilon^2}\right)\nabla
\right]=-\frac{\partial}{\partial x_i}\left[a_{ij}^{\alpha\beta}\left(\frac{x}{\varepsilon},\frac{t}{\varepsilon^2}\right)\frac{\partial}{\partial
x_j}\right],~~\varepsilon>0.
\end{equation*}
Throughout this paper we will assume that the coefficient matrix
$A(y,s)=\left(a^{\alpha\beta}_{ij}(y,s)\right)$ with $1\leq i,j\leq d, 1\leq \alpha, \beta\leq m$ is real, bounded measurable, and satisfies the ellipticity condition
\begin{equation}\label{ellipticity}
\mu |\xi|^2\le  a^{\alpha\beta}_{ij}(y,s)\xi_i^\alpha
\xi_j^\beta\leqslant \frac{1}{\mu}|\xi|^2
\quad \text{ for any }
\xi=(\xi_i^\alpha)\in \mathbb{R}^{d\times m}, (y,s)\in \mathbb{R}^{d+1},
\end{equation}
where $\mu >0$.
We shall be interested in the quantitative homogenization of second order parabolic systems with bounded measurable coefficients that are almost-periodic in the sense of H. Bohr, which means that $A$ is the uniform limit of a sequence of trigonometric polynomials in $\mathbb{R}^{d+1}$.
 Let ${\rm Trig}(\mathbb{R}^{d+1})$ denote the set of all trigonometric polynomials.
 The closure of the set ${\rm Trig}(\mathbb{R}^{d+1})$ with respect to the $L^\infty$ norm
 is called the Bohr space of almost-periodic functions.

 A useful equivalent description of the almost-periodic functions is given as follows.
 Let $A$ be bounded and continuous in $\mathbb{R}^{d+1}$. Then $A$ is almost periodic in the sense of Bohr if and only if
\begin{equation}\label{AP}
\limsup_{R\to\infty}\sup_{Y\in \mathbb{R}^{d+1}}\inf_{\substack{Z\in \mathbb{R}^{d+1}\\|Z|\leq R}}\|A(\cdot+Y)-A(\cdot+Z)\|_{L^\infty(\mathbb{R}^{d+1})}=0,
\end{equation}
where $Y=(y,s), Z=(z, \tau)\in\mathbb{R}^{d+1}$.

Set
\begin{equation}\label{AP-1}
\rho(R):=\sup_{Y\in \mathbb{R}^{d+1}}\inf_{\substack{Z\in \mathbb{R}^{d+1}\\|Z|\leq R}}\|A(\cdot+Y)-A(\cdot+Z)\|_{L^\infty(\mathbb{R}^{d+1})}.
\end{equation}
Notice that $\rho(R)=0$ for any $R\geq L\sqrt{d+1}$ if $A$ is $L$-periodic. We say that $A$ is uniformly almost periodic if and only if $A$ is bounded, continuous and satisfies (\ref{AP}).

Let $\Omega\subset\mathbb{R}^d$ be a bounded   domain and
$0<T<\infty$.
We are interested in the initial-Dirichlet problem,
\begin{equation}\label{IDP}
\left\{\aligned
(\partial_t +\mathcal{L}_\varepsilon) u_\varepsilon  &=  F &\quad &\text { in } \Omega_T=\Omega \times (0, T),\\
u_\varepsilon & = g & \quad & \text{ on } S_T=\partial\Omega \times (0, T),\\
u_\varepsilon  &=h &\quad &\text{ on } \Omega \times \{ t=0\}.
\endaligned
\right.
\end{equation}

Under suitable conditions on $F$, $g$, $h$ and $\Omega$, it is known that the weak solution $u_\varepsilon$ of
(\ref{IDP}) converges weakly in $L^2(0, T; H^1(\Omega))$ and
strongly in $L^2(\Omega_T)$ to $u_0$.
 Furthermore, the function $u_0$ is the weak solution of
the (homogenized) initial-Dirichlet problem,
\begin{equation}\label{IDP-0}
\left\{\aligned
(\partial_t +\mathcal{L}_0) u_0 &=  F &\quad &\text { in } \Omega_T,\\
u_0 & = g & \quad & \text{ on } S_T,\\
u_0  &=h &\quad &\text{ on } \Omega \times \{ t=0\}.
\endaligned
\right.
\end{equation}
The operator $\mathcal{L}_0$ in (\ref{IDP-0}),
called the homogenized operator,
is a second-order elliptic operator with constant coefficients \cite{ZKO}.

Our first main result concerns the uniform interior H\"older estimates for $\partial_t+\mathcal{L}_\varepsilon$.
\begin{theorem}\label{Holder-1}
   Suppose that $A$ is uniformly almost periodic and satisfies (\ref{ellipticity}). Let $u_\varepsilon\in L^2\big(t_0-4r^2,t_0;H^{1}(B(x_0,2r))\big)$ be a weak solution of
\begin{equation*}
(\partial_t+\mathcal{L}_\varepsilon)u_\varepsilon=f+{\rm div}(g)~~{\rm in}~~2Q=Q_{2r}(x_0,t_0).
\end{equation*}
Then for any $0<\alpha<1$,
\begin{equation*}
\aligned
\|u_\varepsilon\|_{\Lambda^{\alpha,\alpha/2}(Q)}\leq &C r^{-\alpha}\left(\dashint_{2Q} |u_\varepsilon|^2\right)^{1/2}
\\
&+ C\sup_{\substack{ 0<\ell<r\\(y,s)\in Q}}\ell^{2-\alpha}\left(\dashint_{Q_\ell(y,s)} |f|^2\right)^{1/2}+ C\sup_{\substack{ 0<\ell<r\\(y,s)\in Q}}\ell^{1-\alpha}\left(\dashint_{Q_\ell(y,s)}|g|^2\right)^{1/2},
\endaligned
\end{equation*}
where $C$ depends at most on $d,m,\alpha,\mu$ and $A$.
\end{theorem}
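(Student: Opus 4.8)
The plan is to prove the equivalent Campanato-type estimate. Since $\|\cdot\|_{\Lambda^{\alpha,\alpha/2}(Q)}$ is comparable to $\big(\dashint_{2Q}|\cdot|^2\big)^{1/2}+\sup\ell^{-\alpha}\big(\dashint_{Q_\ell(y,s)}|\cdot-\dashint_{Q_\ell(y,s)}\cdot|^2\big)^{1/2}$, the supremum being over all parabolic subcylinders $Q_\ell(y,s)\subset Q$, it suffices to bound the oscillation of $u_\varepsilon$ over every such cylinder by $C\ell^\alpha$ times the right-hand side of the theorem. After a translation and dilation we may take $(x_0,t_0)=(0,0)$ and $r=1$ (which replaces $\varepsilon$ by $\varepsilon/r$ and rescales $f,g$ consistently), and we split into the regimes $\varepsilon\ge\varepsilon_0$ and $\varepsilon<\varepsilon_0$, where $\varepsilon_0$ is a small constant fixed below. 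When $\varepsilon\ge\varepsilon_0$ one rescales by $\varepsilon$: the blow-up $v(x,t)=u_\varepsilon(\varepsilon x,\varepsilon^2 t)$ solves a parabolic system with the fixed, $\varepsilon$-independent coefficient $A$ on a cylinder of size at most $2/\varepsilon_0$, and because $A$ is uniformly continuous the classical interior $\Lambda^{\alpha,\alpha/2}$ estimate for $\partial_t+\mathcal{L}_1$, valid for every $\alpha<1$ via the frozen-coefficient Campanato perturbation argument, gives the bound after scaling back; note that De Giorgi--Nash--Moser type regularity is unavailable for systems, so the continuity of $A$ is genuinely used here.

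For $\varepsilon<\varepsilon_0$ the core is a one-step improvement lemma proved by the compactness method: for each $\beta\in(\alpha,1)$ there exist $\theta\in(0,1/4)$ depending only on $d,m,\mu,\beta$ and $\varepsilon_0>0$ depending additionally on $A$ such that, whenever $0<\varepsilon<\varepsilon_0$ and $(\partial_t+\mathcal{L}_\varepsilon)u_\varepsilon=f+\mathrm{div}(g)$ in $Q_1$,
\[
\Big(\dashint_{Q_\theta}\big|u_\varepsilon-\dashint_{Q_\theta}u_\varepsilon\big|^2\Big)^{1/2}\le\theta^\beta\Big(\dashint_{Q_1}|u_\varepsilon|^2\Big)^{1/2}+C\Big(\dashint_{Q_1}|f|^2\Big)^{1/2}+C\Big(\dashint_{Q_1}|g|^2\Big)^{1/2}.
\]
For $f=g=0$ one argues by contradiction: a sequence $u_k$ with $(\partial_t+\mathcal{L}_{\varepsilon_k})u_k=0$, $\dashint_{Q_1}|u_k|^2\le1$ and $\varepsilon_k\to0$ is, by Caccioppoli's inequality and the resulting bound for $\partial_t u_k$ in $L^2\big(H^{-1}\big)$ on interior cylinders, precompact in $L^2_{\mathrm{loc}}$ by the Aubin--Lions lemma and, along a subsequence, converges to a solution $u_0$ of the homogenized equation $(\partial_t+\mathcal{L}_0)u_0=0$; here one invokes the qualitative homogenization for time-dependent uniformly almost periodic coefficients recalled in the introduction, the flux $A(\cdot/\varepsilon_k,\cdot/\varepsilon_k^2)\nabla u_k$ passing to the limit without recourse to exact correctors. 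Since $\mathcal{L}_0$ has constant coefficients, $u_0$ is smooth with interior bounds depending only on $d,m,\mu$, whence $\dashint_{Q_\theta}|u_0-\dashint_{Q_\theta}u_0|^2\le C_1\theta^2$; choosing $\theta$ so that $C_1\theta^{2-2\beta}\le\tfrac14$ and then $k$ large contradicts the negation of the claim, which fixes $\varepsilon_0$. The inhomogeneous terms are absorbed via the decomposition $u_\varepsilon=v+w$ on $Q_{1/2}$, where $w$ vanishes on the parabolic boundary and solves $(\partial_t+\mathcal{L}_\varepsilon)w=f+\mathrm{div}(g)$; the parabolic energy estimate gives $\|w\|_{L^2(Q_{1/2})}\le C\big(\|f\|_{L^2(Q_{1/2})}+\|g\|_{L^2(Q_{1/2})}\big)$ and the homogeneous case applies to $v=u_\varepsilon-w$.

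To conclude I would iterate the lemma on the rescaled functions $u_\varepsilon^{(k)}(x,t)=u_\varepsilon(\theta^k x,\theta^{2k}t)$, which solve systems of the same form with $\varepsilon$ replaced by $\varepsilon/\theta^k$ and with correspondingly rescaled data; applying it to $u_\varepsilon^{(k)}-\dashint_{Q_1}u_\varepsilon^{(k)}$ yields, for the normalized oscillations $\Phi_k=\big(\dashint_{Q_{\theta^k}}|u_\varepsilon-\dashint_{Q_{\theta^k}}u_\varepsilon|^2\big)^{1/2}$, the recursion $\Phi_{k+1}\le\theta^\beta\Phi_k+C\theta^{k\alpha}(M_f+M_g)$, valid as long as $\varepsilon/\theta^k<\varepsilon_0$, where $M_f$ and $M_g$ are the two data suprema in the statement and the choice $\beta>\alpha$ makes the geometric series sum. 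Hence $\Phi_k\le C\theta^{k\alpha}\big[\big(\dashint_{Q_1}|u_\varepsilon|^2\big)^{1/2}+M_f+M_g\big]$ down to the scale $\theta^K\sim\varepsilon$, which controls the oscillation over every cylinder of radius $\ell\in(c\varepsilon,1)$. For radii $\ell\le c\varepsilon$ one rescales the $\theta^K$-cylinder, on which $\Phi_K\lesssim\varepsilon^\alpha\cdot(\text{right-hand side})$ is known, back to unit size; the coefficient then becomes $A(\cdot/\delta,\cdot/\delta^2)$ with $\delta\sim1$, a fixed uniformly continuous coefficient, and the interior $\Lambda^{\alpha,\alpha/2}$ estimate for this operator applies, the scaling bookkeeping turning $(\ell/\theta^K)^\alpha\varepsilon^\alpha$ into $\ell^\alpha$. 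Collecting the three ranges of scales and undoing the initial normalization yields the theorem.

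The step I expect to be the genuine obstacle is the qualitative homogenization used inside the compactness argument: one must know that the $H$-limit of $\partial_t+\mathcal{L}_{\varepsilon_k}$ for time-dependent almost-periodic coefficients is exactly the constant-coefficient operator $\partial_t+\mathcal{L}_0$, and -- as stressed in the introduction -- since the exact corrector equation need not be solvable here, this input must come either from the general theory of $H$-convergence for almost-periodic coefficients or from the approximate correctors constructed elsewhere in the paper. The only other delicate point is the availability of the small-scale estimate for systems, which holds precisely because $A$ is uniformly continuous.
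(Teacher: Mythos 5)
Your proposal is mathematically sound, but it follows a genuinely different route from the paper. The paper first establishes the \emph{homogeneous} interior H\"older estimate (Theorem \ref{Holder}) by exactly the three-step compactness scheme you sketch (one-step improvement by compactness, iteration, blow-up at the scale $\varepsilon/\varepsilon_0$, with Krylov's VMO$_x$ regularity for the small-scale regime). It then spends Section \ref{Section-4} converting that estimate, together with Caccioppoli, into size and oscillation estimates for the fundamental solution $\Gamma_\varepsilon$, namely \eqref{f-6}--\eqref{f-9}. The proof of Theorem \ref{Holder-1} proper is then a Poisson-type representation argument: one cuts off $u_\varepsilon$ by $\psi$, writes $u_\varepsilon(x,t)$ through $\Gamma_\varepsilon$, and estimates $u_\varepsilon(x,t)-u_\varepsilon(x',t')$ term by term via \eqref{f-7} and \eqref{f-9} and dyadic decomposition, which is where the Morrey-type suprema of $f$ and $g$ enter. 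You instead fold the inhomogeneous data directly into the one-step improvement lemma via the splitting $u_\varepsilon=v+w$ on a fixed cylinder (with an energy estimate for $w$), iterate to obtain the Campanato decay $\Phi_k\lesssim\theta^{k\alpha}$ down to scale $\varepsilon$, and then pass to the local VMO regularity theory below scale $\varepsilon$. Both schemes are standard and correct; your version is more self-contained for this single theorem, while the paper's detour through $\Gamma_\varepsilon$ pays off immediately afterward, since the same machinery (size/oscillation bounds on $\Gamma_\varepsilon$ and $G_\varepsilon$) is reused in Section \ref{Section-5} for the Green's function, the boundary H\"older estimates, and later for the approximate-corrector bounds. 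You also correctly identify the only non-routine input in the compactness argument, namely the qualitative $G$-convergence for time-dependent almost-periodic coefficients, which the paper records as the homogenization theorem in Section \ref{section 2} (citing \cite{ZKO,ZKO-1981}); this is indeed what lets the limit in the contradiction argument solve the constant-coefficient system.

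Two small bookkeeping points worth tightening if you were to write this out in full: in the iteration, you should run the one-step lemma on $u_\varepsilon-\dashint_{Q_1}u_\varepsilon$ (or take the infimum over constants) so that the recursion is genuinely on the oscillation $\Phi_k$ rather than on $\dashint|u_\varepsilon|^2$; and in the splitting $u_\varepsilon=v+w$, the factor $\theta^{-(d+2)/2}$ from passing from $\dashint_{Q_{1/2}}$ to $\dashint_{Q_\theta}$ in the $w$-term is harmless only because $\theta$ is fixed once and for all before the iteration begins, which is worth saying explicitly.
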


The next theorem gives the uniform boundary H\"older estimates for $\partial_t+\mathcal{L}_\varepsilon$.
\begin{theorem}\label{boundary holder}
Let $\Omega$ be a bounded $C^{1,\eta}$ domain in $\mathbb{R}^d$ for some $\eta\in(0,1)$ and $0<\alpha<1$. Suppose that $A$ is uniformly almost periodic and satisfies (\ref{ellipticity}). Let $u_\varepsilon \in
L^2(0,T;H^1(\Omega))$ be a weak solution to
$(\partial_t +\mathcal{L_\varepsilon})u_\varepsilon= F +{\rm div}(f)$ in $\Omega_T$, $u_\varepsilon=g$ on $S_T$ and $u_\varepsilon=0$ on $\Omega\times\{t=0\}$. Then we have
\begin{equation*}
\|u_\varepsilon\|_{\Lambda^{\alpha,\alpha/2}(\Omega_T)}\leq C \|g\|_{X^\alpha(S_T)}+ C\sup_{\substack{(x,t)\in\Omega_T\\0<r<r_0}}r^{1-\alpha}\left(\dashint_{\Omega_{r}(x,t)}|f|^2\right)^{1/2}
+C\sup_{\substack{(x,t)\in\Omega_T\\0<r<r_0}}r^{2-\alpha}\dashint_{\Omega_{r}(x,t)}|F|,
\end{equation*}
where $r_0={\rm diam}(\Omega)$ and $C$ depends only on $d$, $m$, $\alpha$, $\Omega$ and $A$.
\end{theorem}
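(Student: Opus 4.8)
The plan is to run the standard large-scale/small-scale scheme for uniform boundary estimates: a one-step approximation lemma at boundary points that improves the flatness of $u_\varepsilon$ by comparison with the constant-coefficient homogenized problem, a Campanato-type iteration of that lemma down to scale $\varepsilon$, a rescaling argument below scale $\varepsilon$ using the uniform continuity of $A$, and finally the Morrey--Campanato characterization of $\Lambda^{\alpha,\alpha/2}$ combined with the interior estimate Theorem \ref{Holder-1}.

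First I would reduce to homogeneous lateral and initial data: extend $g$ from $S_T$ to a function $G$ on $\overline{\Omega_T}$ with $\|G\|_{\Lambda^{\alpha,\alpha/2}(\Omega_T)}\le C\|g\|_{X^\alpha(S_T)}$ and $G=0$ on $\Omega\times\{t=0\}$, and replace $u_\varepsilon$ by $u_\varepsilon-G$. The new function has zero lateral and initial data and solves an equation of the same form with $F$ replaced by $F-\partial_tG$ and $f$ by $f-A(x/\varepsilon,t/\varepsilon^2)\nabla G$, whose contributions to the right side of the claimed inequality are controlled by $\|g\|_{X^\alpha(S_T)}$. Hence one may assume $g\equiv 0$ and $h\equiv 0$. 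Fixing a boundary point $(x_0,t_0)\in\overline{S_T}$, write $\omega(r)=\sup_{0<\ell\le r}\ell^{1-\alpha}\big(\dashint_{\Omega_\ell(x_0,t_0)}|f|^2\big)^{1/2}+\sup_{0<\ell\le r}\ell^{2-\alpha}\dashint_{\Omega_\ell(x_0,t_0)}|F|$ for the data quantity.

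The heart of the proof is the one-step boundary improvement: there exist $\theta\in(0,1/4)$ and $C$, depending only on $d,m,\alpha,\Omega$ and $A$, so that for every $\varepsilon\le r\le r_0$,
\begin{equation*}
\Big(\dashint_{\Omega_{\theta r}(x_0,t_0)}|u_\varepsilon|^2\Big)^{1/2}\le \tfrac12\,\theta^{\alpha}\Big(\dashint_{\Omega_{r}(x_0,t_0)}|u_\varepsilon|^2\Big)^{1/2}+\theta^{\alpha}\,\omega(r).
\end{equation*}
I would prove this by compactness and contradiction. If it fails, one obtains sequences $\varepsilon_k\to 0$ (the regime $\varepsilon_k/r_k$ bounded below being handled simultaneously), radii $r_k$, admissible coefficients $A_k$ with a common almost-periodic modulus, and normalized solutions $u_k$ on $\Omega_{r_k}$ with zero lateral data, unit $L^2$ average on $\Omega_{r_k}$ and data quantities tending to zero, for which the inequality is violated. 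After rescaling to $r_k=1$, the uniform Caccioppoli bound together with the bound on $\partial_tu_k$ in $L^2(H^{-1})$ coming from the equation makes $\{u_k\}$ precompact in $L^2_{\mathrm{loc}}$ (Aubin--Lions); by the qualitative homogenization recalled in the introduction and the $H$-compactness of $\partial_t+\mathcal{L}_{\varepsilon_k}$ with constant-coefficient limit \cite{ZKO}, a subsequence converges to a weak solution $u_\ast$ of a constant-coefficient parabolic system --- or, if $\varepsilon_k\not\to 0$, of a system with uniformly continuous coefficients --- having zero lateral data and vanishing right side near $(x_0,t_0)$. In either case $u_\ast$ satisfies a boundary $C^{\alpha',\alpha'/2}$ estimate in the $C^{1,\eta}$ domain for any fixed $\alpha<\alpha'<1$, so $\big(\dashint_{\Omega_\theta}|u_\ast|^2\big)^{1/2}\le C\theta^{\alpha'}$; choosing $\theta$ so small that $C\theta^{\alpha'}\le\tfrac14\theta^{\alpha}$ contradicts the violation for large $k$. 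Alternatively one could replace the compactness step by the quantitative convergence rate established later in the paper through the approximate-corrector framework; since $\alpha<1$ is strictly subcritical, any positive rate of decay in $\varepsilon/r$ suffices and no sharp corrector bound is needed here.

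Finally I would iterate and conclude. The standard Campanato-type iteration lemma converts the one-step estimate into $\big(\dashint_{\Omega_\rho(x_0,t_0)}|u_\varepsilon|^2\big)^{1/2}\le C(\rho/r_0)^{\alpha}\big[\big(\dashint_{\Omega_{r_0}}|u_\varepsilon|^2\big)^{1/2}+\omega(r_0)\big]$ for all $\varepsilon\le\rho\le r_0$ and all $(x_0,t_0)\in\overline{S_T}$. For $0<\rho<\varepsilon$ I would rescale $v(x,t)=u_\varepsilon(\varepsilon x,\varepsilon^2 t)$, a weak solution in $\varepsilon^{-1}\Omega$ of a divergence-form parabolic system with coefficients $A(x,t)$; since $A$ is uniformly almost periodic it is uniformly continuous, so classical boundary $C^{\alpha,\alpha/2}$ regularity for such systems in $C^{1,\eta}$ domains gives the same $\rho^{\alpha}$-decay on $0<\rho<\varepsilon$ with a constant independent of $\varepsilon$ (the zero lateral data being what permits the full range $\alpha<1$). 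Combining the two ranges yields, at every $(x_0,t_0)\in\overline{S_T}$, the decay of $\rho\mapsto\big(\dashint_{\Omega_\rho(x_0,t_0)}|u_\varepsilon|^2\big)^{1/2}$ like $\rho^{\alpha}$; inserting this together with Theorem \ref{Holder-1} (applied on cylinders of radius comparable to the distance to $S_T$) into the Morrey--Campanato characterization of $\Lambda^{\alpha,\alpha/2}(\Omega_T)$ gives $\|u_\varepsilon\|_{\Lambda^{\alpha,\alpha/2}(\Omega_T)}\le C\big[\big(\dashint_{\Omega_{r_0}}|u_\varepsilon|^2\big)^{1/2}+\|g\|_{X^\alpha(S_T)}+\omega(r_0)\big]$, and the global energy estimate with zero initial data absorbs the first term into the remaining two, which is the assertion. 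The main obstacle is the one-step boundary lemma --- in particular the precompactness of the normalized blow-up sequences up to the boundary (or, on the quantitative route, pushing the convergence-rate estimate to the boundary through the approximate-corrector framework); the remaining steps are routine bookkeeping of the data norms across the boundary-data reduction and the parabolic rescaling.
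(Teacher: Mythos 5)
Your proposal takes a genuinely different route from the paper's. The paper uses the three-step compactness argument only to establish the \emph{homogeneous} boundary H\"older estimate (Theorem \ref{Holder-0}: zero lateral data and zero right-hand side). It then feeds this and the interior estimate into the construction of the Green's matrix $G_\varepsilon$ and its size estimates \eqref{2.2}--\eqref{2.6}, and proves Theorem \ref{boundary holder} by the Poisson representation formula, splitting into the three cases $F$, ${\rm div}(f)$, $g$ (Theorems \ref{Holder-2}, \ref{Holder-3}, \ref{Holder-4}). You instead propose to run the compactness/Campanato machinery directly on the full inhomogeneous problem after reducing to zero lateral data. That strategy is sound for many uniform regularity results, but here it breaks down in two places, both ultimately traceable to the fact that the $F$-term is measured in the $L^1$-average Morrey norm $\sup_r r^{2-\alpha}\dashint_{\Omega_r}|F|$ rather than an $L^2$-average.

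First, the reduction to $g\equiv 0$ does not produce an admissible right-hand side. Any extension $G$ of $g$ (in particular the caloric one) has at best the interior bound $|\partial_t G(x,t)|\le C\delta(x)^{\alpha-2}$, which is exactly what the paper itself uses for its comparison function $v$. But for a boundary point $(x_0,t_0)\in S_T$ and $\alpha<1$,
\begin{equation*}
\int_{\Omega_r(x_0,t_0)}\delta(x)^{\alpha-2}\,dx\,dt
\;\gtrsim\; r^{d+1}\int_0^r s^{\alpha-2}\,ds \;=\;+\infty,
\end{equation*}
so $\partial_t G\notin L^1_{\rm loc}$ up to $S_T$, and the claim that $F-\partial_tG$ is controlled in the stated Morrey norm is false. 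The paper circumvents this: in Theorem \ref{Holder-4} the singular factor $\delta(y)^{\alpha-2}$ is integrated against the Green's function using the boundary decay \eqref{2.5}, $|G_\varepsilon(x,t;y,s)|\le C\,[\delta(x)]^{\sigma_1}[\delta(y)]^{\sigma_2}(|x-y|+|t-s|^{1/2})^{-d-\sigma_1-\sigma_2}$, and $\sigma_2$ close to $1$ makes the $y$-integral converge (see \eqref{4.13}--\eqref{4.14}). This extra factor $[\delta(y)]^{\sigma_2}$ is precisely what the naive reduction discards. Second, and independently of the $g$-reduction, your one-step boundary improvement by compactness invokes a ``uniform Caccioppoli bound'' for the blow-up sequence $u_k$. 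But testing the equation against $u_k\phi^2$ produces the term $\int F_k u_k\phi^2$, which cannot be absorbed from the hypothesis $\sup_\ell \ell^{2-\alpha}\dashint_{\Omega_\ell}|F_k|\to 0$ alone: that controls $\|F_k\|_{L^1}$ but not $F_k$ in any dual Sobolev class, and for parabolic \emph{systems} with bounded measurable coefficients one has no uniform $L^\infty$ bound on $u_k$ to pair against $L^1$. Thus the $H^1$-bound and the Aubin--Lions precompactness you rely on are not available. The representation-formula route handles this effortlessly, since \eqref{2.2} and \eqref{2.6} yield pointwise estimates with $F$ merely in $L^1$-Morrey. To rescue your approach one would either have to weaken the $F$-Morrey condition to an $L^p$-average with $p$ large enough to dualize against $L^2_tH^1_x$ (thereby proving a strictly weaker theorem) or, at the point where the inhomogeneity enters, switch to the Green's function representation anyway --- which is what the paper does.
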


    We shall also establish the following convergence rates for $u_\varepsilon-u_0$.
\begin{theorem}\label{rate}
  Let $\Omega$ be a bounded $C^{1,1}$ domain in $\mathbb{R}^d$. Suppose that $A$ is uniformly almost periodic and satisfies (\ref{ellipticity}) and $F\in L^2(\Omega_T)$. Let $u_\varepsilon\in L^2(0,T;H^1(\Omega))$ and $u_0\in L^2(0,T;H^2(\Omega))$ be the weak solutions of (\ref{IDP}) and (\ref{IDP-0}), respectively. Then there exists a modulus $\eta: (0,1]\rightarrow [0,\infty)$, depending only on $A$, such that $\eta(t)\to 0$ as $t\to 0$, and there exists $\delta$, depending on $\e, A$, such that $\delta\rightarrow 0$ as $\varepsilon\rightarrow 0$,
\begin{equation*}
\|u_\varepsilon-u_0\|_{L^2(\Omega_T)}\leq C\eta(\varepsilon)\left\{\|u_0\|_{L^2(0,T;H^2(\Omega))}+\|F\|_{L^2(\Omega_T)}
+\sup_{\delta^2<t<T}
\left(\frac{1}{\delta}\int_{t-\delta^2}^t\int_{\Omega}|\nabla u_0|^2\right)^{1/2}\right\},
\end{equation*}
where $C$ depends only on $d$, $m$, $\mu$, $\Omega$ and $T$.
\end{theorem}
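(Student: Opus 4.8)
The plan is to follow the standard two-scale asymptotic expansion strategy, adapted to the almost periodic parabolic setting where only \emph{approximate} correctors are available. Let $\chi^{\varep}_{T}$ denote the family of approximate correctors constructed earlier (solutions of a regularized corrector equation with parameter $T$, or at scale $\ell$), and let $\Phi^\varep$ be the associated approximate flux correctors (dual correctors) whose existence and decay in terms of the almost periodicity modulus $\rho$ have been established in the preceding sections. I would first define the first-order approximation
\begin{equation*}
w_\varepsilon = u_\varepsilon - u_0 - \varepsilon \chi^{\varep}_{T}\!\left(\tfrac{x}{\varepsilon},\tfrac{t}{\varepsilon^2}\right)\nabla u_0\, \cdot\, S_\varepsilon^{\delta},
\end{equation*}
where $S_\varepsilon^{\delta}$ is a smoothing operator (spatial mollification at scale $\delta$, and a short time-average of length $\delta^2$) inserted to control $\nabla u_0$, which is only in $L^2$; the presence of $S_\varepsilon^{\delta}$ is exactly why the term $\sup_{\delta^2<t<T}(\delta^{-1}\int_{t-\delta^2}^t\int_\Omega|\nabla u_0|^2)^{1/2}$ appears in the statement. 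One also needs a boundary cutoff multiplying the corrector term to kill the nonzero boundary values of $\varepsilon\chi_T\nabla u_0$ on $S_T$; this contributes a boundary-layer term controlled by $C\varepsilon^{1/2}\|u_0\|_{L^2(0,T;H^2(\Omega))}$ via the usual estimate on a $\varepsilon$-neighborhood of $\partial\Omega$ together with trace/Poincaré.

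The core computation is to identify $(\partial_t+\mathcal L_\varepsilon)w_\varepsilon$. Using the corrector equation and the flux-corrector identity, the right-hand side splits into: (i) a divergence term $\operatorname{div}\big[(\widehat A - A - A\nabla_y\chi_T)(S_\varepsilon^\delta\nabla u_0)\big]$, where the matrix in brackets differs from an exact divergence-free correction by the \emph{approximate} corrector defect, whose $B^2$-seminorm is bounded by a quantity $\Theta(T)\to 0$ (in the periodic case this term vanishes identically — here it is the main new error); (ii) terms involving $\varepsilon\chi_T \nabla S_\varepsilon^\delta \nabla u_0$ and $\varepsilon\chi_T \partial_t S_\varepsilon^\delta\nabla u_0$, which are handled by the bounds $\|\varepsilon\chi_T\|_\infty \le C\varepsilon$ (or $\varepsilon\cdot$ a slowly growing factor) and the smoothing estimates $\|\nabla S_\varepsilon^\delta \nabla u_0\|_{L^2} \le C\delta^{-1}\|\nabla u_0\|_{L^2(\text{nbhd})}$, $\|\partial_t S_\varepsilon^\delta \nabla u_0\|_{L^2}\le C\delta^{-2}\|\nabla u_0\|_{L^2}$ — the latter is delicate because, as the abstract warns, $\partial_t\chi_T$ need not be in $B^2$, so the time derivative must be moved onto the data $u_0$ rather than onto the corrector; (iii) the difference $(S_\varepsilon^\delta\nabla u_0 - \nabla u_0)$, estimated by $C\delta\|u_0\|_{L^2(0,T;H^2)}$ (space part) and by the same short-time-average quantity (time part). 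Since $w_\varepsilon \in L^2(0,T;H^1_0(\Omega))$ with $w_\varepsilon(\cdot,0)$ small, the energy estimate for $\partial_t+\mathcal L_\varepsilon$ gives
\begin{equation*}
\|w_\varepsilon\|_{L^2(\Omega_T)} \le C\big\{\text{(i)} + \text{(ii)} + \text{(iii)} + \text{boundary layer}\big\},
\end{equation*}
and then $\|u_\varepsilon - u_0\|_{L^2(\Omega_T)} \le \|w_\varepsilon\|_{L^2} + \|\varepsilon\chi_T \nabla u_0 S_\varepsilon^\delta\|_{L^2}$, the last piece being $\le C\varepsilon\|\nabla u_0\|_{L^2}$.

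The final step is the optimization that produces the single modulus $\eta(\varepsilon)$. All errors are bounded by a sum of the form $C\big(\varepsilon\delta^{-2} + \varepsilon\delta^{-1} + \delta + \Theta(T) + (\varepsilon/\text{scale of }\chi_T)\big)$ times the bracketed norms, where $T$ (the corrector regularization parameter) and $\delta$ are free. Choosing $\delta = \delta(\varepsilon)\to 0$ slowly enough that $\varepsilon\delta^{-2}\to 0$, and $T=T(\varepsilon)\to\infty$ slowly enough that $\Theta(T)\to 0$ while the growth of $\chi_T$ (controlled by $\int_0^T \rho(\cdot)\,$-type integrals, hence sublinear) is still dominated, one gets a modulus $\eta(\varepsilon)\to0$ depending only on $A$; this is possible precisely because $\Theta(T)\to0$ and the corrector growth is $o$ of any power, both quantitative consequences of \eqref{AP}. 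I expect the main obstacle to be step (ii): carefully arranging the computation so that no $\partial_t\chi_T$ ever appears outside a $B^2$-pairing — concretely, writing the corrector term with $\partial_t$ acting on the smoothed $\nabla u_0$ and absorbing one factor of the mollifier, and checking that the resulting $\delta^{-2}$ loss is still beaten by $\varepsilon\to0$. The boundary-layer estimate and the verification that $w_\varepsilon(\cdot,0)$ is indeed $O(\varepsilon)$ in $L^2$ are routine by comparison.
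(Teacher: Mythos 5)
Your high-level plan (two-scale expansion with approximate correctors, cutoffs near $\partial_p\Omega_T$, a smoothing operator that moves $\partial_t$ onto the data, then an energy estimate and optimization in the free parameters) matches the strategy of the paper. However, there is a structural gap in the definition of $w_\varepsilon$: you take
$w_\varepsilon=u_\varepsilon-u_0-\varepsilon\,\chi_{S}^\varepsilon\,K_\varepsilon(\nabla u_0)$,
whereas the argument also needs a second-order \emph{time}-flux corrector term $-\varepsilon^2\,\phi_{S,(d+1)ij}^{\varepsilon}\,\partial_i K_\varepsilon(\partial_j u_0)$, with $\phi_{S,(d+1)ij}=\partial_{d+1}f_{S,ij}-\partial_i f_{S,(d+1)j}$ built from the $(d+1)$-dimensional approximate Poisson solve $-\Delta_{d+1}f_S+S^{-2}f_S=\langle b_S\rangle-b_S$ and $b_{S,(d+1)j}=-\chi_{S,j}$. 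Without it, the computation of $(\partial_t+\mathcal{L}_\varepsilon)w_\varepsilon$ does \emph{not} decompose into your three controllable pieces. Concretely: (a) the divergence decomposition of $b_{S,ij}^\varepsilon\,\partial_iK_\varepsilon(\partial_j u_0)$ produces a piece $\varepsilon^2\partial_t\{\phi_{S,(d+1)ij}^\varepsilon\cdots\}$ that you can only absorb if the matching $\varepsilon^2\phi_{S,(d+1)}$ block is present in $w_\varepsilon$; otherwise it must be integrated by parts in time onto $w_\varepsilon$, putting $\partial_tw_\varepsilon$ (hence $\partial_tu_\varepsilon$) on the right-hand side, which the energy method does not control; (b) the term $\varepsilon\chi_{S}^\varepsilon\,\partial_tK_\varepsilon(\nabla u_0)$ cannot be bounded the way you claim. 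The estimate $\|\partial_t S_\varepsilon^\delta(\nabla u_0)\|_{L^2}\le C\delta^{-2}\|\nabla u_0\|_{L^2}$ is not correct: the parabolic mollifier is at scale $\varepsilon$, not $\delta$, so the best one gets by moving $\partial_t$ through the convolution is
$\|\partial_t K_\varepsilon(\nabla u_0)\|_{L^2}\le C\delta^{-2}\|\nabla u_0\|_{L^2(\Omega_{T,3\delta})}+C\varepsilon^{-1}\|\partial_t u_0\|_{L^2(\Omega_T)}+C\delta^{-1}\|\partial_t u_0\|_{L^2(\Omega_{T,3\delta})}$.
The $\varepsilon^{-1}$ is irreducible. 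Multiplying it by $\varepsilon\|\chi_S\|_\infty$ (with $\|\chi_S\|_\infty\sim S\,\Theta_\sigma(S)$ and $S=\varepsilon^{-1}$) gives a factor of order $\varepsilon^{-1}\Theta_\sigma(\varepsilon^{-1})$, which diverges since $\Theta_\sigma(S)\gtrsim S^{-\sigma}\gg S^{-1}$ for $\sigma<1$. The $\varepsilon^{-1}$ loss is only absorbed because the corresponding term in the correct $w_\varepsilon$ carries an extra factor of $\varepsilon$ from the $\varepsilon^2\phi_{S,(d+1)}$ prefactor, producing $\varepsilon^2\|\phi_S\|_\infty\cdot\varepsilon^{-1}=\varepsilon\|\phi_S\|_\infty\sim\Theta_\sigma(S)$, which does vanish.

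A secondary but fixable issue: you treat the corrector regularization parameter $T$ and the cutoff width $\delta$ as two free scales to be optimized jointly. The paper hard-wires $S=\varepsilon^{-1}$ and optimizes only in $\delta$; since $\Theta_\sigma$ is nonincreasing, nothing is gained by leaving $S$ free, and the coupling $S=\varepsilon^{-1}$ is exactly what makes the $\varepsilon\cdot\|\chi_S\|_\infty$ and $\varepsilon^2\cdot\|\phi_S\|_\infty$ prefactors bounded by $\Theta_\sigma(S)$. Your boundary-layer, cutoff and initial-time considerations are consistent with the paper's argument, and your remark that $\partial_t\chi_S$ must never appear outside a controlled pairing is exactly the right philosophy. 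The missing ingredient is that this philosophy is implemented by the additional $\varepsilon^2\phi_{S,(d+1)ij}$ term in the expansion — not by a smoothing estimate on $\partial_tK_\varepsilon$ alone.
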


\begin{remark}
By the similar argument as in Remark 1.3 and Remark 3.10 in \cite{GS-2017}, we know that if $g=0$, then
\begin{align*}
\int_{t-\delta^2}^t\int_{\Omega}|\nabla u_0|^2 &\leq C\left\{\|\partial_tu_0\|_{L^2(\Omega_T)}+\|F\|_{L^2(\Omega_T)}\right\}\left(\int_{t-\delta^2}^t\int_{\Omega}| u_0|^2\right)^{1/2}
\\&\leq C\delta \left\{\|\partial_tu_0\|_{L^2(\Omega_T)}+\|F\|_{L^2(\Omega_T)}\right\}\sup_{0<t<T}\|u_0(\cdot, t)\|_{L^2(\Omega)}.
\end{align*}
Using energy estimate yields that
\begin{equation*}
\sup_{\delta^2<t<T}
\left(\frac{1}{\delta}\int_{t-\delta^2}^t\int_{\Omega}|\nabla u_0|^2\right)^{1/2}\leq C\left\{\|\partial_t u_0\|_{L^2(\Omega_T)}+\|F\|_{L^2(\Omega_T)}+\|h\|_{L^2(\Omega)}\right\}.
\end{equation*}
It follows that
\begin{equation*}
\|u_\varepsilon-u_0\|_{L^2(\Omega_T)}\leq C\eta(\varepsilon)\left\{\|u_0\|_{L^2(0,T;H^2(\Omega))}+\|F\|_{L^2(\Omega_T)}
+\|h\|_{L^2(\Omega)}\right\}.
\end{equation*}
Moreover, if $g=0$ and $h=0$, then
\begin{equation*}
\|u_\varepsilon-u_0\|_{L^2(\Omega_T)}\leq C\eta(\varepsilon)\|F\|_{L^2(\Omega_T)},
\end{equation*}
where $C$ depends only on $d$, $m$, $\mu$, $\Omega$ and $T$.
\end{remark}

The next theorem gives the large-scale interior Lipschitz estimates for $\partial_t+\mathcal{L}_\varepsilon$.
\begin{theorem}\label{thm-interior-Lip}
Suppose that $A$ satisfies \eqref{ellipticity} and is uniformly almost periodic.  Suppose also that there exist $C_0>0$ and $N\geq\frac{3q-2}{q-2}$ such that
 \begin{equation}\label{decay-cond}
 \rho(R)\leq C_0[\log R]^{-N}\quad {\rm for~~any}~~R\geq 2,
 \end{equation}
where $q>2$. Let $u_\varepsilon$ be a weak solution to $(\partial_t+\mathcal{L}_\varepsilon)u_\varepsilon=F$ in $Q_R=Q_R(x_0,t_0)$, where $R>\varepsilon$ and $F\in L^p(Q_R)$ for some $p>d+2$. Then for any $\varepsilon\leq r<R$,
\begin{equation}\label{est-Lip}
\aligned
\left(\dashint_{Q_r}|\nabla u_\varepsilon|^2\right)^{1/2}\leq C\left\{\left(\dashint_{Q_R}|\nabla u_\varepsilon|^2\right)^{1/2}+R\left(\dashint_{Q_R}|F|^p\right)^{1/p}\right\},
\endaligned
\end{equation}
where $C>0$ depends only on $d$, $\mu$ and $p$.
\end{theorem}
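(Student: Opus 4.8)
The plan is to establish \eqref{est-Lip} via the now-standard large-scale regularity scheme built on a one-step improvement lemma combined with iteration. The essential input is a quantitative comparison, at every scale $r\in[\varepsilon,R]$, between $u_\varepsilon$ and a solution of the homogenized (constant-coefficient) equation $(\partial_t+\mathcal{L}_0)v=F$, using the approximate (almost-periodic) correctors $\chi_T$ and the associated flux correctors rather than the exact correctors, which need not exist here. First I would record the key estimates on the approximate correctors $\chi_T=(\chi_{T,j}^\beta)$ solving $(\partial_s+T^{-1}+\mathcal{L}_1)\chi_T=\text{(forcing)}$ in $\mathbb{R}^{d+1}$: under the logarithmic decay hypothesis \eqref{decay-cond} with $N\ge \frac{3q-2}{q-2}$, one has a convergence-rate bound of the form $\Theta(T):=\|\chi_T-\chi_{2T}\|+\cdots \lesssim [\log T]^{-(N-\text{const})}$, which is summable over dyadic scales precisely because of the choice of $N$. (These bounds are the parabolic, almost-periodic analogues of what is used in the elliptic almost-periodic theory and should have been set up in the sections preceding this theorem; I will quote them.)

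The core of the argument is the excess-decay (or ``harmonic approximation'') lemma: there exist $\theta\in(0,1/4)$ and $C>0$ such that if $u_\varepsilon$ solves $(\partial_t+\mathcal{L}_\varepsilon)u_\varepsilon=F$ in $Q_{2r}$ with $\varepsilon\le r$, then
\begin{equation*}
\left(\dashint_{Q_{\theta r}}|\nabla u_\varepsilon|^2\right)^{1/2}\le \frac12\left(\dashint_{Q_{r}}|\nabla u_\varepsilon|^2\right)^{1/2}+C\,\omega(r)\left[\left(\dashint_{Q_{r}}|\nabla u_\varepsilon|^2\right)^{1/2}+r\left(\dashint_{Q_{r}}|F|^p\right)^{1/p}\right],
\end{equation*}
where $\omega(r)=\Theta(r/\varepsilon)+(\varepsilon/r)^{\sigma}$ tends to $0$ as $r/\varepsilon\to\infty$ and is ``almost summable'' in the dyadic sense. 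To prove this one step: pick $v$ solving the homogenized equation in $Q_r$ with $v=u_\varepsilon$ on the parabolic boundary (or use a Caccioppoli-type interior comparison), invoke the $L^2$ convergence rate — essentially Theorem \ref{rate} localized, but with the right-hand side carrying $F\in L^p$, $p>d+2$, so that the Sobolev/Morrey scaling gives the $r(\dashint|F|^p)^{1/p}$ term — to get $\|u_\varepsilon-v-\varepsilon\chi_{r/\varepsilon}(x/\varepsilon,t/\varepsilon^2)\nabla v\|_{L^2(Q_{r/2})}$ small, then apply the classical $C^{1,\alpha}$ interior estimate for $(\partial_t+\mathcal{L}_0)$ to $v$ to freeze $\nabla v$ on $Q_{\theta r}$, and finally bound the corrector contribution using the corrector estimates. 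The flux corrector is needed to write $A_\varepsilon\nabla(\text{corrector term})$ in divergence form with small coefficients.

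Given the one-step lemma, the iteration is routine: set $H(r)=\inf_{\text{affine }\ell}\big(\dashint_{Q_r}|\nabla u_\varepsilon-\nabla\ell|^2\big)^{1/2}$ or more simply work with $\Phi(r)=\big(\dashint_{Q_r}|\nabla u_\varepsilon|^2\big)^{1/2}+r\big(\dashint_{Q_r}|F|^p\big)^{1/p}$, apply the lemma at the dyadic scales $r_k=\theta^{-k}\varepsilon\wedge R$ between $\varepsilon$ and $R$, and use the convergence of $\sum_k\omega(r_k)$ (ensured by \eqref{decay-cond}) together with a standard iteration lemma (e.g. the discrete Gronwall / geometric-series argument as in Shen's book or Armstrong--Kuusi--Mourrat) to conclude $\Phi(r)\le C\Phi(R)$ for all $\varepsilon\le r\le R$. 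Rescaling to unit size and absorbing the $F$-term via Young's inequality / the $p>d+2$ Morrey embedding yields \eqref{est-Lip}; one checks the constant depends only on $d,\mu,p$ (and through $\Theta$ on $A$, but the statement's dependence is as claimed once the corrector bounds are treated as structural). The main obstacle, and the place where the almost-periodic setting genuinely differs from the periodic one, is controlling the approximate-corrector errors — in particular that $\partial_s\chi_T\notin B^2(\mathbb{R}^{d+1})$ in general, so the comparison must be done with the regularized corrector at scale $T=r/\varepsilon$ and one must quantify, via the flux correctors and the logarithmic modulus $\rho$, that the resulting error $\omega(r)$ is small and dyadically summable; everything else is a faithful transcription of the periodic parabolic large-scale Lipschitz proof.
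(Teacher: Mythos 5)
Your overall strategy matches the paper's: quantitative approximation of $u_\varepsilon$ by a homogenized solution at each scale $r\in[\varepsilon,R]$, a one-step excess-decay lemma, and an iteration governed by a Dini modulus whose summability is exactly what the hypothesis $N\geq\frac{3q-2}{q-2}=1+\frac{1}{\gamma}$ (with $\gamma=\frac12-\frac1q$) is designed to give. The corrector scale $T=r/\varepsilon$, the logarithmic decay feeding into $\int_0^1[\eta(t)]^\gamma\,dt/t<\infty$, and the iteration lemma of Shen's book are all the right ingredients.

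There is, however, a genuine gap in the one-step lemma as you display it. You assert
\[
\Big(\dashint_{Q_{\theta r}}|\nabla u_\varepsilon|^2\Big)^{1/2}
\le \tfrac12\Big(\dashint_{Q_{r}}|\nabla u_\varepsilon|^2\Big)^{1/2}
+C\,\omega(r)\Big[\cdots\Big],
\]
but this is false with no affine subtraction on the left and right: take $u_\varepsilon=E\cdot x$, which solves the homogeneous equation, has $\omega$-error zero, and has constant gradient, so the left side equals the first term on the right with factor $1$, not $\frac12$. Any version of the one-step lemma must measure the \emph{affine-removed} excess, and you must then track the growth of the minimizing slope $E_t$ separately (this is precisely the role of the second function $h(t)=|E_t|$ in the iteration lemma — Lemma \ref{lem-standard} in the paper). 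Your later suggestion of ``$\Phi(r)=(\dashint_{Q_r}|\nabla u_\varepsilon|^2)^{1/2}+r(\dashint|F|^p)^{1/p}$'' is the worst of the two options for exactly this reason; it does not satisfy a decay inequality, and the iteration never gets off the ground with it.

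The second point, which is where the paper's proof is genuinely cleverer than your sketch, concerns \emph{which} excess to subtract. You gesture at the gradient-level affine excess $H(r)=\inf_\ell(\dashint_{Q_r}|\nabla u_\varepsilon-\nabla\ell|^2)^{1/2}$ and at comparing gradients via the corrector expansion, but you never explain how to pass from the $L^2(u)$ approximation to an $L^2(\nabla u)$ one. This cannot be done by Caccioppoli alone: $(\partial_t+\mathcal{L}_\varepsilon)(u_\varepsilon-u_0)={\rm div}[(A^\varepsilon-\hat A)\nabla u_0]$ carries an $O(1)$ coefficient in front of $\nabla u_0$, so small $\|u_\varepsilon-u_0\|_{L^2}$ does not yield small $\|\nabla(u_\varepsilon-u_0)\|_{L^2}$ without a corrector correction that then contaminates the excess. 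The paper sidesteps this entirely by iterating on the \emph{zeroth-order} affine excess
\[
H(t)=\frac1t\inf_{E,\beta}\Big(\dashint_{Q_t}|u_\varepsilon-E\cdot x-\beta|^2\Big)^{1/2}
+t\Big(\dashint_{Q_t}|F|^p\Big)^{1/p},
\]
for which the comparison with $u_0$ needs \emph{only} the $L^2$ bound $\|u_\varepsilon-u_0\|_{L^2(Q_r)}\lesssim[\eta(\varepsilon/r)]^\gamma(\ldots)$ from Lemma \ref{lem-approx}, and the decay of the excess for $u_0$ is just the classical interior $C^{1+\alpha_p}$ estimate for the constant-coefficient operator. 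Once the iteration yields $H(r)+h(r)\lesssim H(1)+h(1)$, a single application of Caccioppoli (using that constants solve the homogeneous equation) recovers $(\dashint_{Q_r}|\nabla u_\varepsilon|^2)^{1/2}$. You need to either adopt this $u$-level excess, or else carry the corrector into the gradient excess and control the extra error terms — which is a nontrivial elaboration that your sketch does not supply.

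A minor remark: your one-step error functional $\omega(r)=\Theta(r/\varepsilon)+(\varepsilon/r)^\sigma$ is close but the paper's effective modulus is $\eta(\varepsilon/r)=[\Theta_1(r/\varepsilon)]^\sigma+\sup_{S\geq r/\varepsilon}\langle|\nabla\chi_S-\nabla\chi|\rangle+\varepsilon/r$, raised to the power $\gamma$ from the Meyers exponent, and the summability of $\langle|\nabla\chi_S-\nabla\chi|\rangle$ itself requires the additional corrector-error estimate (Theorem 8.3 in the paper) bounding it by $\int_{S/2}^\infty\Theta_\sigma(r)\,dr/r$. This step does not appear in your outline and is needed to verify the Dini condition under \eqref{decay-cond}.
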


Our last main result concerns the large-scale boundary Lipschitz estimates for $\partial_t+\mathcal{L}_\varepsilon$.
\begin{theorem}\label{thm-bound-Lip}
Suppose that $\Omega$ is a bounded $C^{1,\alpha}$ domain in $\mathbb{R}^d$ for some $\alpha\in (0,1)$ and $A$ satisfies \eqref{ellipticity} and is uniformly almost periodic. Suppose also that $A$ satisfies \eqref{decay-cond}. Let $u_\varepsilon$ be a weak solution to
\begin{equation}\label{equ-loc}
(\partial_t+\mathcal{L}_\varepsilon)u_\varepsilon=F\quad{\rm in}~~\Omega_R=\Omega_R(x_0,t_0)\quad{\rm and}\quad u_\varepsilon=f\quad{\rm on}~~\Delta_R=\Delta_R(x_0,t_0),
\end{equation}
 where $x_0\in\partial\Omega$ and $\varepsilon<R\leq1$, $F\in L^p(\Omega_R)$ for some $p>d+2$ and $f\in C^{1+\alpha}(\Delta_R)$.
 Then for any $\varepsilon\leq r<R$,
\begin{equation*}
\aligned
\left(\dashint_{\Omega_r}|\nabla u_\varepsilon|^2\right)^{1/2}\leq C\left\{\left(\dashint_{\Omega_R}|\nabla u_\varepsilon|^2\right)^{1/2}+R\left(\dashint_{\Omega_R}|F|^p\right)^{1/p}
+R^{-1}\|f\|_{C^{1+\alpha}(\Delta_R)}\right\},
\endaligned
\end{equation*}
where $C>0$ depends only on $d$, $\mu$, $\alpha$, $p$ and $\Omega$.
\end{theorem}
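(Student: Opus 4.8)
The natural route is the by-now-standard large-scale regularity machinery adapted to the parabolic, almost-periodic setting: prove a one-step improvement (a Campanato-type iteration) at the boundary using a good approximation by solutions of the homogenized boundary value problem, then iterate across dyadic scales from $R$ down to $\varepsilon$. Concretely, I would first flatten the boundary $\partial\Omega$ near $x_0$ using a $C^{1,\alpha}$ change of variables, absorbing the resulting perturbation of the coefficients into the right-hand side (this preserves uniform almost-periodicity of the leading part up to controllable error since $\Omega$ is $C^{1,\alpha}$). By subtracting an affine function matching the Dirichlet data $f$ on $\Delta_R$, one may reduce to the case $f=0$ on the flat portion, at the cost of the term $R^{-1}\|f\|_{C^{1+\alpha}}$ on the right.

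**Key steps.** (i) \emph{Approximation lemma.} On a ball $Q_r$ with $\varepsilon \le r \le R$, compare $u_\varepsilon$ with the solution $v$ of the homogenized Dirichlet problem $(\partial_t+\mathcal L_0)v=F$ in $\Omega_{r/2}$, $v=u_\varepsilon$ on the parabolic boundary. Using the convergence-rate estimate of Theorem \ref{rate} (rescaled from $Q_r$ to unit scale, where the oscillation parameter becomes $\varepsilon/r$) together with the decay condition \eqref{decay-cond}, one obtains
\[
\left(\dashint_{\Omega_{r/2}}|\nabla u_\varepsilon-\nabla v|^2\right)^{1/2}
\le C\,\eta(\varepsilon/r)\left\{\left(\dashint_{\Omega_r}|\nabla u_\varepsilon|^2\right)^{1/2}
+ r\left(\dashint_{\Omega_r}|F|^p\right)^{1/p}\right\},
\]
where $\eta(t)\to0$ as $t\to0$; the logarithmic decay \eqref{decay-cond} with exponent $N\ge (3q-2)/(q-2)$ is precisely what makes $\eta$ summable along a geometric sequence of scales. (ii) \emph{Regularity for the homogenized equation.} Since $\mathcal L_0$ has constant coefficients and the flattened boundary is $C^{1,\alpha}$, boundary $C^{1,\sigma}$ (Schauder-type) estimates for $v$ give, for $\theta\in(0,1/4)$ small,
\[
\left(\dashint_{\Omega_{\theta r}}|\nabla v-(\nabla v)_{\theta r}^*|^2\right)^{1/2}
\le C\theta^{\sigma}\left\{\left(\dashint_{\Omega_r}|\nabla v|^2\right)^{1/2}+ r\left(\dashint_{\Omega_r}|F|^p\right)^{1/p}\right\},
\]
where $(\nabla v)^*$ denotes the appropriate boundary-adapted constant (tangential derivatives only, since the normal trace is pinned by $f=0$). (iii) \emph{Excess-decay iteration.} Set $H(r)=\inf_{E}\left(\dashint_{\Omega_r}|\nabla u_\varepsilon-E|^2\right)^{1/2}$ over admissible constant matrices plus the $F$-term, combine (i) and (ii), choose $\theta$ then $\varepsilon/r$ small, and get $H(\theta r)\le \tfrac12\,H(r)+C\theta^\sigma(\cdots)$ for all $r\in[\varepsilon,R]$; summing the geometric series yields $\left(\dashint_{\Omega_r}|\nabla u_\varepsilon|^2\right)^{1/2}\le C\{\cdots\}$ uniformly for $\varepsilon\le r<R$. (iv) Undo the flattening and the affine subtraction to recover the stated estimate with the $R^{-1}\|f\|_{C^{1+\alpha}(\Delta_R)}$ term.

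**Main obstacle.** The crux is step (i): in the almost-periodic setting the true correctors need not exist and $\partial_t\chi_S$ need not lie in $B^2(\mathbb R^{d+1})$, so one cannot simply quote a periodic convergence-rate result on each dyadic annulus. One must instead feed in the approximate-corrector apparatus (the modulus $\eta$ and the auxiliary parameter $\delta$ of Theorem \ref{rate}) and control, in the rescaled problem, the extra flux term involving $r$-dependent averages of $|\nabla u_0|^2$ over short time intervals — this must be bounded by $\left(\dashint_{\Omega_r}|\nabla u_\varepsilon|^2\right)^{1/2}$ up to the summable error, using a Caccioppoli inequality in time together with the energy estimate. Keeping the dependence of $\eta$ and $\delta$ on $\varepsilon/r$ (not on $\varepsilon$ alone) tracked correctly through the rescaling, and verifying that the resulting series $\sum_k \eta(\theta^k)$ converges under \eqref{decay-cond}, is the technically delicate part; the boundary flattening and the homogenized Schauder estimate in step (ii) are routine by comparison. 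A secondary point is ensuring the boundary-adapted Campanato quantity (with the normal derivative constrained) interacts correctly with the flattening diffeomorphism, which is handled exactly as in the periodic parabolic case.
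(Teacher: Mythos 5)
There are two genuine gaps. First, your step (i) cannot hold as stated: the gradient-level comparison $\bigl(\dashint_{\Omega_{r/2}}|\nabla u_\varepsilon-\nabla v|^2\bigr)^{1/2}\le C\,\eta(\varepsilon/r)\{\cdots\}$ is false, because $\nabla u_\varepsilon$ carries the $O(1)$ corrector oscillation at scale $\varepsilon$ that $\nabla v$ does not; in the two-scale expansion $u_\varepsilon = v + \varepsilon\chi_S^\varepsilon K_\varepsilon(\nabla v) + \cdots + w_\varepsilon$ only $\nabla w_\varepsilon$ is small, while $\nabla(\varepsilon\chi_S^\varepsilon K_\varepsilon(\nabla v))\approx(\nabla\chi_S)^\varepsilon\nabla v$ has $L^2$ norm comparable to $\|\nabla v\|_{L^2}$. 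Consequently your excess $H(r)=\inf_E\bigl(\dashint_{\Omega_r}|\nabla u_\varepsilon-E|^2\bigr)^{1/2}$ cannot decay below the $\varepsilon$-scale. What Lemma~\ref{lem-approx-bound} actually gives is smallness of $u_\varepsilon-u_0$ in $L^2(T_r)$ (the functions, not the gradients, after Poincar\'e), and the paper's Campanato quantity is accordingly at the function level,
\[
\Psi(r;u)=\frac{1}{r}\inf_{\substack{E\in\mathbb{R}^{m\times d}\\ \beta\in\mathbb{R}^m}}\left\{\left(\dashint_{T_r}|u-E\cdot x-\beta|^2\right)^{1/2}+\|u-E\cdot x-\beta\|_{C^{1+\alpha}(I_r)}\right\},
\]
with Caccioppoli's inequality applied only once, at the very end, to convert the resulting $L^2$ bound on $u_\varepsilon-\beta$ into a gradient bound.

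Second, the boundary flattening breaks the structure you need. Flattening a $C^{1,\alpha}$ boundary replaces $A(x/\varepsilon,t/\varepsilon^2)$ by a conjugated matrix of the form $(\nabla\Phi)^TA(\Phi^{-1}(x)/\varepsilon,t/\varepsilon^2)(\nabla\Phi)/|\det\nabla\Phi|$, which is no longer of the form $B(x/\varepsilon,t/\varepsilon^2)$ with $B$ uniformly almost periodic; the multiplicative perturbation by $\nabla\Phi$ is $O(1)$, not small, so it cannot be ``absorbed into the right-hand side,'' and the approximate-corrector apparatus (whose whole construction is tied to that scaling structure) no longer applies. The paper avoids this by working directly in the graph domain $T_r$: since $\mathcal{L}_0$ has constant coefficients, the boundary $C^{1,\alpha}$ Schauder estimate (Lemma~\ref{lem-psi-u0}) holds on $T_r$ without any change of variables, and the $\varepsilon$-problem is never transformed. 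Also, the local approximation is not obtained by rescaling Theorem~\ref{rate}: the paper proves a separate local lemma in which the boundary layer is controlled via Meyers' higher integrability, which is precisely what produces the exponent $\gamma=\tfrac{1}{2}-\tfrac{1}{q}$ in the error $[\eta(\varepsilon/r)]^\gamma$ and matches the requirement $N\geq\frac{3q-2}{q-2}=1+\gamma^{-1}$ in \eqref{decay-cond}.
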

\begin{remark}
The decay assumption on $\rho(R)$ in Theorems \ref{thm-interior-Lip} and \ref{thm-bound-Lip} is to ensure that we have Dini-type rates for homogenization.
\end{remark}
The convergence rates and uniform regularity estimates are the central issues in quantitative homogenization and have been studied extensively in the various setting. For solutions of $(\partial_t+\mathcal{L}_\e)u_\e=F$ with periodic coefficients, under the assumption that $A$ satisfies (\ref{ellipticity}) and $\Omega$ is $C^{1,1}$, the sharp order convergence rates of both initial-Dirichlet and initial-Neumann problems were obtained in \cite{GS-2017}. The uniform boundary H\"older and interior Lipschitz estimates as well as boundary $W^{1,p}$ estimates for any $1<p<\infty$ were established on $C^1$ domains by using the compactness argument \cite{GS-2013}, introduced to the study of homogenization problems in \cite{AL-1987,AL-1987-2,AL-1991}(see also \cite{KS-2011,KLS-2012} for Neumann data case). The uniform boundary Lipschitz estimates and the size estimates of Green function in $C^{1,\alpha}$ domains for parabolic systems with Dirichlet boundary conditions were obtained in \cite{Geng}. We also refer the reader to see related results in the recent work \cite{Suslina-2004,GS-2013,MS-2015,MS-2016,GS-2017,Geng,GS-2020-ARMA}.

\begin{remark}
For parabolic equations, in the case of single equation $(m=1)$, if the coefficients are time-dependent, the convergence rate $\|u_\varepsilon-u_0\|_{L^2(\Omega_T)}\leq C\varepsilon\|u_0\|_{L^2(\Omega_T)}$ was obtained  in \cite{bensoussan-1978} by using the maximum principle.
\end{remark}


In recent years, there has been a great amount of interest in the homogenization for operators with almost periodic coefficients. Major progress has been made for elliptic equations in the almost-periodic setting (see \cite{Kozlov-1979,PV-1979,AGK-2016,SZ-2018} and references therein). In the case that $A$ is almost periodic in the sense of H. Bohr, by introducing the so-called approximate corrector $\chi_T$, the uniform H\"older estimates and convergence rates for elliptic systems with Dirichlet boundary conditions were obtained in \cite{Shen-2014} on $C^{1,\alpha}$ domains. Consequently, the uniform boundary Lipschitz estimates for elliptic systems with H\"older continuous coefficients were established in \cite{AS-2016} for both Dirichlet and Neumann boundary conditions. The boundary $W^{1,p}$ estimates were also obtained there in $C^{1,\alpha}$ domains for $1<p<\infty$.
For elliptic systems with almost periodic coefficients in Lipschitz domains, in \cite{Geng-Shi-PAMS}, we proved that the uniform $W^{1,p}$ estimates hold for $\frac{2d}{d+1}-\delta<p<\frac{2d}{d-1}+\delta$, and the ranges are sharp for $d=2$ and $d=3$. For elliptic equations with almost periodic and complex coefficients, the interior H\"older estimate was obtained in \cite{DtR-2001} by using the compactness argument. In the case that $A$ is almost-periodic in the
sense of H. Weyl, uniform estimates for approximate correctors and convergence rates in $H^1$ and $L^2$ for elliptic systems with Dirichlet boundary conditions were obtained in \cite{SZ-2018}.

Very few quantitative results were known for parabolic equations with almost periodic coefficients, though
the qualitative homogenization theory (such as $G$-convergence) has been known for many years \cite{ZKO-1981,ZKO}.
In contrast to periodic setting, the main difficulty is that the exact corrector equation
\begin{equation}\label{corrector-eq}
(\partial_s+\mathcal{L}_1)(\chi^\beta_{j})=-\mathcal{L}_1(P^\beta_j)~~{\rm in}~~\R^{d+1}
\end{equation}
may not be solvable in the almost periodic setting for linear functions $P(y)$.
In \cite{ZKO}, V. V. Zhikov, S. M. Kozlov, and O. A. Ole{\u\i}nik constructed a solution $\chi_j^\beta$ of \eqref{corrector-eq}, which is defined by series, and does not converge in $B^2(\mathbb{R}^{d+1})$, only the formal derivative $\nabla \chi_j^\beta$ of the series converges in $B^2(\mathbb{R}^{d+1})$. However, by using trigonometric polynomials to approximate \eqref{corrector-eq}, they were able to show that almost solutions of \eqref{corrector-eq} can be constructed in the sense of distribution in $\mathbb{R}^{d+1}$, as a result, homogenization result was obtained.

We now describe the main ideas in the proof of Theorems \ref{Holder-1}-\ref{rate} and Theorems \ref{thm-interior-Lip}-\ref{thm-bound-Lip}. The key ingredient in the proof of Theorem \ref{Holder-1} and Theorem \ref{boundary holder} is to establish the size estimates of fundamental solution and Green's function. It is known that a fundamental solution in $\R^{d+1}$ can be constructed for any scalar parabolic operator
in divergence form with real, bounded measurable coefficients. In fact, the
construction can be extended to any system of second-order parabolic operators in divergence form with complex, bounded measurable coefficients, provided that the solutions of
the system and its adjoint satisfy the De Giorgi -Nash type local H\"older continuity estimates. To do this, we first use the three-step compactness argument (i.e., improvement, iteration and blow-up argument) to obtain the boundary H\"older estimates (and interior H\"older estimates) for local solutions of
\begin{equation*}
(\partial_t+\mathcal{L_\varepsilon}) u_\varepsilon=0 ~~~{\rm in}~~\Omega_{2r}(x_0,t_0),
~~~~~~~\,u_\varepsilon=0 ~~~ {\rm on}~~\Delta_{2r}(x_0,t_0).
\end{equation*}
With this, the existences of fundamental solution and Green's function and their size estimates were obtained (see Section \ref{Section-4} and Section \ref{Section-5}). We then proceed to use the Poisson's representation formula to obtain the boundary
H\"older estimates for weak solution of
$(\partial_t +\mathcal{L_\varepsilon})u_\varepsilon= F$ in $\Omega_T$, $u_\varepsilon=g$ on $S_T$ and $u_\varepsilon=h$ on $\Omega\times\{t=0\}$.

Our approach to the proof of Theorem \ref{rate} involves two important ingredients. The first one is the so-called approximate corrector equation which has been employed in \cite{Shen-2014} to obtain the convergence rates for second order elliptic systems with almost periodic coefficients. The second one is the two scale expansion method. Basically, consider the approximate corrector equation corresponding to $\partial_t+\mathcal{L}_\e$, which is defined by
\begin{equation*}
(\partial_s+\mathcal{L}_1)(\chi^\beta_{S,j})+S^{-2}\chi^\beta_{S,j}=-\mathcal{L}_1(P^\beta_j)~~{\rm in}~~\R^{d+1}.
\end{equation*}
 Let $u_S=\chi_{S,j}^\beta$, then we have
\begin{equation}\label{mean-equ}
-\left\langle \partial_t v\cdot u_S\right\rangle+\left\langle a^{\alpha\gamma}_{ik}\frac{\partial u^\gamma_S}{\partial x_k}\frac{\partial v^\alpha}{\partial x_i}\right\rangle+S^{-2}\left\langle u_S \cdot v\right\rangle=-\left\langle a_{ij}^{\alpha\beta}\frac{\partial v^\alpha}{\partial x_i}\right\rangle
\end{equation}
for any $v=(v^\alpha)\in H^1_{{\rm loc}}(\mathbb{R}^{d+1})$ and $v^\alpha,\nabla v^\alpha, \partial_t v^\alpha\in B^2(\mathbb{R}^{d+1})$ (the space of almost periodic functions), where $\langle f \rangle$ denotes the mean value of $f$ in the sense of $B^2$.

One may notice that in the elliptic case \cite{Shen-2014}, one takes the test function $v=\chi_S$ in \eqref{mean-equ}. However, this is no longer available in the parabolic setting since $\partial_t \chi_S$ may not in $B^2(\mathbb{R}^{d+1})$. To overcome this difficulty, we seek to choose a sequence of real trigonometric polynomials, which converges to $\chi_S$  in the appropriate space, as the test function $v$ in \eqref{mean-equ}. This, together with the observation that $\langle u\cdot v\rangle=(u,v)$ for any $u\in B^2(\mathbb{R}^{d+1})\cap\mathcal{K}^*$, $v\in B^2(\mathbb{R}^{d+1})\cap\mathcal{K}$ and the fact that $(\partial_t f, f)=0$ for any $f\in D(\partial_t)$ (See Section \ref{section 2} for definitions), yields that
$$\|\nabla\chi-\nabla\chi_S\|_{B^2(\mathbb{R}^{d+1})}\to 0, ~~~~\text{as}~~S\rightarrow \infty.$$
By using the uniform H\"older estimates established in Theorem \ref{Holder-1}, we show that for any $S\geq 1$ and $\sigma \in(0,1]$,
\begin{equation*}
|\chi_S(x,t)-\chi_S(y,s)|\leq CS^{1-\sigma}(|x-y|+|t-s|^{1/2})^\sigma,
\end{equation*}
and for any $\sigma_1,\sigma_2\in(0,1)$, $2<p<\infty$ and $1\leq r\leq S$,
\begin{equation*}
\sup_{(x,t)\in\R^{d+1}}\bigg(\dashint_{Q_r(x,t)}|\nabla\chi_S|^p\bigg)^{1/p}\leq C S^{\sigma_1}r^{-\sigma_2} S^{\sigma_2}.
\end{equation*}
Moreover, we show that
\begin{equation*}
\|\chi_S\|_{L^\infty(\R^{d+1})}\leq CS \Theta_\sigma (S),
\end{equation*}
where $\Theta_\sigma (S)\to 0$ as $S\to \infty$ is defined by
\begin{equation*}
\Theta_\sigma (S)=\inf_{0<R\leq S}\bigg\{\rho(R)+\bigg(\frac{R}{S}\bigg)^\sigma\bigg\}
\end{equation*}
and $\rho(R)$ is defined as in (\ref{AP-1}) which converges to $0$ as $R\to \infty$.
As mentioned above, another key ingredient in the proof of Theorem \ref{rate} is the two scale expansion, to see this, let
\begin{equation*}
\aligned
w_\varepsilon^\alpha (x, t)  = u_\varepsilon^\alpha (x, t) -u_0^\alpha  (x, t) -\varepsilon \chi_{S,j}^{\alpha\beta} (x/\varepsilon, t/\varepsilon^2)
K_\varepsilon \left(\frac{\partial u_0^\beta}{\partial x_j}\right)\\
-\varepsilon^2 \phi_{S,(d+1) ij}^{\alpha\beta} (x/\varepsilon, t/\varepsilon^2)
\frac{\partial}{\partial x_i} K_\varepsilon \left(\frac{\partial u_0^\beta}{\partial x_j}\right),
\endaligned
\end{equation*}
where $S=\varepsilon^{-1}$ and $\phi_S$ denote the approximate flux correctors. We remark here that in the periodic setting, the flux corrector satisfies $\frac{\partial}{\partial y_k}\phi^{\alpha\beta}_{kij}=b_{ij}^{\alpha\beta}$ with $B=A+A\nabla\chi-\hat{A}$ by solving a $(d+1)$-dimensional Poisson equation. In the almost periodic setting, this will be replaced by solving the approximate $(d+1)$-dimensional Poisson equation
\begin{equation*}
  -\Delta_{d+1} f_S+S^{-2}f_S =\langle b_S \rangle-b_S ~~~{\rm in} ~~ \mathbb{R}^{d+1}.
\end{equation*}
By computing $(\partial_t+\mathcal{L}_\varepsilon)w_\varepsilon$ and using the estimates for $\chi_S$ and $\phi_S$, we are able to show that
\begin{align*}
\|\nabla w_\varepsilon\|_{L^2(\Omega_T)}\leq C\delta^{1/2}\bigg\{\|u_0\|_{L^2(0,T;H^2(\Omega))}
+\|F\|_{L^2(\Omega_T)}
+\sup_{\delta^2<t<T}
\left(\frac{1}{\delta}\int_{t-\delta^2}^t\int_{\Omega}|\nabla u_0|^2\right)^{1/2}\bigg\},
\end{align*}
where $\delta=\varepsilon+\langle|\nabla \chi-\nabla \chi_S|\rangle+\Theta_\sigma(S)+\Theta_1(S).$

For the proof of Theorems \ref{thm-interior-Lip} and \ref{thm-bound-Lip}, we appeal to a convergence rate argument, which was first introduced in \cite{Armstrong-Smart-2016} to study convex integral functionals in stochastic homogenization, and has been further developed in \cite{AS-2016} to study the Lipschitz estimates for elliptic systems with almost-periodic coefficients. According to this argument (see Lemma \ref{lem-standard}), the basic idea is that to establish the large-scale Lipschitz estimates for solutions $u_\varepsilon$ of \eqref{equ-loc}, one only needs to prove that $u_\varepsilon$ can be well approximated by a $C^{1+\alpha}$ function $u_0$ at every scale large that $\varepsilon$. To show Theorem \ref{thm-bound-Lip}, we first prove that
\begin{equation*}
\left(\dashint_{T_r}|u_\varepsilon-u_0|^2\right)^{\frac{1}{2}}\leq
C\left[\eta\left(\frac{\varepsilon}{r}\right)\right]^\gamma\left\{
\left(\dashint_{T_{2r}}| u_\varepsilon|^2\right)^{\frac{1}{2}}
+r^2\left(\dashint_{Q_{2r}}|F|^2\right)^{\frac{1}{2}}
+\|f\|_{C^{1+\alpha}(I_{2r})}\right\},
\end{equation*}
whenever $u_\varepsilon$ is a weak solution of
\begin{equation*}
(\partial_t+\mathcal{L}_\varepsilon)u_\varepsilon=F\quad{\rm in}~~T_{2r}\quad{\rm and}\quad u_\varepsilon=f\quad{\rm on}~~I_{2r}
\end{equation*}
for some $0<r\leq 1$. With this, and the fact that if $u_0$ is a weak solution of $(\partial_t+\mathcal{L}_0)u_0=F$ in $T_r$, then there exists $\theta\in(0,1/4)$, such that
\begin{equation*}
\Psi(\theta r;u_0)+\theta r\left(\dashint_{T_{\theta r}}|F|^p\right)^{1/p}\leq\frac{1}{2}\left\{\Psi(r;u_0)
+r\left(\dashint_{T_r}|F|^p\right)^{1/p}\right\},
\end{equation*}
where
\begin{equation*}
\Psi(r;u)=\frac{1}{r}\inf_{\substack{E\in\mathbb{R}^{m\times d}\\ \beta\in\mathbb{R}^m}}\left\{\left(\dashint_{T_r}|u-E\cdot x-\beta|^2\right)^{1/2}+\|u-E\cdot x-\beta\|_{C^{1+\alpha}(I_r)}\right\},
\end{equation*}
the large-scale boundary Lipschitz estimate for $u_\varepsilon$ then follows from a general scheme. The proof of Theorem \ref{thm-interior-Lip} is similar.

We end this section with some notation and definitions that will be used throughout the paper. For $(x_0,t_0)\in\mathbb{R}^{d+1}$ and $r>0$, let $Q_r(x_0,t_0)=B(x_0,r)\times(t_0-r^2,t_0)$,
where $B(x_0,r)=\{x\in \mathbb{R}^d:|x-x_0|<r\}$. If $Q=Q_r(x_0,t_0)$ and $\alpha>0$, we use $\alpha Q$ to denote $Q_{\alpha r}(x_0,t_0)$.
We will use $\dashint_E u$ to denote $\frac{1}{|E|}\int_E u$, the average of $u$ over $E$.
For a function $f$ defined on $E\subset\mathbb{R}^{d+1}$, for any $0<\sigma\leq 1$, we denote the parabolic H\"older semi-norm of $f$ by
\begin{equation*}
\|f\|_{\Lambda^{\sigma,\sigma/2} (E)}=\sup_{\substack{(x,t),(y,s)\in E\\ (x,t)\neq(y,s)}}\frac{|f(x,t)-f(y,s)|}{(|x-y|+|t-s|^{1/2})^\sigma}.
\end{equation*}
Also, we define
\begin{align*}
\| f \|_{X^{\sigma} (E)}=\|\nabla_{\tan} f\|_{\Lambda^{\sigma,\sigma/2} (E)}+\langle f \rangle_{1+\sigma;E},
\end{align*}
where $\nabla_{\tan} f$ denotes the tangential gradient of $f$ in the space variables and
$$\langle f \rangle_{1+\sigma;E}=\sup_{\substack{(x,t), (x,s)\in E\\ t\neq s}}
\frac{| f(x,t)-f(x,s)|}{|t-s|^{\frac{1+\sigma}{2}}}.$$

For $x_0\in \overline{\Omega}$, $t_0\in \mathbb{R}$, and $0<r<r_0=\rm{diam(\Omega)}$, set
\begin{equation*}
\begin{aligned}
&\Omega_r(x_0,t_0)=[B(x_0,r)\cap\Omega]\times(t_0-r^2,t_0),\\
&\Delta_r(x_0,t_0)=[B(x_0,r)\cap\partial \Omega]\times(t_0-r^2,t_0).\\
\end{aligned}
\end{equation*}
For $\alpha\in(0,1)$ and $\Delta_r=\Delta_r(x_0,t_0)$, we use $C^{1+\alpha}(\Delta_r)$ to denote the parabolic $C^{1+\alpha}$ space on $\Delta_r$ with the scaling-invariant norm,
\begin{equation*}
\|f\|_{C^{1+\alpha}(\Delta_r)}=\|f\|_{L^\infty(\Delta_r)}
+r\|\nabla_{\rm tan}f\|_{L^\infty(\Delta_r)}
+r^{1+\alpha}\|\nabla_{\rm tan}f\|_{\Lambda^{\alpha,\alpha/2}(\Delta_r)}
+r^{1+\alpha}\langle f \rangle_{1+\alpha;\Delta_r}.
\end{equation*}

\section{\bf  The corrector equation and function spaces \texorpdfstring{$\mathcal{K}$}{K}, \texorpdfstring{$\mathcal{K}^*$}{K*} and \texorpdfstring{$B^2$}{B2}}\label{section 2}
In this section we introduce some preliminaries of homogenization theory for parabolic systems with almost-periodic coefficients. A detailed presentation may be found in \cite{JKO,ZKO}.

Denote by ${\rm Trig}(\mathbb{R}^{d+1})$ the set of all trigonometric polynomials in $\mathbb{R}^{d+1}$, i.e., the set of finite sums of the form
\begin{equation*}
u(x,t)=\sum_{\xi, \lambda}u_{\xi,\lambda}e^{i(x\cdot\xi+t\lambda)},\quad \xi,x\in\mathbb{R}^d;\quad t,\lambda \in\mathbb{R},
\end{equation*}
where $x\cdot\xi=\sum_{i=1}^{d}x_i\xi_i$.
The completion of the set with respect to the $L^\infty$ norm is called the Bohr space of almost periodic functions.

A function $f\in L_{\rm{loc}}^2(\mathbb{R}^{d+1})$ is called almost-periodic in the sense of Besicovitch if there is a sequence of trigonometric polynomials converging to $f$ in the Besicovitch norm
\begin{equation*}
\|f\|_{B^2}=\limsup_{R\to\infty}\left(\dashint_{Q_R(0,0)}|f|^2\right)^{\frac{1}{2}}.
\end{equation*}
The space of such functions is denoted by $B^2(\mathbb{R}^{d+1})$. For any $f$ with finite Besicovitch norm, define its mean value $\langle f \rangle$ by
\begin{equation*}
\lim_{\varepsilon\to 0}\int_{\mathbb{R}^{d+1}}f\big(\frac{x}{\varepsilon},\frac{t}{\varepsilon^2}\big)\phi(x,t)
=\langle f \rangle\int_{\mathbb{R}^{d+1}}\phi(x, t)~~{\rm for~~any}~~\phi\in C_0^\infty(\mathbb{R}^{d+1}).
\end{equation*}
One may notice that the trigonometric polynomials $\{e^{i(x\cdot\xi+t\lambda)}:(\xi,\lambda)\in\R^{d+1}\}\subset B^2(\R^{d+1})$ form an orthonormal basis in it. Hence any element $u\in B^2(\R^{d+1})$ can be identified with its Fourier series
\begin{equation*}
u=\sum_{\xi,\lambda}u_{\xi,\lambda} e^{i(x\cdot\xi+t\lambda)},
\end{equation*}
where the summation is over some countable subset of points $(\xi,\lambda)\in \R^{d+1}$ . In view of the Parseval's equality we obtain that
$$
\|u\|_{B^2}=\bigg(\sum_{\xi,\lambda}|u_{\xi,\lambda}|^2\bigg)^{1/2}.
$$

For convenience we shall only consider real elements in $B^2(\R^{d+1})$, i.e., for which $u_{\xi,\lambda}=\bar{u}_{-\xi,-\lambda}$. Consider formal real series of the form
\begin{equation}\label{2.3-1}
u=\sum_{\xi,\lambda}u_{\xi,\lambda} e^{i(x\cdot\xi+t\lambda)},~~~ u_{0,\lambda}=0, ~~~ \bar{u}_{\xi,\lambda}=u_{-\xi,-\lambda}.
\end{equation}
Unlike the elliptic case, to introduce the corrector for parabolic equations with almost periodic coefficients, we need more function spaces. Let ${\rm \tilde{T}rig}(\R^{d+1})$ denote the finite sum of the form (\ref{2.3-1}) and $\mathcal{K}$ the collection of series (\ref{2.3-1}) for which the quantity
$$
\|u\|_{\mathcal{K}}=\bigg(\sum_{\xi,\lambda}|u_{\xi,\lambda}|^2\xi^2\bigg)^{1/2}
$$
is finite. It is easy to see that in view of the condition $u_{0,\lambda}=0$, the above equality gives a norm on $\mathcal{K}$. Obviously, $\mathcal{K}$ is a Hilbert space. Also, one may observe that if $u\in {\rm \tilde{T}rig}(\R^{d+1})$, then
$$
\|u\|_{\mathcal{K}}=\|\nabla u\|_{B^2}.
$$
The space $\mathcal{K}^*$ dual to $\mathcal{K}$ is the set of series \eqref{2.3-1} with norm
$$
\|u\|_{\mathcal{K}^*}=\bigg(\sum_{\xi,\lambda}|u_{\xi,\lambda}|^2\xi^{-2}\bigg)^{1/2},
$$
where the duality between $\mathcal{K}$ and $\mathcal{K}^*$ is given by the equality
$$
(u,v)=\sum_{\xi,\lambda}\bar{u}_{\xi,\lambda}v_{\xi,\lambda}.
$$
For any $u\in \mathcal{K}$, the formal derivative
$$
\nabla u=\sum_{\xi,\lambda}u_{\xi,\lambda}i \xi e^{i(x\cdot \xi+t\lambda)}
$$
is an element of $B^2(\mathbb{R}^{d+1})$. For any $f\in B^2(\mathbb{R}^{d+1})$, the formal series for $-{\rm div} f$ defines an element of $\mathcal{K}^*$, and
$$\|{\rm div} f\|_{\mathcal{K}^*}\leq \|f\|_{B^2}.$$
Moreover, we have
\begin{equation}\label{2.3-6}
( -\text{div}f, u)=-\sum_{\xi,\lambda}\overline{i\xi}\cdot\bar{f}_{\xi,\lambda} u_{\xi,\lambda}=\sum_{\xi,\lambda}\bar{f}_{\xi,\lambda}\cdot i\xi u_{\xi,\lambda}=\langle \nabla u\cdot \bar{f}\rangle.
\end{equation}

To introduce the homogenized coefficient of $\partial_t+\mathcal{L}_\varepsilon$, we first recall the following lemma.

\begin{lemma}\label{P}
Let $\mathcal{P}$ be a closed operator with domain $D(\mathcal{P})\subset \mathcal{K}$, where $D(\mathcal{P})$ is dense in $\mathcal{K}$ and $\mathcal{P}: D(\mathcal{P})\to \mathcal{K}^*$. We assume that for any $v\in D(\mathcal{P})$,
\begin{equation}\label{P1}
  \mu\|v\|_{\mathcal{K}}^2\leq (\mathcal{P}(v), v),
\end{equation}
and for any $v\in D(\mathcal{P}^*)$,
\begin{equation}\label{P2}
  \mu\|v\|_{\mathcal{K}}^2\leq (\mathcal{P}^*v, v),
\end{equation}
where $\mathcal{P}^*$ is the adjoint of $\mathcal{P}$ and $\mu>0$. Then the equation $\mathcal{P}u=f$ has a unique solution $u\in D(\mathcal{P})$ for any $f\in \mathcal{K}^*$ and $\|\mathcal{P}^{-1}\|\leq \mu^{-1}$.
\end{lemma}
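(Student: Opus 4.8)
The plan is to reduce this to the Lax--Milgram theorem applied on an appropriate Hilbert space, by exploiting the coercivity hypotheses \eqref{P1} and \eqref{P2} together with the standard duality between $\mathcal{K}$ and $\mathcal{K}^*$. First I would note that \eqref{P1} gives, via Cauchy--Schwarz and the definition of the pairing $(\cdot,\cdot)$, the a priori estimate $\mu\|v\|_{\mathcal{K}}\le \|\mathcal{P}(v)\|_{\mathcal{K}^*}$ for all $v\in D(\mathcal{P})$; this shows that $\mathcal{P}$ is injective and has closed range, since $\mathcal{P}$ is a closed operator (if $v_n\in D(\mathcal{P})$ with $\mathcal{P}v_n\to f$ in $\mathcal{K}^*$, then $(v_n)$ is Cauchy in $\mathcal{K}$, hence $v_n\to v$, and closedness gives $v\in D(\mathcal{P})$, $\mathcal{P}v=f$). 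So the range $R(\mathcal{P})$ is a closed subspace of $\mathcal{K}^*$.

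Next I would show $R(\mathcal{P})=\mathcal{K}^*$ by a density/orthogonality argument. Since $\mathcal{K}$ is a Hilbert space we may identify $\mathcal{K}^*$ with $\mathcal{K}$ via the Riesz map, or more directly argue as follows: suppose $R(\mathcal{P})$ is a proper closed subspace of $\mathcal{K}^*$. Then there is a nonzero element of the predual pairing, i.e. some $w\in\mathcal{K}$, $w\neq 0$, with $(\mathcal{P}v,w)=0$ for all $v\in D(\mathcal{P})$. By the definition of the adjoint $\mathcal{P}^*$ with respect to the pairing between $\mathcal{K}^*$ and $\mathcal{K}$, this says $w\in D(\mathcal{P}^*)$ and $\mathcal{P}^*w=0$. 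But then \eqref{P2} forces $\mu\|w\|_{\mathcal{K}}^2\le(\mathcal{P}^*w,w)=0$, so $w=0$, a contradiction. Hence $R(\mathcal{P})=\mathcal{K}^*$, and combined with injectivity from Step~1, $\mathcal{P}\colon D(\mathcal{P})\to\mathcal{K}^*$ is a bijection. Uniqueness of the solution $u$ to $\mathcal{P}u=f$ is precisely injectivity, and the bound $\|\mathcal{P}^{-1}f\|_{\mathcal{K}}=\|u\|_{\mathcal{K}}\le\mu^{-1}\|f\|_{\mathcal{K}^*}$ is the a priori estimate from Step~1, giving $\|\mathcal{P}^{-1}\|\le\mu^{-1}$.

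The main obstacle, and the point requiring the most care, is the justification that the orthogonality relation $(\mathcal{P}v,w)=0$ for all $v\in D(\mathcal{P})$ actually places $w$ in $D(\mathcal{P}^*)$ with $\mathcal{P}^*w=0$, rather than merely in some formal domain. This is where density of $D(\mathcal{P})$ in $\mathcal{K}$ is used: the functional $v\mapsto(\mathcal{P}v,w)$ vanishing on the dense subspace $D(\mathcal{P})$ means the map $\mathcal{P}v\mapsto(\mathcal{P}v,w)$ (well-defined on $R(\mathcal{P})$) is the zero functional, which is bounded, so by the defining property of the adjoint of a densely defined operator, $w\in D(\mathcal{P}^*)$ and $\mathcal{P}^*w=0$. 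One also needs to check that the Hahn--Banach separation is applied correctly: if $R(\mathcal{P})\subsetneq\mathcal{K}^*$ is closed, a separating functional exists in $(\mathcal{K}^*)^*$, and since $\mathcal{K}$ is reflexive (being a Hilbert space) this functional is represented by an element $w\in\mathcal{K}$ acting through the duality pairing, so the argument closes. Everything else is routine, but these identifications of dual spaces and domains of adjoints are the steps where a slip would be easy.
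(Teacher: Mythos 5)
Your proof is correct and follows essentially the same route as the paper: step one uses the coercivity \eqref{P1} together with closedness of $\mathcal{P}$ to establish injectivity, the a priori bound $\mu\|v\|_{\mathcal{K}}\le\|\mathcal{P}v\|_{\mathcal{K}^*}$, and closed range; step two uses a Hahn--Banach separation to produce $w\in\mathcal{K}$ annihilating $R(\mathcal{P})$, identifies $w\in D(\mathcal{P}^*)$ with $\mathcal{P}^*w=0$, and invokes \eqref{P2} to force $w=0$. Your extra commentary on why annihilation of $R(\mathcal{P})$ places $w$ in $D(\mathcal{P}^*)$ and on the reflexivity identification is a welcome elaboration of points the paper leaves implicit, but it is the same argument.
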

\begin{proof}
The proof was given in \cite{ZKO-1981}. We provide a proof here for the sake of completeness. We first prove that the range of $\mathcal{P}$ is closed in $\mathcal{K}^*$. Suppose that $\mathcal{P}u_k\to w$ in $\mathcal{K}^*$,
then, in view of (\ref{P1}) we have
\begin{equation*}
  \mu\|u_k-u_\ell\|^2_{\mathcal{K}}\leq \|\mathcal{P}(u_k-u_\ell)\|_{\mathcal{K}^*}\|u_k-u_\ell\|_{\mathcal{K}}.
\end{equation*}
Hence $\{u_k\}$ is a Cauchy sequence in $\mathcal{K}$ and $\lim_{k\to\infty}u_k=v$ for some $v\in \mathcal{K}$. It follows from the fact that $\mathcal{P}$ is closed we obtain $v\in D(\mathcal{P})$ and $w=\mathcal{P}v$. Thus the range of $\mathcal{P}$ is closed in $\mathcal{K}^*$.

Next we need to show that the range of $\mathcal{P}$ is indeed $\mathcal{K}^*$. Suppose not, then it follows from the well-known Hahn-Banach extension theorem, we can find $g\in \mathcal{K}$ such that $g\neq 0$ and $(\mathcal{P}v, g)=0$ for any $v\in D(\mathcal{P})$, then we have $\mathcal{P}^*g=0$. In view of (\ref{P2}), this implies that $g=0$. Hence the range of $\mathcal{P}$ is $\mathcal{K}^*$. Hence $\mathcal{P}: D(\mathcal{P})\to \mathcal{K}^*$ is one-to-one. Finally, by (\ref{P1}), we obtain
\begin{equation*}
\|v\|_{\mathcal{K}}\leq \mu^{-1} \|\mathcal{P}v\|_{\mathcal{K}^*}.
\end{equation*}
Thus $\|\mathcal{P}^{-1}\|\leq \mu^{-1}$ and we complete the proof.
\end{proof}

Next, we consider the parabolic operator $\mathcal{P}=\partial_t+\mathcal{L}_1$ where $\mathcal{L}_1=-\text{div}(A(x,t)\nabla)$, for any $u\in{\rm \tilde{T}rig}(\R^{d+1})$, we have $\partial_t u=\sum_{\xi,\lambda}i\lambda u_{\xi,\lambda} e^{i(x\cdot\xi+t\lambda)}$, and define the domain $D(\partial_t)$ by
$$
D(\partial_t)=\bigg\{u\in\mathcal{K},\,\sum_{\xi,\lambda}|u_{\xi,\lambda}|^2 \lambda^2 \xi^{-2}<\infty\bigg\}.
$$
It is clear that the operator $\partial_t:D(\partial_t)\to \mathcal{K}^* $ is closed and skew-symmetric. For each $u\in \mathcal{K}$, we know that $\mathcal{L}_1u=-\text{div}(A(x,t)\nabla u)\in \mathcal{K}^*$.  Hence $\mathcal{L}_1: \mathcal{K}\to \mathcal{K}^*$ and we then have
$$
\|\mathcal{L}_1u\|_{\mathcal{K}^*}\leq \|A(x,t)\nabla u\|_{B^2}\leq C\|\nabla u\|_{B^2}\leq C_1\|u\|_{\mathcal{K}},
$$
where we have used the fact that $fg\in B^2{(\R^{d+1})}$ if $f$ is uniformly almost periodic and $ g\in B^2{(\R^{d+1})}$, moreover, $\|fg\|_{B^2}\leq \|f\|_{L^\infty(\R^{d+1})}\|g\|_{B^2}$.

Next, it follows from \eqref{2.3-6} that
\begin{equation}\label{coercive}
  (\mathcal{L}_1u,u)=\langle A\nabla u\cdot\nabla u\rangle\geq \mu \|u\|^2_{\mathcal{K}},
\end{equation}
where the proof of the last inequality uses \eqref{ellipticity} and is given in \cite{Zhikov-1979}. From \eqref{coercive} we know that the operator $\mathcal{L}_1$ is coercive.

Since $\partial_t:D(\partial_t)\to \mathcal{K}^*$ is skew-symmetric, we have
\begin{equation}\label{skew-symmetric}
(\partial_t u,u)=0\quad {\rm for~~ any}\quad u\in D(\partial_t).
\end{equation}
Let $\mathcal{P}=\partial_t+\mathcal{L}_1:D(\partial_t)\to \mathcal{K}^*$. By \eqref{coercive} and \eqref{skew-symmetric} we know that $\mathcal{P}$ satisfies the conditions of Lemma \ref{P}.
Hence, Lemma \ref{P} is applicable, then we know that the auxiliary equation
\begin{equation}\label{corrector}
  (\partial_s +\mathcal{L}_1) (\chi_j^\beta)  = - \mathcal{L}_1(P_j^\beta)~~~~\text{in}~~~\R^{d+1}
\end{equation}
has a unique solution $u\in D(\partial_t)$, where $A(x,t)$ is uniformly almost periodic in $\mathbb{R}^{d+1}$ and $P_j^\beta(y)=y_je^\beta$ and $e^\beta=(0,...1,...,0)$ with $1$ in the $\beta-$th position.
However, in generally speaking, the series $u=\sum_{\xi,\lambda}u_{\xi,\lambda} e^{i(x\cdot\xi+t\lambda)}$ does not converge in $B^2(\mathbb{R}^{d+1})$, only the formal derivative $\nabla u$ of this series converges in $B^2(\mathbb{R}^{d+1})$.  Hence, in general, (\ref{corrector}) is not solvable in the class of almost periodic functions.


With the help of the solvability of \eqref{corrector} in $D(\partial_t)$, we define the homogenized coefficients $\hat{A}=(\hat{a}_{ij}^{\alpha\beta})$ by
\begin{equation*}
\hat{a}_{ij}^{\alpha\beta}=\langle a_{ij}^{\alpha\beta} \rangle+\langle a_{ik}^{\alpha\gamma}\frac{\partial}{\partial y_k}\chi_j^{\gamma\beta}\rangle.
\end{equation*}
Then
\begin{equation*}
\mu|\xi|^2\leq \hat{a}_{ij}^{\alpha\beta}\xi_i^\alpha \xi_j^\beta\leq \widetilde{\mu}|\xi|^2,~~{\rm for}~~{\rm any}~~\xi\in\mathbb{R}^{dm},
\end{equation*}
where $\widetilde{\mu}$ depends only on $m, d$ and $\mu$. Let $A^*$ denote the adjoint of $A$, then it is known that $\hat{A^*}=(\hat{A})^*$. Define $\mathcal{L}_0=-{\rm div}(\hat{A}\nabla)$. Then $\partial_t+\mathcal{L}_0$ is the homogenized operator associated with $\partial_t+\mathcal{L}_\varepsilon$.

The following theorem gives a homogenization result for a sequence of operators $\partial_t+\mathcal{L}_{\varepsilon_k}=\partial_t-\text{div}[A_k(x/\varepsilon_k,t/\varepsilon_k^2)\nabla]$ where $\varepsilon_k \to 0$.

\begin{theorem}
Let $\Omega\subset\mathbb{R}^d$ be a bounded Lipschitz domain, $-\infty<T_0<T_1<+\infty$ and $F \in L^2(T_0, T_1; H^{-1}(\Omega))$. Let $u_k\in L^2(T_0,T_1; W^{1,2}(\Omega))$ be a weak solution of
\begin{equation*}
\partial_t  u_k  -{\rm div}[A_k(x/\varepsilon_k,t/\varepsilon^2_k)\nabla u_k]=  F \quad {\rm in } ~~\Omega \times (T_0, T_1),
\end{equation*}
where $\varepsilon_k\to 0$ and the matrix $A_k(y,s)$ is uniformly almost periodic and satisfies (\ref{ellipticity}). Suppose that $\hat{A_k}\to \hat{A}$, and
\begin{align*}
\nabla u_k  \to \nabla u_0 \quad {\rm weakly~~ in } ~~L^2(T_0, T_1; L^2(\Omega)),\\
u_k  \to u_0 \quad {\rm strongly~~ in } ~~L^2(T_0, T_1; L^2(\Omega)).
\end{align*}
Then $u_0\in L^2(T_0,T_1; W^{1,2}(\Omega))$ is a weak solution of
\begin{equation*}
\partial_t  u_0  -{\rm div}(\hat{A} \nabla u_0)=  F \quad {\rm in } ~~\Omega \times (T_0, T_1),
\end{equation*}
and the constant matrix $\hat{A}$ satisfies the ellipticity condition (\ref{ellipticity}).
\end{theorem}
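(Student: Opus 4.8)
The plan is to prove this by Tartar's method of oscillating test functions, in the parabolic almost-periodic framework of Zhikov--Kozlov--Ole\u{\i}nik \cite{ZKO}. Since $\nabla u_k\rightharpoonup\nabla u_0$ weakly in $L^2(\Omega\times(T_0,T_1))$, the fluxes $\sigma_k:=A_k(x/\varepsilon_k,t/\varepsilon_k^2)\nabla u_k$ are bounded in $L^2(\Omega\times(T_0,T_1);\mathbb{R}^{dm})$ by \eqref{ellipticity}, so along a subsequence $\sigma_k\rightharpoonup\sigma_0$ weakly. Passing to the limit in the (linear) weak formulation of the equation for $u_k$ already gives $\partial_t u_0-{\rm div}\,\sigma_0=F$ in $\Omega\times(T_0,T_1)$, so everything reduces to the flux identification $\sigma_0=\hat A\nabla u_0$; once this holds along every weakly convergent subsequence of $\{\sigma_k\}$, the limit is independent of the subsequence, hence $\sigma_k\rightharpoonup\hat A\nabla u_0$ for the whole sequence and $u_0$ is a weak solution of the homogenized equation. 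The ellipticity of $\hat A$ is the estimate already recorded in the excerpt just after the definition of $\hat a_{ij}^{\alpha\beta}$: writing $\chi_\xi=\xi_j^\beta\chi_j^\beta$, one has $\hat a_{ij}^{\alpha\beta}\xi_i^\alpha\xi_j^\beta=\langle A(\xi+\nabla\chi_\xi)\cdot(\xi+\nabla\chi_\xi)\rangle$, whence $\ge\mu|\xi|^2$ by \eqref{ellipticity} and $\langle\nabla\chi_\xi\rangle=0$, and $\le\widetilde{\mu}|\xi|^2$ from the boundedness of $A$ and the energy bound $\|\nabla\chi_\xi\|_{B^2}\le C|\xi|$.

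To identify the flux I would first carry out the argument for a fixed coefficient matrix $A$, fixing $1\le j\le d$ and $1\le\beta\le m$. Lemma \ref{P}, applied to the adjoint operator $-\partial_s+\mathcal{L}_1^{*}$ (coercivity of it and of its adjoint $\partial_s+\mathcal{L}_1$ following from \eqref{ellipticity} and the skew-symmetry of $\partial_s$, exactly as for $\partial_s+\mathcal{L}_1$), yields an adjoint corrector $\chi^{*}=\chi_{j}^{*\beta}\in D(\partial_t)$. Since $\chi^{*}$ need not lie in $B^2(\mathbb{R}^{d+1})$ and $\partial_t\chi^{*}$ need not lie there either, I would replace $\chi^{*}$ by an almost-solution (as in \cite{ZKO}): a trigonometric polynomial $\chi^{*}_{S,T}$ approximating the bounded $B^2$-solution $\chi^{*}_{S}$ of the regularized equation $(-\partial_s+\mathcal{L}_1^{*})\chi^{*}_{S}+S^{-2}\chi^{*}_{S}=-\mathcal{L}_1^{*}(P_j^\beta)$, chosen along $S,T\to\infty$ so that $\|\nabla\chi^{*}-\nabla\chi^{*}_{S,T}\|_{B^2}\to 0$ and $\chi^{*}_{S,T}$ satisfies the adjoint corrector equation up to a residual that becomes negligible. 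Then $\Psi_k(x,t):=P_j^\beta(x)+\varepsilon_k\chi^{*}_{S,T}(x/\varepsilon_k,t/\varepsilon_k^2)$ is, for fixed $S,T$, an admissible test function with $\varepsilon_k\chi^{*}_{S,T}(\cdot/\varepsilon_k,\cdot/\varepsilon_k^2)\to 0$ in $L^\infty$, and $\Psi_k$ solves $(-\partial_t-{\rm div}[A^{*}(\cdot/\varepsilon_k)\nabla])\Psi_k=r_k$ with $r_k$ the rescaled residual. Testing the equation for $u_k$ against $\varphi\Psi_k$ and the equation for $\Psi_k$ against $\varphi u_k$, $\varphi\in C_0^\infty(\Omega\times(T_0,T_1))$, and exploiting the pointwise identity $\sigma_k\cdot\nabla\Psi_k=A\nabla u_k\cdot\nabla\Psi_k=\nabla u_k\cdot A^{*}\nabla\Psi_k$, the two oscillating quadratic terms cancel upon subtraction; after one integration by parts in $t$, every remaining term converges by a (strong)$\times$(weak) pairing — $u_k\to u_0$ and $\Psi_k\to P_j^\beta$ strongly in $L^2_{\rm loc}$, $\sigma_k\rightharpoonup\sigma_0$, and $A^{*}\nabla\Psi_k=A^{*}(\cdot/\varepsilon_k)\big(e_j e^\beta+\nabla_y\chi^{*}_{S,T}(\cdot/\varepsilon_k)\big)\rightharpoonup\big\langle A^{*}\big(e_j e^\beta+\nabla_y\chi^{*}_{S,T}\big)\big\rangle$ by the mean-value property of almost periodic functions — while the residual term is bounded by $\|r_k\|$ times the uniform bound on $\|u_k\|_{L^2(T_0,T_1;H^1(\Omega))}$. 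Letting $k\to\infty$ and then $S,T\to\infty$ (so that $\langle A^{*}(e_j e^\beta+\nabla_y\chi^{*}_{S,T})\rangle\to\langle A^{*}(e_j e^\beta+\nabla_y\chi^{*})\rangle=\hat A^{*}e_j e^\beta$ and the residual contribution vanishes) and comparing with the weak formulation of $\partial_t u_0-{\rm div}\,\sigma_0=F$ tested against $\varphi P_j^\beta$ yields $\sigma_0\cdot e_j e^\beta=\nabla u_0\cdot\hat A^{*}e_j e^\beta$ for all $j,\beta$, i.e.\ $(\sigma_0)^\beta_j=\hat a_{ij}^{\alpha\beta}\partial_{x_i}u_0^\alpha$, which is $\sigma_0=\hat A\nabla u_0$.

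When the $A_k$ genuinely vary, the same scheme applies with $\chi^{*}$, $\chi^{*}_{S}$, $\chi^{*}_{S,T}$ taken relative to $A_k$: all corrector and energy bounds are uniform in $k$ because the $A_k$ obey \eqref{ellipticity} with a common $\mu$, and the coefficient variation enters the limit only through $\langle A_k^{*}(e_j e^\beta+\nabla_y\chi^{*}_{k,S,T})\rangle\to\hat A^{*}e_j e^\beta$, where the hypothesis $\hat A_k\to\hat A$ is used; this bookkeeping is carried out in \cite{ZKO}. The step I expect to be the real obstacle is the construction and handling of the almost-solutions $\chi^{*}_{S,T}$ — simultaneously controlling, in the diagonal limit, the residual $r_k$ in the $\Psi_k$-equation, the error $\|\nabla\chi^{*}-\nabla\chi^{*}_{S,T}\|_{B^2}$, and the corrector term $\varepsilon_k\chi^{*}_{S,T}(\cdot/\varepsilon_k,\cdot/\varepsilon_k^2)$. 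This is exactly the difficulty absent from the periodic case, where $\varepsilon_k\chi(\cdot/\varepsilon_k,\cdot/\varepsilon_k^2)$ is itself a bona fide bounded oscillating test function converging to $0$.
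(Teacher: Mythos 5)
Your proposal is correct and reproduces, in outline, precisely the Zhikov--Kozlov--Ole\u{\i}nik oscillating-test-function argument with approximate adjoint correctors to which the paper delegates the entire proof (the paper's own ``proof'' is nothing more than the citation to \cite{ZKO} plus the remark that $m>1$ and $k$-dependent $A_k$ change nothing). The flux-identification scheme via $\Psi_k=P_j^\beta+\varepsilon_k\chi^{*}_{S,T}(\cdot/\varepsilon_k,\cdot/\varepsilon_k^2)$, the symmetric representation $\hat a_{ij}^{\alpha\beta}\xi_i^\alpha\xi_j^\beta=\langle A(\xi+\nabla\chi_\xi)\cdot(\xi+\nabla\chi_\xi)\rangle$ obtained from testing the corrector equation against $\chi_\xi$ and killing $(\partial_t\chi_\xi,\chi_\xi)$ by skew-symmetry, and the diagonal limit over $k$ and $S,T$ are exactly the steps in the cited source, so this is the same approach.
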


\begin{proof}
  The proof may be found in \cite{ZKO} for scalar case $(m=1)$ and $A_k$ is independent of $k$.  In the case of $m>1$ and $A_k$ depends on $k$, the proof is exactly the same.
\end{proof}

We end this section with the following Aubin-Lions lemma.
\begin{lemma}
  Let $X_0\subset X\subset X_1$ be three Banach spaces. Suppose that $X_0, X_1$ are reflexive and the injection $X_0\subset X$ is compact. Let $1<\alpha_0,\alpha_1<\infty$. Define
  $$Y=\{u:u\in L^{\alpha_0}(T_0,T_1;X_0)~~\text{and}~~\partial_t u\in L^{\alpha_1}(T_0,T_1;X_1)\}$$
  with norm
  $$\|u\|_Y=\|u\|_{L^{\alpha_0}(T_0,T_1;X_0)}+\|\partial_t u\|_{L^{\alpha_1}(T_0,T_1;X_1)}.$$
  Then, $Y$ is a Banach space, and the injection $Y\subset L^{\alpha_0}(T_0,T_1; X)$ is compact.
\end{lemma}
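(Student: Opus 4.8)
The plan is to treat the two assertions separately, the completeness of $Y$ being routine and the compactness being the real content. For completeness: if $\{u_n\}$ is Cauchy in $Y$ then $u_n\to u$ in $L^{\alpha_0}(T_0,T_1;X_0)$ and $\partial_t u_n\to w$ in $L^{\alpha_1}(T_0,T_1;X_1)$ by completeness of the Bochner spaces, and passing to the limit in $\int u_n\phi'=-\int(\partial_t u_n)\phi$ for $\phi\in C_c^\infty(T_0,T_1)$ identifies $w=\partial_t u$, so $u\in Y$ and $u_n\to u$ in $Y$; that $\|\cdot\|_Y$ is a norm, and that the injection $Y\hookrightarrow L^{\alpha_0}(T_0,T_1;X)$ is bounded (since $X_0\hookrightarrow X$), are immediate. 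So the task reduces to showing that every sequence $\{u_n\}$ bounded in $Y$ has a subsequence converging strongly in $L^{\alpha_0}(T_0,T_1;X)$.

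The first ingredient I would use is an Ehrling-type inequality: because $X_0\hookrightarrow X$ is compact and $X\hookrightarrow X_1$ continuous, for every $\eta>0$ there is $C_\eta>0$ with $\|v\|_X\le\eta\|v\|_{X_0}+C_\eta\|v\|_{X_1}$ for all $v\in X_0$, proved by the standard contradiction argument (normalize $\|v_n\|_{X_0}=1$, deduce $\|v_n\|_{X_1}\to0$, extract an $X$-convergent subsequence by compactness, and contradict $\|v_n\|_X\ge\eta$). Raising this to the power $\alpha_0$ and integrating in time gives, for $u\in L^{\alpha_0}(T_0,T_1;X_0)$,
\begin{equation*}
\|u\|_{L^{\alpha_0}(T_0,T_1;X)}\le C\eta\,\|u\|_{L^{\alpha_0}(T_0,T_1;X_0)}+CC_\eta\,\|u\|_{L^{\alpha_0}(T_0,T_1;X_1)}.
\end{equation*}
Since $\{u_n\}$ is bounded in $L^{\alpha_0}(T_0,T_1;X_0)$, it will therefore be enough to exhibit a subsequence with $u_n\to u$ in $L^{\alpha_0}(T_0,T_1;X_1)$ and then let $\eta\to0$.

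To get such a subsequence I would use the reflexivity hypotheses: $L^{\alpha_0}(T_0,T_1;X_0)$ and $L^{\alpha_1}(T_0,T_1;X_1)$ are reflexive (as $X_0,X_1$ are reflexive and $1<\alpha_0,\alpha_1<\infty$), so after passing to a subsequence $u_n\rightharpoonup u$ in $L^{\alpha_0}(T_0,T_1;X_0)$ and $\partial_t u_n\rightharpoonup\partial_t u$ in $L^{\alpha_1}(T_0,T_1;X_1)$ (the weak limit of $\partial_t u_n$ being $\partial_t u$ by the same distributional argument as above); replacing $u_n$ by $u_n-u$ we may assume $u=0$. Each $u_n$ has an absolutely continuous representative $[T_0,T_1]\to X_1$ since $\partial_t u_n\in L^1(T_0,T_1;X_1)$, and a mean-value argument (choose a time where $\|u_n(\cdot)\|_{X_1}$ is below its average, then integrate the derivative from there) together with $\|\partial_t u_n\|_{L^1(T_0,T_1;X_1)}\le C$ gives $\sup_n\sup_{t}\|u_n(t)\|_{X_1}\le C$. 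For fixed $t$ and small $h>0$ with $[t,t+h]\subset[T_0,T_1]$ I would then write
\begin{equation*}
u_n(t)=\frac{1}{h}\int_t^{t+h}u_n(\tau)\,d\tau-\frac{1}{h}\int_t^{t+h}\!\int_t^\tau\partial_s u_n(s)\,ds\,d\tau .
\end{equation*}
The second term has $X_1$-norm at most $\int_t^{t+h}\|\partial_s u_n\|_{X_1}\,ds\le h^{1-1/\alpha_1}\|\partial_t u_n\|_{L^{\alpha_1}(T_0,T_1;X_1)}\le Ch^{1-1/\alpha_1}$, uniformly in $n$. The first term is bounded in $X_0$ uniformly in $n$ (for $h$ fixed) and, testing against functionals in $X_0^*$ via $u_n\rightharpoonup0$ in $L^{\alpha_0}(T_0,T_1;X_0)$, converges weakly to $0$ in $X_0$; since $X_0\hookrightarrow X$ is compact, it hence converges strongly to $0$ in $X$, a fortiori in $X_1$. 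Thus $\limsup_n\|u_n(t)\|_{X_1}\le Ch^{1-1/\alpha_1}$ for every admissible $h$, so $\|u_n(t)\|_{X_1}\to0$ for a.e.\ $t$; with the uniform bound $\|u_n(t)\|_{X_1}\le C$ and dominated convergence on the finite interval this gives $u_n\to0$ in $L^{\alpha_0}(T_0,T_1;X_1)$, and then the Ehrling inequality with $\eta\to0$ finishes the proof.

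The step I expect to be the crux is the decomposition above: upgrading the weak convergence of the time averages $h^{-1}\int_t^{t+h}u_n$ to strong convergence in $X$ via the compact embedding, while absorbing the remainder with the uniform bound on $\partial_t u_n$; everything else (completeness, the Ehrling lemma, reflexivity of the Bochner spaces) is standard. As a more self-contained alternative one could instead verify the vector-valued Fréchet--Kolmogorov (Riesz--Simon) compactness criterion directly: $\{\int_{t_1}^{t_2}u_n\,dt\}$ is relatively compact in $X$ by its boundedness in $X_0$ plus the compact embedding, and $\|u_n(\cdot+h)-u_n\|_{L^{\alpha_0}(T_0,T_1-h;X)}\to0$ uniformly in $n$ follows from the same Ehrling inequality combined with $\|u_n(t+h)-u_n(t)\|_{X_1}\le h^{1-1/\alpha_1}\|\partial_t u_n\|_{L^{\alpha_1}(T_0,T_1;X_1)}$.
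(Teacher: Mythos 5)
The paper does not give a proof of this lemma; it is stated as the classical Aubin--Lions compactness theorem (in the form found in Lions' or Temam's books, or Simon's 1987 survey) and used as a black box in the compactness argument of Section~3. Your proof is correct and is essentially the textbook argument: the Ehrling interpolation inequality $\|v\|_X\le\eta\|v\|_{X_0}+C_\eta\|v\|_{X_1}$ (proved by contradiction from the compact embedding $X_0\hookrightarrow X$) reduces the problem to showing strong convergence in $L^{\alpha_0}(T_0,T_1;X_1)$ of a weakly null subsequence; you then split $u_n(t)$ into the time average $h^{-1}\int_t^{t+h}u_n$, whose weak convergence in $X_0$ is upgraded to strong convergence in $X\hookrightarrow X_1$ by the compact embedding, plus a remainder of size $O(h^{1-1/\alpha_1})$ uniformly in $n$ controlled by $\|\partial_t u_n\|_{L^{\alpha_1}(X_1)}$, giving pointwise a.e.\ convergence in $X_1$; the uniform bound $\sup_{n,t}\|u_n(t)\|_{X_1}\le C$ (from the absolutely continuous representative and a mean-value point) plus dominated convergence on the finite interval closes the loop. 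Every step is carried out correctly, including the identification of the weak limit of $\partial_t u_n$ with $\partial_t u$ via testing against $C_c^\infty$ scalar functions. The alternative you mention (a vector-valued Fréchet--Kolmogorov/Riesz criterion using equi-integrability of the time translates $u_n(\cdot+h)-u_n$) is Simon's route and would also work; both are standard and neither conflicts with anything in the paper, which simply invokes the result.
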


\section{\bf Interior H\"older estimates }

In this section, we establish the interior H\"older estimates for local solutions of
$(\partial_t+\mathcal{L}_\e) u_\varepsilon=0$ under the assumption that $A$ is uniformly almost periodic and satisfies \eqref{ellipticity}.

\begin{theorem}\label{Holder}
Suppose that $A$ is uniformly almost periodic and satisfies (\ref{ellipticity}). Let $u_\varepsilon\in L^2(t_0-4r^2,t_0;H^1(B(x_0,2r)))$ be a weak solution of
\begin{equation*}
(\partial_t+\mathcal{L}_\varepsilon)u_\varepsilon=0~~{\rm in}~~2Q=Q_{2r}(x_0,t_0).
\end{equation*}
Then for any $0<\alpha<1$,
\begin{equation*}
\|u_\varepsilon\|_{\Lambda^{\alpha,\alpha/2}(Q)}\leq C r^{-\alpha}\left(\dashint_{2Q} | u_\varepsilon|^2\right)^{1/2},
\end{equation*}
where $C$ depends at most on $d$, $m$, $\alpha$ and $A$.
\end{theorem}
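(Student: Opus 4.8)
The plan is to follow the three-step compactness scheme (improvement, iteration, blow-up) that is standard in quantitative homogenization, adapted to the parabolic almost-periodic setting. The goal is to show a one-step Campanato-type improvement: there exist constants $\theta\in(0,1/4)$ and $\varepsilon_0>0$ such that if $(\partial_t+\mathcal L_\varepsilon)u_\varepsilon=0$ in $Q_1$ with $\dashint_{Q_1}|u_\varepsilon|^2\le 1$ and $0<\varepsilon<\varepsilon_0$, then
\[
\left(\dashint_{Q_\theta}|u_\varepsilon - \dashint_{Q_\theta}u_\varepsilon|^2\right)^{1/2}\le \theta^{\alpha}.
\]
The mechanism behind this is a compactness/contradiction argument: if it failed, one would have sequences $\varepsilon_k\to 0$ and solutions $u_k$ of $(\partial_t+\mathcal L_{\varepsilon_k})u_k=0$ with $\dashint_{Q_1}|u_k|^2\le 1$ violating the bound for every $\theta$. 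By the Caccioppoli inequality (interior energy estimate, which holds uniformly in $\varepsilon$ because it only uses ellipticity \eqref{ellipticity}) the $u_k$ are bounded in $L^2(-1/4,0;H^1(B_{1/2}))$ and, via the equation, $\partial_t u_k$ is bounded in $L^2(-1/4,0;H^{-1}(B_{1/2}))$; the Aubin–Lions lemma then gives a subsequence converging strongly in $L^2(Q_{1/2})$ and weakly in $L^2(-1/4,0;H^1)$ to some $u_0$. The homogenization theorem of Section~2 identifies $u_0$ as a solution of the constant-coefficient equation $(\partial_t+\mathcal L_0)u_0=0$ in $Q_{1/2}$, for which the classical interior Schauder/$C^\infty$ estimate gives
\[
\left(\dashint_{Q_\theta}|u_0-\dashint_{Q_\theta}u_0|^2\right)^{1/2}\le C_0\,\theta\left(\dashint_{Q_{1/2}}|u_0|^2\right)^{1/2}\le C_0\theta,
\]
and choosing $\theta$ so that $C_0\theta\le \tfrac12\theta^\alpha$ contradicts the strong $L^2$ convergence. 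One technical point: the homogenized operator $\mathcal L_0$ may vary with $k$ because $\hat A_k$ depends on the sequence, so I would argue as in \cite{Shen-2014}, passing also to a subsequence along which $\hat A_k\to \hat A$; since the ellipticity constants of $\hat A$ are controlled by $\mu,d,m$, the Schauder constant $C_0$ is uniform and the argument goes through.

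Second, I would iterate the improvement at every dyadic scale. A rescaling argument (using that $(\partial_t+\mathcal L_\varepsilon)$ applied to $u_\varepsilon(\theta x,\theta^2 t)$ is $(\partial_t+\mathcal L_{\varepsilon/\theta})$) lets one apply the one-step estimate repeatedly as long as the relevant small parameter $\varepsilon/\theta^k$ stays below $\varepsilon_0$, i.e.\ for $\theta^{k}\ge \varepsilon/\varepsilon_0$. This yields, for all such $k$,
\[
\left(\dashint_{Q_{\theta^k}}|u_\varepsilon-\dashint_{Q_{\theta^k}}u_\varepsilon|^2\right)^{1/2}\le \theta^{k\alpha}\left(\dashint_{Q_1}|u_\varepsilon|^2\right)^{1/2},
\]
which is a Campanato/Morrey condition on all scales $r\in[\varepsilon,1]$. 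For scales $r<\varepsilon$ one absorbs the oscillation into the $\varepsilon$-scale by the standard rescaling $v(x,t)=u_\varepsilon(\varepsilon x,\varepsilon^2 t)$, which solves $(\partial_t+\mathcal L_1)v=0$, a single fixed equation with uniformly almost periodic coefficients; the De Giorgi–Nash–Moser theory (applicable componentwise since the estimate is of Campanato type — here one must be slightly careful with systems, but the argument only needs the interior regularity of the blown-up limit, and in fact the Campanato characterization of $\Lambda^{\alpha,\alpha/2}$ is what carries the bound below $\varepsilon$) closes the gap. Combining the two ranges and invoking the parabolic Campanato embedding — a function whose $L^2$ mean oscillation over $Q_r(y,s)$ decays like $r^\alpha$ uniformly in $(y,s)$ is in $\Lambda^{\alpha,\alpha/2}$ with norm controlled by that decay constant — gives the claimed estimate on $Q=Q_r(x_0,t_0)$ after undoing the normalization (which produces the factor $r^{-\alpha}$).

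The main obstacle I expect is the compactness/homogenization step, specifically making sure the limit of the blow-up sequence really is governed by a single constant-coefficient parabolic operator with ellipticity constants depending only on $\mu,d,m$, so that the Schauder constant $C_0$ in the improvement is genuinely uniform; this is exactly where the almost-periodic structure enters (through the solvability of the corrector equation \eqref{corrector} in $D(\partial_t)$ and the resulting homogenization theorem of Section~2), and it is the place where the parabolic case diverges from a naive transcription of the elliptic argument, since one must track that $\partial_t u_k$ is precompact in the right negative-order space via Aubin–Lions. The remaining pieces — Caccioppoli's inequality, the dyadic iteration with the $\varepsilon/\theta^k$ bookkeeping, and the sub-$\varepsilon$ scales handled by a single fixed operator — are routine once the improvement lemma is in hand. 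I would also note that the two additional right-hand-side terms in Theorem~\ref{Holder-1} (the $f$ and $g$ contributions) are incorporated afterward by the usual device of subtracting off a particular solution / adding the corresponding Morrey-norm source terms into the one-step improvement, so proving the homogeneous Theorem~\ref{Holder} is the essential content.
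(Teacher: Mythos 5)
Your overall strategy — a three-step compactness scheme (one-step improvement via a contradiction/Aubin--Lions argument, dyadic iteration by rescaling $\varepsilon\mapsto\varepsilon/\theta^k$, and a blow-up to cover sub-$\varepsilon$ scales) — is exactly the structure of the paper's proof, and the improvement and iteration steps as you describe them match Theorems~\ref{improvent} and~\ref{iteration}, including the point that the homogenized matrices $\hat A_k$ must be passed to the limit along a subsequence.

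There is, however, a genuine gap in how you close the argument at scales $r<\varepsilon$ (and in the regime $\varepsilon\gtrsim\varepsilon_0$, which you do not explicitly address). You propose to handle the blown-up function $v(x,t)=u_\varepsilon(\varepsilon x,\varepsilon^2 t)$, a solution of the fixed-$\varepsilon$ system $(\partial_t+\mathcal L_1)v=0$, by invoking ``the De Giorgi--Nash--Moser theory (applicable componentwise \dots).'' This does not work: Theorem~\ref{Holder} is stated for genuine \emph{systems} ($m\ge 1$), and De Giorgi--Nash--Moser fails for parabolic systems with merely bounded measurable coefficients; it cannot be applied componentwise because the components are coupled through $A$. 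You yourself flag that ``one must be slightly careful with systems,'' but the parenthetical remark that follows does not supply the missing ingredient — the Campanato characterization of $\Lambda^{\alpha,\alpha/2}$ is a bookkeeping device, not a regularity theorem for the fixed-scale equation. What actually closes the gap (and is what the paper uses at \eqref{Holder-VMO} and in the case $\varepsilon\ge\theta\varepsilon_0$) is the regularity theory for second-order parabolic \emph{systems with $VMO_x$ coefficients} (Krylov \cite{Krylov-2007}); this is applicable precisely because $A$ is uniformly almost periodic, hence bounded and uniformly continuous, hence uniformly in $VMO_x$. Without this input, your argument establishes the Campanato decay only on the scale range $r\in[\varepsilon,1]$ and for small $\varepsilon$, and the full Hölder estimate does not follow. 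Aside from this, your outline is sound and, once the $VMO_x$ theory is substituted for De Giorgi--Nash, coincides with the paper's proof.
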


\begin{theorem}\label{improvent}
   Suppose that $A$ is uniformly almost periodic and satisfies \eqref{ellipticity}. Let $0<\alpha<1$. Then there exist constants $\theta\in(0,1/4)$ and $\varepsilon_0>0$, depending only on $d$, $\mu$, $m$ and $\alpha$, such that
\begin{equation}\label{42}
\dashint_{Q_\theta}|u_\e-\dashint_{Q_\theta}u_\e|^2\leq \theta^{2\alpha}\dashint_{Q_1}|u_\e|^2,
\end{equation}
whenever $0<\e<\e_0$ and $u_\varepsilon$ is a weak solution of $\left(\partial_t +\mathcal{L}_\varepsilon\right) u_\varepsilon=0$ in $Q_1$.
\end{theorem}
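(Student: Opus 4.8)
The plan is to prove Theorem~\ref{improvent} by a compactness (homogenization) argument, following the classical scheme of Avellaneda--Lin adapted to the parabolic almost-periodic setting. The statement we want is that, for any prescribed $0<\alpha<1$, there is a decay estimate \eqref{42} for solutions of $(\partial_t+\mathcal{L}_\varepsilon)u_\varepsilon=0$ on $Q_1$, valid for all sufficiently small $\varepsilon$; this is the ``improvement'' step that will later be iterated in Theorem~\ref{Holder}. The core observation is that the constant-coefficient homogenized equation $(\partial_t+\mathcal{L}_0)u_0=0$ enjoys, by interior $C^2$ (indeed $C^\infty$) estimates, a Campanato-type decay: for any exponent $\sigma\in(\alpha,1)$ there is $\theta\in(0,1/4)$ so that $\dashint_{Q_\theta}|u_0-\dashint_{Q_\theta}u_0|^2\le \tfrac12\,\theta^{2\alpha}\dashint_{Q_1}|u_0|^2$ whenever $\dashint_{Q_1}|u_0|^2\le 1$, where the factor $\tfrac12$ and the gap $\sigma>\alpha$ give us room to absorb an error. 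One then argues by contradiction that solutions $u_\varepsilon$ inherit the same decay once $\varepsilon$ is small.

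First I would fix $\alpha$ and choose $\theta$ from the homogenized decay estimate just described, so that in fact $\dashint_{Q_\theta}|v-\dashint_{Q_\theta}v|^2\le \tfrac12\theta^{2\alpha}$ for every solution $v$ of $(\partial_t+\mathcal{L}_0)v=0$ in $Q_1$ with $\dashint_{Q_1}|v|^2\le 1$; note $\theta$ depends only on $d,m,\mu,\alpha$ since $\mathcal{L}_0$ has constant coefficients controlled by $\mu$ (and $\widetilde\mu=\widetilde\mu(d,m,\mu)$). Next, suppose for contradiction that no $\varepsilon_0$ works: there exist $\varepsilon_k\to 0$ and weak solutions $u_k$ of $(\partial_t+\mathcal{L}_{\varepsilon_k})u_k=0$ in $Q_1$ with $\dashint_{Q_1}|u_k|^2\le 1$ but $\dashint_{Q_\theta}|u_k-\dashint_{Q_\theta}u_k|^2>\theta^{2\alpha}$. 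By the interior Caccioppoli inequality and the energy estimate for parabolic systems, $\{u_k\}$ is bounded in $L^2(-1/4,0;H^1(B_{1/2}))$ and $\{\partial_t u_k\}$ is bounded in $L^2(-1/4,0;H^{-1}(B_{1/2}))$; hence by the Aubin--Lions lemma (stated at the end of Section~\ref{section 2}) a subsequence converges strongly in $L^2(Q_{1/2})$ and weakly in $L^2(-1/4,0;H^1)$ to some $u_0$. Since $A_k=A(x/\varepsilon_k,t/\varepsilon_k^2)$ is a fixed uniformly almost-periodic matrix rescaled with $\varepsilon_k\to 0$ and $\widehat{A_k}=\widehat{A}$ for all $k$, the qualitative homogenization theorem quoted just above (the Theorem preceding the Aubin--Lions lemma) applies and shows $u_0$ is a weak solution of $(\partial_t+\mathcal{L}_0)u_0=0$ in $Q_{1/2}$, with $\dashint_{Q_1}|u_0|^2\le 1$ by weak lower semicontinuity (after restricting to a slightly smaller parabolic cylinder, or by running the compactness on $Q_1$ directly using local energy bounds on a chain of subcylinders).

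Then I would pass to the limit in the two averaged quantities: strong $L^2(Q_{1/2})$ convergence gives $\dashint_{Q_\theta}u_k\to\dashint_{Q_\theta}u_0$ and $\dashint_{Q_\theta}|u_k-\dashint_{Q_\theta}u_k|^2\to\dashint_{Q_\theta}|u_0-\dashint_{Q_\theta}u_0|^2$. Combining with the contradiction hypothesis yields $\dashint_{Q_\theta}|u_0-\dashint_{Q_\theta}u_0|^2\ge\theta^{2\alpha}$, whereas the homogenized decay estimate forces $\dashint_{Q_\theta}|u_0-\dashint_{Q_\theta}u_0|^2\le\tfrac12\theta^{2\alpha}$ (using $\dashint_{Q_1}|u_0|^2\le 1$), a contradiction. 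This proves the existence of $\varepsilon_0>0$ as claimed. I would also record that \eqref{42} should be stated with the precise normalization so the iteration in Theorem~\ref{Holder} goes through; but since $\varepsilon_0$ and $\theta$ depend only on $d,m,\mu,\alpha$ the rescaling argument in the next theorem (applying this at every dyadic scale $\ge\varepsilon/\varepsilon_0$) will be routine.

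The main obstacle is the compactness / qualitative homogenization input in the almost-periodic setting: unlike the periodic case one cannot write down an exact corrector converging in $B^2$, so one must be careful that the limit equation really is the constant-coefficient one. This is exactly where the quoted homogenization theorem (relying on the constructions of Zhikov--Kozlov--Ole{\u\i}nik and the function spaces $\mathcal{K},\mathcal{K}^*,B^2$ of Section~\ref{section 2}) is essential; everything else --- the Caccioppoli and energy estimates, Aubin--Lions compactness, and the elementary $C^2$ decay for $\mathcal{L}_0$ --- is standard and contributes only constants depending on $d,m,\mu,\alpha$. A minor technical point to handle cleanly is that $\dashint_{Q_1}|u_k|^2\le1$ controls $u_k$ only on interior subcylinders via Caccioppoli, so the strong convergence and the limit equation are first obtained on $Q_{1/2}$ (or any $Q_\rho$, $\rho<1$), which is more than enough since $\theta<1/4$.
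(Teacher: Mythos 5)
Your proposal is correct and follows essentially the same three-step compactness argument as the paper: a contradiction sequence, Caccioppoli plus Aubin--Lions to extract a strong $L^2$ limit, identification of the limit as a solution of the homogenized constant-coefficient equation via Theorem~2.3, and then the classical Campanato decay for $\partial_t+\mathcal{L}_0$ to derive a contradiction. The only place where you deviate from the paper in a way worth noting is that you fix the single matrix $A$ along the contradiction sequence (so $A_k \equiv A$, $\widehat{A_k}\equiv\widehat A$), whereas the paper takes an arbitrary sequence $\{A_k\}$ of uniformly almost-periodic matrices satisfying \eqref{ellipticity}; this is exactly what lets the paper's $\theta,\varepsilon_0$ depend only on $d,\mu,m,\alpha$ and not on the particular $A$, since the non-quantitative "small enough $\varepsilon$" threshold extracted from a contradiction with $A$ fixed could a priori depend on $A$. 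With your fixed-$A$ version one only obtains $\varepsilon_0=\varepsilon_0(d,\mu,m,\alpha,A)$, which is still sufficient for the applications in Theorems~\ref{iteration} and \ref{Holder} (whose constants are allowed to depend on $A$), but does not quite establish the uniformity as stated; if you want the claimed uniformity you should run the contradiction with varying $A_k$ and pass to a further subsequence so that $\widehat{A_k}$ converges.
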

\begin{proof}
  Without loss of generality, we may assume that $\dashint_{Q_1}|u_\e|^2\leq 1$. Let $A^0$ be a constant matrix satisfying the ellipticity condition (\ref{ellipticity}) and $u$ be a weak solution of $\partial_t u-\text{div}(A^0\nabla u)=0$ in $Q_{1/2}$, it follows that for any $\theta\in(0,1/4)$,
 \begin{equation}\label{43}
\dashint_{Q_\theta}|u-\dashint_{Q_\theta}u|^2\leq C_0\theta^2\dashint_{Q_{1/2}}|u|^2.
\end{equation}
  We argue by contradiction, assume (\ref{42}) is not true. Then there exist sequences $\{\e_k\}, \{A_k\}$ satisfying (\ref{ellipticity}) and \eqref{AP}, and $\{u_k\}\subset L^2\left(-1,0;W^{1,2}(B(0,1))\right)$ such that $\e_k\to 0$,
\begin{equation*}
\partial_t u_k-\text{div}[A_k(x/\e_k,t/\e^2_k)\nabla u_k]=0~~\text{in}~~Q_1,
\end{equation*}
  and
\begin{equation}\label{45}
\dashint_{Q_1}|u_k|^2\leq 1~~{\rm and}~~\dashint_{Q_\theta}|u_k-\dashint_{Q_\theta}u_k|^2> \theta^{2\alpha}.
\end{equation}
In view of Caccioppoli's inequality we know that $\{\nabla u_k\}$  is bounded in $L^2(Q_{1/2})$ since $\{ u_k\}$
 is bounded in $L^2(Q_1)$. Hence by taking a subsequence and passing to the limit we have
\begin{equation*}
\left\{\aligned
u_k\to u  \quad & {\rm weakly~~in } \quad L^2(Q_1),\\
\nabla u_k \to \nabla u \quad &{\rm weakly~~in } \quad  L^2(Q_{1/2}).
\endaligned
\right.
\end{equation*}

Note that $\{\partial_t u_k\}$ is bounded in $L^2(-1/4,0;W^{-1,2}(B(0,1/2)))$. In view of the Aubin-Lions lemma, we obtain that $u_k\to u$ strongly in $L^2(Q_{1/2})$. It then follows from (\ref{45}) that
\begin{equation*}
\dashint_{Q_1}|u|^2\leq 1~~{\rm and}~~\dashint_{Q_\theta}|u-\dashint_{Q_\theta}u|^2\geq \theta^{2\alpha}.
\end{equation*}

 Next, fix $0<\alpha<1$, choose $\theta\in(0,1/4)$ small enough such that $2^{d+2}C_0\theta^2<\theta^{2\alpha}$. Then by (\ref{43}) we obtain
\begin{equation*}
\theta^{2\alpha}\leq C_0\theta^2\dashint_{Q_{1/2}}|u|^2\leq 2^{d+2}C_0\theta^2\dashint_{Q_1}|u|^2\leq 2^{d+2}C_0\theta^2,
\end{equation*}
which is in contradiction with the choice of $\theta$.
\end{proof}

\begin{theorem}\label{iteration}
   Suppose that $A$ is uniformly almost periodic and satisfies \eqref{ellipticity}.  Fix $0<\alpha<1$. Let $\theta\in(0,1/4)$ and $\varepsilon_0$ be given by Theorem \ref{improvent}. Then
\begin{equation}\label{49}
\dashint_{Q_{\theta^k}}|u_\e-\dashint_{Q_{\theta^k}}u_\e|^2\leq \theta^{2k\alpha}\dashint_{Q_1}|u_\e|^2,
\end{equation}
whenever $0<\e<\e_0\theta^{k-1}$ for some $k\geq 1$ and $u_\varepsilon$ is a weak solution of $\left(\partial_t +\mathcal{L}_\varepsilon\right) u_\varepsilon=0$ in $Q_1$.
\end{theorem}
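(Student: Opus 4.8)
The plan is to prove \eqref{49} by induction on $k\ge 1$, using Theorem \ref{improvent} both as the base case and, after a parabolic rescaling, as the driving step. Throughout, $\theta\in(0,1/4)$ and $\varepsilon_0>0$ are the constants produced by Theorem \ref{improvent}; these are now frozen, so the iteration will incur no loss of constants. The base case $k=1$ is exactly Theorem \ref{improvent}, since the hypothesis $0<\varepsilon<\varepsilon_0\theta^{0}=\varepsilon_0$ coincides with the one required there.

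The key structural point is that the parabolic dilation $(x,t)\mapsto(\theta x,\theta^2 t)$ preserves the class of operators $\partial_t+\mathcal{L}_\varepsilon$: if $u_\varepsilon$ is a weak solution of $(\partial_t+\mathcal{L}_\varepsilon)u_\varepsilon=0$ in $Q_1$ and we set $v(x,t)=u_\varepsilon(\theta x,\theta^2 t)$, then $v$ is a weak solution in $Q_1$ of $\partial_t v-{\rm div}\big(A(x/(\varepsilon/\theta),\,t/(\varepsilon/\theta)^2)\nabla v\big)=0$, that is, $(\partial_t+\mathcal{L}_{\varepsilon/\theta})v=0$ with the same matrix $A$, which still satisfies \eqref{ellipticity} and \eqref{AP}. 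Hence both Theorem \ref{improvent} and the inductive hypothesis apply to $v$.

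For the inductive step, I assume \eqref{49} holds for some $k\ge1$ for every admissible solution and every admissible $\varepsilon$, and take $u_\varepsilon$ solving $(\partial_t+\mathcal{L}_\varepsilon)u_\varepsilon=0$ in $Q_1$ with $0<\varepsilon<\varepsilon_0\theta^{k}$. Since $\varepsilon<\varepsilon_0$, Theorem \ref{improvent} gives
\begin{equation*}
\dashint_{Q_\theta}\Big|u_\varepsilon-\dashint_{Q_\theta}u_\varepsilon\Big|^2\le\theta^{2\alpha}\dashint_{Q_1}|u_\varepsilon|^2 .
\end{equation*}
Setting $v(x,t)=u_\varepsilon(\theta x,\theta^2 t)-\dashint_{Q_\theta}u_\varepsilon$ (subtracting a constant does not affect the equation), $v$ solves $(\partial_t+\mathcal{L}_{\varepsilon/\theta})v=0$ in $Q_1$ with $0<\varepsilon/\theta<\varepsilon_0\theta^{k-1}$, so the inductive hypothesis yields
\begin{equation*}
\dashint_{Q_{\theta^k}}\Big|v-\dashint_{Q_{\theta^k}}v\Big|^2\le\theta^{2k\alpha}\dashint_{Q_1}|v|^2 .
\end{equation*}
Changing variables back via $y=\theta x$, $s=\theta^2 t$, the left-hand side equals $\dashint_{Q_{\theta^{k+1}}}\big|u_\varepsilon-\dashint_{Q_{\theta^{k+1}}}u_\varepsilon\big|^2$ (the subtracted constant cancels against the new mean), while $\dashint_{Q_1}|v|^2=\dashint_{Q_\theta}\big|u_\varepsilon-\dashint_{Q_\theta}u_\varepsilon\big|^2$. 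Combining the two displays gives \eqref{49} with $k+1$ in place of $k$, under the correct threshold $\varepsilon<\varepsilon_0\theta^{(k+1)-1}$, which closes the induction.

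There is no serious obstacle; the argument is routine once the scaling invariance is recorded. The only points needing care are bookkeeping: tracking how the threshold transforms under the dilation (the substitution $\varepsilon\mapsto\varepsilon/\theta$ lowers the admissible power of $\theta$ by exactly one, which is precisely what makes the induction self-sustaining), and checking that subtracting constants and parabolic rescaling are compatible with the mean-oscillation quantity appearing in \eqref{49}. It is also worth emphasizing that, because $\theta$ is fixed once and for all in Theorem \ref{improvent}, \eqref{49} is a genuine geometric decay in $k$ with constant $1$.
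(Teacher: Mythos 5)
Your proof is correct and uses the same essential ingredients as the paper's argument (induction on $k$, the parabolic scaling $(x,t)\mapsto(\theta x,\theta^2 t)$ sending $\mathcal{L}_\varepsilon$ to $\mathcal{L}_{\varepsilon/\theta}$, and the one-step improvement of Theorem \ref{improvent}). The only cosmetic difference is the order of operations: you apply Theorem \ref{improvent} at the unit scale and then rescale by $\theta$ and invoke the inductive hypothesis, whereas the paper rescales by $\theta^k$ (using the inductive hypothesis to normalize the rescaled solution) and then applies Theorem \ref{improvent}; the two bookkeeping schemes are interchangeable.
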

\begin{proof}
  The case $k=1$ was proved by Theorem \ref{improvent}. Suppose that the estimate (\ref{49}) holds for some $k\geq 1$. Let $0<\e<\e_0\theta^k$. Define
 \begin{equation*}
w(x,t)=\theta^{-\alpha k}\bigg\{u_\e(\theta^{k} x,\theta^{2k} t)-\dashint_{Q_{\theta^k}}u_\e\bigg\}/\bigg\{\dashint_{Q_1}|u_\e|^2\bigg\}^{1/2}.
\end{equation*}
  Then
  \begin{equation*}
(\partial_t+\mathcal{L}_{\frac{\e}{\theta^{k}}})w=0~~{\rm in}~~Q_1,
\end{equation*}
and by the induction assumption, we have that $\dashint_{Q_1}|w|^2\leq 1$. Since $\e/\theta^k<\e_0$, in view of Theorem \ref{improvent}, we have
\begin{equation*}
\dashint_{Q_\theta}|w-\dashint_{Q_\theta}w|^2\leq \theta^{2\alpha},
\end{equation*}
which leads to
\begin{equation*}
\dashint_{Q_{\theta^{k+1}}}|u_\e-\dashint_{Q_{\theta^{k+1}}}u_\e|^2\leq \theta^{2(k+1)\alpha}\dashint_{Q_1}|u_\e|^2.
\end{equation*}
This completes the proof.
\end{proof}

\begin{proof}[Proof of Theorem \ref{Holder}]
By Campanato's characterization of H\"older spaces, it suffices to prove
\begin{equation}\label{54}
\dashint_{Q_r}|u_\e-\dashint_{Q_r}u_\e|^2\leq Cr^{2\alpha}\dashint_{Q_1}|u_\e|^2
\end{equation}
for $0<r<1/2$, where $u_\varepsilon$ is a weak solution of $(\partial_t+\mathcal{L}_\varepsilon)u_\varepsilon=0$ in $Q_1$. Notice that in the case of $\varepsilon\geq \theta\varepsilon_0$, (\ref{54}) follows from the regularity theory for parabolic systems with $VMO_x$ coefficients in \cite{Krylov-2007}, hence we just need to handle the case $0<\varepsilon <\theta\varepsilon_0$, and we divide this into two cases. First, consider $\varepsilon/\varepsilon_0\leq r<\theta$. Set $\theta^{k+1}\leq r<\theta^k$ by choosing suitable $k\geq 1$, then in view of Theorem \ref{iteration}, we have

\begin{equation*}
\aligned
\dashint_{Q_r}|u_\e-\dashint_{Q_r}u_\e|^2&\leq C\dashint_{Q_{\theta^k}}|u_\e-\dashint_{Q_{\theta^k}}u_\e|^2
\\& \leq C\theta^{2k\alpha}\dashint_{Q_1}|u_\varepsilon|^2\leq Cr^{2\alpha}\dashint_{Q_1}|u_\varepsilon|^2.
\endaligned
\end{equation*}
Secondly, we use a blow-up argument to handle the case $r<\varepsilon/\varepsilon_0$. Let
\begin{equation*}
w(x,t)=u_\varepsilon(\varepsilon x,\varepsilon^2 t)-\dashint_{Q_{2\varepsilon/\varepsilon_0}}u_\varepsilon.
\end{equation*}
Note that $(\partial_t+\mathcal{L}_1)w=0$ in $Q_{2/\varepsilon_0}$, then we have the H\"older estimate
for second order parabolic systems with $VMO_x$ coefficients (see \cite{Krylov-2007}),
\begin{equation}\label{Holder-VMO}
\|w\|_{\Lambda^{\alpha,\alpha/2}(Q_{1/\varepsilon_0})}\leq C {\varepsilon_0}^{\alpha}\left(\dashint_{Q_{2/\varepsilon_0}} | w|^2\right)^{1/2}.
\end{equation}
For any $0<\rho<\frac{1}{\varepsilon_0}$, by (\ref{Holder-VMO}) we have
\begin{equation*}
\dashint_{Q_\rho}|w-\dashint_{Q_\rho}w|^2\leq \rho^{2\alpha}\dashint_{Q_{2/\varepsilon_0}}|w|^2.
\end{equation*}
Thus, if $r<\varepsilon/\varepsilon_0$, then
\begin{equation*}
\aligned
\dashint_{Q_r}|u_\e-\dashint_{Q_r}u_\e|^2&\leq C\bigg(\frac{r}{\varepsilon}\bigg)^{2\alpha}\dashint_{Q_{2\varepsilon/\varepsilon_0}}|u_\e-\dashint_{Q_{2\varepsilon/\varepsilon_0}}u_\e|^2
\\& \leq Cr^{2\alpha}\dashint_{Q_1}|u_\varepsilon|^2,
\endaligned
\end{equation*}
where we have used (\ref{54}) for $r=2\varepsilon/\varepsilon_0$ in the last inequality. Hence we complete the proof.
\end{proof}

\section{\bf Fundamental solutions and proof of Theorem \ref{Holder-1}}\label{Section-4}

Suppose that $A$ is uniformly almost periodic and satisfies (\ref{ellipticity}). Let $u_\varepsilon$ be a weak solution of
\begin{equation*}
(\partial_t+\mathcal{L}_\varepsilon)u_\varepsilon=0~~\text{in}~~Q_{r}=Q_r(x_0,t_0).
\end{equation*}
Then it follows from Theorem \ref{Holder} that, for any $0<\rho<r$ and $0<\alpha<1$,
\begin{equation*}
\dashint_{Q_\rho}|u_\varepsilon-\dashint_{Q_\rho}u_\varepsilon|^2\leq C\bigg(\frac{\rho}{r}\bigg)^{2\alpha}\dashint_{Q_r}|u_\varepsilon-\dashint_{Q_r}u_\varepsilon|^2,
\end{equation*}
where $C$ is independent of $\varepsilon$. In view of Caccioppoli's inequality and Poincar\'e's inequality, we have that for any $0<\rho<r$,

\begin{equation}\label{f-3}
\dashint_{Q_\rho}|\nabla u_\varepsilon|^2\leq C\bigg(\frac{\rho}{r}\bigg)^{2\alpha-2}\dashint_{Q_r}|\nabla u_\varepsilon|^2.
\end{equation}

Let $v_\varepsilon$ be a weak solution of $(-\partial_t +\mathcal{L}_\varepsilon^*)v_\varepsilon=0$ in $Q_r^+$, where
$$Q_r^+=Q_r^+(x_0,t_0)=B(x_0,r)\times (t_0,t_0+r^2)$$
and $\mathcal{L}_\varepsilon^*=-\text{div}(A^*(x/\varepsilon,t/\varepsilon^2)\nabla)$. Then $u_\varep(x,t)= v_\varepsilon(x,2t_0-t)$ is a solution of
$$
\big(\partial_t-\text{div}(\tilde{A}(x/\varepsilon,t/\varepsilon^2)\nabla)\big)u_\varepsilon=0 ~~~~\text{in} ~~Q_r(x_0,t_0)
$$ with $\tilde{A}(y,s)=A^*(y,2t_0/\varepsilon^2-s)$ is uniformly almost periodic and satisfies (\ref{ellipticity}), we thus obtain that for any $0<\rho<r$,

\begin{equation}\label{f-4}
\dashint_{Q_\rho^+}|\nabla v_\varepsilon|^2\leq C\bigg(\frac{\rho}{r}\bigg)^{2\alpha-2}\dashint_{Q_r^+}|\nabla v_\varepsilon|^2.
\end{equation}

By utilizing (\ref{f-3}) and (\ref{f-4}), one may construct an $m\times m$ matrix of fundamental solutions $\Gamma_\varepsilon(x,t;y,s)=\big(\Gamma^{\alpha\beta}_{\varepsilon,ij}(x,t;y,s)\big)$ for the parabolic operator $\partial_t+\mathcal{L}_\varepsilon$ in $\mathbb{R}^{d+1}$ (see \cite{CDK-2008}). Moreover, the matrix $\Gamma_\varepsilon(x,t;y,s)$ satisfies
\begin{equation*}
|\Gamma_\varepsilon(x,t;y,s)|\leq \frac{C}{(t-s)^{d/2}}{\rm exp}\left\{-\frac{\kappa|x-y|^2}{t-s}\right\}
\end{equation*}
for any $t>s$, $x,y\in \mathbb{R}^d$ and $\kappa,C>0$ are independent of $\varepsilon$. Furthermore, we have
\begin{equation}\label{f-6}
|\Gamma_\varepsilon(x,t;y,s)|\leq \frac{C}{(|x-y|+|t-s|^{1/2})^d}
\end{equation}
for any $(x,t)\neq (y,s)$. By using Theorem \ref{Holder} we also have
\begin{equation}\label{f-7}
|\Gamma_\varepsilon(x,t;y,s)-\Gamma_\varepsilon(x^\prime,t^\prime;y,s)|\leq \frac{C(|x-x^\prime|+|t-t^\prime|^{1/2})^\alpha}{(|x-y|+|t-s|^{1/2})^{d+\alpha}}
\end{equation}
for any $\alpha\in(0,1)$ and $(x,t), (x^\prime,t^\prime), (y,s)\in\mathbb{R}^{d+1}$ such that $(x,t)\neq (y,s)$ and $|x-x^\prime|+|t-t^\prime|^{1/2}\leq\frac{1}{2}\left(|x-y|+|t-s|^{1/2}\right)$. Since $$-\partial_s\Gamma^*_\varepsilon(y,s;x,t)-\text{div}\left[A^*(s/\varepsilon,y/\varepsilon^2)
\nabla_y\Gamma^*_\varepsilon(y,s;x,t)\right]=0~~~~{\rm in}~~ \mathbb{R}^{d+1}\backslash (x,t),
$$
and $\Gamma^*_\varepsilon(y,s;x,t)=\Gamma^t_\varepsilon(x,t;y,s)$, where $\Gamma^t_\varepsilon(x,t;y,s)$ denotes the transpose of $\Gamma_\varepsilon(x,t;y,s)$, by using Caccioppoli's inequality and (\ref{f-6}) we have
\begin{equation}\label{f-8}
\bigg(\dashint_{R\leq |x-y|+|t-s|^{1/2}\leq 2R}|\nabla_y\Gamma_\varepsilon(x,t;y,s)|^2dyds\bigg)^{1/2}\leq \frac{C}{R^{d+1}}.
\end{equation}
Moreover, by using Caccioppoli's inequality and (\ref{f-7}), we see that
\begin{equation}\label{f-9}
\aligned
&\bigg(\dashint_{R\leq|x-y|+|t-s|^{1/2}\leq 2R}|\nabla_y\{\Gamma_\varepsilon(x,t;y,s)- \Gamma_\varepsilon(x^\prime,t^\prime;y,s)\}|^2dyds\bigg)^{1/2}\\
&\qquad\qquad\leq \frac{C(|x-x^\prime|+|t-t^\prime|^{1/2})^\alpha}{R^{d+1+\alpha}}
\endaligned
\end{equation}
for any $\alpha\in(0,1)$ and $(x,t), (x^\prime,t^\prime)\in\mathbb{R}^{d+1}$ with $|x-x^\prime|+|t-t^\prime|^{1/2}\leq\frac{R}{2}$.
\bigskip

Next, we are ready to give the proof of Theorem \ref{Holder-1}.

\begin{proof}[Proof of Theorem \ref{Holder-1}]
Let $\psi\in C^\infty_0(\mathbb{R}^{d+1})$ such that $0\leq\psi\leq1$, $\psi=1$ in $\frac{3}{2}Q$, $\psi=0$ in $2Q\setminus\frac{7}{4}Q$, and $|\nabla \psi|^2+|\partial_t\psi|\leq \frac{C}{r^2}$. Then direct calculation yields that
\begin{equation*}
(\partial_t+\mathcal{L}_\varepsilon)(\psi u_\varepsilon)=(\partial_t \psi)u_\varepsilon+f\psi+\text{div}(g\psi)
-g\cdot\nabla\psi-\text{div}(A^\varepsilon(\nabla\psi)u_\varepsilon)-A^\varepsilon\nabla u_\varepsilon\cdot \nabla \psi.
\end{equation*}
Hence for each $(x,t)\in Q$, we have
\begin{equation*}
\aligned
u_\varepsilon(x,t)&=\int_{-\infty}^{+\infty}\int_{\mathbb{R}^d}\Gamma_\varepsilon(x,t;y,s)\{(\partial_t \psi)u_\varepsilon+f\psi-g\cdot\nabla\psi-A^\varepsilon\nabla u_\varepsilon \cdot\nabla \psi\}(y,s)\,dyds
\\& ~~~~+\int_{-\infty}^{+\infty}\int_{\mathbb{R}^d}\Gamma_\varepsilon(x,t;y,s)\{\text{div}(g\psi)
-\text{div}(A^\varepsilon(\nabla\psi)u_\varepsilon)\}(y,s)\,dyds.
\endaligned
\end{equation*}
 Applying integration by parts we obtain
 \begin{equation}\label{61}
\aligned
u_\varepsilon(x,t)&=\int_{-\infty}^{+\infty}\int_{\mathbb{R}^d}\Gamma_\varepsilon(x,t;y,s)\{(\partial_t \psi)u_\varepsilon+f\psi-g\cdot\nabla\psi-A^\varepsilon\nabla u_\varepsilon\cdot\nabla \psi\}(y,s)\,dyds
\\& ~~~~+\int_{-\infty}^{+\infty}\int_{\mathbb{R}^d}\nabla_y\Gamma_\varepsilon(x,t;y,s)\cdot A(y/\varepsilon,s/\varepsilon^2)
\nabla\psi(y,s)u_\varepsilon(y,s)\, dyds
\\&~~~~-\int_{-\infty}^{+\infty}\int_{\mathbb{R}^d}\nabla_y\Gamma_\varepsilon(x,t;y,s)\cdot g(y,s)\psi(y,s)\, dyds.
\endaligned
\end{equation}
Similarly, for any $(x^\prime,t^\prime)\in Q$,
 \begin{equation}\label{62}
\aligned
u_\varepsilon(x^\prime,t^\prime)&=\int_{-\infty}^{+\infty}\int_{\mathbb{R}^d}\Gamma_\varepsilon(x^\prime,t^\prime;y,s)\{(\partial_t \psi)u_\varepsilon+f\psi-g\cdot\nabla\psi-A^\varepsilon\nabla u_\varepsilon \cdot\nabla \psi\}(y,s)\,dyds
\\& ~~~~+\int_{-\infty}^{+\infty}\int_{\mathbb{R}^d}\nabla_y\Gamma_\varepsilon(x^\prime,t^\prime;y,s)\cdot A(y/\varepsilon,s/\varepsilon^2)
\nabla\psi(y,s)u_\varepsilon(y,s)\,dyds
\\&~~~~-\int_{-\infty}^{+\infty}\int_{\mathbb{R}^d}\nabla_y\Gamma_\varepsilon(x^\prime,t^\prime;y,s)\cdot g(y,s)\psi(y,s)\, dyds.
\endaligned
\end{equation}

In view of (\ref{61}) and (\ref{62}), for any $(x,t),(x^\prime,t^\prime)\in Q$,
 \begin{equation}\label{63}
\aligned
|u_\varepsilon(x,t)-u_\varepsilon(x^\prime,t^\prime)|
&\leq \int_{2Q}
|\Gamma_\varepsilon(x^\prime,t^\prime;y,s)-\Gamma_\varepsilon(x,t;y,s)\|\partial_t \psi \|u_\varepsilon|dyds
\\&~~~+ \int_{2Q}
|\Gamma_\varepsilon(x^\prime,t^\prime;y,s)-\Gamma_\varepsilon(x,t;y,s)\|f\|\psi|dyds
\\&~~~+ \int_{2Q}
|\Gamma_\varepsilon(x^\prime,t^\prime;y,s)-\Gamma_\varepsilon(x,t;y,s)\|g\|\nabla\psi|dyds
\\&~~~+ C\int_{2Q}
|\Gamma_\varepsilon(x^\prime,t^\prime;y,s)-\Gamma_\varepsilon(x,t;y,s)| |\nabla u_\varepsilon \|\nabla \psi|dyds
\\&~~~+C\int_{2Q}|\nabla_y\{\Gamma_\varepsilon(x^\prime,t^\prime;y,s)
-\Gamma_\varepsilon(x,t;y,s)\}\|\nabla\psi\|u_\varepsilon|dyds
\\&~~~+\int_{2Q}|\nabla_y\{\Gamma_\varepsilon(x^\prime,t^\prime;y,s)
-\Gamma_\varepsilon(x,t;y,s)\}\|g\|\psi| dyds.
\endaligned
\end{equation}

By using (\ref{f-7}) and H\"older's inequality, we know that the third term in the right-hand side of (\ref{63}) is bounded by
\begin{equation*}
Cr\bigg(\frac{|x-x^\prime|+|t-t^\prime|^{1/2}}{r}\bigg)^\alpha  \bigg(\dashint_{2Q}|g|^2 \bigg)^{1/2}.
\end{equation*}

In view of \eqref{f-7}, \eqref{f-9} and Caccioppoli's inequality, we know that the first, fourth and fifth terms are bounded by
\begin{equation*}
C\bigg(\frac{|x-x^\prime|+|t-t^\prime|^{1/2}}{r}\bigg)^\alpha\left\{ \bigg(\dashint_{2Q}|u_\varepsilon|^2 \bigg)^{1/2}+
r \bigg(\dashint_{2Q}|g|^2 \bigg)^{1/2}+r^2 \bigg(\dashint_{2Q}|f|^2 \bigg)^{1/2}\right\}.
\end{equation*}

To bound the second term, by using (\ref{f-6}) and (\ref{f-7}), it will be bounded by
\begin{equation}\label{66}
\aligned
&C\int_{Q_{4d}(x,t)}\frac{|f(y,s)|}{(|x-y|+|t-s|^{1/2})^{d}}dyds
+C\int_{Q_{5d}(x^\prime,t^\prime)}\frac{|f(y,s)|}{(|x^\prime-y|+|t^\prime-s|^{1/2})^{d}}dyds\\
&+C d^\alpha\int_{2Q\backslash Q_{4d}(x,t)}\frac{|f(y,s)|}{(|x-y|+|t-s|^{1/2})^{d+\alpha}}dyds,
\endaligned
\end{equation}
where $d=|x-x^\prime|+|t-t^\prime|^{1/2}$. By decomposing $Q_{4d}(x,t)$ as a union of sets $\{(y,s):2^{1-j}d\leq|y-x|+|t-s|^{1/2}< 2^{2-j}d\}$, it is easy to see that the first term in (\ref{66}) is bounded by
\begin{equation}\label{67}
Cd^\alpha \sup_{\substack{(y,s)\in Q\\0<\ell<r}}\ell^{2-\alpha} \bigg(\dashint_{Q_\ell(y,s)}|f|^2 \bigg)^{1/2}.
\end{equation}
The second and the third terms in (\ref{66}) could be handled by the same way. Also, by using \eqref{f-7} and \eqref{f-9}, a similar argument as (\ref{67}) yields that the last term in the right-hand side of (\ref{63}) will be bounded by
\begin{equation*}
Cd^\alpha \sup_{\substack{(y,s)\in Q\\0<\ell<r}}\ell^{1-\alpha} \bigg(\dashint_{Q_\ell(y,s)}|g|^2 \bigg)^{1/2}.
\end{equation*}
We thus complete the proof.
\end{proof}

\begin{lemma}\label{6.15}
  Suppose that $\partial_t u_\varepsilon-{\rm div}(A(x/\varepsilon,t/\varepsilon^2)\nabla u_\varepsilon)=f$ in $2Q=Q_{2r}(x_0,t_0)$. Let $f\in L^p(2Q)$ for some $p\geq 2$. Then
  \begin{align*}
\bigg(\dashint_{Q}|u_\varepsilon|^q\bigg)^{1/q}\leq C\bigg(\dashint_{2Q}|u_\varepsilon|^2\bigg)^{1/2}+C r^2\bigg(\dashint_{2Q}|f|^p\bigg)^{1/p},
\end{align*}
where $0<\frac{1}{p}-\frac{1}{q}\leq \frac{2}{d+2}$.
\end{lemma}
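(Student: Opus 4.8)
The statement is a parabolic Meyers–type (or rather, sub-quadratic Sobolev) estimate for solutions of an inhomogeneous equation, and the natural route is to combine a Caccioppoli inequality with the parabolic Sobolev embedding and a bootstrap on the exponent. First I would reduce to the case $x_0=0$, $t_0=0$ and, by the usual rescaling $u_\varepsilon(x,t)\mapsto u_\varepsilon(rx,r^2t)$, normalize so that $r=1$, noting that the claimed inequality is scaling invariant (the powers of $r$ are exactly the ones that make both sides homogeneous of degree zero under this scaling); this replaces $\varepsilon$ by $\varepsilon/r$, which is harmless since no smallness of $\varepsilon$ is assumed. I would then recall the parabolic energy/Caccioppoli inequality: for a cutoff $\psi$ supported in $Q_{2}$ with $\psi=1$ on $Q_{3/2}$, testing the equation against $\psi^2 u_\varepsilon$ and using ellipticity \eqref{ellipticity} gives
\begin{equation*}
\sup_{t}\int |\psi u_\varepsilon(\cdot,t)|^2 + \int_{Q_{3/2}}|\nabla u_\varepsilon|^2 \le C\int_{2Q}|u_\varepsilon|^2 + C\int_{2Q}|f|^{2},
\end{equation*}
with the $f$–term handled by Young's inequality (here one may first absorb using that $p\ge 2$ so $\|f\|_{L^2(2Q)}\le C\|f\|_{L^p(2Q)}$).

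Next I would invoke the Gagliardo–Nirenberg–Sobolev inequality in the parabolic setting: a function on $Q_{3/2}$ that lies in $L^\infty_t L^2_x \cap L^2_t H^1_x$ belongs to $L^{2(d+2)/d}(Q_{3/2})$, with norm controlled by the sum of the two, i.e. by the right-hand side of the Caccioppoli inequality. This already gives the conclusion when $q\le 2(d+2)/d$, i.e. when $\tfrac1p-\tfrac1q \le \tfrac{1}{d+2}$ follows a fortiori — but to reach the full range $0<\tfrac1p-\tfrac1q\le\tfrac{2}{d+2}$ one must iterate once: having gained integrability $u_\varepsilon\in L^{q_1}_{\mathrm{loc}}$ with $q_1=2(d+2)/d$, treat the equation on a slightly smaller cube, use the higher Caccioppoli-type bound obtained by a Gehring/Meyers reverse-Hölder argument (or simply a second application of Caccioppoli with the now-better right side when $d\le 2$), and apply Sobolev embedding again. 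A cleaner packaging is to use the self-improving property: $(\partial_t-\mathrm{div}(A^\varepsilon\nabla))u_\varepsilon=f$ with $f\in L^p$ yields, by the scalar $W^{1,p}$/$L^p$ interior estimate for parabolic equations with $\mathrm{VMO}_x$ coefficients — which applies here since the frozen-$\varepsilon$ analysis and Krylov's results in \cite{Krylov-2007} are available, and more simply since the $L^\infty$ coefficients combined with the parabolic Moser/Aronson machinery or the real-variable interpolation of \cite{CDK-2008}-type estimates give exactly the embedding $L^p$-data $\to L^q$-solution for $\tfrac1p-\tfrac1q\le\tfrac2{d+2}$ — the desired bound directly.

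The key steps in order are therefore: (1) normalize by translation and parabolic scaling so $r=1$; (2) prove the Caccioppoli inequality with the $L^p$ right-hand side; (3) apply the parabolic Sobolev embedding $L^\infty_tL^2_x\cap L^2_tH^1_x\hookrightarrow L^{2(d+2)/d}$ to cover the first exponent gain; (4) bootstrap once (reverse-Hölder / higher-integrability, or the local parabolic $L^p\to L^q$ estimate) to reach the full admissible range $\tfrac1p-\tfrac1q\le\tfrac2{d+2}$; (5) undo the scaling, restoring the factors $r^2$ and the averaged integrals. The main obstacle is step (4): controlling the gain in integrability uniformly in $\varepsilon$. This is where one must be careful that the constant does not degenerate as $\varepsilon\to0$ — but this is automatic because Caccioppoli and Sobolev embedding have $\varepsilon$-independent constants (they use only \eqref{ellipticity}), and the reverse-Hölder exponent furnished by Gehring's lemma likewise depends only on $d,m,\mu$. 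A secondary technical point is that the limiting case $q=\infty$ (when $\tfrac1p-\tfrac1q=\tfrac2{d+2}$ with $p>(d+2)/2$) is excluded by the strict inequality $0<\tfrac1p-\tfrac1q$, so one never needs an endpoint $L^\infty$ bound, which keeps the argument elementary; if one did want $q=\infty$, Theorem~\ref{Holder-1} would supply it, but that is outside the stated hypothesis.
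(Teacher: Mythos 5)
Your step (4) is where the argument has a genuine gap, and it is exactly the step the paper's proof is designed to avoid. One round of Caccioppoli followed by the parabolic Sobolev embedding $L^\infty_t L^2_x\cap L^2_tH^1_x\hookrightarrow L^{2(d+2)/d}$ gains only $\tfrac12-\tfrac{d}{2(d+2)}=\tfrac{1}{d+2}$ in the reciprocal exponent, i.e.\ half of the claimed range $\tfrac1p-\tfrac1q\le\tfrac{2}{d+2}$; you acknowledge this, but none of the mechanisms you list actually supplies the missing half with an $\varepsilon$-independent constant. A Moser-type iteration with test functions of the form $\psi^2|u_\varepsilon|^{q-2}u_\varepsilon$ is not available for \emph{systems} ($m>1$), because the cross terms in $\int A^\varepsilon\nabla u_\varepsilon\cdot\nabla(|u_\varepsilon|^{q-2}u_\varepsilon)$ have no sign; this is the standard obstruction to De Giorgi--Nash--Moser in the vectorial case, and the present paper is emphatically about systems. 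The Gehring/Meyers reverse-H\"older route gives only a small, unquantified gain $2\to 2+\delta$ and cannot reach an arbitrary admissible $q$. Krylov's $W^{2,1}_p$/VMO$_x$ theory would do the job for fixed $\varepsilon$, but its constant depends on the VMO modulus of $A(\cdot/\varepsilon,\cdot/\varepsilon^2)$, which degenerates as $\varepsilon\to 0$, so this is not a source of uniform estimates (the paper invokes Krylov only after a blow-up, at unit scale, where this is harmless). The one concrete alternative you mention --- ``\cite{CDK-2008}-type estimates'' --- is precisely the paper's own method: represent $u_\varepsilon$ by convolution against the fundamental solution $\Gamma_\varepsilon$, whose $\varepsilon$-independent bound $|\Gamma_\varepsilon(x,t;y,s)|\le C(|x-y|+|t-s|^{1/2})^{-d}$ from \eqref{f-6} makes it a parabolic Riesz potential of order $2$, so Hardy--Littlewood--Sobolev gives the full $\tfrac{2}{d+2}$ gain in one stroke. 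Without that representation you are stuck at $\tfrac{1}{d+2}$.

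Two smaller points. First, in the Caccioppoli step your right-hand side after testing with $\psi^2 u_\varepsilon$ also produces a term $\int_{2Q}A^\varepsilon\nabla u_\varepsilon\cdot(\nabla\psi)\,\psi u_\varepsilon$ which must be absorbed; it is routine but should be recorded. Second, your remark that ``$q=\infty$ is excluded by the strict inequality $0<\tfrac1p-\tfrac1q$'' is incorrect: with $q=\infty$ one has $\tfrac1p-\tfrac1q=\tfrac1p>0$, so the stated hypothesis \emph{does} admit $q=\infty$ whenever $p\ge\tfrac{d+2}{2}$; the paper's kernel bound covers that range directly by H\"older's inequality, whereas your Sobolev-embedding route never reaches $L^\infty$.
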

\begin{proof}
  Let $\psi$ be defined as in the proof of Theorem \ref{Holder-1}. Since
  \begin{equation*}
(\partial_t+\mathcal{L}_\varepsilon)(\psi u_\varepsilon)=(\partial_t \psi)u_\varepsilon+f\psi-\text{div}(A^\varepsilon(\nabla\psi)u_\varepsilon)-A^\varepsilon\nabla u_\varepsilon \cdot\nabla \psi,
\end{equation*}
by applying integration by parts we obtain
\begin{equation*}
\aligned
u_\varepsilon(x,t)&=\int_{-\infty}^{+\infty}\int_{\mathbb{R}^d}\Gamma_\varepsilon(x,t;y,s)\{(\partial_t \psi)u_\varepsilon+f\psi-A^\varepsilon\nabla u_\varepsilon\cdot\nabla \psi\}(y,s)\,dyds
\\& ~~~~+\int_{-\infty}^{+\infty}\int_{\mathbb{R}^d}\nabla_y\Gamma_\varepsilon(x,t;y,s)\cdot
A^\varepsilon(y,s)\nabla\psi(y,s)u_\varepsilon(y,s)\,dyds
\endaligned
\end{equation*}
for any $(x,t)\in Q$.
Using the size estimate (\ref{f-6}) and (\ref{f-8}) and H\"older's inequality, we have
 \begin{equation*}
\aligned
|u_\varepsilon(x,t)|&\leq C\left(\dashint_{2Q}|u_\varepsilon|^2\right)^{1/2}+C\int_{2Q}\frac{|f(y,s)|}{(|x-y|+|t-s|^{1/2})^d}
+Cr\left(\dashint_{\frac{7}{4}Q}|\nabla u_\varepsilon|^2\right)^{1/2}
\\&\leq C\bigg(\dashint_{2Q}|u_\varepsilon|^2\bigg)^{1/2}+\int_{2Q}\frac{|f(y,s)|}{(|x-y|+|t-s|^{1/2})^d}
+Cr^2\left(\dashint_{2Q}|f|^2\right)^{1/2},
\endaligned
\end{equation*}
where we have used Caccioppoli's inequality for the last step. Next by the boundness of fractional integral and H\"older's inequality,
we have
\begin{equation*}
\aligned
\bigg(\dashint_{Q}|u_\varepsilon|^q\bigg)^{1/q}\leq C\bigg(\dashint_{2Q}|u_\varepsilon|^2\bigg)^{1/2}
+Cr^2\bigg(\dashint_{2Q}|f|^p\bigg)^{1/p}.
\endaligned
\end{equation*}
Thus we complete the proof.
\end{proof}

\section{\bf Green's function and boundary H\"older estimates}\label{Section-5}
For $x_0\in \bar{\Omega}$, $t_0\in \mathbb{R}$ and $0<r<r_0=\text{diam}(\Omega)$, recall that
\begin{equation*}
\begin{aligned}
\Omega_r(x_0,t_0)&=[B(x_0,r)\cap\Omega]\times(t_0-r^2,t_0),\\
\Delta_r(x_0,t_0)&=[B(x_0,r)\cap\partial \Omega]\times(t_0-r^2,t_0).\\
\end{aligned}
\end{equation*}

The next theorem provides the uniform boundary H\"older estimates.

\begin{theorem}\label{Holder-0}
Let $\Omega$ be a bounded $C^{1,\eta}$ domain in $\mathbb{R}^{d}$ for some $\eta\in(0,1)$. Suppose that $A$ is uniformly almost periodic and satisfies (\ref{ellipticity}). Let $u_\varepsilon\in L^2(t_0-4r^2,t_0;H^1(B(x_0,2r)\cap\Omega))$ be a weak solution of
\begin{equation*}
\begin{cases}
(\mathcal{L_\varepsilon} +\partial_t) u_\varepsilon=0 ~~~{\rm in}~~\Omega_{2r}(x_0,t_0), \\
~~~~~~~~~~~u_\varepsilon=0 ~~~ {\rm on}~~\Delta_{2r}(x_0,t_0),
\end{cases}
\end{equation*}
then for any $0<\alpha<1$,
\begin{align*}
\|u_\varepsilon\|_{\Lambda^{\alpha,\alpha/2}(\Omega_r(x_0,t_0))}\leq
\frac{C}{r^\alpha}\left(\dashint_{\Omega_{2r}(x_0,t_0)}|u_\varepsilon|^2\right)^{1/2},
\end{align*}
where $C$ depends only on $d$, $m$, $\mu$, $\eta$, $\alpha$, $A$ and $\Omega$.
\end{theorem}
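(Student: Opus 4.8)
The plan is to run, at the boundary, the same three‑step compactness scheme (improvement, iteration, blow‑up) used for the interior estimate in Theorem~\ref{Holder}. After translating so that $(x_0,t_0)=(0,0)$ and rotating so that near $0$ one has $\Omega=\{x_d>\psi(x')\}$ with $\psi\in C^{1,\eta}$, $\psi(0)=0$, $\nabla\psi(0)=0$, and using compactness of $\partial\Omega$ to fix a uniform bound $M$ on the $C^{1,\eta}$ norm of all such $\psi$, I would work once and for all in the class of $C^{1,\eta}$ domains with seminorm $\le M$ and $0\in\partial\Omega$, so that all constants below are uniform. By Campanato's characterization of the parabolic Hölder space it suffices to prove the boundary decay
\begin{equation*}
\dashint_{\Omega_\rho(0,0)}|u_\varepsilon|^2\le C\rho^{2\alpha}\dashint_{\Omega_1(0,0)}|u_\varepsilon|^2,\qquad 0<\rho<1/2,
\end{equation*}
for solutions vanishing on $\Delta_1(0,0)$, and then to glue it to the interior estimate of Theorem~\ref{Holder}: for $(y,s)\in\Omega_{1/2}$ with $\delta=\operatorname{dist}(y,\partial\Omega)$ one uses Theorem~\ref{Holder} on the interior cylinder $Q_{\delta/2}(y,s)$ for radii below $\delta/2$ and, for radii comparable to or larger than $\delta$, transfers to a surface cylinder centred at the nearest boundary point where the displayed decay applies; the two combine to give $\|u_\varepsilon\|_{\Lambda^{\alpha,\alpha/2}(\Omega_{1/2})}\le C(\dashint_{\Omega_1}|u_\varepsilon|^2)^{1/2}$, and undoing the rescaling yields the stated estimate on $\Omega_r(x_0,t_0)$ (the rescaled domain stays in the class, with even smaller seminorm).

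The core step is the one‑step improvement: there exist $\theta\in(0,1/4)$ and $\varepsilon_0>0$, depending only on $d,m,\mu,\eta,\alpha$ and $M$, such that
\begin{equation*}
\dashint_{\Omega_\theta(0,0)}|u_\varepsilon|^2\le\theta^{2\alpha}\dashint_{\Omega_1(0,0)}|u_\varepsilon|^2
\end{equation*}
whenever $0<\varepsilon<\varepsilon_0$, $(\partial_t+\mathcal{L}_\varepsilon)u_\varepsilon=0$ in $\Omega_1(0,0)$ and $u_\varepsilon=0$ on $\Delta_1(0,0)$. I would prove this by contradiction, normalizing $\dashint_{\Omega_1}|u_\varepsilon|^2\le1$: a failing sequence gives $\varepsilon_k\to0$, domains $\Omega^{(k)}$ in the class, and solutions $u_k$ with $\dashint_{\Omega^{(k)}_\theta}|u_k|^2>\theta^{2\alpha}$. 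Caccioppoli's inequality adapted to the vanishing Dirichlet datum (the cutoff need not vanish on $\Delta$) bounds $\nabla u_k$ in $L^2$ on a smaller half‑cylinder and, via the equation, $\partial_t u_k$ in $L^2(H^{-1})$; the Aubin--Lions lemma then yields, along a subsequence, $u_k\to u$ strongly in $L^2$ and weakly in $L^2(H^1)$, while $\Omega^{(k)}$ converges to a limit domain $\Omega_\infty$ still in the class. The qualitative homogenization theorem of Section~\ref{section 2} identifies $u$ as a solution of $(\partial_t+\mathcal{L}_0)u=0$ in a neighbourhood of $(0,0)$ within $\Omega_\infty$, and the zero trace is preserved under weak $H^1$ convergence, so $u=0$ on the adjacent piece of $\partial\Omega_\infty$. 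Since $\mathcal{L}_0$ has constant coefficients and $\Omega_\infty$ is $C^{1,\eta}$, boundary $C^{1,\alpha}$ regularity for $\partial_t+\mathcal{L}_0$ with vanishing lateral data gives $|u(x,t)|\le C\operatorname{dist}(x,\partial\Omega_\infty)\le C|x|$ near $(0,0)$, hence
\begin{equation*}
\dashint_{\Omega_\theta}|u|^2\le C_0\theta^2\dashint_{\Omega_{1/2}}|u|^2\le C_1\theta^2\dashint_{\Omega_1}|u|^2\le C_1\theta^2,
\end{equation*}
which contradicts $\dashint_{\Omega_\theta}|u|^2\ge\theta^{2\alpha}$ as soon as $\theta$ is chosen so small that $C_1\theta^2<\theta^{2\alpha}$.

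The improvement iterates just as in Theorem~\ref{iteration}: if $0<\varepsilon<\varepsilon_0\theta^{k-1}$, rescaling $u_\varepsilon$ through $w(x,t)=\theta^{-\alpha k}u_\varepsilon(\theta^kx,\theta^{2k}t)/(\dashint_{\Omega_1}|u_\varepsilon|^2)^{1/2}$ — which solves $(\partial_t+\mathcal{L}_{\varepsilon/\theta^k})w=0$ on the rescaled domain (again in the class) with zero lateral data — gives $\dashint_{\Omega_{\theta^k}}|u_\varepsilon|^2\le\theta^{2k\alpha}\dashint_{\Omega_1}|u_\varepsilon|^2$, hence the boundary decay for all $\varepsilon/\varepsilon_0\le\rho<1/2$. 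For the remaining scales $0<\rho<\varepsilon/\varepsilon_0$ I would blow up: $v(x,t)=u_\varepsilon(\varepsilon x,\varepsilon^2 t)$ solves $(\partial_t+\mathcal{L}_1)v=0$ on $\varepsilon^{-1}\Omega$, which near the rescaled boundary point is $C^{1,\eta}$ with seminorm $O(\varepsilon^\eta)$, and $A(x,t)$ is continuous, hence $VMO_x$; the boundary Hölder estimate for parabolic systems with $VMO_x$ coefficients on $C^{1,\eta}$ domains (cf.~\cite{Krylov-2007}) then supplies the decay down to $\rho=0$. The borderline range $\varepsilon\ge\varepsilon_0\theta$ is covered directly by the same $VMO_x$ boundary regularity. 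Collecting the three ranges proves the boundary decay, and the gluing described above completes the proof.

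I expect the main obstacle to be the compactness step: one has to set up the convergence of the domains $\Omega^{(k)}$ and of the solutions $u_k$ carefully enough that weak $H^1$ convergence preserves the vanishing Dirichlet trace and that the qualitative homogenization theorem of Section~\ref{section 2} applies on the moving (then limiting) domain, and then to invoke boundary $C^{1,\alpha}$ regularity for the constant‑coefficient operator $\partial_t+\mathcal{L}_0$ — this is exactly where the hypothesis $\eta>0$, rather than mere Lipschitz regularity of $\Omega$, is used, and where the uniformity over $x_0\in\partial\Omega$ (via compactness of $\partial\Omega$) is needed. The boundary Caccioppoli inequality, the iteration, and the $VMO_x$ small‑scale input are routine.
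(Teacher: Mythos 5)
Your proposal is correct and follows the same route the paper takes: the paper's proof of Theorem~\ref{Holder-0} is a one-line appeal to the three-step compactness argument (improvement, iteration, blow-up), citing \cite{GS-2013,Shen-2014} for details, and you have supplied precisely those details. Your improvement step (contradiction, Caccioppoli plus Aubin--Lions, passage to the $G$-limit $(\partial_t+\mathcal{L}_0)u=0$, boundary $C^{1,\alpha}$ decay for the constant-coefficient system on the limit $C^{1,\eta}$ domain), your iteration via rescaling, and your blow-up to the scale $\varepsilon$ where the flattened domain has small $C^{1,\eta}$ norm and $A$ is $VMO_x$ all match the intended argument, as does the final gluing of the boundary decay with the interior estimate of Theorem~\ref{Holder}.
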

\begin{proof}
  Theorem \ref{Holder-0} follows by the three-step compactness argument, as in the periodic case and elliptic case with almost periodic coefficients. We refer the reader to \cite{GS-2013,Shen-2014} for details.
\end{proof}

We introduce the Green's matrix $ G_\varep (x,t; y,s)=\left( G_\varep ^{\alpha\beta} (x,t; y,s) \right)_{m\times m}$ with pole at $(y,s)$  for $\partial_t + \mathcal{L_\varepsilon}$
in $\mathcal{U}=\Omega \times \mathbb{R}$. Precisely, for each $(y,s)\in \mathcal{U}$,
$G_\varep (x,t; y,s)$ is defined by
\begin{equation*}
\left\{
\aligned
\left(\partial_t + \mathcal{L}_\varepsilon\right)G_\varepsilon(\cdot,\cdot;y,s)
 &=\delta_{(y,s)}(\cdot,\cdot)
 \quad \text{in } \  \mathcal{U}, \\
G_\varepsilon(\cdot,\cdot;y,s) & =0 \qquad\quad\quad  \text{on } \partial\mathcal{U},\\
\lim_{t\to s^+} G_\varep (x,t; \cdot,s)&  =\delta_x (\cdot),
\endaligned
\right.
\end{equation*}
where $\delta_{(y,s)}(\cdot,\cdot)$ and $\delta_x (\cdot)$ are Dirac delta functions with pole at $(y,s)$ and $x$, respectively. It is known that
$G_\varep (x,t;y,s)$ exists provided that the solutions of the systems and its adjoint satisfy the local H\"older continuity estimates (see \cite{CDK-2008}). Hence with interior and boundary H\"older estimates in Theorem \ref{Holder} and Theorem \ref{Holder-0}, we could construct an $m\times m$ matrix $G_\varepsilon(x,t;y,s)=(G^{\alpha\beta}_\varepsilon(x,t;y,s))$ of Green's function for $\partial_t + \mathcal{L_\varepsilon}$ in a bounded $C^{1,\eta}$ domain $\Omega$. Moreover, we have the size estimates, as follows.

\begin{theorem}
Assume that $\Omega$ and $A$ satisfy the same assumptions as in Theorem \ref{boundary holder}.
Then, for any $(x,t),(y,s)\in \mathcal{U}$ such that $(x,t)\neq (y,s)$ and $|t-s|^{1/2}<\text{\rm diam} (\Omega)$,
\begin{equation}\label{2.2}
|G_\varepsilon(x,t;y,s)|\leq
\frac{C}{(|x-y|+|t-s|^{1/2})^d},
\end{equation}
where $C$ depends only on $A$ and $\Omega$,
and for any $0<\sigma,\sigma_1,\sigma_2<1$,
\begin{equation}\label{2.3}
|G_\varepsilon(x,t;y,s)|\leq
\frac{C_\sigma\, [\delta(x)]^\sigma}{(|x-y|+|t-s|^{1/2})^{d+\sigma}},\\
\end{equation}
\begin{equation}\label{2.4}
|G_\varepsilon(x,t;y,s)|\leq
\frac{C_\sigma \, [\delta(y)]^\sigma}{(|x-y|+|t-s|^{1/2})^{d+\sigma}},\\
\end{equation}
\begin{equation}\label{2.5}
|G_\varepsilon(x,t;y,s)|\leq
\frac{C\, [\delta(x)]^{\sigma_1}[\delta(y)]^{\sigma_2}}{(|x-y|+|t-s|^{1/2})^{d+\sigma_1+\sigma_2}},
\end{equation}
where $|t-s|^{1/2}<\text{\rm diam} (\Omega)$ and
$\delta(x)=\text{\rm dist} (x, \partial\Omega)$. Moreover, for any $(x,t),(x^\prime,t^\prime),(y,s)\in \mathcal{U}$ such that $|x-x^\prime|+|t-t^\prime|^{1/2}<\frac{1}{2}\left(|x-y|+|t-s|^{1/2}\right)$, $(x,t)\neq (y,s)$ and $|t-s|^{1/2}<\text{\rm diam} (\Omega)$,
\begin{equation}\label{2.6}
	|G_\varepsilon(x,t;y,s)-G_\varepsilon(x^\prime,t^\prime;y,s)|\leq
	\frac{C_\sigma\, (|x-x^\prime|+|t-t^\prime|^{1/2})^\sigma}{(|x-y|+|t-s|^{1/2})^{d+\sigma}},
\end{equation}
The constant  $C_\sigma$ depends only on $\sigma$, $A$ and $\Omega$.
\end{theorem}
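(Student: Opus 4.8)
The plan is to take the construction of the Green's matrix as the starting point and then bootstrap the interior and boundary H\"older estimates of Theorems \ref{Holder} and \ref{Holder-0}. Since local solutions of $(\partial_t+\mathcal{L}_\varepsilon)u=0$ and of the adjoint system $(-\partial_s+\mathcal{L}_\varepsilon^*)v=0$ satisfy De Giorgi--Nash type interior and boundary H\"older bounds, the general existence theory of \cite{CDK-2008} produces the matrix $G_\varepsilon(x,t;y,s)$ together with the Gaussian upper bound $|G_\varepsilon(x,t;y,s)|\le C(t-s)^{-d/2}\exp\{-\kappa|x-y|^2/(t-s)\}$ for $t>s$; comparing the regimes $|x-y|\le|t-s|^{1/2}$ and $|x-y|>|t-s|^{1/2}$ converts this into \eqref{2.2}. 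Throughout the argument I write $r=|x-y|+|t-s|^{1/2}$.

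To get \eqref{2.3}, fix $(y,s)$ and put $u(z,\tau)=G_\varepsilon(z,\tau;y,s)$, which solves the homogeneous system in $\mathcal{U}$ away from $(y,s)$ and vanishes on $\partial\Omega\times\mathbb{R}$. If $\delta(x)\ge\frac14 r$ the bound \eqref{2.3} is immediate from \eqref{2.2}; otherwise choose $x^\ast\in\partial\Omega$ with $|x-x^\ast|=\delta(x)$ and fix a small dimensional constant $c_0$ so that the pole $(y,s)$ lies outside $\Omega_{2c_0 r}(x^\ast,t)$. Applying Theorem \ref{Holder-0} on that cylinder and using $u(x^\ast,t)=0$ gives
\begin{equation*}
|G_\varepsilon(x,t;y,s)|=|u(x,t)-u(x^\ast,t)|\le \|u\|_{\Lambda^{\sigma,\sigma/2}(\Omega_{c_0 r}(x^\ast,t))}\,[\delta(x)]^\sigma\le \frac{C_\sigma[\delta(x)]^\sigma}{r^\sigma}\Big(\dashint_{\Omega_{2c_0 r}(x^\ast,t)}|u|^2\Big)^{1/2},
\end{equation*}
and bounding the $L^2$ average by \eqref{2.2} yields \eqref{2.3}.

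The estimate \eqref{2.4} is obtained in the same manner from the adjoint Green's matrix $G_\varepsilon^*(y,s;x,t)=G_\varepsilon^t(x,t;y,s)$: it satisfies $-\partial_s G_\varepsilon^*-\mathrm{div}_y(A^*(y/\varepsilon,s/\varepsilon^2)\nabla_y G_\varepsilon^*)=0$ away from $(x,t)$ and vanishes for $y\in\partial\Omega$, and after the time reflection $s\mapsto 2s_0-s$ it becomes a parabolic system to which Theorem \ref{Holder-0} applies on a backward cylinder based at a nearest boundary point $y^\ast$ of $y$; the argument of the previous paragraph then gives \eqref{2.4}. For \eqref{2.5} I iterate the two boundary estimates: \eqref{2.4} holds with the same right-hand side $C_{\sigma_2}[\delta(y)]^{\sigma_2}r^{-d-\sigma_2}$ uniformly for $(z,\tau)$ in a cylinder of radius comparable to $c_0 r$ about $(x,t)$, and since $z\mapsto G_\varepsilon(z,\tau;y,s)$ still vanishes on $\partial\Omega$, repeating the boundary--H\"older argument in the $x$ variable upgrades $r^{-d-\sigma_2}$ to $[\delta(x)]^{\sigma_1}r^{-d-\sigma_1-\sigma_2}$, which is \eqref{2.5}. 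Finally, for \eqref{2.6} with $d_0=|x-x^\prime|+|t-t^\prime|^{1/2}\le\frac12 r$ I split into two cases: if $d_0<\frac14\delta(x)$ I invoke the interior estimate of Theorem \ref{Holder} on $Q_{\frac12\delta(x)}(x,t)$ and then on $Q_{c_0 r}(x,t)$, while if $d_0\ge\frac14\delta(x)$ I invoke the boundary estimate of Theorem \ref{Holder-0} on $\Omega_{c_0 r}(x^\ast,t)$ with $x^\ast$ nearest to $x$ on $\partial\Omega$; in either case the H\"older seminorm is controlled through Caccioppoli's inequality and \eqref{2.2} by $C_\sigma r^{-d-\sigma}$, giving \eqref{2.6}. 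The main obstacle is not any single estimate but the bookkeeping of the parabolic geometry: keeping the pole outside every cylinder on which a H\"older bound is used, using backward cylinders for the adjoint, and verifying that for a $C^{1,\eta}$ domain all these cylinders can be chosen at a scale comparable to $r$, so that the scaling-invariant forms of Theorems \ref{Holder} and \ref{Holder-0} apply without loss.
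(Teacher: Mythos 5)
Your overall route matches the paper's: obtain the $L^\infty$ bound (\ref{2.2}) from the construction in \cite{CDK-2008} plus the interior and boundary H\"older estimates, then upgrade to the weighted bounds (\ref{2.3})--(\ref{2.5}) by applying Theorem \ref{Holder-0} to $u=G_\varepsilon(\cdot,\cdot;y_0,s_0)$ (and its adjoint counterpart) on cylinders of radius comparable to $r_0=|x-y|+|t-s|^{1/2}$ with the vanishing boundary data. That part is fine.

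However, your proof of (\ref{2.6}) has a genuine gap. In the case $d_0=|x-x'|+|t-t'|^{1/2}<\frac14\delta(x)$ with $\delta(x)\ll r$, you invoke the interior estimate of Theorem \ref{Holder} on $Q_{\frac12\delta(x)}(x,t)$; the largest interior cylinder available has radius $\sim\delta(x)$, so the resulting H\"older seminorm bound is $\lesssim\delta(x)^{-\sigma}\|G_\varepsilon(\cdot,\cdot;y,s)\|_{L^\infty(Q_{\frac12\delta(x)}(x,t))}$. If, as you claim, you only feed in (\ref{2.2}) to bound the $L^\infty$ norm by $Cr^{-d}$, you end up with $|G_\varepsilon(x,t;y,s)-G_\varepsilon(x',t';y,s)|\lesssim d_0^\sigma\,\delta(x)^{-\sigma}\,r^{-d}$, which is strictly worse than the target $d_0^\sigma\,r^{-d-\sigma}$ whenever $\delta(x)\ll r$. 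The remedy is to use (\ref{2.3}) rather than (\ref{2.2}) for the $L^\infty$ input on the small interior cylinder (giving $\|G_\varepsilon\|_{L^\infty}\lesssim\delta(x)^\sigma r^{-d-\sigma}$, which cancels the $\delta(x)^{-\sigma}$), or simply apply Theorem \ref{Holder-0} centered at $(x,t)$ at scale $\sim r$ (which automatically includes the interior case when $\Delta_{2\rho}(x,t)=\emptyset$), as the paper's one-line reference to Theorem \ref{Holder-0} and (\ref{2.2}) suggests. Related to this, the phrase ``and then on $Q_{c_0 r}(x,t)$'' is not meaningful in the regime $\delta(x)<c_0 r$, since that cylinder is not contained in $\mathcal{U}$ and the interior estimate does not apply to it.

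A secondary, more minor remark: you invoke a Gaussian upper bound for $G_\varepsilon$ itself as coming directly from \cite{CDK-2008}. The paper is more careful here: it first derives the scaled reverse-H\"older inequality (\ref{RH}) for solutions vanishing on $\Delta_\rho$ (and its analogue for the adjoint system), observes that \cite{CDK-2008} then yields (\ref{2.2}) only in the range $|x-y|+|t-s|^{1/2}\le\frac12\max(\delta(x),\delta(y))$, and extends it to all points with $|t-s|^{1/2}<\mathrm{diam}(\Omega)$ precisely because the estimate in hand is a \emph{boundary} estimate. If you want to claim a Gaussian bound for the domain Green's function (as opposed to the whole-space fundamental solution $\Gamma_\varepsilon$), you should either cite the exact result you are using or reproduce the paper's two-step extension argument.
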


\begin{proof}
Suppose that $\left(\partial_t +\mathcal{L}_\varep\right) u_\varep =0$
in $\Omega_{\rho} (x_0, t_0)$ and $u_\varep =0$ on $\Delta_{\rho} (x_0, t_0)$, where
$x_0\in \overline{\Omega}$, $t_0\in \mathbb{R}$ and $0<\rho<\text{\rm diam}(\Omega)$.
It follows from Caccioppoli's inequality, Theorems \ref{Holder} and \ref{Holder-0} that
\begin{equation}\label{RH}
\int_{\Omega_r (x_0, t_0)}
|\nabla u_\varep |^2
\le C_0 \left(\frac{r}{\rho}\right)^{d+2\sigma} \int_{\Omega_{\rho} (x_0, t_0)}
|\nabla u_\varep|^2,
\end{equation}
where $0<r<\rho$ and $C_0$ depends only on $A$ and $\Omega$.
Since $A^*$ satisfies the same conditions as $A$,
it is clear that estimate (\ref{RH}) also holds for weak solutions of
$(-\partial_t +\mathcal{L}_\varep^*) u_\varep=0$ in $\Omega_{\rho} (x_0, t_0)$
and $u_\varep =0$ on $\Delta_\rho (x_0, t_0)$.
By \cite{CDK-2008} this implies that the matrix  $G_\varep (x,t; y,s)$ of Green function
exists, and the size estimate (\ref{2.2}) holds for any $(x,t), (y,s)\in \mathcal{U}$
satisfying
$$
|x-y| +|t-s|^{1/2} \le (1/2) \max \big( \delta (x), \delta (y)\big).
$$
However, since (\ref{RH}) is a boundary H\"older estimate, the same argument as in \cite{CDK-2008}
shows that (\ref{2.2}) in fact holds for any $(x,t), (y,s)\in \mathcal{U}$ satisfying
$|t-s|^{1/2} < \text{\rm diam} (\Omega)$.

Estimates (\ref{2.3})-(\ref{2.5}) follow from the size estimate (\ref{2.2}) by Theorem \ref{Holder-0}.
Indeed, to see (\ref{2.3}), we fix $(x_0, t_0), (y_0, s_0)\in \mathcal{U}$ such that
$|t_0-s_0|^{1/2}<\text{\rm diam}(\Omega)$.
We may assume that $\delta (x_0)<c\, r_0= c\, \big\{ |x_0-y_0| +|t_0-s_0|^{1/2} \big\}$,
for otherwise the desired estimate follows directly from (\ref{2.2}).
Consider $u_\varep (x,t)=G_\varep (x,t; y_0, s_0)$.
Since $\left( \partial_t + \mathcal{L}_\varep\right) u_\varep =0$
in $\Omega_{c\, r_0} (x_0, t_0)$ and $u_\varep =0$ on $\partial\Omega \times \mathbb{R}$,
we may apply Theorem \ref{Holder-0} to obtain
$$
\aligned
|u_\varep (x_0, t_0)| & =|u_\varep (x_0, t_0) -u_\varep (z_0, t_0)|\\
 &\le C \left( \frac{\delta (x_0)}{r_0} \right)^\sigma  \| u_\varepsilon\| _{L^\infty (\Omega_{c r_0} (z_0, t_0))}\\
&\le C\, \frac{\big[\delta(x_0)\big]^\sigma}{r_0^{d+\sigma}},
\endaligned
$$
where $z_0\in \partial\Omega$ and $|x_0-z_0|=\delta (x_0)$.
This gives (\ref{2.3}). Estimates (\ref{2.4})-(\ref{2.5}) follow in the same manner. Finally, estimate (\ref{2.6}) follows from Theorem \ref{Holder-0} and estimate \eqref{2.2}.
\end{proof}

\begin{theorem}\label{Holder-2}
  Assume that $\Omega$ and $A$ satisfy the same conditions as in Theorem \ref{boundary holder}. Suppose that
  \begin{equation*}
  (\partial_t+\mathcal{L}_\varepsilon) u_\varepsilon=F \quad \text{in}\quad\Omega_T ~~~~~\text{and}~~~ u_\varepsilon=0~~~~~\text{on}\quad \partial_p\Omega_T.
  \end{equation*}
Then
  \begin{equation}\label{5.1}
    \|u_\varepsilon\|_{\Lambda^{\alpha,\alpha/2}(\Omega_T)}\leq
     C\sup_{\substack{(x,t)\in\Omega_T\\0<r<r_0}}r^{2-\alpha}\dashint_{\Omega_{r}(x,t)}|F|
   \end{equation}
for any $0<\alpha<1$, where $r_0={\rm diam}(\Omega)$ and $C$ depends only on $A,\Omega$ and $\alpha$.
\end{theorem}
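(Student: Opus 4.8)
The plan is to copy the proof of Theorem~\ref{Holder-1} almost verbatim, replacing the fundamental solution $\Gamma_\varepsilon$ by the Green's matrix $G_\varepsilon$ and using the size estimates \eqref{2.2} and \eqref{2.6} in place of \eqref{f-6} and \eqref{f-7}. Since $u_\varepsilon=0$ on $\partial_p\Omega_T$, the Green's representation gives $u_\varepsilon(x,t)=\int_0^t\int_\Omega G_\varepsilon(x,t;y,s)F(y,s)\,dy\,ds$ for $(x,t)\in\Omega_T$, where I used that $G_\varepsilon(x,t;y,s)=0$ for $s\ge t$. I abbreviate the quantity on the right of \eqref{5.1} by $M=\sup_{(y,s)\in\Omega_T,\,0<\ell<r_0}\ell^{2-\alpha}\dashint_{\Omega_\ell(y,s)}|F|$, so that $\dashint_{\Omega_\ell(y,s)}|F|\le M\ell^{\alpha-2}$ for every $\ell<r_0$. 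Given $(x,t),(x',t')\in\Omega_T$ set $d=|x-x'|+|t-t'|^{1/2}$. First I would note that $\|u_\varepsilon\|_{L^\infty(\Omega_T)}\le CMr_0^\alpha$: this is the estimate $|u_\varepsilon(x,t)|\le\int|G_\varepsilon(x,t;y,s)|\,|F|$ treated by the dyadic decomposition below but summing over parabolic scales $\le r_0$ instead of scales $\le d$, which converges because $\alpha>0$. Consequently, when $d\ge r_0/8$ one has $|u_\varepsilon(x,t)-u_\varepsilon(x',t')|\le 2\|u_\varepsilon\|_{L^\infty(\Omega_T)}\le CMr_0^\alpha\le CMd^\alpha$, so from now on I may assume $d<r_0/8$.

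Assuming $d<r_0/8$, I would write $u_\varepsilon(x,t)-u_\varepsilon(x',t')=\int_{\Omega_T}[G_\varepsilon(x,t;y,s)-G_\varepsilon(x',t';y,s)]F(y,s)\,dy\,ds$ and split the integral over $D=\{(y,s)\in\Omega_T:|x-y|+|t-s|^{1/2}<4d\}$ and over its complement. On $D$, which also contains $\{(y,s):|x'-y|+|t'-s|^{1/2}<2d\}$, I bound the two Green's functions separately by the pointwise estimate \eqref{2.2} and decompose $D$ into the parabolic dyadic shells $\{2^{-j}(2d)\le|x-y|+|t-s|^{1/2}<2^{1-j}(2d)\}$, $j\ge0$; on such a shell $|G_\varepsilon|\lesssim(2^{-j}d)^{-d}$ and the shell has measure $\lesssim(2^{-j}d)^{d+2}$, so its contribution is $\lesssim(2^{-j}d)^2\dashint_{\Omega_{2^{2-j}d}(x,t)}|F|\le M(2^{-j}d)^\alpha$ (here $2^{2-j}d<r_0$), and summing over $j\ge0$ converges because $\alpha>0$ and gives $\le CMd^\alpha$; the analogous sum centred at $(x',t')$ handles the second Green's function. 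On $\Omega_T\setminus D$ I fix once and for all an exponent $\sigma$ with $\alpha<\sigma<1$, use the H\"older bound \eqref{2.6} with exponent $\sigma$, and decompose into the shells $\{2^k(4d)\le|x-y|+|t-s|^{1/2}<2^{k+1}(4d)\}$, $k\ge0$; as long as the shell radius stays below $r_0$, its contribution is $\lesssim d^\sigma(2^kd)^{2-\sigma}\dashint_{\Omega_{2^{k+3}d}(x,t)}|F|\le Md^\sigma(2^kd)^{\alpha-\sigma}=Md^\alpha 2^{k(\alpha-\sigma)}$, and summing over $k\ge0$ converges precisely because $\sigma>\alpha$ and gives $\le CMd^\alpha$. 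Adding everything up yields $|u_\varepsilon(x,t)-u_\varepsilon(x',t')|\le CMd^\alpha$, which together with the case $d\ge r_0/8$ gives \eqref{5.1}.

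The delicate point, and essentially the only thing beyond a transcription of the dyadic estimate for the $f$-term in the proof of Theorem~\ref{Holder-1} (cf.\ \eqref{66}--\eqref{67}), is the large-time tail: the size bounds \eqref{2.2} and \eqref{2.6} are recorded only for $|t-s|^{1/2}<{\rm diam}(\Omega)$, so the shells in the outer sum above — and likewise in the $L^\infty$ bound — can be used only up to radius $r_0$, and the remaining contribution of pairs $(y,s)$ with $|t-s|\gtrsim r_0^2$ must be absorbed separately. This is where I would invoke the exponential-in-time decay of the parabolic Green's function on the bounded domain $\Omega$ (a consequence of the Poincar\'e inequality and an energy estimate), which controls that tail by $CMd^\alpha$ with a constant independent of $T$; with this input the proof closes. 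The only other thing to watch is that the two competing geometric series — the inner one summing towards scale $0$ and the outer one towards scale $r_0$ — both converge, which is exactly why the choice $\alpha<\sigma<1$ is available, and it is available thanks to the hypothesis $\alpha<1$.
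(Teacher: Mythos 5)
Your proof follows the same structure as the paper's: Green's representation $u_\varepsilon(x,t)=\int_0^T\int_\Omega G_\varepsilon(x,t;y,s)F(y,s)\,dy\,ds$, splitting $\Omega_T$ into $\Omega_{4d}(x,t)$ and its complement, using the size estimate \eqref{2.2} plus a parabolic dyadic decomposition on the near part, and the H\"older estimate \eqref{2.6} with an exponent $\sigma$ (the paper uses $\beta$) strictly between $\alpha$ and $1$ on the far part. The dyadic computations match \eqref{5.4}--\eqref{5.5}.

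The one genuine difference is your observation about the large-time tail, and you are right to flag it: the estimates \eqref{2.2} and \eqref{2.6} are stated only for $|t-s|^{1/2}<\mathrm{diam}(\Omega)=r_0$, whereas the constant in \eqref{5.1} is asserted to be independent of $T$. The paper's written proof applies \eqref{2.6} across all of $\Omega_T\setminus Q_{4\ell}(x,t)$ without comment, so strictly as written the outer geometric series in \eqref{5.5} is not justified once the shell radius exceeds $r_0$. Your proposed fix is the correct one: combining exponential-in-time decay of $G_\varepsilon$ on the bounded domain (Poincar\'e plus energy estimate) with the boundary H\"older estimate of Theorem \ref{Holder-0} applied at spatial scale $\sim r_0$ gives, for $t-s\geq r_0^2$,
\[
|G_\varepsilon(x,t;y,s)-G_\varepsilon(x',t';y,s)|\leq C\,\frac{d^\sigma}{r_0^{d+\sigma}}\,e^{-c(t-s)/r_0^2},
\]
and the time slabs $\Omega\times(t-(j+1)r_0^2,\,t-jr_0^2)$ each carry $\int|F|\leq CMr_0^{d+\alpha}$, so the tail sums to $CMd^\sigma r_0^{\alpha-\sigma}\leq CMd^\alpha$ because $d<r_0$ and $\sigma>\alpha$. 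So your proof is correct, coincides with the paper's on the main dyadic argument, and is more careful than the paper on the $T$-uniformity.
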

\begin{proof}
 By using the Poisson's representation formula we have
\begin{equation*}
    u_\varepsilon(x,t)=\int_0^T\int_\Omega G_\varepsilon(x,t;y,s)F(y,s)dyds.
\end{equation*}
Hence, we have that for any $(x,t), (x^\prime, t^\prime) \in \Omega_T$,
\begin{equation}\label{5.3}
    |u_\varepsilon(x,t)-u_\varepsilon(x^\prime, t^\prime)|\leq \int_0^T\int_\Omega |G_\varepsilon(x,t;y,s)-G_\varepsilon(x^\prime, t^\prime;y,s)||F(y,s)|dyds.
\end{equation}
Let $\ell=|x-x^\prime|+|t-t^\prime|^{1/2}$ and decompose $\Omega_T=[\Omega_T\backslash Q_{4\ell}(x,t)]\cup \Omega_{4\ell}(x,t)$. We use (\ref{2.2}) to estimate the integral $\int_{\Omega_{4\ell}(x,t)} |G_\varepsilon(x,t;y,s)-G_\varepsilon(x^\prime, t^\prime;y,s)||F(y,s)|dyds$, which gives
\begin{align}\label{5.4}
   &\int_{\Omega_{4\ell}(x,t)}|G_\varepsilon(x,t;y,s)-G_\varepsilon(x^\prime, t^\prime;y,s)||F(y,s)|dyds\nonumber
   \\&\leq C\int_{\Omega_{5\ell}(x^\prime,t^\prime)} \frac{|F(y,s)|}{(|x^\prime-y|+|t^\prime-s|^{1/2})^d} dyds+\int_{\Omega_{4\ell}(x,t)} \frac{|F(y,s)|}{(|x-y|+|t-s|^{1/2})^d}dyds\nonumber
   \\& \leq C \ell^\alpha \sup_{\substack{(x,t)\in\Omega_T\\0<r<r_0}}r^{2-\alpha}\dashint_{\Omega_{r}(x,t)}|F|,
\end{align}
where we have used the dyadic decomposition
$$\Omega_{r}(x,t)=\cup_{k=0}^\infty \big\{\Omega_{2^{-k}r}(x,t)\backslash\Omega_{2^{-k-1}r}(x,t)\big\}.$$

For the integral $\int_{\Omega_T\backslash Q_{4\ell}(x,t)} |G_\varepsilon(x,t;y,s)-G_\varepsilon(x^\prime, t^\prime;y,s)||F(y,s)|dyds$, we choose $\beta\in(\alpha, 1)$ and use (\ref{2.6}) to obtain that
\begin{align}\label{5.5}
   &\int_{\Omega_T\backslash Q_{4\ell}(x,t)}|G_\varepsilon(x,t;y,s)-G_\varepsilon(x^\prime, t^\prime;y,s)||F(y,s)|dyds\nonumber
   \\&\leq C\ell^{\beta}\int_{\Omega_T\backslash Q_{4\ell}(x,t)}\frac{|F(y,s)|}{(|x-y|+|t-s|^{1/2})^{d+\beta}}dyds\nonumber
   \\&\leq C\ell^{\alpha}\sup_{\substack{(x,t)\in\Omega_T\\0<r<r_0}} r^{2-\alpha}\dashint_{\Omega_r(x,t)}|F|.
\end{align}
Combining \eqref{5.3} with \eqref{5.4} and \eqref{5.5}, we obtain \eqref{5.1}.
\end{proof}

\begin{theorem}\label{Holder-3}
Assume that $\Omega$ and $A$ satisfy the same conditions as in Theorem \ref{boundary holder}. Suppose that
  \begin{equation*}
  (\partial_t+\mathcal{L}_\varepsilon) u_\varepsilon= {\rm div}(f)\quad \text{in}\quad\Omega_T ~~~~~\text{and}~~~ u_\varepsilon=0~~~~~\text{on}\quad \partial_p\Omega_T.
  \end{equation*}
  Then
  \begin{equation*}
    \|u_\varepsilon\|_{\Lambda^{\alpha,\alpha/2}(\Omega_T)}\leq
     C_\alpha\sup_{\substack{(x,t)\in\Omega_T\\0<r<r_0}}r^{1-\alpha}
     \left(\dashint_{\Omega_{r}(x,t)}|f|^2\right)^{1/2}
   \end{equation*}
for any $0<\alpha<1$, where $r_0={\rm diam}(\Omega)$ and $C_\alpha$ depends only on $A$, $\Omega$ and $\alpha$.
\end{theorem}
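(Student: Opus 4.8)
The plan is to imitate the proof of Theorem~\ref{Holder-2}, replacing the pointwise size bound \eqref{2.2} of the Green's matrix by $L^2$-averaged bounds on its gradient in the pole variable, just as \eqref{f-8} and \eqref{f-9} are the gradient counterparts of \eqref{f-6} and \eqref{f-7} in the interior case. Since $u_\varepsilon$ solves $(\partial_t+\mathcal{L}_\varepsilon)u_\varepsilon={\rm div}(f)$ in $\Omega_T$ with $u_\varepsilon=0$ on $\partial_p\Omega_T$, the Poisson representation formula followed by integration by parts in $y$ (the boundary terms vanishing because $G_\varepsilon(x,t;\cdot,\cdot)=0$ on $\partial\Omega\times\mathbb{R}$) gives
\begin{equation*}
u_\varepsilon(x,t)=-\int_0^T\!\!\int_\Omega\nabla_yG_\varepsilon(x,t;y,s)\cdot f(y,s)\,dyds ,
\end{equation*}
so that for any $(x,t),(x',t')\in\Omega_T$,
\begin{equation*}
|u_\varepsilon(x,t)-u_\varepsilon(x',t')|\leq\int_0^T\!\!\int_\Omega\bigl|\nabla_y\{G_\varepsilon(x,t;y,s)-G_\varepsilon(x',t';y,s)\}\bigr|\,|f(y,s)|\,dyds .
\end{equation*}

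First I would establish the two gradient estimates for $G_\varepsilon$. Recall that $G_\varepsilon^*(y,s;x,t)=G_\varepsilon^{t}(x,t;y,s)$ solves the adjoint equation in the variables $(y,s)$ away from $(x,t)$ and vanishes on $\partial\Omega\times\mathbb{R}$. Covering the intersection of a dyadic shell $\{R\le|x-y|+|t-s|^{1/2}\le2R\}$ with $\Omega_T$ by interior parabolic cubes and boundary half-cubes of size comparable to $R$, and invoking the interior and boundary Caccioppoli inequalities together with \eqref{2.2}, one obtains
\begin{equation*}
\Bigl(\,\dashint_{\{R\le|x-y|+|t-s|^{1/2}\le2R\}\cap\Omega_T}|\nabla_yG_\varepsilon(x,t;y,s)|^2\,dyds\Bigr)^{1/2}\le\frac{C}{R^{d+1}} ,
\end{equation*}
and, in the same way, from \eqref{2.6} one obtains, whenever $|x-x'|+|t-t'|^{1/2}\le R/2$,
\begin{equation*}
\Bigl(\,\dashint_{\{R\le|x-y|+|t-s|^{1/2}\le2R\}\cap\Omega_T}\bigl|\nabla_y\{G_\varepsilon(x,t;y,s)-G_\varepsilon(x',t';y,s)\}\bigr|^2\,dyds\Bigr)^{1/2}\le\frac{C_\sigma(|x-x'|+|t-t'|^{1/2})^\sigma}{R^{d+1+\sigma}}
\end{equation*}
for any $\sigma\in(0,1)$. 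These are the boundary analogues of \eqref{f-8} and \eqref{f-9}.

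With $\ell=|x-x'|+|t-t'|^{1/2}$, I would then split $\Omega_T=[\Omega_T\setminus Q_{4\ell}(x,t)]\cup\Omega_{4\ell}(x,t)$, exactly as in the proof of Theorem~\ref{Holder-2}. On the near part $\Omega_{4\ell}(x,t)$ I estimate $\nabla_yG_\varepsilon(x,t;\cdot)$ and $\nabla_yG_\varepsilon(x',t';\cdot)$ separately: decomposing into the dyadic shells of radius $2^{-k}\ell$, the Cauchy--Schwarz inequality and the first gradient bound give for the $k$-th shell a contribution $\le C\,(2^{-k}\ell)\bigl(\dashint_{\Omega_{2^{-k}\ell}}|f|^2\bigr)^{1/2}\le C\,(2^{-k}\ell)^\alpha\sup_{0<r<r_0}r^{1-\alpha}\bigl(\dashint_{\Omega_r}|f|^2\bigr)^{1/2}$, which is summable in $k$. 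On the far part $\Omega_T\setminus Q_{4\ell}(x,t)$ I fix $\beta\in(\alpha,1)$ and decompose into the dyadic shells of radius $2^k\ell$ with $k\ge2$; the Cauchy--Schwarz inequality and the second gradient bound with $\sigma=\beta$ give for the $k$-th shell a contribution $\le C\,\ell^\beta(2^k\ell)^{1-\beta}\bigl(\dashint_{\Omega_{2^k\ell}}|f|^2\bigr)^{1/2}\le C\,\ell^\alpha 2^{k(\alpha-\beta)}\sup_{0<r<r_0}r^{1-\alpha}\bigl(\dashint_{\Omega_r}|f|^2\bigr)^{1/2}$, and since $\alpha-\beta<0$ the sum over $k\ge2$ converges. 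Adding the two parts and dividing by $\ell^\alpha$ yields the asserted bound on $\|u_\varepsilon\|_{\Lambda^{\alpha,\alpha/2}(\Omega_T)}$.

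The hard part will be the two gradient estimates for $G_\varepsilon$ near $\partial\Omega$: one needs to know that the intersection of each dyadic shell with $\Omega_T$ can genuinely be covered by cubes on which either the interior or the boundary Caccioppoli inequality applies to the function $G_\varepsilon(\cdot;y,s)$ (using that it vanishes on $\partial\Omega\times\mathbb{R}$ and on $\Omega\times\{0\}$), and that \eqref{2.2} and \eqref{2.6} are available in the full range $|t-s|^{1/2}<{\rm diam}(\Omega)$ --- which is precisely the generality in which they were proved above. Everything else is the same dyadic bookkeeping already used in Theorems~\ref{Holder-1} and~\ref{Holder-2}.
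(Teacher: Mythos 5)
Your proof is correct and follows essentially the same route as the paper: start from the Poisson representation (with integration by parts in $y$), derive $L^2$-averaged bounds of orders $R^{-d-1}$ and $\ell^\sigma R^{-d-1-\sigma}$ for $\nabla_y G_\varepsilon$ and its difference via Caccioppoli plus \eqref{2.2} and \eqref{2.6}, and then run the near/far dyadic decomposition as in Theorem~\ref{Holder-2}. The paper states only the Caccioppoli step explicitly and delegates the rest to "the same manner as Theorem~\ref{Holder-2}"; you have simply filled in that bookkeeping, and it checks out.
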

\begin{proof}
Notice that
  \begin{equation*}
    |u_\varepsilon(x,t)-u_\varepsilon(x^\prime, t^\prime)|\leq \int_0^T\int_\Omega |\nabla_y(G_\varepsilon(x,t;y,s)-G_\varepsilon(x^\prime, t^\prime;y,s))||f(y,s)|dyds.
\end{equation*}
Using Caccioppoli's inequality we have
\begin{equation}\label{5.8}
\aligned
&\left(\dashint_{R\leq |x-y|+|t-s|^{1/2}\leq 2R}|\nabla_y G_\varepsilon(x,t;y,s)|^2dyds\right)^{1/2}\\&
\qquad\qquad\leq \frac{C}{R}\left(\dashint_{R/2\leq |x-y|+|t-s|^{1/2}\leq 3R}|G_\varepsilon(x,t;y,s)|^2dyds\right)^{1/2},
\endaligned
\end{equation}
and
\begin{equation*}
\aligned
&\bigg(\dashint_{R\leq |x-y|+|t-s|^{1/2}\leq 2R}|\nabla_y\{G_\varepsilon(x,t;y,s)- G_\varepsilon(x^\prime,t^\prime;y,s)\}|^2dyds\bigg)^{1/2}
\\&\qquad\qquad\leq \frac{C}{R}\bigg(\dashint_{R/2\leq |x-y|+|t-s|^{1/2}\leq 3R}|G_\varepsilon(x,t;y,s)- G_\varepsilon(x^\prime,t^\prime;y,s)|^2dyds\bigg)^{1/2},
\endaligned
\end{equation*}
where $|x-x^\prime|+|t-t^\prime|^{1/2}<\frac{1}{4}\left(|x-y|+|t-s|^{1/2}\right)$. The rest of the proof follows from the same manner as Theorem \ref{Holder-2}.
\end{proof}

\begin{theorem}\label{Holder-4}
Assume that $\Omega$ and $A$ satisfy the same conditions as in Theorem \ref{boundary holder}. Suppose that $g\in X^\alpha(S_T)$ for some $\alpha\in(0,1)$. Let $u_\varepsilon \in
L^2(0,T;H^1(\Omega))$ be the weak solution to
\begin{equation*}
  (\partial_t+\mathcal{L}_\varepsilon) u_\varepsilon= 0\quad \text{in}~~\Omega_T, ~~~~ u_\varepsilon=g~~\text{on}~~S_T ~~~\text{and}~~ ~~u_\varepsilon=0~~\text{on}~~\Omega\times\{t=0\}.
  \end{equation*}
Then we have
\begin{equation*}
\|u_\varepsilon\|_{\Lambda^{\alpha,\alpha/2}(\Omega_T)}\leq C \|g\|_{X^\alpha(S_T)}
\end{equation*}
for any $0<\alpha<1$, where $C$ depends only on $\alpha$, $\Omega$ and $A$.
\end{theorem}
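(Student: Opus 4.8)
The plan is to deduce Theorem \ref{Holder-4} from the two boundary H\"older estimates for zero lateral data that have just been proved, Theorem \ref{Holder-2} (right-hand side $F$) and Theorem \ref{Holder-3} (right-hand side ${\rm div}(f)$), by subtracting from $u_\varepsilon$ a suitable extension of the boundary datum $g$.

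First I would establish a parabolic extension lemma. Since $\Omega$ is $C^{1,\eta}$ and, by the compatibility implicit in the problem, $g$ vanishes on the corner $\partial\Omega\times\{0\}$, there is an extension $G\in C^{\infty}\big(\Omega\times(0,T]\big)\cap C\big(\overline{\Omega_T}\big)$ with $G=g$ on $S_T$, $G=0$ on $\Omega\times\{0\}$, satisfying
$$
\|G\|_{\Lambda^{\alpha,\alpha/2}(\Omega_T)}+\|\nabla G\|_{L^\infty(\Omega_T)}+\sup_{(x,t)\in\Omega_T}\big[\delta_P(x,t)\big]^{1-\alpha}\big(|\partial_tG(x,t)|+|\nabla^2G(x,t)|\big)\le C\,\|g\|_{X^\alpha(S_T)},
$$
where $\delta_P(x,t)={\rm dist}\big((x,t),\partial_p\Omega_T\big)$ denotes the parabolic distance to the parabolic boundary. (Using $g=0$ on $\partial\Omega\times\{0\}$ together with the definition of $X^\alpha$, one first checks $\|g\|_{L^\infty(S_T)}+\|\nabla_{\tan}g\|_{L^\infty(S_T)}\le C(T)\|g\|_{X^\alpha(S_T)}$, so the $X^\alpha$ seminorm controls the full parabolic $C^{1+\alpha}$ norm of $g$; then $G$ is produced as in the elliptic case --- a partition of unity, local flattening of $\partial\Omega$, and mollification of the flattened datum at the parabolic scale $\delta_P$ --- and the pointwise bounds come from differentiating the mollification.)

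Next set $v_\varepsilon=u_\varepsilon-G$, so that $v_\varepsilon\in L^2(0,T;H^1(\Omega))$ solves
$$
(\partial_t+\mathcal{L}_\varepsilon)v_\varepsilon=F+{\rm div}(f)\quad\text{in}\ \Omega_T,\qquad v_\varepsilon=0\quad\text{on}\ \partial_p\Omega_T,
$$
with $F=-\partial_tG\in L^1(\Omega_T)$ and $f=A(x/\varepsilon,t/\varepsilon^2)\nabla G\in L^\infty(\Omega_T)$. From the extension bounds, $\|f\|_{L^\infty(\Omega_T)}\le C\|g\|_{X^\alpha(S_T)}$, hence $\sup_{(x,t)\in\Omega_T,\,0<r<r_0}r^{1-\alpha}\big(\dashint_{\Omega_r(x,t)}|f|^2\big)^{1/2}\le Cr_0^{1-\alpha}\|g\|_{X^\alpha(S_T)}$. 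For $F$ the point is the elementary bound $\dashint_{\Omega_r(x_0,t_0)}[\delta_P]^{\alpha-1}\le Cr^{\alpha-1}$ (split according to whether $\delta_P(x_0,t_0)\ge 2r$ or not, and use $\alpha-1\in(-1,0)$ and the smoothness of $\partial\Omega$), which gives $\sup_{(x,t)\in\Omega_T,\,0<r<r_0}r^{2-\alpha}\dashint_{\Omega_r(x,t)}|F|\le Cr_0\|g\|_{X^\alpha(S_T)}$. Splitting $v_\varepsilon=v_\varepsilon^{(1)}+v_\varepsilon^{(2)}$ according to the two parts of the right-hand side and invoking Theorems \ref{Holder-2} and \ref{Holder-3} yields $\|v_\varepsilon\|_{\Lambda^{\alpha,\alpha/2}(\Omega_T)}\le C\|g\|_{X^\alpha(S_T)}$; combined with $\|G\|_{\Lambda^{\alpha,\alpha/2}(\Omega_T)}\le C\|g\|_{X^\alpha(S_T)}$ and the triangle inequality, this proves the theorem.

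The hard part is the extension lemma: producing $G$ whose time derivative (and full spatial Hessian) blow up at the sharp parabolic rate $[\delta_P]^{\alpha-1}$ uniformly on $\Omega_T$, including near the corner $\partial\Omega\times\{0\}$, while preserving the parabolic scaling of the estimates and only using that $\partial\Omega\in C^{1,\eta}$; everything else is bookkeeping with the theorems already proved. As an alternative avoiding the $\partial_tG$ term altogether, one may take $G$ to be the solution of the \emph{heat} equation in $\Omega_T$ with lateral data $g$ and zero initial data: then $(\partial_t+\mathcal{L}_\varepsilon)G={\rm div}\big((I-A(x/\varepsilon,t/\varepsilon^2))\nabla G\big)$ is already in divergence form, $v_\varepsilon=u_\varepsilon-G$ solves a problem to which only Theorem \ref{Holder-3} need be applied, and the cost is shifted to a global Schauder (Lipschitz) bound $\|\nabla G\|_{L^\infty(\Omega_T)}\le C\|g\|_{X^\alpha(S_T)}$ for the heat equation on the $C^{1,\eta}$ domain $\Omega$. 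One could instead represent $u_\varepsilon$ through the Poisson kernel associated with $G_\varepsilon$ and argue directly from \eqref{2.2}--\eqref{2.6} and \eqref{5.8}, using the reproducing property $\int_{S_T}P_\varepsilon=1$ to subtract a constant and exploit the H\"older continuity of $g$; but the extension route reuses the theorems just established and is shorter.
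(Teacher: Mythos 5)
Your approach is a genuinely different route from the paper's. The paper also uses the heat extension $v$ as a comparison function, but it does \emph{not} try to reduce to Theorems~\ref{Holder-2} and~\ref{Holder-3}: it works only with the weak bounds $|\nabla v(x,t)|\le C\delta(x)^{\alpha-1}$ and $|\partial_t v(x,t)|\le C\delta(x)^{\alpha-2}$ (which follow from $\|v\|_{\Lambda^{\alpha,\alpha/2}}\le C$ plus \emph{interior} estimates for the heat equation), writes $u_\varepsilon-v$ via the Green's function representation, and estimates the two resulting integrals $I$ and $II$ directly, splitting into $E_r^1=\Omega_T\cap Q_r(x,t)$ and its complement and using the dyadic decomposition together with \eqref{2.2}--\eqref{2.6}, \eqref{5.8}. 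The reason the paper does not simply plug $f=(I-A^\varepsilon)\nabla v$ into Theorem~\ref{Holder-3} --- which is essentially your ``alternative route'' --- is that the $\delta^{\alpha-1}$ blow-up is \emph{not} enough: for $\alpha\le\tfrac12$ one has $2(\alpha-1)\le -1$, so $\dashint_{\Omega_r}\delta^{2(\alpha-1)}$ diverges and the Morrey hypothesis of Theorem~\ref{Holder-3} fails. Thus the step in your proposal that really carries the weight is the bound $\|\nabla G\|_{L^\infty(\Omega_T)}\le C\|g\|_{X^\alpha(S_T)}$, which is strictly stronger than anything the paper needs or proves about its comparison function. That bound is plausible (via mollification at scale $\delta_P$ after flattening, or via boundary Schauder for the heat equation on a $C^{1,\eta}$ domain exploiting the implicit compatibility $g(\cdot,0)=0$), and if established your argument goes through, with the $F=-\partial_t G$ term handled by Theorem~\ref{Holder-2} exactly as you describe. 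So: your decomposition buys the reuse of Theorems~\ref{Holder-2}--\ref{Holder-3} and a shorter final paragraph, at the cost of a nontrivial extension lemma; the paper's route avoids requiring a bounded gradient for the extension (and hence the compatibility/Schauder machinery) at the cost of the more delicate two-term Green's function estimate. Your closing remark about representing $u_\varepsilon$ directly through the Poisson kernel is not what the paper does either --- the paper always subtracts $v$ first and only then invokes the Green's function.
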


\begin{proof}
We may assume that $\|g\|_{X^\alpha(S_T)}=1$. Let $v$ be the weak solution of heat equation $(\partial_t-\Delta) v=0$ in $\Omega_T$, $v=g$ on $S_T$ and $v=0$ on $\Omega\times\{t=0\}$. It is well known that $\|v\|_{\Lambda^{\alpha,\alpha/2}(\Omega_T)}\leq C \|g\|_{X^\alpha(S_T)}=C$. In view of the interior estimate of heat equation we obtain that for any $(x,t)\in\Omega_T$,
\begin{equation}\label{4.2}
|\nabla v(x,t)|\leq C \delta(x)^{\alpha-1}.
\end{equation}
Since $(\partial_t+\mathcal{L_\varepsilon})(u_\varepsilon-v)=-(\partial_t+\mathcal{L_\varepsilon})v$ in $\Omega_T$ and $u_\varepsilon-v=0$ on $\partial_p\Omega_T$, we obtain
\begin{equation}\label{4.3}
\aligned
u_\varepsilon(x,t)-v(x,t)&=-\int_0^t\int_\Omega \nabla_y G_\varep (x,t;y,s)\cdot A(\frac{y}{\varep},\frac{s}{\varep^2})\nabla_y v(y,s)dyds
\\&~~~-\int_0^t\int_\Omega G_\varep (x,t;y,s)\partial_s v(y,s)dyds
\\&=I+II.
\endaligned
\end{equation}

We claim that for any $(x,t)\in\Omega_T$,
\begin{equation}\label{est-I}
	|I|\leq Cr^\alpha.
\end{equation}
To show \eqref{est-I}, note that by \eqref{4.2}, we have
\begin{align*}
|I|\leq C\int_0^t\int_\Omega |\nabla_y G_\varep (x,t;y,s)|\delta(y)^{\alpha-1}dyds.
\end{align*}
Fix $(x,t)\in\Omega_T$ and let $r=\delta(x)/2$. Write $\Omega_T =E_r^1 \cup E_r^2$, where
$E_r^1 =\Omega_T\cap Q_r (x,t)$
and
$E_r^2=\Omega_T\setminus Q_r (x,t)$.
By H\"older's inequality and (\ref{5.8}) we have
\begin{align}\label{2.23}
& \iint_{E^1_r}|\nabla_y G_\varepsilon(x,t; y,s)|\delta(y)^{\alpha-1}\, dyds\nonumber
\\& \leq C r^{\alpha-1}\sum_{j=0}^{\infty}\int_{2^{-j-1}r\leq |x-y|+|t-s|^{1/2}\leq 2^{-j}r}
|\nabla_y G_\varepsilon(x,t; y,s)|\, dyds \nonumber
\\& \leq C r^{\alpha-1}\sum_{j=0}^{\infty}(2^{-j}r)^{\frac{d+2}{2}}\bigg\{\int_{2^{-j-1}r\leq |x-y|+|t-s|^{1/2}\leq 2^{-j}r}
|\nabla_y G_\varepsilon(x,t; y,s)|^2\, dyds\bigg\}^{1/2} \nonumber
\\& \leq C r^{\alpha-1}\sum_{j=0}^{\infty}(2^{-j}r)^{\frac{d+2}{2}-1} \bigg\{\int_{2^{-j-2}r\leq |x-y|+|t-s|^{1/2}\leq 2^{-j+1}r}
|G_\varepsilon(x,t; y,s)|^2\, dyds\bigg\}^{1/2},
\end{align}
where the decomposition
$$E^1_r=\cup_{k=0}^\infty \{(y,s)\in\Omega_T:2^{-k-1}r\leq |x-y|+|t-s|^{1/2}\leq 2^{-k}r\}$$
was used in the first inequality.
This, together with the size estimate (\ref{2.2}), implies that the left-hand side of (\ref{2.23}) is bounded by
\begin{align}\label{2.23-1}
& C\, r^{\alpha-1}\sum_{j=0}^{\infty}(2^{-j}r)^{d+1}\bigg\{
\dashint_{2^{-j-2}r\leq |x-y|+|t-s|^{1/2}\leq 2^{-j+1}r}(|x-y|+|t-s|^{1/2})^{-2d}\, dyds\bigg\}^{1/2}\nonumber
\\ \leq &  C\, r^{\alpha-1}\sum_{j=0}^{\infty}(2^{-j}r)^{d+1}(2^{-j-2}r)^{-d}\nonumber
\\ \leq& C r^\alpha.
\end{align}

To estimate the integral on $E_r^2$, decomposing $E_r^2$ as a nonoverlapping union of cubes Q with the property that $3Q\subset\Omega_T\backslash(x,t)$ and $\ell(Q)\sim{\rm dist}(Q,\partial_p\Omega_T)$. Then by using Caccioppoli's inequality and \eqref{2.5}, a similar argument as in \cite{Shen-2014} for elliptic systems yields that
\begin{equation}\label{2.24}
\iint_{E_r^2}|\nabla_y G_\varepsilon(x,t;y,s)|\delta(y)^{\alpha-1}\, dyds
\leq C\,  r^\alpha.
\end{equation}
Combining (\ref{2.23-1}) with (\ref{2.24}), we obtain \eqref{est-I}.

To treat $II$, we first use the interior estimate of heat equation to obtain that for any $(x,t)\in\Omega_T$,
\begin{equation*}
|\partial_t v(x,t)|\leq C \delta(x)^{\alpha-2}.
\end{equation*}
Thus we have
\begin{align*}
|II|\leq C\int_0^t\int_\Omega |G_\varep (x,t;y,s)|\delta(y)^{\alpha-2}dyds.
\end{align*}
Note that by \eqref{2.3} we have
\begin{align}\label{4.13}
 \iint_{E^1_r}&|G_\varepsilon(x,t; y,s)|\delta(y)^{\alpha-2}\, dyds\nonumber\\
& \leq C r^{\alpha-2}\iint_{\Omega_r (x,t)}
\frac{[\delta(x)]^\alpha}{(|x-y|+|t-s|^{1/2})^{d+\alpha}}\, dyds \nonumber
\\& \leq C\, r^{\alpha-2} r^{\alpha}\sum_{j=0}^{\infty}\int_{2^{-j-1}r\leq |x-y|+|t-s|^{1/2}\leq 2^{-j}r}
\frac{1}{(|x-y|+|t-s|^{1/2})^{d+\alpha}}\, dyds \nonumber
\\& \leq C\, r^{2\alpha-2}\sum_{j=0}^{\infty}\left(2^{-j-1}r\right)^{-(d+\alpha)}\left(2^{-j}r\right)^{d+2}\nonumber
\\&\leq Cr^\alpha.
\end{align}

To estimate the integral on $E_r^2$, in view of (\ref{2.5}), we see that
\begin{align*}
& \iint_{E_r^2}|G_\varepsilon(x,t;y,s)|\delta(y)^{\alpha-2}\, dyds
\leq C\,  r^{\alpha_1}\iint_{E_r^2}
\frac{ [\delta(y)]^{\alpha+\alpha_2-2}}{(|x-y|+|t-s|^{1/2})^{d+\alpha_1+\alpha_2}}\, dyds.
\end{align*}
A similar argument as in \eqref{2.24} yields that
\begin{align}\label{4.14}
	& \iint_{E_r^2}|G_\varepsilon(x,t;y,s)|\delta(y)^{\alpha-2}\, dyds
	\leq C  r^{\alpha}.
\end{align}
Combining (\ref{4.13}) with (\ref{4.14}), we obtain
$$|II|\leq Cr^\alpha.$$
This, together with \eqref{4.3} and \eqref{est-I}, leads that
\begin{align}\label{4.7}
	|u_\varepsilon(x,t)-v(x,t)|\leq C[\delta(x)]^{\alpha}
\end{align}
for any $(x,t)\in\Omega_T$. It follows that
\begin{align}\label{4.8}
\|u_\varepsilon\|_{L^\infty(\Omega_T)}\leq C\|v\|_{L^\infty(\Omega_T)}+C\leq C.
\end{align}

To prove the desired estimate $|u_\varep(x,t)-u_\varep(y,s)|\leq C(|x-y|+|t-s|^{1/2})^\alpha$, we consider three cases: (1) $|x-y|+|t-s|^{1/2}<\frac{1}{4}\delta(x)$; (2) $|x-y|+|t-s|^{1/2}<\frac{1}{4}\delta(y)$; (3) $|x-y|+|t-s|^{1/2}\geq \max\{\frac{1}{4}\delta(x),\frac{1}{4}\delta(y)\}$. For the first case, it follows from interior H\"older estimates that
\begin{align*}
|u_\varep(x,t)-u_\varep(y,s)|&\leq C(|x-y|+|t-s|^{1/2})^\alpha \|u_\varep\|_{L^\infty(Q_{\delta(x)}(x,t))}\nonumber
\\&\leq C(|x-y|+|t-s|^{1/2})^\alpha,
\end{align*}
where we have used \eqref{4.8} in the last inequality.
The second case will be handled in the same manner. For the third case, we observe that
\begin{align*}
|u_\varep(x,t)-u_\varep(y,s)|&\leq |u_\varep(x,t)-v(x,t)|+|v(x,t)-v(y,s)|+|v(y,s)-u_\varep(y,s)|\nonumber
\\&\leq C\left\{[\delta(x)]^\alpha+(|x-y|+|t-s|^{1/2})^\alpha+[\delta(y)]^\alpha\right\}\nonumber
\\&\leq C(|x-y|+|t-s|^{1/2})^\alpha,
\end{align*}
where we have used \eqref{4.7} in the second inequality. Thus we complete the proof.
\end{proof}

We are ready to give the proof of Theorem \ref{boundary holder}.

\begin{proof}[Proof of Theorem \ref{boundary holder}]
  In view of Theorem \ref{Holder-2}, Theorem \ref{Holder-3} and Theorem \ref{Holder-4}, Theorem \ref{boundary holder} follows directly.
\end{proof}

\section{\bf Approximate correctors and approximate dual correctors}

In this section we carry out a quantitative study of the approximate correctors $\chi_S=\chi^\beta_{S,j}$ for $\partial_t+\mathcal{L}_\varepsilon$. For $1\leq j\leq d$ and $1\leq \beta \leq m $, consider
\begin{equation}\label{6.10}
\partial_s \chi^\beta_{S,j}+\mathcal{L}_1(\chi^\beta_{S,j})+S^{-2}\chi^\beta_{S,j}=-\mathcal{L}_1(P^\beta_j)~~{\rm in}~~\R^{d+1},
\end{equation}
where $P_j^\beta(y)=y_j(0,...0,1,0,...0)$ with $1$ in the $\beta-$th position.

\begin{lemma}\label{6.1}
Let $f,g\in L_{{\rm loc}}^2(\mathbb{R}^{d+1})$. Assume that
\begin{equation*}
\sup_{(x,t)\in \mathbb{R}^{d+1}}\int_{Q_1(x,t)}(|f|^2+|g|^2)<\infty.
\end{equation*}
Then for $S>0$, there exists a unique $u\in L^2_{{\rm loc}}(\mathbb{R}; H^1_{\rm loc}(\mathbb{R}^d))$ such that
\begin{equation}\label{6.2}
\partial_t u-{\rm div}(A(x,t)\nabla u)+ S^{-2}u =f+{\rm div}(g)~~{\rm in} ~~\mathbb{R}^{d+1},
\end{equation}
and
\begin{equation}\label{6.2-1}
\sup_{(x,t)\in \mathbb{R}^{d+1}}\int_{Q_1(x,t)}(|u|^2+|\nabla u|^2)<\infty.
\end{equation}
Moreover, the solution $u$ satisfies the estimate
\begin{equation}\label{6.3}
\sup_{(x,t)\in \mathbb{R}^{d+1}}\dashint_{Q_S(x,t)}(|\nabla u|^2+S^{-2}|u|^2)\leq C\sup_{(x,t)\in \mathbb{R}^{d+1}}\dashint_{Q_S(x,t)}(S^{2}|f|^2+|g|^2),
\end{equation}
where $C$ depends only on $d, m$ and $\mu$.
\end{lemma}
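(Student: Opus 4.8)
The plan is to derive all three assertions of the lemma — existence, uniqueness, and the scale‑$S$ bound \eqref{6.3} — from one device: a Caccioppoli inequality at a scale $\rho\ge S$, combined with a hole‑filling iteration in which the zeroth‑order term $S^{-2}u$ produces a geometrically small factor at each step. This term is what is missing from the true corrector equation \eqref{corrector}, and it is precisely what makes the global problem \eqref{6.2} well posed.

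\medskip
\noindent\textbf{The estimate \eqref{6.3}.} Assume first that $u$ solves \eqref{6.2} and satisfies \eqref{6.2-1}. Fix $(x_0,t_0)\in\mathbb{R}^{d+1}$ and $\rho\ge S$, and choose $\varphi\in C_0^\infty(Q_{2\rho}(x_0,t_0))$ with $\varphi\equiv1$ on $Q_\rho(x_0,t_0)$ and $|\nabla\varphi|^2+|\partial_t\varphi|\le C\rho^{-2}$. Testing \eqref{6.2} against $u\varphi^2$, integrating the term $\int(\partial_t u)u\varphi^2$ by parts in $t$ (no boundary contribution, since $\varphi$ is compactly supported), using \eqref{ellipticity}, absorbing $\int|f|\,|u|\varphi^2$ into $S^{-2}\int|u|^2\varphi^2$, and absorbing the two $g$‑terms into $\int|\nabla u|^2\varphi^2$ and — here $\rho\ge S$ is used — into the data, I obtain a recursion of the type
\[
\bigl(S^{-2}+C_1\rho^{-2}\bigr)\int_{Q_\rho}|u|^2\ \le\ C_2\,\rho^{-2}\int_{Q_{2\rho}}|u|^2\ +\ C_3\int_{Q_{2\rho}}\bigl(S^{2}|f|^2+|g|^2\bigr),
\]
together with $\int_{Q_\rho}(|\nabla u|^2+S^{-2}|u|^2)\le C\rho^{-2}\int_{Q_{2\rho}}|u|^2+C\int_{Q_{2\rho}}(S^2|f|^2+|g|^2)$. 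Taking $\rho=2^jS$, the contraction factor is $\theta_j=C_2/(4^j+C_1)$, which decays like $4^{-j}$. By \eqref{6.2-1}, $\int_{Q_{2^kS}}|u|^2\le C\bigl(1+(2^kS)^{d+2}\bigr)$ — a polynomial in $2^k$ — while $\int_{Q_{2^kS}}(S^2|f|^2+|g|^2)\le C(2^kS)^{d+2}\,G$, where $G:=\sup_{(y,s)}\dashint_{Q_S(y,s)}(S^2|f|^2+|g|^2)$. Iterating the recursion from scale $S$ upward, the product $\prod_{j<k}\theta_j$ decays super‑exponentially in $k$ and hence dominates the polynomial growth of $\int_{Q_{2^kS}}|u|^2$, so the boundary term $\bigl(\prod_{j<k}\theta_j\bigr)\int_{Q_{2^kS}}|u|^2\to0$; the accumulated forcing contributions sum to $C\,S^{d+4}G$. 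Dividing by $|Q_S|\sim S^{d+2}$ and feeding this back into the gradient estimate yields \eqref{6.3}.

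\medskip
\noindent\textbf{Existence and uniqueness.} For existence I would truncate: on the cylinder $Q^{(R)}:=B_R(0)\times(-R^2,R^2)$, solve \eqref{6.2} with zero data on the parabolic boundary. Since $\int_{Q^{(R)}}(|f|^2+|g|^2)\le CR^{d+2}\sup_{(y,s)}\int_{Q_1(y,s)}(|f|^2+|g|^2)<\infty$, the right‑hand side lies in $L^2(Q^{(R)})+\mathrm{div}\,L^2(Q^{(R)})$, and standard linear parabolic theory provides a unique weak solution $u_R\in L^2(-R^2,R^2;H_0^1(B_R))$ with $\partial_t u_R\in L^2(-R^2,R^2;H^{-1}(B_R))$. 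Extending $u_R$ by zero and running the Caccioppoli/hole‑filling scheme of the previous paragraph — now stopping the iteration at the largest scale $2^kS\lesssim R$ and closing it with the crude global energy bound $\int_{Q^{(R)}}|u_R|^2\le CS^2R^{d+2}G$ — gives, because the leading product decays super‑exponentially in $\log(R/S)$, a bound $\dashint_{Q_S(x_0,t_0)}(|\nabla u_R|^2+S^{-2}|u_R|^2)\le CG+o_R(1)$ with $C$ independent of $R$, valid once $R$ is large (depending on $(x_0,t_0)$). A diagonal extraction then produces $u\in L^2_{\mathrm{loc}}(\mathbb{R};H^1_{\mathrm{loc}}(\mathbb{R}^d))$ solving \eqref{6.2} on $\mathbb{R}^{d+1}$ and satisfying \eqref{6.2-1}, hence also \eqref{6.3}. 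Uniqueness follows by applying the same iteration to $w=u_1-u_2$, which solves \eqref{6.2} with $f=g=0$ and satisfies \eqref{6.2-1}: the recursion reduces to $\int_{Q_{2^jS}}|w|^2\le\theta_j\int_{Q_{2^{j+1}S}}|w|^2$, and letting $j\to\infty$ against the polynomial growth of $\int_{Q_{2^jS}}|w|^2$ forces $\int_{Q_S(x_0,t_0)}|w|^2=0$ for every $(x_0,t_0)$, i.e.\ $w\equiv0$.

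\medskip
The main obstacle is the hole‑filling iteration: one must check that the factor $\theta_j\sim4^{-j}$ — the gain bought by the massive term $S^{-2}u$ — decays fast enough to swallow both the polynomial‑in‑$2^j$ growth of the energy of $u$ (or of $u_R$) on $Q_{2^jS}$ coming from \eqref{6.2-1} (respectively, from the global bound) and the geometric accumulation of the forcing terms, and that the surviving sum is exactly $C\,G$. Arranging that the constants in the truncated problems are genuinely independent of $R$ is the bookkeeping that requires the most care.
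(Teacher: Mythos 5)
Your argument is correct and takes a genuinely different route from the paper. The paper (after rescaling to $S=1$) establishes an exponentially weighted global energy estimate by testing \eqref{6.2} against $u\,e^{\lambda\xi}$ with $\xi=\xi_\varepsilon=|x|/(1+\varepsilon|x|)$: the time-independence of $\xi$ kills the term $\int u\,\partial_t u\,e^{\lambda\xi}$, the zeroth-order term gives $\int e^{\lambda\xi}|u|^2$ on the left, and the commutator $\lambda\int A\nabla u\cdot u\,\nabla\xi\,e^{\lambda\xi}$ is absorbed for $\lambda$ small; letting $\varepsilon\to0$ yields $\int e^{\lambda|x|}(|\nabla u|^2+|u|^2)\le C\int e^{\lambda|x|}(|f|^2+|g|^2)$. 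Existence for $L^2_{\rm loc}$ data and \eqref{6.3} are then obtained, following Pozhidaev--Yurinski\u{\i}, by decomposing $f,g$ into pieces supported in unit cylinders, solving for each piece, and summing — the exponential spatial decay of each piece's solution dominates the polynomial number of cylinders at distance $R$. Uniqueness is proved by a finite Caccioppoli iteration followed by $L\to\infty$. Your proof replaces the weighted estimate by a direct dyadic Caccioppoli iteration in which the massive term supplies absolute $L^2$ control: $S^{-2}\int_{Q_\rho}|u|^2\le C\rho^{-2}\int_{Q_{2\rho}}|u|^2+C\int_{Q_{2\rho}}(S^2|f|^2+|g|^2)$, so at $\rho=2^jS$ the contraction factor is $\theta_j\sim(S/\rho)^2=4^{-j}$; the super-exponential decay of $\prod_{j<k}\theta_j$ then beats the $(2^kS)^{d+2}$-growth coming either from \eqref{6.2-1} or from the crude energy bound on the truncated domain, and the data contributions telescope to $C\sup\dashint_{Q_S}(S^2|f|^2+|g|^2)$. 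Both proofs exploit precisely the same mechanism — the term $S^{-2}u$ generates decay — but your iteration avoids the exponential weight and the explicit data decomposition, and is therefore somewhat more self-contained, at the cost of the bookkeeping you flag at the end. Two minor remarks. First, the name ``hole-filling'' is a slight misnomer: you are not filling the annular gap in $Q_{2\rho}\setminus Q_\rho$ (the classical device), but rather exploiting that $S^{-2}\gg\rho^{-2}$ once $\rho\gg S$; and the displayed recursion with $C_1\rho^{-2}$ on the left is not quite what a Caccioppoli-plus-hole-filling step produces — but the conclusion $\theta_j\lesssim 4^{-j}$ is correct and is all that the argument uses. Second, when passing $R\to\infty$ in the truncated existence step, the uniform $L^2_{\rm loc}$ bounds on $u_R$ and $\nabla u_R$ already suffice to pass to the weak limit in the linear weak formulation (Aubin--Lions is not needed here), so the diagonal extraction closes the argument as you indicate.
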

\begin{proof}
  By rescaling we may assume $S=1$. The proof is similar to the elliptic case \cite{Shen-2014}, we give a proof here for the sake of completeness.

  For the existence, note that if $g,h\in L^2(\mathbb{R}^{d+1})$, then the equation \eqref{6.2} with $S=1$
is solvable in $L^2(\mathbb{R};H^{1}(\mathbb{R}^{d}))$, and the solution $u$ satisfies
\begin{equation}\label{6.4}
-\int_{\mathbb{R}^{d+1}}  u \partial_t\phi +\int_{\mathbb{R}^{d+1}}  A\nabla u\cdot\nabla \phi+ \int_{\mathbb{R}^{d+1}}  u\phi=\int_{\mathbb{R}^{d+1}}  f\phi-\int_{\mathbb{R}^{d+1}}  g\cdot\nabla \phi
\end{equation}
for any $\phi\in C^\infty(\mathbb{R}^{d+1})$.
For $f,g \in L^2(\mathbb{R}^{d+1})$ with compact support, let $u\in L^2(\mathbb{R};H^{1}(\mathbb{R}^{d}))$ be the solution of \eqref{6.2} with $S=1$. We claim that there exist $\theta$ and $C$, such that for any $\lambda<\theta$,
\begin{equation}\label{claim}
\int_{\mathbb{R}^{d+1}}e^{\lambda |x|}(|\nabla u|^2+|u|^2)\leq C \int_{\mathbb{R}^{d+1}}e^{\lambda |x|}(|f|^2+|g|^2).
\end{equation}
We assume the claim (\ref{claim}) for a while. Then for $f,g\in L_{{\rm loc}}^2(\mathbb{R}^{d+1})$, the solvability of (\ref{6.2})  in $L^2_{{\rm loc}}(\mathbb{R};H^1_{{\rm loc}}(\mathbb{R}^d))$ and estimate (\ref{6.3}) with $S=1$ follow from the same line as in \cite{PY} for elliptic case. We omit the details.

It remains to prove (\ref{claim}). Taking $\phi=u(x,t) e^{\lambda \xi}$ as test function in (\ref{6.4}), where $\xi=\xi_\varepsilon=\frac{|x|}{1+\varepsilon|x|}$ and $\lambda>0$ is sufficiently small. To deal with the first two terms in the left-hand side of (\ref{6.4}), one has
\begin{equation}\label{6.7}
\int_{\mathbb{R}^{d+1}} u \partial_t u e^{\lambda \xi}=\frac{1}{2}\int_{\mathbb{R}^{d+1}}  \partial_t (|u|^2) e^{\lambda \xi}=0,
\end{equation}
and
\begin{align}\label{6.8}
\int_{\mathbb{R}^{d+1}}  A(x,t)\nabla u\cdot\nabla \phi&= \int_{\mathbb{R}^{d+1}}  A(x,t)\nabla u\cdot\nabla u e^{\lambda \xi}+\lambda\int_{\mathbb{R}^{d+1}} A(x,t) \nabla u \cdot u\nabla \xi  e^{\lambda \xi} \nonumber
\\& \geq \mu\int_{\mathbb{R}^{d+1}}  |\nabla u|^2 e^{\lambda \xi}+\lambda\int_{\mathbb{R}^{d+1}} A(x,t) \nabla u \cdot u\nabla \xi e^{\lambda \xi},
\end{align}
where the ellipticity condition \eqref{ellipticity} was used in the last step.  Moreover, by Cauchy's inequality, we see that
\begin{equation}\label{6.8-1}
\left|\int_{\mathbb{R}^{d+1}} g\cdot\nabla \phi\right|\leq C\int_{\mathbb{R}^{d+1}} e^{\lambda \xi}|g|^2+\varepsilon \lambda^2 \int_{\mathbb{R}^{d+1}} |u|^2 e^{\lambda \xi} |\nabla \xi|^2+\frac{\mu}{4}\int_{\mathbb{R}^{d+1}}  |\nabla u|^2 e^{\lambda \xi}.
\end{equation}
The first term in the right-hand side of \eqref{6.4} could be handled easily. Finally, one may notice that $\xi_\varepsilon\to |x|$ as $\varepsilon\to 0$. This, together with (\ref{6.4}), (\ref{6.7}), (\ref{6.8}) and (\ref{6.8-1}), implies that (\ref{claim}) holds for $\lambda$ sufficiently small.

For the uniqueness, assume that $u\in L^2_{{\rm loc}}(\mathbb{R};H^1_{{\rm loc}}(\mathbb{R}^d))$ is a weak solution to $\partial_t u-{\rm div}(A\nabla u)+u=0$ in $\mathbb{R}^{d+1}$. Then for any $L\geq1$, by Caccioppoli's inequality,
\begin{equation*}
\int_{Q_L(0,0)}(|u|^2+|\nabla u|^2)\leq \frac{C}{L^2}\int_{Q_{2L}(0,0)}|u|^2,
\end{equation*}
which implies that
\begin{equation*}
\int_{Q_L(0,0)}|u|^2\leq \frac{C}{L^{2(d+2)}}\int_{Q_{2^{d+2}L}(0,0)}|u|^2
\end{equation*}
for any $L\geq 1$. However, \eqref{6.2-1} implies that $\int_{Q_{2^{d+2}L}(0,0)}|u|^2\leq CL^{d+2}$. Hence $$\int_{Q_L(0,0)}|u|^2\leq \frac{C}{L^{d+2}}\quad {\rm for~any}~~L\geq 1. $$
By letting $L\rightarrow \infty$, we obtain that $u\equiv0$. Thus we complete the proof.
\end{proof}

As a direct consequence of Lemma \ref{6.1}, we have
\begin{corollary}\label{6.9}
For $S> 0$, let $\chi_S=(\chi^\beta_{S,j})$ be the weak solution of \eqref{6.10}. Then we have
\begin{equation*}
\sup_{(x,t)\in\R^{d+1}}\dashint_{Q_S(x,t)}(|\nabla\chi_S|^2+S^{-2}|\chi_S|^2)\leq C,
\end{equation*}
where $C$ depends only on $d$, $m$ and $\mu$.
\end{corollary}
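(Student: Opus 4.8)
The plan is to obtain Corollary~\ref{6.9} as an immediate specialization of Lemma~\ref{6.1}, the work consisting only in putting the approximate corrector equation \eqref{6.10} into the form \eqref{6.2}. First I would rewrite its right-hand side. Since $\mathcal{L}_1=-{\rm div}(A(x,t)\nabla)$ and the $\gamma$-component of $P_j^\beta$ is $(P_j^\beta)^\gamma(y)=y_j\,\delta^{\gamma\beta}$, the matrix $\nabla P_j^\beta$ is the constant elementary matrix $(\nabla P_j^\beta)_k^\gamma=\delta_{kj}\delta^{\gamma\beta}$, so that
\[
-\mathcal{L}_1(P_j^\beta)={\rm div}\big(A(x,t)\nabla P_j^\beta\big)={\rm div}(g),\qquad g=g_{S,j}^\beta,\quad g_i^\alpha=a_{ij}^{\alpha\beta}(x,t),
\]
with $j,\beta$ fixed. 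Hence \eqref{6.10} is precisely equation \eqref{6.2} with this choice of $g$ and with $f=0$.

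Second, I would check the hypothesis of Lemma~\ref{6.1}. Because $A$ is bounded, with $\|A\|_{L^\infty(\mathbb{R}^{d+1})}$ controlled in terms of $\mu$ (and $d,m$), we have $|g|\le C$ pointwise, and therefore
\[
\sup_{(x,t)\in\mathbb{R}^{d+1}}\int_{Q_1(x,t)}\big(|f|^2+|g|^2\big)=\sup_{(x,t)\in\mathbb{R}^{d+1}}\int_{Q_1(x,t)}|g|^2\le C\,|Q_1|<\infty.
\]
Lemma~\ref{6.1} then produces a unique $u\in L^2_{\rm loc}(\mathbb{R};H^1_{\rm loc}(\mathbb{R}^d))$ solving \eqref{6.10} and satisfying \eqref{6.2-1}; by the uniqueness clause of that lemma, this $u$ is exactly the weak solution $\chi_S=(\chi_{S,j}^\beta)$ referred to in the statement of the corollary.

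Finally, the asserted bound is just estimate \eqref{6.3} evaluated for the present $f$ and $g$: since $f=0$ and $|g|\le C$,
\[
\sup_{(x,t)\in\mathbb{R}^{d+1}}\dashint_{Q_S(x,t)}\big(|\nabla\chi_S|^2+S^{-2}|\chi_S|^2\big)\le C\sup_{(x,t)\in\mathbb{R}^{d+1}}\dashint_{Q_S(x,t)}\big(S^2|f|^2+|g|^2\big)\le C,
\]
with $C$ depending only on $d,m,\mu$, as claimed. There is no genuine obstacle in this argument — all the analytic content (the weighted energy estimate, the uniqueness via Caccioppoli, and the passage from finite-measure data to the $L^2_{\rm loc}$ setting) already lives inside Lemma~\ref{6.1}; the only point requiring a little care is the routine bookkeeping above, namely identifying the source term as ${\rm div}(g)$ with $g=A\nabla P_j^\beta$ and noting that $g$ is bounded with a constant expressible through the ellipticity data, so that the final constant carries the stated dependence.
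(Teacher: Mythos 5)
Your proof is correct and matches the paper's approach exactly: the paper states Corollary \ref{6.9} ``as a direct consequence of Lemma \ref{6.1},'' and your argument simply spells out that consequence by identifying $g=A\nabla P_j^\beta$ (bounded by the ellipticity constant), taking $f=0$, verifying the finiteness hypothesis, and reading off estimate \eqref{6.3}.
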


\begin{lemma}\label{6.11}
Let $(x,t), (y, s), (z,\tau)\in\R^{d+1}$. Then
\begin{equation*}
\aligned
\bigg(\dashint_{Q_S(x,t)}&|\nabla\chi_S(\lambda+y, \theta+s)-\nabla\chi_S(\lambda+z, \theta+\tau)|^2d\lambda d\theta\bigg)^{\frac{1}{2}}\\&~~~~\leq C\|A(\cdot+y, \cdot+s)-A(\cdot+z, \cdot+\tau)\|_{L^\infty(\R^{d+1})},
\endaligned
\end{equation*}
and
\begin{equation*}
\aligned
\bigg(\dashint_{Q_S(x,t)}&|\chi_S(\lambda+y, \theta+s)-\chi_S(\lambda+z, \theta+\tau)|^2d\lambda d\theta\bigg)^{\frac{1}{2}}\\
&~~~~\leq C\,S\|A(\cdot+y, \cdot+s)-A(\cdot+z, \cdot+\tau)\|_{L^\infty(\R^{d+1})},
\endaligned
\end{equation*}
where $C$ depends only on $d$, $m$ and $\mu$.
\end{lemma}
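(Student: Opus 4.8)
The plan is to reduce the lemma to the a priori estimate \eqref{6.3} of Lemma~\ref{6.1} by translating the corrector equation and subtracting. First I would record that the right-hand side of \eqref{6.10} is $-\mathcal{L}_1(P_j^\beta)=\mathrm{div}\!\big(A(x,t)\nabla P_j^\beta\big)$, where $\nabla P_j^\beta$ is a constant matrix with $|\nabla P_j^\beta|=1$, so that $\chi_S$ solves an equation of the form \eqref{6.2} with $f=0$ and $g=A\nabla P_j^\beta$. Writing $Y=(y,s)$ and $Z=(z,\tau)$ and setting $v(\lambda,\theta)=\chi_S(\lambda+y,\theta+s)$ and $w(\lambda,\theta)=\chi_S(\lambda+z,\theta+\tau)$, translation invariance shows that $v$ solves
\[
\partial_\theta v-\mathrm{div}(A_Y\nabla v)+S^{-2}v=\mathrm{div}(A_Y\nabla P_j^\beta)\quad\text{in }\mathbb{R}^{d+1},
\]
and $w$ solves the analogous equation with $A_Z$ in place of $A_Y$, where $A_Y(\lambda,\theta):=A(\lambda+y,\theta+s)$ and $A_Z(\lambda,\theta):=A(\lambda+z,\theta+\tau)$ are again bounded and satisfy \eqref{ellipticity} with the same $\mu$.

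Next I would subtract. A short computation, using $\mathrm{div}(A_Y\nabla v)-\mathrm{div}(A_Z\nabla w)=\mathrm{div}(A_Y\nabla(v-w))+\mathrm{div}((A_Y-A_Z)\nabla w)$, shows that $\phi:=v-w$ solves
\[
\partial_\theta\phi-\mathrm{div}(A_Y\nabla\phi)+S^{-2}\phi=\mathrm{div}\!\Big((A_Y-A_Z)\big(\nabla w+\nabla P_j^\beta\big)\Big)\quad\text{in }\mathbb{R}^{d+1},
\]
that is, an equation of type \eqref{6.2} with coefficient $A_Y$, with $f=0$, and with $g=(A_Y-A_Z)(\nabla w+\nabla P_j^\beta)$. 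Since $\chi_S$, hence $v$ and $w$, lies in $L^2_{\mathrm{loc}}(\mathbb{R};H^1_{\mathrm{loc}}(\mathbb{R}^d))$ and satisfies the uniform local bound coming from Corollary~\ref{6.9} (together with Caccioppoli's inequality), $\phi$ satisfies \eqref{6.2-1} and $g$ satisfies the hypothesis of Lemma~\ref{6.1}; by the uniqueness assertion of that lemma, $\phi$ is precisely the solution to which \eqref{6.3} applies.

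Then I would apply \eqref{6.3} to $\phi$ with coefficient $A_Y$ to obtain
\[
\sup_{(x,t)\in\mathbb{R}^{d+1}}\dashint_{Q_S(x,t)}\big(|\nabla\phi|^2+S^{-2}|\phi|^2\big)\le C\sup_{(x,t)\in\mathbb{R}^{d+1}}\dashint_{Q_S(x,t)}|g|^2.
\]
To estimate the right-hand side I would use the pointwise bound $|g|\le\|A_Y-A_Z\|_{L^\infty(\mathbb{R}^{d+1})}\big(|\nabla w|+1\big)$ together with $\sup_{(x,t)}\dashint_{Q_S(x,t)}|\nabla w|^2\le C$, which follows from Corollary~\ref{6.9} after translating the averaging cube; this gives $\sup_{(x,t)}\dashint_{Q_S(x,t)}|g|^2\le C\|A_Y-A_Z\|_{L^\infty}^2$. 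Reading off the $|\nabla\phi|^2$ term yields the first inequality, and reading off the $S^{-2}|\phi|^2$ term, then multiplying by $S^2$ and taking square roots, yields the second, since $\nabla\phi(\lambda,\theta)=\nabla\chi_S(\lambda+y,\theta+s)-\nabla\chi_S(\lambda+z,\theta+\tau)$ and similarly for $\phi$, and $\|A_Y-A_Z\|_{L^\infty}=\|A(\cdot+y,\cdot+s)-A(\cdot+z,\cdot+\tau)\|_{L^\infty(\mathbb{R}^{d+1})}$. All constants depend only on $d$, $m$ and $\mu$.

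The argument is mostly bookkeeping once Lemma~\ref{6.1} and Corollary~\ref{6.9} are available; the only points requiring genuine care are verifying that the difference $\phi$ lies in the uniqueness class \eqref{6.2-1}, so that the a priori estimate \eqref{6.3} is legitimately applicable to it, and checking that the translated coefficient $A_Y$ inherits the same ellipticity bound, so that the constant in \eqref{6.3} does not deteriorate under the translation.
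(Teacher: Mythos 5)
Your proposal is correct and follows essentially the same route as the paper: form the difference of the two translated correctors, observe it solves an equation of type \eqref{6.2} with coefficient $A(\cdot+y,\cdot+s)$, zero interior source, and divergence source $(A(\cdot+y,\cdot+s)-A(\cdot+z,\cdot+\tau))(\nabla\chi_S(\cdot+z,\cdot+\tau)+\nabla P_j^\beta)$, then apply the a priori estimate \eqref{6.3} of Lemma~\ref{6.1} together with Corollary~\ref{6.9}. The extra remarks about verifying membership in the uniqueness class \eqref{6.2-1} and preservation of the ellipticity constant under translation are sound and make explicit two points the paper leaves implicit.
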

\begin{proof}
Let $u(\lambda,\theta)=\chi^\beta_{S,j}(\lambda+y, \theta+s)$, $v(\lambda,\theta)=\chi^\beta_{S,j}(\lambda+z, \theta+\tau)$ and $w=u-v$. Then direct computation shows that
\begin{align*}
\partial_t w&-\text{div}(A(\lambda+y, \theta+s)\nabla w)+S^{-2}w\nonumber
\\&=\partial_t (u-v)-\text{div}(A(\lambda+y, \theta+s)\nabla u)\nonumber
+\text{div}(A(\lambda+y, \theta+s)\nabla v)+S^{-2}(u-v)\nonumber
\\&~~~+\text{div}(A(\lambda+z, \theta+\tau)\nabla v)-\text{div}(A(\lambda+z, \theta+\tau)\nabla v)\nonumber
\\&=\text{div}\big\{[A(\lambda+y, \theta+s)-A(\lambda+z, \theta+\tau)](\nabla P^\beta_j+\nabla v)\big\}.
\end{align*}
Hence, in view of Theorem \ref{6.1} and Corollary \ref{6.9}, we see that
\begin{align*}
\dashint_{Q_S(x,t)}|\nabla w|^2+S^{-2}|w|^2\leq &C\sup_{(x,t)\in\R^{d+1}}\dashint_{Q_S(x,t)}|A(\lambda+y, \theta+s)-A(\lambda+z, \theta+\tau)|^2\nonumber
\\&+C\sup_{(x,t)\in\R^{d+1}}\dashint_{Q_S(x,t)}|A(\lambda+y, \theta+s)-A(\lambda+z, \theta+\tau)|^2|\nabla v|^2\nonumber
\\ \leq &C\|A(\cdot+y, \cdot+s)-A(\cdot+z, \cdot+\tau)\|^2_{L^\infty(\R^{d+1})}\nonumber\\
&~~\cdot\left\{1+\sup_{(x,t)\in\R^{d+1}}\dashint_{Q_S(x,t)}|\nabla v|^2\right\}\nonumber
\\ \leq &C\|A(\cdot+y, \cdot+s)-A(\cdot+z, \cdot+\tau)\|^2_{L^\infty(\R^{d+1})}.
\end{align*}
Thus we complete the proof.
\end{proof}

\begin{remark}
Since $A$ is uniformly almost-periodic, for any $\varepsilon>0$, the set
	$$
	\left\{(z,\tau)\in\mathbb{R}^{d+1}:\|A(\cdot+z, \cdot+\tau)-A(\cdot, \cdot)\|_{L^\infty(\R^{d+1})}<\varepsilon\right\}
	$$
is relatively dense in $\mathbb{R}^{d+1}$. For $f\in L^2_{\rm loc}(\mathbb{R}^{d+1})$, define
$$\|f\|_{W^2}=\limsup_{L\rightarrow \infty}\sup_{(x,t)\in\mathbb{R}^{d+1}}\left(\dashint_{Q_L(x,t)}|f|^2\right)^\frac{1}{2}.
$$
By Lemma \ref{6.11}, we know that
\begin{equation*}
	\aligned
&\|\nabla\chi_S(\cdot+z, \cdot+\tau)-\nabla\chi_S(\cdot, \cdot)\|_{W^2}
+S^{-1}\|\chi_S(\cdot+z, \cdot+\tau)-\chi_S(\cdot, \cdot)\|_{W^2}	\\
&\leq C\|A(\cdot+z, \cdot+\tau)-A(\cdot, \cdot)\|_{L^\infty(\R^{d+1})},
	\endaligned
\end{equation*}
which implies that
\begin{equation*}
	\aligned
	\big\{(z,\tau)\in\mathbb{R}^{d+1}:\|\nabla\chi_S(\cdot+z, \cdot+\tau)-\nabla\chi_S(\cdot, \cdot)\|_{W^2}
	+S^{-1}\|\chi_S(\cdot+z, \cdot+\tau)-\chi_S(\cdot, \cdot)\|_{W^2}<\varepsilon\big\}
	\endaligned
\end{equation*}
is relatively dense in $\mathbb{R}^{d+1}$. Hence $\nabla \chi_S$ and $\chi_S$ are limits of sequences of trigonometric polynomials w.r.t. the seminorm $\|\cdot\|_{W^2}$. As a result, $\chi_S,\,\nabla \chi_S\in B^2(\mathbb{R}^{d+1})$ for any $S>0$.

Since $\nabla \chi_S\in B^2(\mathbb{R}^{d+1})$ and $A$ is uniformly almost periodic, we see that $-{\rm div}(A\nabla\chi_S)\in \mathcal{K}^*$. This, together with $\chi_S\in B^2(\mathbb{R}^{d+1})$ and \eqref{6.10}, implies that $\partial_t\chi_S\in \mathcal{K}^*+B^2(\mathbb{R}^{d+1})$. Moreover,
\begin{equation}\label{time-der}
\aligned
\|\partial_t\chi_S\|_{\mathcal{K}^*+B^2}&\leq \|{\rm div}[A(\nabla P+\nabla\chi_S)]\|_{\mathcal{K}^*}+S^{-2}\|\chi_S\|_{B^2}\\
&\leq C\|\nabla P+\nabla\chi_S\|_{B^2}+S^{-2}\|\chi_S\|_{B^2}.
\endaligned
\end{equation}
\end{remark}
\begin{lemma}\label{Lem-6.4}
Let $u_S=\chi_{S,j}^\beta$ for some $ S\geq 0$, $1\leq j\leq d$ and $1\leq \beta\leq m$. Then
\begin{equation}\label{6.19-1}
-\left\langle \partial_t v\cdot u_S\right\rangle+\left\langle a^{\alpha\gamma}_{ik}\frac{\partial u^\gamma_S}{\partial x_k}\frac{\partial v^\alpha}{\partial x_i}\right\rangle+S^{-2}\left\langle u_S \cdot v\right\rangle=-\left\langle a_{ij}^{\alpha\beta}\frac{\partial v^\alpha}{\partial x_i}\right\rangle,
\end{equation}
where $v=(v^\alpha)\in H^1_{{\rm loc}}(\mathbb{R}^{d+1})$ and $v^\alpha,\nabla v^\alpha, \partial_t v^\alpha\in B^2(\mathbb{R}^{d+1})$.
\end{lemma}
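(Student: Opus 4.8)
The plan is to derive the identity \eqref{6.19-1} from the weak form of the approximate corrector equation \eqref{6.10} by testing against trigonometric polynomials and then passing to the limit. First I would fix $u_S=\chi_{S,j}^\beta$ and recall that, by the remark preceding the lemma, $\chi_S,\nabla\chi_S\in B^2(\mathbb{R}^{d+1})$ and $\partial_t\chi_S\in\mathcal{K}^*+B^2(\mathbb{R}^{d+1})$, with the bound \eqref{time-der}. The weak formulation of \eqref{6.10} written in mean-value form reads
\begin{equation*}
-\langle u_S\cdot\partial_t\varphi\rangle+\langle a_{ik}^{\alpha\gamma}\tfrac{\partial u_S^\gamma}{\partial x_k}\tfrac{\partial\varphi^\alpha}{\partial x_i}\rangle+S^{-2}\langle u_S\cdot\varphi\rangle=-\langle a_{ij}^{\alpha\beta}\tfrac{\partial\varphi^\alpha}{\partial x_i}\rangle
\end{equation*}
for every real trigonometric polynomial $\varphi\in{\rm \tilde{T}rig}(\mathbb{R}^{d+1})$; this is just \eqref{6.2} integrated against $\varphi$, using that the mean value picks out the diagonal Fourier pairing and that $\langle\partial_t u_S\cdot\varphi\rangle=-\langle u_S\cdot\partial_t\varphi\rangle$ for polynomial $\varphi$ (integration by parts has no boundary term in the mean). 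So the identity holds on the dense subclass ${\rm \tilde{T}rig}(\mathbb{R}^{d+1})$.

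Next I would approximate a general admissible test function $v=(v^\alpha)$ with $v^\alpha,\nabla v^\alpha,\partial_t v^\alpha\in B^2(\mathbb{R}^{d+1})$ by a sequence $v_k\in{\rm \tilde{T}rig}(\mathbb{R}^{d+1})$ with $\|v_k-v\|_{B^2}\to0$, $\|\nabla v_k-\nabla v\|_{B^2}\to0$, and $\|\partial_t v_k-\partial_t v\|_{B^2}\to0$ simultaneously. Such a sequence exists: taking the partial Fourier sums of $v$ (truncating the Fourier series) simultaneously approximates $v$, $\nabla v$, $\partial_t v$ in $B^2$, by Parseval, since each of these belongs to $B^2$ and has the corresponding Fourier series with coefficients $v_{\xi,\lambda}$, $i\xi v_{\xi,\lambda}$, $i\lambda v_{\xi,\lambda}$. (One should note $v_{0,\lambda}=0$ is automatic for $\lambda\neq0$ from $\partial_t v\in B^2$ and $v_{0,0}$ drops out of all the derivative terms, so the $\tilde{T}$rig normalization is harmless; alternatively work with ordinary ${\rm Trig}$ and observe the constant mode contributes nothing.) Then pass to the limit in each of the four terms: the middle term converges because $a_{ik}^{\alpha\gamma}\partial_{x_k}u_S^\gamma\in B^2$ (product of a uniformly almost periodic function with a $B^2$ function) paired against $\partial_{x_i}v_k^\alpha\to\partial_{x_i}v^\alpha$ in $B^2$, using the Cauchy–Schwarz inequality $|\langle fg\rangle|\le\|f\|_{B^2}\|g\|_{B^2}$; the $S^{-2}$ term and the right-hand side converge likewise; and the first term converges because $\langle u_S\cdot\partial_t v_k\rangle\to\langle u_S\cdot\partial_t v\rangle$ as $u_S\in B^2$ and $\partial_t v_k\to\partial_t v$ in $B^2$.

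The one point that requires care — and the main obstacle — is justifying the first term of the claimed identity, which is written as $-\langle\partial_t v\cdot u_S\rangle$ rather than $-\langle u_S\cdot\partial_t v\rangle$; these agree since both are real, but the real content is the "integration-by-parts" step $\langle\partial_t u_S\cdot v\rangle=-\langle u_S\cdot\partial_t v\rangle$, where $\partial_t u_S$ only lies in $\mathcal{K}^*+B^2$, not in $B^2$. Here I would invoke exactly the tools flagged in the introduction: for the $\mathcal{K}^*$-component one uses the duality pairing $(\cdot,\cdot)$ between $\mathcal{K}^*$ and $\mathcal{K}$ together with the identity $\langle u\cdot v\rangle=(u,v)$ for $u\in B^2\cap\mathcal{K}^*$, $v\in B^2\cap\mathcal{K}$ (and the fact that $v,u_S\in\mathcal{K}$ here, $\nabla v,\nabla u_S\in B^2$), and for the genuine time derivative one uses that $\partial_t$ is skew-symmetric on its domain, i.e. $(\partial_t f,g)=-(\partial_t g,f)$, so in particular the diagonal pairing is antisymmetric and $\langle\partial_t u_S\cdot v\rangle+\langle u_S\cdot\partial_t v\rangle=0$ once both $\partial_t u_S$ and $\partial_t v$ are interpreted in the appropriate dual space. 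Feeding this back, the polynomial identity becomes exactly \eqref{6.19-1} for $v_k$, and the limit passage above completes the proof. I would close by remarking that the constraint $u_S\in D(\partial_t)$ is not needed — only $\chi_S,\nabla\chi_S\in B^2$ and the weak equation \eqref{6.10} are used — which is the whole point of working with the approximate rather than the exact corrector.
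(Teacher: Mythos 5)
There is a genuine gap at the very first step. You assert that the mean-value identity
\begin{equation*}
-\langle u_S\cdot\partial_t\varphi\rangle+\langle a_{ik}^{\alpha\gamma}\tfrac{\partial u_S^\gamma}{\partial x_k}\tfrac{\partial\varphi^\alpha}{\partial x_i}\rangle+S^{-2}\langle u_S\cdot\varphi\rangle=-\langle a_{ij}^{\alpha\beta}\tfrac{\partial\varphi^\alpha}{\partial x_i}\rangle
\end{equation*}
holds for every $\varphi\in{\rm \tilde{T}rig}(\mathbb{R}^{d+1})$ because it ``is just \eqref{6.2} integrated against $\varphi$.'' But \eqref{6.10} is only known to hold against \emph{compactly supported} $H^1$ test functions, and neither $u_S$ nor a trigonometric polynomial $\varphi$ decays at infinity, so the integral $\int_{\mathbb{R}^{d+1}}\partial_t\varphi\cdot u_S$ and its companions are not defined; there is no sense in which you can literally ``integrate the PDE against $\varphi$.'' Moreover the remark about ``the mean value picking out the diagonal Fourier pairing'' does not help either: because $A(x,t)$ has variable coefficients, the operator $\mathcal{L}_1$ does not diagonalize in the Fourier basis, so the equation cannot be read off mode-by-mode. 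Converting the local weak form of \eqref{6.10} into a mean-value identity is exactly the nontrivial content of the lemma, and your proposal assumes it rather than proves it.

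The paper closes this gap directly: one takes the compactly supported test function $\phi(x,t)=\psi(\varepsilon x,\varepsilon^2 t)v(x,t)$ with $\psi\in C_0^\infty(\mathbb{R}^{d+1})$ in the weak formulation of \eqref{6.10}, performs the parabolic change of variables $x\mapsto x/\varepsilon$, $t\mapsto t/\varepsilon^2$, and then lets $\varepsilon\to 0$; by the definition of the mean value the leading terms converge to the entries of \eqref{6.19-1}, while all the terms carrying a factor of $\varepsilon$ (those involving $\nabla\psi$ or $\partial_t\psi$) vanish in the limit. Note also that this argument works directly for any admissible $v$ with $v,\nabla v,\partial_t v\in B^2(\mathbb{R}^{d+1})$, so your subsequent two paragraphs --- the density/approximation step and the discussion of $\partial_t u_S\in\mathcal{K}^*+B^2$ and of the skew-symmetry of $\partial_t$ --- are not needed for this lemma. (Indeed, the identity as stated only involves $\partial_t v$ paired with $u_S$, which is identical to $u_S\cdot\partial_t v$; no integration-by-parts onto $u_S$ is required. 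The skew-symmetry machinery you invoke is used later in the paper, in the remark following this lemma where $v$ is taken to be an approximation of $\chi_S$ itself, not here.)
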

\begin{proof}
  Let $1\leq j\leq d$ and $1\leq \beta\leq m$. For any $\phi\in H^1(\mathbb{R}^{d+1})$ with compact support, we have
\begin{equation}\label{6.19-2}
-\int_{\mathbb{R}^{d+1}} \partial_t\phi\cdot u_S+\int_{\mathbb{R}^{d+1}} a^{\alpha\gamma}_{ik}\frac{\partial u^\gamma_S}{\partial x_k}\frac{\partial \phi^\alpha}{\partial x_i}+S^{-2}\int_{\mathbb{R}^{d+1}} u_S \cdot \phi=-\int_{\mathbb{R}^{d+1}}a_{ij}^{\alpha\beta}\frac{\partial \phi^\alpha}{\partial x_i}.
\end{equation}

Suppose that $v=(v^\alpha)\in B^2(\mathbb{R}^{d+1})$ and $\nabla v^\alpha,\partial_t v^\alpha\in B^2(\mathbb{R}^{d+1})$. By choosing $\phi(x,t)=\psi(\varepsilon x, \varepsilon^2 t)v(x,t)$ with $\psi\in C_0^\infty(\R^{d+1})$ in \eqref{6.19-2} and using a change of variables, we obtain
\begin{equation*}
\aligned
&\int_{\mathbb{R}^{d+1}} \left(
-\partial_t v\cdot u_S+a_{ik}^{\alpha\beta}\frac{\partial u_S^{\gamma}}{\partial x_k}\frac{\partial v^{\alpha}}{\partial x_i}
+S^{-2} u_S\cdot v\right)\left(\frac{x}{\varepsilon},\frac{t}{\varepsilon^2}\right)\psi(x,t)\\
&-\varepsilon^2\int_{\mathbb{R}^{d+1}} (v\cdot u_S)\left(\frac{x}{\varepsilon},\frac{t}{\varepsilon^2}\right)\partial_t\psi(x,t)
+\varepsilon\int_{\mathbb{R}^{d+1}} \left(a_{ik}^{\alpha\beta}\frac{\partial u_S^{\gamma}}{\partial x_k}v^{\alpha}\right)\left(\frac{x}{\varepsilon},\frac{t}{\varepsilon^2}\right)\frac{\partial \psi}{\partial x_i}(x,t)\\
&=-\varepsilon\int_{\mathbb{R}^{d+1}}\left(a_{ij}^{\alpha\beta}v^{\alpha}\right)
\left(\frac{x}{\varepsilon},\frac{t}{\varepsilon^2}\right)
\frac{\partial \psi}{\partial x_i}(x,t)
-\int_{\mathbb{R}^{d+1}}\left(a_{ij}^{\alpha\beta}\frac{\partial v^{\alpha}}{\partial x_i}\right)\left(\frac{x}{\varepsilon},\frac{t}{\varepsilon^2}\right)
\psi(x,t).
\endaligned
\end{equation*}
Then the desired result follows by letting $\varepsilon\rightarrow 0$.
\end{proof}

\begin{remark}
Let $v$ be a constant in \eqref{6.19-1}, we obtain that $\langle \chi_{S,j}^\beta \rangle=0$. Let $u_S$ be the same as in Lemma \ref{Lem-6.4}. Choosing a sequence of trigonometric polynomials $u_{S,n}\in {\rm \tilde{T}rig}(\R^{d+1})$ such that
 \begin{equation}\label{6.19-11}
  \|u_{S,n}-u_S\|_{B^2}+ \|\nabla u_{S,n}-\nabla u_S\|_{B^2}+\|\partial_t u_{S,n}-\partial_t u_S\|_{\mathcal{K}^*+B^2}\rightarrow 0
  \end{equation}
as $n\rightarrow \infty$. By taking $u_{S,n}$ as the test function in \eqref{6.19-1}, we obtain
\begin{equation}\label{6.19-1-1}
-\left\langle \partial_t u_{S,n}\cdot u_S\right\rangle+\left\langle a^{\alpha\gamma}_{ik}\frac{\partial u^\gamma_S}{\partial x_k}\frac{\partial u_{S,n}^\alpha}{\partial x_i}\right\rangle+S^{-2}\left\langle u_S \cdot u_{S,n}\right\rangle=-\left\langle a_{ij}^{\alpha\beta}\frac{\partial u_{S,n}^\alpha}{\partial x_i}\right\rangle.
\end{equation}
By \eqref{6.19-11}, we see that
\begin{equation}\label{6.19-1-2}
\left\langle a^{\alpha\gamma}_{ik}\frac{\partial u^\gamma_S}{\partial x_k}\frac{\partial u_{S,n}^\alpha}{\partial x_i}\right\rangle+S^{-2}\left\langle u_S \cdot u_{S,n}\right\rangle
\rightarrow\left\langle a^{\alpha\gamma}_{ik}\frac{\partial u^\gamma_S}{\partial x_k}\frac{\partial u_S^\alpha}{\partial x_i}\right\rangle+S^{-2}\left\langle |u_S|^2\right\rangle
\end{equation}
and
\begin{equation}\label{6.19-1-3}
\left\langle a_{ij}^{\alpha\beta}\frac{\partial u_{S,n}^\alpha}{\partial x_i}\right\rangle
\rightarrow\left\langle a_{ij}^{\alpha\beta}\frac{\partial u_S^\alpha}{\partial x_i}\right\rangle
\end{equation}
as $n\rightarrow \infty$.
To handle the first term in the left-hand side of \eqref{6.19-1-1}, write
\begin{equation}\label{est-u,S,n}
\left\langle \partial_t u_{S,n}\cdot u_S\right\rangle=
\left\langle \partial_t u_{S,n}\cdot (u_S-u_{S,n})\right\rangle+\left\langle \partial_t u_{S,n}\cdot u_{S,n}\right\rangle.
\end{equation}
Since $u_{S,n}\in {\rm \tilde{T}rig}(\R^{d+1})$, by \eqref{skew-symmetric} we obtain that
\begin{equation}\label{est-u,S,n-1}
\left\langle \partial_t u_{S,n}\cdot u_{S,n}\right\rangle=0.
\end{equation}
 Moreover, since $u_S\in \mathcal{K}\cap B^2(\mathbb{R}^{d+1})$ and $u_{S,n}\in {\rm \tilde{T}rig}(\mathbb{R}^{d+1})$, if $\partial_t u_{S,n}=v+h$ with $v\in \mathcal{K}^*$ and $h\in B^2(\mathbb{R}^{d+1})$, then $v=\partial_t u_{S,n}-h\in B^2(\mathbb{R}^{d+1})$ and
\begin{equation}\label{est-u,S,n-2}
\aligned
\left|\left\langle \partial_t u_{S,n}\cdot (u_S-u_{S,n})\right\rangle\right|
&\leq \left|\left\langle v\cdot (u_S-u_{S,n})\right\rangle\right|
+\left|\left\langle h\cdot (u_S-u_{S,n})\right\rangle\right|\\
&=\left|\left( v, u_S-u_{S,n}\right)\right|
+\left|\left\langle h\cdot (u_S-u_{S,n})\right\rangle\right|\\
&\leq\|v\|_{\mathcal{K}^*}\|u_S-u_{S,n}\|_{\mathcal{K}}
+\|h\|_{B^2}\|u_S-u_{S,n}\|_{B^2}\\
&\leq\{\|v\|_{\mathcal{K}^*}+\|h\|_{B^2}\}\{\|u_S-u_{S,n}\|_{\mathcal{K}}
+\|u_S-u_{S,n}\|_{B^2}\}.
\endaligned
\end{equation}
Since \eqref{est-u,S,n-2} holds for any $v\in \mathcal{K}^*$ and $h\in B^2(\mathbb{R}^{d+1})$ such that $\partial_t u_{S,n}=v+h$, it follows that
\begin{equation}\label{est-u,S,n-3}
\aligned
\left|\left\langle \partial_t u_{S,n}\cdot (u_S-u_{S,n})\right\rangle\right|
&\leq\|\partial_t u_{S,n}\|_{\mathcal{K}^*+B^2}\{\| u_S- u_{S,n}\|_{\mathcal{K}}
+\|u_S-u_{S,n}\|_{B^2}\}\\
&\leq C\{\|\nabla u_S-\nabla u_{S,n}\|_{B^2}
+\|u_S-u_{S,n}\|_{B^2}\}\rightarrow 0
\endaligned
\end{equation}
as $n\rightarrow\infty$, where we have used \eqref{6.19-11}. Combining \eqref{est-u,S,n} with \eqref{est-u,S,n-1} and \eqref{est-u,S,n-3}, we obtain $\left\langle \partial_t u_{S,n}\cdot u_S\right\rangle\rightarrow 0$ as $n\rightarrow \infty$. This, together with \eqref{6.19-1-1}-\eqref{6.19-1-3}, gives
\begin{equation}\label{approximate-cor}
\left\langle a^{\alpha\gamma}_{ik}\frac{\partial u^\gamma_S}{\partial x_k}\frac{\partial u_S^\alpha}{\partial x_i}\right\rangle+S^{-2}\left\langle |u_S|^2\right\rangle=-\left\langle a_{ij}^{\alpha\beta}\frac{\partial u_S^\alpha}{\partial x_i}\right\rangle.
\end{equation}
In view of the ellipticity condition \eqref{ellipticity}, this implies that
\begin{equation}\label{est-AC}
\left\langle |\nabla \chi_S|^2\right\rangle+S^{-2}\left\langle |\chi_S|^2\right\rangle\leq C,
\end{equation}
where $C$ depends only on $d$, $m$, and $\mu$.
\end{remark}

\begin{lemma}
As $S\to \infty$,
\begin{equation*}
 \frac{\partial }{\partial x_i}(\chi_{S,j}^{\alpha\beta}) \rightarrow \frac{\partial }{\partial x_i}(\chi_j^{\alpha\beta})~~{\rm weakly~in}~~B^2(\mathbb{R}^{d+1}).
\end{equation*}
\end{lemma}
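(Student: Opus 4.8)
The plan is to argue by weak compactness in the Hilbert space $\mathcal K$ and then to identify the weak limit through the weak formulation of the corrector equation. Write $\chi_S=(\chi^\beta_{S,j})$ and $\chi=(\chi^\beta_j)$ for the exact corrector produced by Lemma \ref{P} (so $\nabla\chi\in B^2(\R^{d+1})$ and $\chi\in D(\partial_t)\subset\mathcal K$). By \eqref{est-AC} we have $\langle|\nabla\chi_S|^2\rangle\le C$ uniformly in $S\ge 1$, that is $\|\chi_S\|_{\mathcal K}=\|\nabla\chi_S\|_{B^2}\le C$, and moreover $\|\chi_S\|_{B^2}\le CS$. Hence along any sequence $S_k\to\infty$ one can extract a subsequence (not relabelled) with $\chi_{S_k}\rightharpoonup w$ weakly in $\mathcal K$ for some $w\in\mathcal K$; applying the bounded linear map $u\mapsto\nabla u$ gives $\nabla\chi_{S_k}\rightharpoonup\nabla w$ weakly in $B^2(\R^{d+1})$. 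It therefore suffices to show $w=\chi$, since then the weak limit is independent of the subsequence and the full convergence $\nabla\chi_S\rightharpoonup\nabla\chi$ as $S\to\infty$ follows by the standard subsequence argument.

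To identify $w$, I would pass to the limit in the identity of Lemma \ref{Lem-6.4} with test functions $v$ ranging over $ {\rm \tilde{T}rig}(\R^{d+1})$ (these satisfy $v,\nabla v,\partial_t v\in B^2$ and also $\partial_t v\in\mathcal K^*$). For such $v$, using $\langle\partial_t v\cdot\chi_{S_k}\rangle=(\partial_t v,\chi_{S_k})$ (the $\mathcal K^*$–$\mathcal K$ pairing), equation \eqref{6.19-1} reads
\[
-(\partial_t v,\chi_{S_k})+\Big\langle a^{\alpha\gamma}_{ik}\tfrac{\partial\chi^{\gamma\beta}_{S_k,j}}{\partial x_k}\tfrac{\partial v^\alpha}{\partial x_i}\Big\rangle+S_k^{-2}\langle\chi^\beta_{S_k,j}\cdot v\rangle=-\Big\langle a^{\alpha\beta}_{ij}\tfrac{\partial v^\alpha}{\partial x_i}\Big\rangle .
\]
The first term converges to $(\partial_t v,w)$ because $\partial_t v\in\mathcal K^*$ is a fixed bounded functional on $\mathcal K$ and $\chi_{S_k}\rightharpoonup w$ in $\mathcal K$. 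In the second term $a^{\alpha\gamma}_{ik}\partial v^\alpha/\partial x_i$ is a fixed element of $B^2$ (a uniformly almost periodic function times a trigonometric polynomial), so weak $B^2$-convergence of $\nabla\chi_{S_k}$ gives convergence to $\big\langle a^{\alpha\gamma}_{ik}\,\partial w^{\gamma\beta}/\partial x_k\,\partial v^\alpha/\partial x_i\big\rangle$. The third term is bounded by $S_k^{-2}\|\chi_{S_k}\|_{B^2}\|v\|_{B^2}\le CS_k^{-1}\|v\|_{B^2}\to 0$. Passing to the limit, $w$ satisfies $-(\partial_t v,w)+\langle A\nabla w\cdot\nabla v\rangle=-\langle a^{\alpha\beta}_{ij}\partial v^\alpha/\partial x_i\rangle$ for all $v\in{\rm \tilde{T}rig}(\R^{d+1})$, which is precisely the weak form of $(\partial_s+\mathcal L_1)w=-\mathcal L_1(P^\beta_j)$ against a dense subset of $\mathcal K$ (compare \eqref{corrector} and \eqref{2.3-6}).

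Finally I would invoke uniqueness. The difference $z=w-\chi\in\mathcal K$ satisfies $-(\partial_t v,z)+\langle A\nabla z\cdot\nabla v\rangle=0$ for all $v\in{\rm \tilde{T}rig}(\R^{d+1})$; since $v\mapsto\langle A\nabla z\cdot\nabla v\rangle$ is bounded on $\mathcal K$ (by $\mu^{-1}\|z\|_{\mathcal K}\|v\|_{\mathcal K}$), so is $v\mapsto(\partial_t v,z)$, and by density the identity extends to all $v\in D(\partial_t)$; this forces $z\in D(\partial_t)$ (the maximal domain of the skew-adjoint operator $\partial_t$). Taking $v=z$ gives $(\partial_t z,z)=\langle A\nabla z\cdot\nabla z\rangle$, where the left side vanishes by \eqref{skew-symmetric} and the right side is $\ge\mu\|z\|^2_{\mathcal K}$ by the coercivity \eqref{coercive}; hence $z=0$, i.e. $w=\chi$, completing the proof. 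I expect the only delicate points to be the bookkeeping of the function spaces — making sure every pairing $(\cdot,\cdot)$ and mean value $\langle\cdot\rangle$ is legitimate, in particular treating the $t$-derivative term through the $\mathcal K^*$–$\mathcal K$ duality rather than through $B^2$ (the exact corrector $\chi$ not lying in $B^2$) — together with the observation that the zeroth-order term is killed by the factor $S^{-2}$ even though $\|\chi_S\|_{B^2}$ only grows linearly in $S$.
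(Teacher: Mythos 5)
Your proof is correct and shares the paper's first step — extract a weak subsequential limit, pass to the limit in the identity \eqref{6.19-1} against trigonometric polynomials, and recognize the limiting weak equation — but differs in how the limit is placed in $D(\partial_t)$ so that uniqueness can be invoked. The paper runs a second compactness argument: from $\|\partial_t\chi_S\|_{\mathcal{K}^*+B^2}\le C$ and $S^{-2}\chi_S\to 0$ in $B^2$ it concludes $\partial_t\psi={\rm div}[A(\nabla\psi+\nabla P)]\in\mathcal{K}^*$, hence $\psi\in D(\partial_t)$, and then applies Lemma \ref{P}. You instead work directly with the difference $z=w-\chi$: from the homogeneous weak identity $-(\partial_t v,z)+\langle A\nabla z\cdot\nabla v\rangle=0$ on ${\rm \tilde{T}rig}(\R^{d+1})$ and the boundedness of the second term, $v\mapsto(\partial_t v, z)$ is bounded in the $\mathcal{K}$-norm, which forces $z\in D(\partial_t)$; then $v=z$ together with \eqref{skew-symmetric} and \eqref{coercive} gives $z=0$. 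This avoids the auxiliary space $\mathcal{K}^*+B^2$ and the compactness of $\partial_t\chi_S$ entirely, at the price of using explicitly that $D(\partial_t)$ is the maximal domain — i.e. exactly the set of $z\in\mathcal{K}$ for which $v\mapsto(\partial_t v,z)$ is bounded — which is the (essential) skew-adjointness of the Fourier multiplier $\partial_t$; the paper asserts this only implicitly, so a line of justification would be appropriate. One small reorganization: you should first deduce $z\in D(\partial_t)$ from the boundedness, and \emph{then} extend the identity to $v=z$ by density — the phrase ``by density the identity extends to all $v\in D(\partial_t)$; this forces $z\in D(\partial_t)$'' has the implications in the wrong order. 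Finally, your weak compactness of $\chi_S$ in $\mathcal{K}$ and the paper's weak compactness of $\nabla\chi_S$ in $B^2$ together with the $V^2_{\rm pot}$ characterization are equivalent formulations, since $\nabla:\mathcal{K}\to V^2_{\rm pot}$ is an isometric isomorphism.
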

\begin{proof}
For $1\leq j\leq d$ and $1\leq \beta\leq m$, since $\{\nabla\chi_{S,j}^{\beta}\}$ is bounded in $B^2(\mathbb{R}^{d+1})$, we can assume that $\frac{\partial }{\partial x_i}(\chi_{S,j}^{\alpha\beta}) \rightarrow \phi_{ij}^{\alpha\beta}$ weakly in $B^2(\mathbb{R}^{d+1})$ as $S\rightarrow \infty$ for some $\phi=(\phi_{ij}^{\alpha\beta})\in B^2(\mathbb{R}^{d+1})$. Moreover, since  $\nabla \chi_{S,j}^{\beta}\in V^2_{{\rm pot}}$, where $V^2_{{\rm pot}}$ denotes the closure of potential trigonometric polynomials with mean value zero in $B^2(\mathbb{R}^{d+1})$, we obtain that $\phi\in V^2_{{\rm pot}}$. Hence, $\phi_{ij}^{\alpha\beta}=\frac{\partial \psi_j^{\alpha\beta}}{\partial x_i}$ for some $\psi=(\psi_j^{\alpha\beta})\in \mathcal{K}$.

Let $u_S=\chi_{S,j}^\beta$. Since $S^{-2}\left\langle |\chi_S|^2\right\rangle\leq C$, we see that
\begin{equation}\label{6.19-4}
	\left\langle a^{\alpha\gamma}_{ik}\frac{\partial u^\gamma_S}{\partial x_k}\frac{\partial v^\alpha}{\partial x_i}\right\rangle+S^{-2}\left\langle u_S \cdot v\right\rangle
\rightarrow	\left\langle a^{\alpha\gamma}_{ik}\frac{\partial \psi^{\gamma\beta}_{j}}{\partial x_k}\frac{\partial v^\alpha}{\partial x_i}\right\rangle
\end{equation}
as $S\rightarrow\infty$, for any $v\in {\rm \tilde{T}rig}(\mathbb{R}^{d+1})$.
Moreover, since $u_S\in \mathcal{K}\cap B^2(\mathbb{R}^{d+1})$, we have
$\left\langle \partial_t v\cdot u_S\right\rangle=\left(\partial_t v, u_S\right)$. Note that
$\left(\partial_t v, u_S\right)\rightarrow
\left(\partial_t v, \psi\right)$ as $S\rightarrow \infty$.
This, together with \eqref{6.19-1} and \eqref{6.19-4}, gives
\begin{equation}\label{psi-equ}
-\left(\partial_t v, \psi\right)+\left\langle a^{\alpha\gamma}_{ik}\frac{\partial \psi^{\gamma\beta}_{j}}{\partial x_k}\frac{\partial v^\alpha}{\partial x_i}\right\rangle
=-\left\langle a_{ij}^{\alpha\beta}\frac{\partial v^\alpha}{\partial x_i}\right\rangle.
\end{equation}
By \eqref{time-der} and \eqref{est-AC}, we know that $\|\partial_t \chi_S\|_{\mathcal{K}^*+B^2}
\leq C$ for any $S\geq 1$. Hence $\partial_t \chi_{S}\rightarrow w$ weakly$^*$ in $\mathcal{K}^*+B^2(\mathbb{R}^{d+1})$ for some $w=(w_{j}^{\alpha\beta})\in \mathcal{K}^*+B^2(\mathbb{R}^{d+1})$. Since $\frac{\partial }{\partial x_i}(\chi_{S,j}^{\alpha\beta}) \rightarrow \frac{\partial \psi^{\alpha\beta}_j}{\partial x_i}$ weakly in $B^2(\mathbb{R}^{d+1})$, we obtain that $w=\partial_t \psi$.

In view of \eqref{est-AC}, we know that $S^{-2}\chi_S\rightarrow 0$ in $B^2(\mathbb{R}^{d+1})$ as $S\rightarrow\infty$. This, together with \eqref{6.10}, implies that
$\partial_t \chi_S-{\rm div}[A(\nabla \chi_S+\nabla P)]=-S^{-2}\chi_S\rightarrow 0$ in $B^2(\mathbb{R}^{d+1})$. Consequently, $\partial_t \psi={\rm div}[A(\nabla \psi+\nabla P)]\in \mathcal{K}^*$, which, together with (\ref{psi-equ}), implies that $\psi\in D(\partial_t)$ is a solution to
\begin{equation*}
(\partial_s +\mathcal{L}_1)(\psi_j^\beta)=-\mathcal{L}_1(P^\beta_j)~~{\rm in}~~\R^{d+1}.
\end{equation*}
By the uniqueness, we obtain that $\psi_j^\beta=\chi_j^{\beta}$. This completes the proof.
\end{proof}

\begin{theorem}
As $S\to \infty$, $S^{-2}\langle |\chi_S|^2\rangle \to 0$ and $\|\nabla\chi-\nabla\chi_S\|_{B^2}\to 0$.
\end{theorem}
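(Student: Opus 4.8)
The plan is to combine the energy identity \eqref{approximate-cor} for the approximate corrector with the analogous identity for the exact corrector $\chi$, and then to squeeze using weak lower semicontinuity together with the weak convergence $\partial_i\chi^{\alpha\beta}_{S,j}\rightharpoonup\partial_i\chi^{\alpha\beta}_j$ in $B^2(\R^{d+1})$ established in the preceding lemma. Fix $1\le j\le d$ and $1\le\beta\le m$, and abbreviate $u_S=\chi_{S,j}^\beta$, $\chi=\chi_j^\beta$. First I would record an energy identity for $\chi$: since $\chi\in D(\partial_t)$ and ${\rm \tilde{T}rig}(\R^{d+1})$ is dense in $D(\partial_t)$ in the graph norm, pick $v_n\in{\rm \tilde{T}rig}(\R^{d+1})$ with $v_n\to\chi$ in $\mathcal{K}$ and $\partial_t v_n\to\partial_t\chi$ in $\mathcal{K}^*$; taking $v=v_n$ in \eqref{psi-equ} (with $\psi=\chi$), letting $n\to\infty$, and using $(\partial_t\chi,\chi)=0$ from \eqref{skew-symmetric}, we obtain
\begin{equation*}
\big\langle a_{ik}^{\alpha\gamma}\,\partial_k\chi^\gamma\,\partial_i\chi^\alpha\big\rangle=-\big\langle a_{ij}^{\alpha\beta}\,\partial_i\chi^\alpha\big\rangle .
\end{equation*}

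Next I would pass to the limit in \eqref{approximate-cor}. Since $a_{ij}^{\alpha\beta}$ is uniformly almost periodic, hence an element of $B^2(\R^{d+1})$, and $\partial_i u_S^\alpha\rightharpoonup\partial_i\chi^\alpha$ weakly in $B^2(\R^{d+1})$, the right-hand side of \eqref{approximate-cor} satisfies $-\langle a_{ij}^{\alpha\beta}\partial_i u_S^\alpha\rangle\to-\langle a_{ij}^{\alpha\beta}\partial_i\chi^\alpha\rangle=\langle A\nabla\chi\cdot\nabla\chi\rangle$ as $S\to\infty$, so that
\begin{equation*}
\big\langle A\nabla u_S\cdot\nabla u_S\big\rangle+S^{-2}\big\langle|u_S|^2\big\rangle\longrightarrow\big\langle A\nabla\chi\cdot\nabla\chi\big\rangle .
\end{equation*}
The functional $w\mapsto\langle A\nabla w\cdot\nabla w\rangle$ is a nonnegative, convex, continuous (hence weakly lower semicontinuous) functional of $\nabla w\in B^2(\R^{d+1})$, and $S^{-2}\langle|u_S|^2\rangle\ge0$; therefore
\begin{equation*}
\langle A\nabla\chi\cdot\nabla\chi\rangle\le\liminf_{S\to\infty}\langle A\nabla u_S\cdot\nabla u_S\rangle\le\limsup_{S\to\infty}\langle A\nabla u_S\cdot\nabla u_S\rangle\le\langle A\nabla\chi\cdot\nabla\chi\rangle ,
\end{equation*}
which forces $\langle A\nabla u_S\cdot\nabla u_S\rangle\to\langle A\nabla\chi\cdot\nabla\chi\rangle$ and $S^{-2}\langle|u_S|^2\rangle\to0$; summing over $j$ and $\beta$ gives the first assertion $S^{-2}\langle|\chi_S|^2\rangle\to0$.

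For the strong convergence of the gradients I would expand
\begin{equation*}
\langle A\nabla(u_S-\chi)\cdot\nabla(u_S-\chi)\rangle=\langle A\nabla u_S\cdot\nabla u_S\rangle-\langle A\nabla u_S\cdot\nabla\chi\rangle-\langle A\nabla\chi\cdot\nabla u_S\rangle+\langle A\nabla\chi\cdot\nabla\chi\rangle .
\end{equation*}
Since $A$ is uniformly almost periodic and $\nabla\chi\in B^2(\R^{d+1})$, both $A\nabla\chi$ and $A^{*}\nabla\chi$ lie in $B^2(\R^{d+1})$, so the weak convergence $\nabla u_S\rightharpoonup\nabla\chi$ makes each of the two cross terms tend to $\langle A\nabla\chi\cdot\nabla\chi\rangle$; together with $\langle A\nabla u_S\cdot\nabla u_S\rangle\to\langle A\nabla\chi\cdot\nabla\chi\rangle$ from the previous step, the whole right-hand side tends to $0$. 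Averaging the pointwise ellipticity bound \eqref{ellipticity} then yields
\begin{equation*}
\mu\,\|\nabla\chi_S-\nabla\chi\|_{B^2}^2\le\langle A\nabla(\chi_S-\chi)\cdot\nabla(\chi_S-\chi)\rangle\longrightarrow0 ,
\end{equation*}
which is the second assertion. The one genuine obstacle is that \eqref{approximate-cor} only supplies weak compactness of $\{\nabla\chi_S\}$ in $B^2(\R^{d+1})$, so the quadratic term cannot be passed to the limit directly; the squeeze between weak lower semicontinuity and the two energy identities is exactly what circumvents this, and the same Hilbert-space identity then upgrades weak convergence to strong.
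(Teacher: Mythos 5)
Your proposal is correct, and it arrives at both conclusions by a route that is genuinely different from (though in the same circle of ideas as) the one the paper takes. You first derive the exact-corrector energy identity $\langle A\nabla\chi\cdot\nabla\chi\rangle=-\langle A\nabla P\cdot\nabla\chi\rangle$ by approximating $\chi$ with trigonometric polynomials in the graph norm of $D(\partial_t)$ and using $(\partial_t\chi,\chi)=0$; then you pass to the limit on the right-hand side of \eqref{approximate-cor} using the weak convergence $\nabla\chi_S\rightharpoonup\nabla\chi$ in $B^2$; and finally you squeeze with the weak lower semicontinuity of $v\mapsto\langle Av\cdot v\rangle$ to conclude both that $S^{-2}\langle|\chi_S|^2\rangle\to 0$ and that $\langle A\nabla\chi_S\cdot\nabla\chi_S\rangle\to\langle A\nabla\chi\cdot\nabla\chi\rangle$, after which the polarization identity upgrades weak to strong convergence. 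The paper instead tests the exact corrector equation against $\chi_S$ (rather than against $\chi$), combines this with \eqref{approximate-cor} and with the pointwise ellipticity bound for the difference $\nabla\chi-\nabla\chi_S$, and obtains the single inequality
\begin{equation*}
\mu\left\langle |\nabla \chi-\nabla\chi_S|^2\right\rangle+S^{-2}\left\langle|\chi_S|^2\right\rangle
\leq \left\langle A(\nabla \chi-\nabla\chi_S)\cdot\nabla\chi\right\rangle+\left(\partial_t \chi, \chi_S-\chi\right),
\end{equation*}
from which both conclusions drop out at once because both right-hand terms vanish by weak convergence, with no appeal to weak lower semicontinuity. Your approach is slightly longer (two squeezes instead of one), but it has the pedagogical virtue of making explicit the convergence of the Dirichlet energies $\langle A\nabla\chi_S\cdot\nabla\chi_S\rangle\to\langle A\nabla\chi\cdot\nabla\chi\rangle$, which is not stated in the paper; the paper's combination is tighter and avoids needing the symmetric part $A^s\geq\mu I$ to justify convexity. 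One small point you should make explicit when writing this up: the weak lower semicontinuity you invoke is that of the quadratic form $v\mapsto\langle A^s v\cdot v\rangle$ on $B^2(\R^{d+1})^{dm}$, which is convex and continuous precisely because \eqref{ellipticity} forces the symmetric part of $A$ to be uniformly positive definite — the possible non-symmetry of $A$ is harmless but deserves a word.
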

\begin{proof}
It follows from \eqref{ellipticity} that
\begin{align}\label{6.19-6}
\mu\left\langle |\nabla \chi-\nabla\chi_S|^2\right\rangle
&\leq \left
\langle A(\nabla \chi-\nabla \chi_S)\cdot(\nabla \chi-\nabla \chi_S)\right\rangle\nonumber
\\&=\left\langle A\nabla \chi\cdot\nabla \chi\right\rangle
+\left\langle A\nabla \chi_S\cdot\nabla \chi_S \right\rangle
-\left\langle A\nabla \chi \cdot\nabla \chi_S\right\rangle-
\left\langle A\nabla \chi_S\cdot\nabla \chi\right\rangle.
\end{align}
In view of (\ref{corrector}), we have
\begin{equation*}
\left(\partial_t \chi, \chi_S\right)
+\left\langle A\nabla \chi\cdot\nabla\chi_S\right\rangle
=-\left\langle A\nabla P\cdot\nabla\chi_S\right\rangle.
\end{equation*}
Note that
\begin{equation*}
\left(\partial_t \chi, \chi_S\right)
=\left(\partial_t \chi, \chi\right)
+\left(\partial_t \chi, \chi_S-\chi\right).
\end{equation*}
By \eqref{skew-symmetric} we know that $\left(\partial_t \chi, \chi\right)=0$. Hence we obtain
\begin{equation}\label{6.19-6-1}
\left\langle A\nabla \chi\cdot\nabla\chi_S\right\rangle
=-\left\langle A\nabla P\cdot\nabla\chi_S\right\rangle-\left(\partial_t \chi, \chi_S-\chi\right).
\end{equation}
 Moreover, it follows from \eqref{approximate-cor} that
\begin{equation}\label{6.19-6-2}
\left\langle A\nabla \chi_S\cdot\nabla\chi_S\right\rangle=
-\left\langle A\nabla P\cdot\nabla\chi_S\right\rangle-S^{-2}\left\langle|\chi_S|^2\right\rangle.
\end{equation}
Combining (\ref{6.19-6}), (\ref{6.19-6-1}) with (\ref{6.19-6-2}), we see that
\begin{equation}\label{6.19-6-3}
\mu\left\langle |\nabla \chi-\nabla\chi_S|^2\right\rangle+S^{-2}\left\langle|\chi_S|^2\right\rangle
\leq \left\langle A(\nabla \chi-\nabla\chi_S)\cdot\nabla\chi\right\rangle+\left(\partial_t \chi, \chi_S-\chi\right).
\end{equation}
Moreover, since $\nabla\chi_S\rightarrow \nabla\chi$ weakly in $B^2(\mathbb{R}^{d+1})$, we have $\chi_S\rightarrow\chi$ weakly in $\mathcal{K}$, therefore $\left(\partial_t \chi, \chi_S-\chi\right)\rightarrow 0$ as $S\rightarrow \infty$. Hence by letting $S\to \infty$, \eqref{6.19-6-3} implies that $S^{-2}\langle |\chi_S|^2\rangle \to 0$ and $\|\nabla\chi-\nabla\chi_S\|_{B^2}\to 0$ as $S\to \infty$.
\end{proof}

\begin{remark}\label{6.19-10}
  For $S>0$, we define the approximate homogenized coefficients as
  \begin{equation*}
  \hat{a}_{S,ij}^{\alpha\beta}=\langle a_{ij}^{\alpha\beta} \rangle+\left\langle a_{ik}^{\alpha\gamma} \frac{\partial}{\partial x_k}(\chi_{S,j}^{\gamma\beta})\right\rangle.
  \end{equation*}
  Then
  \begin{equation*}
  |\hat{a}_{ij}^{\alpha\beta}-\hat{a}_{S,ij}^{\alpha\beta}|=\left|\left\langle a_{ik}^{\alpha\gamma} \left(\frac{\partial}{\partial x_k}(\chi_j^{\gamma\beta})-\frac{\partial}{\partial x_k}(\chi_{S,j}^{\gamma\beta})\right)\right\rangle\right|\leq C\|\nabla\chi-\nabla\chi_S\|_{B^2}.
  \end{equation*}

\end{remark}

\begin{lemma}\label{6.20}
For $S\geq 1$, we have
\begin{equation}\label{est-6.20}
\|\chi_S\|_{L^\infty(\R^{d+1})}\leq CS.
\end{equation}
Moreover, for any $(x,t),(y,s)\in\mathbb{R}^{d+1}$ and $\sigma\in(0,1)$,
\begin{equation}\label{6.21}
|\chi_S(x,t)-\chi_S(y,s)|\leq CS^{1-\sigma}(|x-y|+|t-s|^{1/2})^\sigma,
\end{equation}
where $C$ depends only on $d$, $m$, $\sigma$ and $\mu$.
\end{lemma}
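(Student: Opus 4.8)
The plan is to represent $\chi_S$ by a fundamental solution and then extract both estimates from the size and regularity bounds for fundamental solutions established in Section~\ref{Section-4}. Rewrite \eqref{6.10} as $(\partial_t+\mathcal{L}_1+S^{-2})\chi_S=\text{div}(g)$ in $\mathbb{R}^{d+1}$, where $g:=A\nabla P_j^\beta$, so that $|g|\le\mu^{-1}$. The key structural observation is that
\[
\Gamma_{1,S}(x,t;y,s):=e^{-(t-s)/S^2}\,\Gamma_1(x,t;y,s)
\]
is the fundamental solution of $\partial_t+\mathcal{L}_1+S^{-2}$, where $\Gamma_1$ is the fundamental solution of $\partial_t+\mathcal{L}_1$ (that is, $\Gamma_\varepsilon$ at $\varepsilon=1$); consequently $\nabla_y\Gamma_{1,S}=e^{-(t-s)/S^2}\nabla_y\Gamma_1$ and $\Gamma_{1,S}$ inherits \eqref{f-6}--\eqref{f-9} with the extra damping factor $e^{-(t-s)/S^2}$. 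I would then use the Poisson/Duhamel representation
\[
\chi_S(x,t)=-\int_{-\infty}^{t}\!\!\int_{\mathbb{R}^d}\nabla_y\Gamma_{1,S}(x,t;y,s)\cdot g(y,s)\,dy\,ds;
\]
its validity for the specific solution $\chi_S$ follows from uniqueness: once the right‑hand integral is shown to be bounded, Caccioppoli makes it a member of the uniqueness class of Lemma~\ref{6.1}, and $\chi_S$ lies in that class by Corollary~\ref{6.9}.

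The first real step is a preliminary one on $\Gamma_1$: the annulus bounds \eqref{f-8}--\eqref{f-9} carry no Gaussian/time decay and so do not, by themselves, make the above integral converge. Using the pointwise Gaussian bound $|\Gamma_1(x,t;y,s)|\le C(t-s)^{-d/2}\exp(-\kappa|x-y|^2/(t-s))$ together with a parabolic Caccioppoli inequality on cylinders kept away from the pole (cf.\ \cite{CDK-2008}), I would establish the time-sliced estimates
\[
\int_{\mathbb{R}^d}|\nabla_y\Gamma_1(x,t;y,s)|\,dy\le \frac{C}{(t-s)^{1/2}},\qquad
\int_{\mathbb{R}^d}\big|\nabla_y\{\Gamma_1(x,t;y,s)-\Gamma_1(x',t';y,s)\}\big|\,dy\le \frac{C\,d^{\sigma}}{(t-s)^{(1+\sigma)/2}},
\]
the second for $\sigma\in(0,1)$, $d:=|x-x'|+|t-t'|^{1/2}$ and $t-s\ge 16 d^2$.

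Granting these, \eqref{est-6.20} is immediate: inserting the first estimate into the representation and using $|g|\le\mu^{-1}$,
\[
|\chi_S(x,t)|\le C\int_0^\infty (t-s)^{-1/2}e^{-(t-s)/S^2}\,d(t-s)=C\,S\int_0^\infty u^{-1/2}e^{-u}\,du=C\,S,
\]
the damping cutting the time integral off at scale $S^2$ and producing the factor $S$. For \eqref{6.21} one may assume $d\le S$, since for $d>S$ it follows from \eqref{est-6.20}. I would then split $\mathbb{R}^{d+1}$ into the near region $\{|x-y|+|t-s|^{1/2}\le 4d\}$, on which estimating $\nabla_y\Gamma_{1,S}(x,t;\cdot)$ and $\nabla_y\Gamma_{1,S}(x',t';\cdot)$ separately by the first time-sliced bound gives a contribution $\le Cd\le CS^{1-\sigma}d^\sigma$, and its complement, on which I write $\Gamma_{1,S}(x,t;y,s)-\Gamma_{1,S}(x',t';y,s)$ as $e^{-(t-s)/S^2}\{\Gamma_1(x,t;y,s)-\Gamma_1(x',t';y,s)\}+\{e^{-(t-s)/S^2}-e^{-(t'-s)/S^2}\}\Gamma_1(x',t';y,s)$ and estimate the first bracket by the second time-sliced bound (whose $t$-integral $\int_{(4d)^2}^\infty (t-s)^{-(1+\sigma)/2}e^{-(t-s)/S^2}d(t-s)\le CS^{1-\sigma}d^\sigma$ converges because $(1-\sigma)/2>0$) and the second bracket by $|e^{-(t-s)/S^2}-e^{-(t'-s)/S^2}|\le CS^{-2}d^2$ and the first time-sliced bound, obtaining $\le CS^{-1}d^2\le CS^{1-\sigma}d^\sigma$. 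Adding the pieces gives \eqref{6.21}.

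I expect the main obstacle to be the preliminary step on $\Gamma_1$: one has to recover the Gaussian/time decay of $\nabla_y\Gamma_1$ from the pointwise kernel bound and Caccioppoli, and then track carefully how the damping $e^{-(t-s)/S^2}$ truncates the various time integrals at scale $S^2$, so that the resulting constants scale exactly like $S$ and $S^{1-\sigma}$ rather than worse. As an alternative derivation of \eqref{6.21} once \eqref{est-6.20} is in hand, one may apply Theorem~\ref{Holder-1} to $\chi_S$ on $Q_{2S}(z,\tau)$: the inhomogeneous term is then controlled via $\sup_{0<\ell<S}\ell^{2-\alpha}S^{-2}\big(\dashint_{Q_\ell}|\chi_S|^2\big)^{1/2}\le S^{-2}\,S^{2-\alpha}\,\|\chi_S\|_{L^\infty}\le CS^{1-\alpha}$, so that $\|\chi_S\|_{\Lambda^{\alpha,\alpha/2}(Q_S(z,\tau))}\le CS^{1-\alpha}$ uniformly in $(z,\tau)$, which yields \eqref{6.21} for parabolic distances at most $S$ and, trivially, beyond.
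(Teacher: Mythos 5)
Your route to \eqref{est-6.20} is genuinely different from the paper's. The paper sets $u(x,t)=\chi^\beta_{S,j}(x,t)+P^\beta_j(x-z)$, which solves $(\partial_s+\mathcal{L}_1)u=-S^{-2}\chi^\beta_{S,j}$; Corollary~\ref{6.9} gives $\big(\dashint_{Q_{2S}(z,\tau)}|u|^2\big)^{1/2}\le CS$, Lemma~\ref{6.15} is iterated to upgrade this to $L^p$ for all $p<\infty$, and the interior H\"older estimate then yields $\|u\|_{L^\infty(Q_S(z,\tau))}\le CS$, from which \eqref{est-6.20} follows since $\chi_S(z,\tau)=u(z,\tau)$. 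In contrast, you represent $\chi_S$ globally by Duhamel, using the correct and clean observation that $\Gamma_{1,S}(x,t;y,s)=e^{-(t-s)/S^2}\Gamma_1(x,t;y,s)$ is the fundamental solution of $\partial_t+\mathcal{L}_1+S^{-2}$, and let the exponential damping truncate the time integral at scale $S^2$. The trade-off: the paper's argument needs only the already-established local facts (Caccioppoli, Lemma~\ref{6.15}, Theorem~\ref{Holder-1}) applied on cylinders of size $S$; your argument needs the time-sliced bounds $\int_{\mathbb{R}^d}|\nabla_y\Gamma_1(x,t;y,s)|\,dy\le C(t-s)^{-1/2}$ and its H\"older-increment variant, which are \emph{not} proved in the paper and do not follow from the space-time annulus estimates \eqref{f-8}--\eqref{f-9} alone -- one must combine the pointwise Gaussian bound on $\Gamma_1$ with Caccioppoli on a family of spatial annuli at each dyadic time scale to capture both the spatial Gaussian decay (where $|x-y|\gg\sqrt{t-s}$) and the parabolic decay, and then sum. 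You correctly flag this as the main obstacle; it is a real extra piece of work, but the estimate is standard for fundamental solutions of systems with De Giorgi--Nash regularity, so the plan is sound. Your final remark -- deriving \eqref{6.21} from \eqref{est-6.20} by applying Theorem~\ref{Holder-1} on cylinders $Q_S(z,\tau)$ (with the zero-order term $S^{-2}\chi_S$ and divergence datum $A\nabla P$ contributing only $CS^{1-\sigma}$), and handling parabolic separations $\ge S$ trivially by $\|\chi_S\|_{L^\infty}\le CS$ -- is exactly the paper's route to \eqref{6.21}, so that part is essentially identical.
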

\begin{proof}
  Let $1\leq j\leq d$ and $1\leq \beta\leq m$. Fix $(z,\tau)\in \R^{d+1}$ and consider
  $$u(x,t)=\chi_{S,j}^\beta(x,t)+P_j^\beta(x-z).$$
Since
\begin{equation*}
(\partial_s+\mathcal{L}_1) (\chi^\beta_{S,j})+S^{-2}\chi^\beta_{S,j}=-\mathcal{L}_1(P^\beta_j)~~{\rm in}~~\R^{d+1},
\end{equation*}
we see that
 \begin{equation}\label{6.23}
(\partial_s+\mathcal{L}_1)(u)=-S^{-2}\chi^\beta_{S,j}.
\end{equation}
By Corollary \ref{6.9}, we have
  \begin{equation*}
\bigg(\dashint_{Q_{2S}(z,\tau)}|u|^2\bigg)^{1/2}\leq CS.
\end{equation*}
Hence by Lemma \ref{6.15} with an iteration argument, we obtain
\begin{equation*}
\bigg(\dashint_{Q_{2S}(z,\tau)}|u|^p\bigg)^{1/p}\leq CS
\end{equation*}
for any $2<p<\infty$. This, together with the H\"older estimate, gives
 \begin{equation}\label{6.26}
\|u\|_{L^\infty(Q_S(z,\tau))}\leq C S
\end{equation}
for any $(z,\tau)\in \mathbb{R}^{d+1}$. Estimate (\ref{est-6.20}) follows from \eqref{6.26} directly. Finally, \eqref{6.21} follows from \eqref{6.26} and the H\"older estimate.
\end{proof}
\begin{lemma}
Let $\sigma_1,\sigma_2\in(0,1)$ and $2<p<\infty$. Then for any $1\leq r\leq S$,
\begin{equation}\label{6.37}
\sup_{(x,t)\in\R^{d+1}}\bigg(\dashint_{Q_r(x,t)}|\nabla\chi_S|^p\bigg)^{1/p}\leq C S^{\sigma_1}r^{-\sigma_2} S^{\sigma_2},
\end{equation}
where $C$ depends only on $p,\sigma_1,\sigma_2$ and $A$.
\end{lemma}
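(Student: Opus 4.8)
The plan is to reduce the whole estimate to a gradient bound for the single function $v=\chi^\beta_{S,j}+P^\beta_j$. By \eqref{6.10} this $v$ solves $(\partial_t+\mathcal{L}_1)v=-S^{-2}\chi^\beta_{S,j}$ in $\R^{d+1}$, whose right-hand side is bounded with $\|S^{-2}\chi^\beta_{S,j}\|_{L^\infty(\R^{d+1})}\le CS^{-1}$ by \eqref{est-6.20}. Since $|\nabla P^\beta_j|=1$, to prove \eqref{6.37} it suffices to bound $\big(\dashint_{Q_r(x_0,t_0)}|\nabla v|^p\big)^{1/p}$ uniformly in $(x_0,t_0)$ and $1\le r\le S$.

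First I would establish the $L^2$ version. Fix $\sigma:=1-\sigma_2\in(0,1)$ and a point $(x_0,t_0)$, and apply Caccioppoli's inequality to $(\partial_t+\mathcal{L}_1)v=-S^{-2}\chi^\beta_{S,j}$ on $Q_{2r}(x_0,t_0)$ with the constant $c=v(x_0,t_0)$:
\begin{equation*}
\dashint_{Q_r}|\nabla v|^2\le \frac{C}{r^2}\dashint_{Q_{2r}}|v-v(x_0,t_0)|^2+Cr^2\dashint_{Q_{2r}}|S^{-2}\chi^\beta_{S,j}|^2 .
\end{equation*}
For the first term I use the global H\"older bound \eqref{6.21} of Lemma \ref{6.20}, together with $|P^\beta_j(x)-P^\beta_j(x_0)|\le|x-x_0|\le2r$ and the hypothesis $r\le S$ (so that $r\le r^{\sigma}S^{1-\sigma}$), to get $\dashint_{Q_{2r}}|v-v(x_0,t_0)|^2\le CS^{2-2\sigma}r^{2\sigma}$; for the second term, $r^2\dashint_{Q_{2r}}|S^{-2}\chi^\beta_{S,j}|^2\le Cr^2S^{-2}\le C$. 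Since $r\le S$ forces $S^{2-2\sigma}r^{2\sigma-2}=(S/r)^{2-2\sigma}\ge1$, the additive constant $C$ is absorbed, and together with $|\nabla P^\beta_j|=1$ this gives
\begin{equation*}
\Big(\dashint_{Q_r(x_0,t_0)}|\nabla\chi_S|^2\Big)^{1/2}\le C\,S^{\sigma_2}r^{-\sigma_2},\qquad 1\le r\le S .
\end{equation*}

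Next I would upgrade this to $L^p$ via the interior $W^{1,p}$ estimate for parabolic systems with $VMO_x$ coefficients (as in \cite{Krylov-2007}): since $A$ is uniformly almost periodic it has a fixed modulus of continuity, hence is $VMO$ with an $S$-independent modulus, so that estimate holds with a constant $C$ depending only on $d,m,p,\mu$ and (through that modulus) on $A$. Applied to $(\partial_t+\mathcal{L}_1)v=-S^{-2}\chi^\beta_{S,j}$ on $Q_{2r}$ it yields
\begin{equation*}
\Big(\dashint_{Q_r}|\nabla v|^p\Big)^{1/p}\le C\Big(\dashint_{Q_{2r}}|\nabla v|^2\Big)^{1/2}+Cr\Big(\dashint_{Q_{2r}}|S^{-2}\chi^\beta_{S,j}|^p\Big)^{1/p}.
\end{equation*}
Plugging in the $L^2$ bound on $Q_{2r}$ (valid when $2r\le S$; when $S/2<r\le S$ one covers $Q_r$ by boundedly many cubes of radius $S/2$ and gets $\big(\dashint_{Q_{2r}}|\nabla v|^2\big)^{1/2}\le C\le CS^{\sigma_2}r^{-\sigma_2}$), and using $r\|S^{-2}\chi^\beta_{S,j}\|_{L^\infty}\le CrS^{-1}\le C\le CS^{\sigma_2}r^{-\sigma_2}$, gives $\big(\dashint_{Q_r}|\nabla v|^p\big)^{1/p}\le CS^{\sigma_2}r^{-\sigma_2}$. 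Hence $\big(\dashint_{Q_r}|\nabla\chi_S|^p\big)^{1/p}\le CS^{\sigma_2}r^{-\sigma_2}\le CS^{\sigma_1}S^{\sigma_2}r^{-\sigma_2}$ because $S\ge1$ and $\sigma_1>0$; taking the supremum over $(x_0,t_0)$ proves \eqref{6.37}. (The argument in fact yields the slightly stronger bound $CS^{\sigma_2}r^{-\sigma_2}$; the extra factor $S^{\sigma_1}$ is harmless.)

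The main obstacle is the $L^2\to L^p$ step: one must be sure that the $W^{1,p}$ constant for these almost-periodic coefficients carries no hidden dependence on $S$, which is precisely why it matters that $A$ is a fixed uniformly almost periodic matrix, hence $VMO$ with an $S$-independent modulus of continuity. Everything else — Caccioppoli's inequality for systems, the bookkeeping with the scales $r$, $2r$ and $S$, and the repeated use of $1\le r\le S$ and $S\ge1$ to absorb the lower-order and additive terms — is routine.
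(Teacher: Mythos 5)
Your $L^2$ step is fine and is exactly what the paper does (it is \eqref{6.28} combined with Lemma \ref{6.20} to get $\sup_{(z,\tau)}\bigl(\dashint_{Q_r}|\nabla\chi_S|^2\bigr)^{1/2}\le C(S/r)^{1-\sigma}$). The gap is in the $L^2\to L^p$ step: you invoke the local $W^{1,p}$ estimate on $Q_{2r}$ in the form
$\bigl(\dashint_{Q_r}|\nabla v|^p\bigr)^{1/p}\le C\bigl(\dashint_{Q_{2r}}|\nabla v|^2\bigr)^{1/2}+Cr\bigl(\dashint_{Q_{2r}}|F|^p\bigr)^{1/p}$
with a constant $C$ independent of $r$ for all $1\le r\le S$, $S$ arbitrarily large. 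That uniformity is not a consequence of $A$ being ${\rm VMO}$ (or uniformly continuous). After the parabolic rescaling $v\mapsto r^{-1}v(r\cdot,r^2\cdot)$ the estimate on $Q_{2r}$ becomes the unit-scale estimate for the dilated coefficient $\tilde A(y,s)=A(ry,r^2s)$, and the ${\rm VMO}$ (or continuity) modulus of $\tilde A$ at scale $\rho$ equals that of $A$ at scale $r\rho$: it degenerates as $r\to\infty$. Equivalently, a reverse H\"older inequality $\|\nabla v\|_{L^p(Q_r)}\lesssim\|\nabla v\|_{L^2(Q_{2r})}$ (normalized averages) with $p$ large and $r$ ranging up to $S$ is precisely the kind of large-scale uniform regularity estimate that this paper is in the business of \emph{establishing}; it cannot be quoted at this point as a perturbative consequence of \cite{Krylov-2007}. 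A symptom of the over-reach is that your argument would give the bound $CS^{\sigma_2}r^{-\sigma_2}$ with no $S^{\sigma_1}$ loss, i.e., a strictly stronger conclusion than the lemma claims, with arbitrarily small work.

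The paper sidesteps this by never invoking the $W^{1,p}$ estimate above the fixed scale $1$. It first obtains the uniform unit-scale bound $\sup_{(z,\tau)}\bigl(\dashint_{Q_1(z,\tau)}|\nabla\chi_S|^p\bigr)^{1/p}\le CS^{\sigma}$ from the Krylov estimate applied on $Q_2(z,\tau)$, where the constant depends only on the ${\rm VMO}$ modulus of $A$ at unit scale and is manifestly $r$- and $S$-independent. Then it covers $Q_r$ by unit cylinders of bounded overlap to conclude $\bigl(\dashint_{Q_r}|\nabla\chi_S|^p\bigr)^{1/p}\le CS^{\sigma}$ for all $1\le r\le S$ (this loses the $r$-decay). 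The $r$-decay is recovered at the end by interpolating this $L^{p'}$ bound (for a slightly larger $p'$ and small $\sigma$) against the $L^2$ bound $\bigl(\dashint_{Q_r}|\nabla\chi_S|^2\bigr)^{1/2}\le C(S/r)^{\sigma_1}$; the resulting exponent is $S^{\sigma_1}(S/r)^{\sigma_2}$, which is why the extra factor $S^{\sigma_1}$ appears in \eqref{6.37}. Your write-up is missing both the restriction of the Krylov estimate to a fixed scale and the interpolation step, and those are precisely what make the conclusion legitimate.
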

\begin{proof}
  Let $1\leq j\leq d$ and $1\leq \beta\leq m$. Fix $(z,\tau)\in \R^{d+1}$ and consider
  $$u(x,t)=\chi_{S,j}^\beta(x,t)+P_j^\beta(x-z).$$
Then by Caccioppoli's inequality we have
\begin{equation}\label{6.28}
\dashint_{Q_r(z,\tau)}|\nabla u|^2\leq C r^{-2} \dashint_{Q_{2r}(z,\tau)}|u-u(z,\tau)|^2+Cr^2\|S^{-2}\chi_S\|^2_{L^\infty},
\end{equation}
where $0<r\leq S$. It follows from Lemma \ref{6.20} that
\begin{align}\label{6.28-1}
\sup_{(z,\tau)\in\R^{d+1}}\bigg(\dashint_{Q_r(z,\tau)}|\nabla\chi_S|^2\bigg)^{1/2}&\leq C r^{-1} S^{1-\sigma}r^\sigma+CrS^{-2}S+C\nonumber
\\&=C r^{\sigma-1}S^{1-\sigma}+CrS^{-1}+C\nonumber
\\&\leq C\left(\frac{S}{r}\right)^{\sigma_1},
\end{align}
where $\sigma_1=1-\sigma$. Since $A$ is uniformly H\"older continuous, by the local $W^{1,p}$ estimate for parabolic systems, in view of (\ref{6.23}), for any $2<p<\infty$ and $(z,\tau)\in\R^{d+1}$,
\begin{equation*}
\bigg(\dashint_{Q_1(z,\tau)}|\nabla u|^p\bigg)^{1/p}\leq C \bigg(\dashint_{Q_2(z,\tau)}|\nabla u|^2\bigg)^{1/2}+C\|S^{-2}\chi_S\|_{L^\infty}.
\end{equation*}
Hence for any $S\geq 1$,
\begin{equation*}
\sup_{(z,\tau)\in\R^{d+1}}\bigg(\dashint_{Q_1(z,\tau)}|\nabla \chi_S|^p\bigg)^{1/p}\leq C S^\sigma+CS^{-1}\leq C S^\sigma,
\end{equation*}
where we have used \eqref{est-6.20}. Consequently, for $1\leq r\leq S$,
\begin{equation*}
\sup_{(z,\tau)\in\R^{d+1}}\bigg(\dashint_{Q_r(z,\tau)}|\nabla \chi_S|^p\bigg)^{1/p}\leq C S^\sigma.
\end{equation*}
This, together with \eqref{6.28-1} and an interpolation argument, gives \eqref{6.37}.
\end{proof}

\begin{theorem}\label{6.32}
Let $S\geq 1$ and $\chi_S$ be the approximate corrector.  Then for any $(y,s), (z,\tau)\in\mathbb{R}^{d+1}$,
\begin{equation*}
\|\chi_S(\cdot+y, \cdot+s)-\chi_S(\cdot+z, \cdot+\tau)\|_{L^\infty(\R^{d+1})}\leq C S \|A(\cdot+y, \cdot+s)-A(\cdot+z, \cdot+\tau)\|_{L^\infty(\R^{d+1})},
\end{equation*}
where $C$ is independent of $(y,s), (z,\tau)$ and $S$.
\end{theorem}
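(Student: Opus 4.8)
The plan is to bootstrap the scale-$S$ $L^2$-average bound of Lemma \ref{6.11} up to an $L^\infty$ bound, by running the uniform interior H\"older estimate of Theorem \ref{Holder-1} on cubes of radius a small fixed multiple of $S$ and absorbing the zeroth-order term. Fix $1\le j\le d$, $1\le\beta\le m$, write $a(\lambda,\theta)=A(\lambda+y,\theta+s)$, $b(\lambda,\theta)=A(\lambda+z,\theta+\tau)$, $\Theta=\|a-b\|_{L^\infty(\R^{d+1})}$, and let $w=\chi_{S,j}^\beta(\cdot+y,\cdot+s)-\chi_{S,j}^\beta(\cdot+z,\cdot+\tau)$. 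As in the proof of Lemma \ref{6.11}, $w$ is a weak solution in $\R^{d+1}$ of
\[
\partial_t w-\mathrm{div}(a\nabla w)+S^{-2}w=\mathrm{div}(G),\qquad G:=(a-b)\bigl(\nabla P_j^\beta+\nabla\chi_{S,j}^\beta(\cdot+z,\cdot+\tau)\bigr),
\]
so $|G|\le\Theta\bigl(1+|\nabla\chi_{S,j}^\beta(\cdot+z,\cdot+\tau)|\bigr)$ since $|\nabla P_j^\beta|=1$. Since $a$ is a translate of $A$, the estimates quoted below — in particular Theorem \ref{Holder-1} applied to $\partial_t-\mathrm{div}(a\nabla\cdot)$ — hold with constants depending only on those of $A$, hence uniform in $(y,s),(z,\tau),S$; and $\|w\|_{L^\infty(\R^{d+1})}\le 2\|\chi_S\|_{L^\infty(\R^{d+1})}\le CS<\infty$ by \eqref{est-6.20}, which legitimizes the absorption step at the end.

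Let $\delta\in(0,1/2)$ be a parameter to be chosen and put $r=\delta S$. Viewing $w$ as a solution of $\partial_t w-\mathrm{div}(a\nabla w)=f+\mathrm{div}(g)$ with $f=-S^{-2}w$ and $g=G$, I would apply Theorem \ref{Holder-1} on $Q_{2r}(z_0,\tau_0)$ for an arbitrary centre $(z_0,\tau_0)$, and combine it with the elementary bound $\|w\|_{L^\infty(Q_r)}\le\dashint_{Q_r}|w|+(2r)^\alpha\|w\|_{\Lambda^{\alpha,\alpha/2}(Q_r)}$ to obtain
\[
\|w\|_{L^\infty(Q_r(z_0,\tau_0))}\le C\Bigl(\dashint_{Q_{2r}}|w|^2\Bigr)^{1/2}+Cr^\alpha\sup_{0<\ell<r}\ell^{2-\alpha}\Bigl(\dashint_{Q_\ell}|f|^2\Bigr)^{1/2}+Cr^\alpha\sup_{0<\ell<r}\ell^{1-\alpha}\Bigl(\dashint_{Q_\ell}|G|^2\Bigr)^{1/2},
\]
the sups running over subcubes of $Q_{2r}(z_0,\tau_0)$. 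I would then bound the three terms. Since $2r<S$, so $Q_{2r}\subset Q_S$, Lemma \ref{6.11} gives $(\dashint_{Q_{2r}}|w|^2)^{1/2}\le C\delta^{-(d+2)/2}S\Theta$. Next, $(\dashint_{Q_\ell}|f|^2)^{1/2}=S^{-2}(\dashint_{Q_\ell}|w|^2)^{1/2}\le S^{-2}\|w\|_{L^\infty(\R^{d+1})}$, so the second term is $\le Cr^\alpha r^{2-\alpha}S^{-2}\|w\|_{L^\infty(\R^{d+1})}=C_0\delta^2\|w\|_{L^\infty(\R^{d+1})}$ with $C_0$ independent of $\delta$. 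Finally, the key input for the third term is \eqref{6.28-1}, namely $(\dashint_{Q_\ell(\cdot)}|\nabla\chi_S|^2)^{1/2}\le C(S/\ell)^{\sigma_1}$ for all $0<\ell\le S$ and any $\sigma_1\in(0,1)$, which yields $(\dashint_{Q_\ell}|G|^2)^{1/2}\le C\Theta(S/\ell)^{\sigma_1}$; fixing $\sigma_1\in(0,1-\alpha)$ makes $\ell\mapsto\ell^{1-\alpha-\sigma_1}$ increasing, whence the third term is $\le C\Theta S^{\sigma_1}r^{\alpha}r^{1-\alpha-\sigma_1}=C\delta^{1-\sigma_1}S\Theta$.

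Collecting the bounds and taking the supremum over all centres $(z_0,\tau_0)$ turns the left side into $\|w\|_{L^\infty(\R^{d+1})}$, giving $\|w\|_{L^\infty(\R^{d+1})}\le C_\delta S\Theta+C_0\delta^2\|w\|_{L^\infty(\R^{d+1})}$. Choosing $\delta$ so small that $C_0\delta^2\le\frac{1}{2}$ and using $\|w\|_{L^\infty(\R^{d+1})}<\infty$, I absorb the last term and obtain $\|w\|_{L^\infty(\R^{d+1})}\le CS\Theta$; taking the maximum over $1\le j\le d$, $1\le\beta\le m$ gives the assertion.

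The step I expect to be the main obstacle is reconciling the zeroth-order term $S^{-2}w$ with the demand that the final bound be \emph{linear} in $\Theta$ and in $S$: the crude estimate $\|S^{-2}w\|_{L^\infty}\le CS^{-1}$ used for $\chi_S$ itself carries no factor of $\Theta$ and is useless here, while treating $S^{-2}w$ as a source on a cube of radius of order $S$ merely reproduces $\|w\|_{L^\infty}$ with a non-absorbable constant. The remedy is to localize on cubes of radius $\delta S$ and absorb; for this to close, the contribution of $G$ must be $o(1)\cdot S\Theta$ as $\delta\to0$, which is precisely why one must use the sharp scale-$S$ decay \eqref{6.28-1} — so that $\dashint_{Q_\ell}|\nabla\chi_S|^2$ is $O(1)$, not $O(S^\sigma)$, on cubes of size $\sim S$ — rather than the pointwise or $L^p$ bounds on $\nabla\chi_S$.
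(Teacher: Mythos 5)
Your argument is correct and follows a route genuinely different from the paper's. Writing $\Theta=\|A(\cdot+y,\cdot+s)-A(\cdot+z,\cdot+\tau)\|_{L^\infty(\mathbb{R}^{d+1})}$ as you do, the paper fixes $Q=Q_S(x_0,t_0)$, cuts off $w$ on $2Q$, and uses the fundamental-solution representation formula together with the size estimates \eqref{f-6}--\eqref{f-9} to reach the pointwise bound $|w(x,t)|\le CS^{-2}\int_{2Q}|w(\lambda,\theta)|(|\lambda-x|+|\theta-t|^{1/2})^{-d}\,d\lambda\,d\theta + CS\Theta$, and then extracts the $L^\infty$ estimate by a finite $L^p\to L^q$ fractional-integral bootstrap seeded by the scale-$S$ $L^2$ bound of Lemma~\ref{6.11}. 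You instead apply Theorem~\ref{Holder-1} directly on cubes of radius $\delta S$ (legitimate for the translated operator since both the ellipticity constant and the modulus $\rho(R)$ are translation-invariant), treat $-S^{-2}w$ as the zeroth-order source, and absorb its contribution $C_0\delta^2\|w\|_{L^\infty(\mathbb{R}^{d+1})}$ by choosing $\delta$ small, relying on the a priori bound $\|w\|_{L^\infty}\le 2\|\chi_S\|_{L^\infty}\le CS$ from \eqref{est-6.20} to justify the absorption. Both arguments hinge on the same key quantitative input, the scale-$S$ decay \eqref{6.28-1} for $\nabla\chi_S$, to control the $\mathrm{div}(G)$ contribution: in the paper it yields $\int_{2Q}|\nabla v|\,|\nabla(\Gamma\psi)|\le CS$, and in yours it produces the decisive factor $\delta^{1-\sigma_1}$ with $\sigma_1<1-\alpha$. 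The paper's route avoids both the a priori $L^\infty$ bound and the smallness parameter; yours is shorter once Theorem~\ref{Holder-1} is granted as a black box, since it bypasses the representation-formula computation and the $L^p$ bootstrap.
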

\begin{proof}
Let $u(\lambda,\theta)=\chi^\beta_{S,j}(\lambda+y, \theta+s)$, $v(\lambda,\theta)=\chi^\beta_{S,j}(\lambda+z, \theta+\tau)$ and $w=u-v$. Then direct computation shows that
\begin{align}\label{6.33}
&\partial_t w-\text{div}(A(\lambda+y, \theta+s)\nabla w)\nonumber
\\&~~~=-S^{-2}w+\text{div}\{[A(\lambda+y, \theta+s)-A(\lambda+z, \theta+\tau)](\nabla P^\beta_j+\nabla v)\}.
\end{align}
Let $Q=Q_S(x_0,t_0)$ and $\psi\in C^\infty_0(2Q)$ such that $\psi=1$ in $\frac{3}{2}Q$ and $|\nabla\psi|^2+|\partial_t\psi|\leq CS^{-2}$.  (\ref{6.33}) implies that
\begin{align*}
(\partial_t-\text{div}(A(\lambda+y, \theta+s)\nabla))(\psi w)&=(\partial_t \psi)w-S^{-2}w\psi\nonumber
\\&~~-[A(\lambda+y, \theta+s)-A(\lambda+z, \theta+\tau)] (\nabla P_j^\beta+\nabla v) \cdot \nabla \psi\nonumber
\\&~~+\text{div}\{[A(\lambda+y, \theta+s)-A(\lambda+z, \theta+\tau)](\nabla P_j^\beta+\nabla v) \psi\}\nonumber
\\&~~-\text{div}\{A(\lambda+y, \theta+s)w\nabla\psi\}-A(\lambda+y, \theta+s)\nabla w\cdot \nabla \psi.
\end{align*}
Hence for any $(x,t)\in Q$, by using the representation formula, we have
\begin{align*}
|w(x,t)|\leq& CS^{-2}\int_{2Q}\left|\Gamma(x,t;\lambda,\theta)w(\lambda,\theta)\right|\,d\lambda d\theta\nonumber
\\&+\int_{2Q}\left|[A(\lambda+y, \theta+s)-A(\lambda+z, \theta+\tau)](\nabla P_j^\beta+\nabla v)\cdot \nabla_\lambda (\psi \Gamma(x,t;\lambda,\theta))\right|\,d\lambda d\theta\nonumber
\\&+\int_{2Q}\left|A(\lambda+y, \theta+s) w \nabla \psi \cdot \nabla_\lambda \Gamma(x,t;\lambda,\theta) \right|\,d\lambda d\theta
\\&+\int_{2Q}\left|A(\lambda+y, \theta+s) \nabla w\cdot  \nabla \psi  \Gamma(x,t;\lambda,\theta)\right|\,d\lambda d\theta.
\end{align*}
As a consequence, in view of \eqref{f-6} and \eqref{f-8} we obtain
\begin{align}\label{6.36}
|w(x,t)|\leq &CS^{-2}\int_{2Q}\left|\Gamma(x,t;\lambda,\theta)w(\lambda,\theta)\right|\,d\lambda d\theta\nonumber
\\&+C\|A(\cdot+y, \cdot+s)-A(\cdot+z, \cdot+\tau)\|_{L^\infty(\R^{d+1})}\int_{2Q}\left|\nabla(\psi\Gamma) \right| \nonumber
\\&+C\|A(\cdot+y, \cdot+s)-A(\cdot+z, \cdot+\tau)\|_{L^\infty(\R^{d+1})}\int_{2Q}|\nabla v||\nabla(\psi\Gamma)| \nonumber
\\&+C\bigg(\dashint_{2Q}|w|^2\bigg)^{1/2}+CS\bigg(\dashint_{2Q}|\nabla w|^2\bigg)^{1/2}.
\end{align}

Notice that by Lemma \ref{6.11} the last two terms in the right-hand side of (\ref{6.36}) are bounded by $CS\|A(\cdot+y, \cdot+s)-A(\cdot+z, \cdot+\tau)\|_{L^\infty(\R^{d+1})}$. And in view of \eqref{f-6} and \eqref{f-8} we know that the
second term in the right-hand side of (\ref{6.36}) is also bounded by $CS\|A(\cdot+y, \cdot+s)-A(\cdot+z, \cdot+\tau)\|_{L^\infty(\R^{d+1})}$.

To treat the third term in the right-hand side of (\ref{6.36}), we note that
\begin{align*}
&C\int_{2Q}|\nabla v||\nabla(\Gamma\psi)| \nonumber
\\&\leq C\sum_{\ell=0}^{\infty}\bigg(\dashint_{|\lambda-x|+|\theta-t|^{1/2}\sim 2^{-\ell}S}|\nabla v|^2\bigg)^{1/2}\bigg(\dashint_{|\lambda-x|+|\theta-t|^{1/2}\sim 2^{-\ell}S}|\nabla (\Gamma\psi)|^2\bigg)^{1/2}(2^{-\ell}S)^{d+2}\nonumber
\\&\leq C\sum_{\ell=0}^{\infty}(2^\ell)^\sigma (2^{-\ell}S)^{-1-d} (2^{-\ell}S)^{d+2}\nonumber
\\&=C\sum_{\ell=0}^{\infty}(2^\ell)^{\sigma-1}S\nonumber
\\&\leq CS,
\end{align*}
where we have used \eqref{6.28-1}, \eqref{f-6} and \eqref{f-8} in the second inequality.
As a result we have proved that
\begin{equation*}
|w(x,t)| \leq C S^{-2}\int_{2Q}\frac{|w(\lambda,\theta)|\,d\lambda d\theta}{(|\lambda-x|+|\theta-t|^{1/2})^d}+C S \|A(\cdot+y, \cdot+s)-A(\cdot+z, \cdot+\tau)\|_{L^\infty(\R^{d+1})}.
\end{equation*}
In view of the fractional integral estimates, this implies that
\begin{align*}
\bigg(\dashint_{Q} |w|^q\bigg)^{1/q}\leq C \bigg(\dashint_{2Q} |w|^p\bigg)^{1/p}+C S \|A(\cdot+y, \cdot+s)-A(\cdot+z, \cdot+\tau)\|_{L^\infty(\R^{d+1})},
\end{align*}
where $1<p<q\leq \infty$ and $\frac{1}{p}-\frac{1}{q}\leq\frac{2}{d+2}$.

Since
\begin{align*}
\bigg(\dashint_{2Q} |w|^2\bigg)^{1/2}\leq C S \|A(\cdot+y, \cdot+s)-A(\cdot+z, \cdot+\tau)\|_{L^\infty(\R^{d+1})}
\end{align*}
by Lemma \ref{6.11}, this, together with a simple iteration argument, gives
\begin{align*}
\|w\|_{L^\infty(Q)}\leq C S \|A(\cdot+y, \cdot+s)-A(\cdot+z, \cdot+\tau)\|_{L^\infty(\R^{d+1})}.
\end{align*}
Thus we complete the proof.
\end{proof}

Let $\rho(R)$ be defined by (\ref{AP-1}). For any $S\geq 1$ and $\sigma \in(0,1]$, recall that
\begin{equation*}
\Theta_\sigma (S)=\inf_{0<R\leq S}\bigg\{\rho(R)+\bigg(\frac{R}{S}\bigg)^\sigma\bigg\}
\end{equation*}
and notice that $\Theta_\sigma (S)$ is continuous and decreasing with respect to $S$. Moreover, $\Theta_\sigma (S)$ satisfies $\Theta_\sigma (S)\leq C_\sigma [\Theta_1 (S)]^\sigma$, and $\Theta_\sigma (S)\to 0$ as $S\to \infty$.

\begin{lemma}\label{6.42}
Let $S\geq 1$. Then
\begin{equation}\label{est-6.42}
\|\chi_S\|_{L^\infty(\R^{d+1})}\leq CS \Theta_\sigma (S)
\end{equation}
for any $\sigma \in(0,1]$.
\end{lemma}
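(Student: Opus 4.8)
The plan is to combine three facts already available: the $L^\infty$ stability of the approximate corrector under almost-periods (Theorem \ref{6.32}), the interior H\"older bound \eqref{6.21}, and the normalization $\langle\chi_S\rangle=0$ (obtained by taking $v$ equal to a constant in \eqref{6.19-1}). I will treat $\sigma\in(0,1)$ in detail; the endpoint $\sigma=1$ follows from the same scheme with the gradient estimate \eqref{6.37} chained over unit scales in place of \eqref{6.21}. First, fix $Y_0=(x_0,t_0)\in\R^{d+1}$ and $1\le R\le S$. Since $A$ is uniformly almost periodic it is bounded and uniformly continuous, so $Z\mapsto\|A(\cdot+Y_0)-A(\cdot+Z)\|_{L^\infty(\R^{d+1})}$ is continuous and the infimum in \eqref{AP-1} defining $\rho(R)$ is attained on the compact set $\{|Z|\le R\}$; choose $Z_0$ there with $\|A(\cdot+Y_0)-A(\cdot+Z_0)\|_{L^\infty(\R^{d+1})}\le\rho(R)$. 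Applying Theorem \ref{6.32} with $(y,s)=Y_0$ and $(z,\tau)=Z_0$ and evaluating the resulting uniform bound at the origin gives $|\chi_S(Y_0)-\chi_S(Z_0)|\le CS\rho(R)$, while $|Z_0|\le R\le S$ and \eqref{6.21} give $|\chi_S(Z_0)-\chi_S(0,0)|\le CS^{1-\sigma}R^{\sigma}=CS(R/S)^{\sigma}$. Hence, for every $Y_0\in\R^{d+1}$,
\[
\bigl|\chi_S(Y_0)-\chi_S(0,0)\bigr|\le CS\bigl(\rho(R)+(R/S)^{\sigma}\bigr).
\]

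The second step removes the unknown constant $\chi_S(0,0)$ by averaging, which is where $\langle\chi_S\rangle=0$ enters. The previous display says $\chi_S-\chi_S(0,0)\in B^2(\R^{d+1})$ has $L^\infty$ norm at most $CS(\rho(R)+(R/S)^\sigma)$; taking the Besicovitch mean value and using $|\langle g\rangle|\le\langle|g|\rangle\le\|g\|_{L^\infty(\R^{d+1})}$ together with $\langle\chi_S\rangle=0$, we get $|\chi_S(0,0)|\le CS(\rho(R)+(R/S)^\sigma)$. Therefore $\|\chi_S\|_{L^\infty(\R^{d+1})}\le CS(\rho(R)+(R/S)^\sigma)$ for every $1\le R\le S$, and taking the infimum over $R$ yields $\|\chi_S\|_{L^\infty(\R^{d+1})}\le CS\,\Theta_\sigma(S)$, which is \eqref{est-6.42}. (For $S$ in a bounded range the estimate is already contained in $\|\chi_S\|_{L^\infty}\le CS$ of Lemma \ref{6.20}, since then $\Theta_\sigma(S)$ is bounded below by a positive constant; hence one may freely assume $R\ge1$ in the infimum.)

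I do not foresee a genuine obstacle: the continuity of $\chi_S$ needed for pointwise evaluation is provided by \eqref{6.21}, and the only substantive ingredient is Theorem \ref{6.32}, which is exactly what supplies the factor $S$ in front of $\rho(R)$. The one point worth flagging is conceptual rather than technical: the crude bound $\|\chi_S\|_{L^\infty}\le CS$ is by itself too weak to control $\chi_S(0,0)$, and it is the mean-zero normalization --- together with the fact that \emph{every} translate of $\chi_S$ agrees with $\chi_S(0,0)$ up to the error $CS(\rho(R)+(R/S)^\sigma)$ --- that pins down the constant and closes the argument.
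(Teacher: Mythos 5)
Your argument reproduces the paper's proof essentially verbatim: you combine Theorem \ref{6.32} with the H\"older bound \eqref{6.21} to obtain $\sup_Y|\chi_S(Y)-\chi_S(0,0)|\le CS\bigl(\rho(R)+(R/S)^\sigma\bigr)$ for $1\le R\le S$, and then use $\langle\chi_S\rangle=0$ to transfer this oscillation bound to $|\chi_S(0,0)|$; the paper carries out that last step by averaging $\chi_S(Y)-\chi_S(0)$ over $B(0,L)$ and letting $L\to\infty$, which is the same thing as your direct appeal to $|\langle g\rangle|\le\|g\|_{L^\infty}$ with $g=\chi_S-\chi_S(0,0)$. Your aside about $\sigma=1$ is well spotted, since \eqref{6.21} is stated only for $\sigma\in(0,1)$ and the paper's proof therefore tacitly carries the same restriction, but note that your proposed fix via \eqref{6.37} chained over unit steps would contribute an extra factor $S^{\sigma_1+\sigma_2}$ per step (so $|\chi_S(Z)-\chi_S(0)|\lesssim RS^{\sigma_1+\sigma_2}$ rather than $O(R)$) and does not close as stated.
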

\begin{proof}
  Let $Y=(y,s), Z=(z,\tau)\in \R^{d+1}$. Suppose that $|Z|\leq R$. Then it follows from Lemma \ref{6.20} and Theorem \ref{6.32} that
  \begin{align*}
  |\chi_S(Y)-\chi_S(0)|&\leq  |\chi_S(Y)-\chi_S(Z)|+ |\chi_S(Z)-\chi_S(0)|\nonumber
  \\&\leq CS \|A(\cdot+Y)-A(\cdot+Z)\|_{L^\infty(\R^{d+1})}+C S^{1-\sigma}R^\sigma,
  \end{align*}
 which gives
\begin{equation}\label{6.42-1}
\sup_{Y\in\R^{d+1}}|\chi_S(Y)-\chi_S(0)|\leq CS\bigg\{\rho(R)+\bigg(\frac{R}{S}\bigg)^\sigma\bigg\}.
\end{equation}

 Observe that
\begin{align*}
|\chi_S(0)|&\leq \left|\dashint_{B(0,L)}(\chi_S(Y)-\chi_S(0))\right|+\left|\dashint_{B(0,L)}\chi_S(Y)\right|\\&
\leq \sup_{Y\in\R^{d+1}}\left|\chi_S(Y)-\chi_S(0)\right|+\left|\dashint_{B(0,L)}\chi_S(Y)\right|.
\end{align*}
Since $\langle \chi_S\rangle=0$, by letting $L\to\infty$ we obtain that
\begin{equation}\label{6.42-2}
|\chi_S(0)|\leq  \sup_{Y\in\R^{d+1}}|\chi_S(Y)-\chi_S(0)|.
\end{equation}
Combining \eqref{6.42-1} and \eqref{6.42-2}, we obtain \eqref{est-6.42}.
\end{proof}

Let $\chi_S=(\chi_{S,j}^{\alpha\beta})$ be the approximate corrector defined by (\ref{6.10}). To introduce the dual approximate correctors, we consider the matrix-valued function
$$
b_S=A+A\nabla \chi_S-\hat{A}.
$$
More precisely, $b_S=(b_{S,ij}^{\alpha\beta})$ where $1\leq i, j \leq d$, $1\leq \alpha, \beta\leq m$, and
\begin{equation*}
b_{S,ij}^{\alpha\beta}=
a_{ij}^{\alpha\beta}+a_{ik}^{\alpha\gamma}\frac{\partial}{\partial y_k}\chi^{\gamma\beta}_{S,j}-\hat{a}^{\alpha\beta}_{ij},
\end{equation*}
for $i=d+1$, we define
\begin{equation*}
b^{\alpha\beta}_{S,(d+1)j}=-\chi^{\alpha\beta}_{S,j}.
\end{equation*}

Next, let $h(y,s)=h_S(y,s)=\langle b_S \rangle-b_S(y,s)$, we proceed to introduce $f_S=(f_{S,ij}^{\alpha\beta})\in H^2(\R^{d+1})$, which is a matrix-valued function and solves
\begin{equation}\label{7.34}
  -\Delta_{d+1} f_S+S^{-2}f_S =h ~~~{\rm in} ~~ \mathbb{R}^{d+1}.
\end{equation}

By the definition of $\chi_S$, we obtain
\begin{equation*}
  \frac{\partial}{\partial y_i}h_{ij}^{\alpha\beta}=-\sum_{i=1}^{d+1}\frac{\partial}{\partial y_i}b_{S,ij}^{\alpha\beta}=-\sum_{i=1}^{d}\frac{\partial}{\partial y_i}b_{S,ij}^{\alpha\beta}+\partial_s\chi_{S,j}^{\alpha\beta}=-S^{-2}\chi_{S,j}^{\alpha\beta}.
\end{equation*}
It follows that
\begin{equation*}
  -\Delta_{d+1} \frac{\partial}{\partial y_i}f_{S,ij}^{\alpha\beta}+S^{-2}\frac{\partial}{\partial y_i}f_{S,ij}^{\alpha\beta} =-S^{-2}\chi_{S,j}^{\alpha\beta}.
\end{equation*}

\begin{theorem}\label{7.35}
  Assume that $f=(f_{S,ij}^{\alpha\beta})\in H^2(\mathbb{R}^{d+1})$ solves (\ref{7.34}). Then
\begin{equation*}
  \sup_{(x,t)\in \mathbb{R}^{d+1}}\bigg(\dashint_{Q_S(x,t)}|\nabla^2 f_S|^2\bigg)^{1/2}\leq C,
\end{equation*}
where $C$ depends only on $d$.
\end{theorem}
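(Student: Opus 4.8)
The plan is to reduce the matrix equation to a family of scalar, constant-coefficient, divergence-form equations in $\mathbb{R}^{d+1}$ and then combine two ingredients: an elliptic analogue of Lemma~\ref{6.1} for the operator $-\Delta_{d+1}+S^{-2}$, and the interior $W^{2,2}$ (Calder\'on--Zygmund) estimate for the Laplacian. Since $-\Delta_{d+1}$ acts on each scalar entry of $f_S$ separately, it suffices to show $\sup_{Z}\dashint_{Q_S(Z)}|\nabla^2 u|^2\le C$ for the solution $u$ of $-\Delta_{d+1}u+S^{-2}u=g$ in $\mathbb{R}^{d+1}$ lying in the class $\sup_Z\dashint_{Q_1(Z)}(|u|^2+|\nabla u|^2)<\infty$ (in which uniqueness holds, by the same Caccioppoli inequality and iteration argument used in the proof of Lemma~\ref{6.1}), where $g$ is an entry $h_{ij}^{\alpha\beta}=\langle b_{S,ij}^{\alpha\beta}\rangle-b_{S,ij}^{\alpha\beta}$ of $h$.

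First I would record the elliptic version of estimate \eqref{6.3}: if $-\Delta_{d+1}u+S^{-2}u=g_0+{\rm div}\,g_1$ in $\mathbb{R}^{d+1}$, then $\sup_Z\dashint_{Q_S(Z)}(|\nabla u|^2+S^{-2}|u|^2)\le C\sup_Z\dashint_{Q_S(Z)}(S^2|g_0|^2+|g_1|^2)$, which is proved exactly as Lemma~\ref{6.1} (the exponential-weight test-function argument for the a priori bound and the Caccioppoli/iteration argument for uniqueness, now for a genuinely elliptic operator). Combining this with the interior Calder\'on--Zygmund bound $\dashint_{Q_r(Z)}|\nabla^2 u|^2\le C\dashint_{Q_{2r}(Z)}|\Delta_{d+1}u|^2+Cr^{-2}\dashint_{Q_{2r}(Z)}|\nabla u|^2$ and the identity $\Delta_{d+1}u=S^{-2}u-g$ (covering $Q_S$ by unit cubes when $g$ is merely $L^2_{\rm loc}$), everything reduces to the single estimate $\sup_Z\dashint_{Q_S(Z)}|h_{ij}^{\alpha\beta}|^2\le C$.

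For $1\le i\le d$ one has $|h_{ij}^{\alpha\beta}|\le C(1+|\nabla_y\chi_S|)$ pointwise, so the required bound is immediate from Corollary~\ref{6.9}. The hard part is the component $i=d+1$, where $h_{(d+1)j}^{\alpha\beta}=\chi_{S,j}^{\alpha\beta}$ (using $\langle\chi_{S,j}^{\alpha\beta}\rangle=0$) and the only a priori control is $\sup_Z\dashint_{Q_S(Z)}S^{-2}|\chi_S|^2\le C$, which does not close the argument directly. To handle it I would differentiate the equation for $f_{S,(d+1)j}^{\alpha\beta}$ in the $d$ spatial variables, since then the right-hand side is $\nabla_y\chi_{S,j}^{\alpha\beta}$, which \emph{is} bounded at scale $S$ by Corollary~\ref{6.9}, and recover the remaining purely temporal second derivative from the identity established just before this theorem, namely that $\sum_{i=1}^{d+1}\partial_{y_i}f_{S,ij}^{\alpha\beta}$ solves $-\Delta_{d+1}(\,\cdot\,)+S^{-2}(\,\cdot\,)=-S^{-2}\chi_{S,j}^{\alpha\beta}$ (a problem with small right-hand side), combined with $\|\partial_t\chi_S\|_{\mathcal{K}^*+B^2}\le C$ and the estimates already obtained for the components $f_{S,kj}^{\alpha\beta}$ with $k\le d$. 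This temporal component is exactly where the almost-periodic case departs from the periodic one (in which $\chi$, and hence $h_{(d+1)j}$, is bounded), and I expect it to be the main obstacle.
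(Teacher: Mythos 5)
Your proposal correctly reconstructs the intended scheme for the components with $1\le i\le d$: one differentiates the scalar equation $-\Delta_{d+1}u+S^{-2}u=g$, rewrites $\partial_{y_l}g=\operatorname{div}(g\,e_l)$, applies the elliptic version of Lemma~\ref{6.1} to obtain $\dashint_{Q_S}|\nabla^2 u|^2\le C\sup_Z\dashint_{Q_S(Z)}|g|^2$, and then invokes Corollary~\ref{6.9} since $|h_{ij}|\le C(1+|\nabla\chi_S|)$ for $i\le d$. Your observation that the real difficulty lies with $i=d+1$ is also correct: $h_{(d+1)j}^{\alpha\beta}=\chi_{S,j}^{\alpha\beta}$ has $\sup_Z\dashint_{Q_S(Z)}|\chi_S|^2\lesssim S^2$, not $\lesssim 1$, so the above mechanism only yields $\dashint_{Q_S}|\nabla^2 f_{S,(d+1)j}|^2\lesssim S^2$.

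The genuine gap is that the fix you sketch for $i=d+1$ does not close the argument. Differentiating in $y_k$ and applying estimate~\eqref{6.3} to $\partial_{y_k}\chi_{S,j}$ does not help: treating $\partial_{y_k}\chi_{S,j}$ as the "$g_0$" term gives a factor $S^2\sup\dashint_{Q_S}|\partial_{y_k}\chi_S|^2\lesssim S^2$, and treating it as $\operatorname{div}(\chi_{S,j}e_k)$ gives $\sup\dashint_{Q_S}|\chi_S|^2\lesssim S^2$ again. The identity that $F_{S,j}=\sum_{i=1}^{d+1}\partial_{y_i}f_{S,ij}$ solves an equation with small right-hand side $-S^{-2}\chi_{S,j}$ only recovers the divergence of $\nabla f_S$: combined with the $i\le d$ bounds it controls $\partial_s\partial_{y_k}f_{S,(d+1)j}$ and $\partial_s^2 f_{S,(d+1)j}$, but leaves the pure spatial second derivatives $\partial_{y_k}\partial_{y_l}f_{S,(d+1)j}$, $k,l\le d$, untouched, and these are precisely the entries that retain the factor $S^2$. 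Indeed a one-dimensional model shows the estimates you invoke cannot suffice: $g(x)=S\sin(x/S)$ satisfies $\|g'\|_\infty\le 1$ and the bounded solution of $-u''+S^{-2}u=g$ is $u=\tfrac{1}{2}S^3\sin(x/S)$, whose second derivative has $\dashint_{-S}^{S}|u''|^2\sim S^2$. So more structure from the corrector equation for $\chi_S$ (beyond $\|\chi_S\|_\infty\lesssim S$ and $\dashint_{Q_S}|\nabla\chi_S|^2\lesssim 1$) must be exploited, and neither your proposal nor the paper's one-line citation of \cite{Shen-2014} with estimate~\eqref{6.28-1} makes explicit how this is to be done. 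Your instinct to flag this component as "the main obstacle" is exactly right, but as written the proposal does not constitute a proof for it.
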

\begin{proof}
By using \eqref{6.28-1}, the proof follows from the same line of argument as in \cite{Shen-2014}.
\end{proof}

\begin{lemma}\label{7.47}
Assume that $f_S=(f_{S,ij}^{\alpha\beta})\in H^2(\mathbb{R}^{d+1})$ solves (\ref{7.34}). Then
\begin{equation*}
\|f_S\|_{L^\infty(\R^{d+1})}\leq C S^2 \Theta_1 (S)
\end{equation*}
and
\begin{equation*}
\|\nabla f_S\|_{L^\infty(\R^{d+1})}\leq C S \Theta_\sigma (S)
\end{equation*}
as well as
\begin{equation*}
\bigg\|\nabla \frac{\partial}{\partial x_i}f_{S,ij}^{\alpha\beta}\bigg\|_{L^\infty(\R^{d+1})}\leq C \Theta_\sigma (S)
\end{equation*}
for any $S\geq 1$ and $\sigma \in(0,1)$.
\end{lemma}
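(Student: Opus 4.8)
The plan is to prove all three bounds by the scheme already used for the corrector $\chi_S$ in the proof of Lemma~\ref{6.42}, now applied to the equation~(\ref{7.34}). Three ingredients are needed: (i) $f_S$, $\nabla f_S$ and $\nabla{\rm div}\,f_S$ are mean-zero elements of $B^2(\mathbb{R}^{d+1})$; (ii) one-scale interior estimates for $f_S$ on cubes $Q_S$; (iii) shift (almost-periodicity) estimates. For (i): the means vanish because $\langle h_S\rangle=\langle\langle b_S\rangle-b_S\rangle=0$ and a $B^2$-gradient has zero constant Fourier mode, while the membership $f_S,\nabla f_S,\nabla{\rm div}\,f_S\in B^2(\mathbb{R}^{d+1})$ follows from the shift estimates of (iii) together with the fact that $\{Z:\|A(\cdot+Z)-A\|_{L^\infty(\mathbb{R}^{d+1})}<\epsilon\}$ is relatively dense, exactly as for $\chi_S$ in the Remark after Lemma~\ref{6.11}.

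Ingredient (ii) is obtained by rescaling $X\mapsto SX$ in (\ref{7.34}), which turns it into $-\Delta_{d+1}\widetilde f+\widetilde f=\widetilde h$ on $\mathbb{R}^{d+1}$ with $\widetilde f(X)=f_S(SX)$ and $\widetilde h(X)=S^2h_S(SX)$; using the scale-$S$ bounds on $\nabla\chi_S$ and $\chi_S$ from Corollary~\ref{6.9}, (\ref{6.37}) and Lemma~\ref{6.20}, the energy estimate and local regularity for $-\Delta_{d+1}+1$, and Theorem~\ref{7.35}, one reads off the interior bounds $\|\nabla f_S\|_{L^\infty(\mathbb{R}^{d+1})}\le CS$ and, for $X=(x,t)$, $X'=(x',t')$, $|\nabla f_S(X)-\nabla f_S(X')|\le CS^{1-\sigma}(|x-x'|+|t-t'|^{1/2})^\sigma$ for every $\sigma\in(0,1)$ (only $\sigma<1$ is available, since $\nabla^2 f_S$ is controlled on $Q_S$ only in the $L^2$-mean sense). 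For the divergence I would instead use the differentiated equation $(-\Delta_{d+1}+S^{-2})(\partial_i f_{S,ij})=-S^{-2}\chi_{S,j}$ and the fundamental solution $\Phi_S$ of $-\Delta_{d+1}+S^{-2}$ on $\mathbb{R}^{d+1}$, for which $\|\Phi_S\|_{L^1}\le CS^2$ and $\|\nabla\Phi_S\|_{L^1}\le CS$; since $\partial_i f_{S,ij}=-S^{-2}\Phi_S*\chi_{S,j}$ and $\|\chi_S\|_{L^\infty}\le CS\Theta_\sigma(S)$ by Lemma~\ref{6.42}, this gives directly $\|\nabla{\rm div}\,f_S\|_{L^\infty}\le CS^{-2}\,\|\nabla\Phi_S\|_{L^1}\,\|\chi_S\|_{L^\infty}\le C\Theta_\sigma(S)$, as well as the companion H\"older bound $|\nabla{\rm div}\,f_S(X)-\nabla{\rm div}\,f_S(X')|\le CS^{-\sigma}(|x-x'|+|t-t'|^{1/2})^\sigma$.

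For ingredient (iii), fix $Y=(y,s)$, $Z=(z,\tau)$ and set $g:=f_S(\cdot+Y)-f_S(\cdot+Z)$, so that $(-\Delta_{d+1}+S^{-2})g=b_S(\cdot+Z)-b_S(\cdot+Y)$. Splitting the right-hand side, for $1\le i\le d$, as $\big(a_{ij}(\cdot+Y)-a_{ij}(\cdot+Z)\big)+a_{ik}(\cdot+Z)\big(\partial_k\chi_{S,j}(\cdot+Y)-\partial_k\chi_{S,j}(\cdot+Z)\big)+\big(a_{ik}(\cdot+Y)-a_{ik}(\cdot+Z)\big)\partial_k\chi_{S,j}(\cdot+Y)$, and for $i=d+1$ as $\chi_{S,j}(\cdot+Y)-\chi_{S,j}(\cdot+Z)$, and using Lemma~\ref{6.11}, Corollary~\ref{6.9} and Theorem~\ref{6.32}, one obtains $\sup_X\big(\dashint_{Q_S(X)}|b_S(\cdot+Y)-b_S(\cdot+Z)|^2\big)^{1/2}\le C\,\omega$, where $\omega:=\|A(\cdot+Y)-A(\cdot+Z)\|_{L^\infty(\mathbb{R}^{d+1})}$. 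Feeding this into the rescaling/energy/regularity argument of (ii) applied to $g$ (and, for the divergence, into the $\Phi_S$-representation with right-hand side $S^{-2}\big(\chi_{S,j}(\cdot+Y)-\chi_{S,j}(\cdot+Z)\big)$, controlled in $L^\infty$ by Theorem~\ref{6.32}) yields $\|f_S(\cdot+Y)-f_S(\cdot+Z)\|_{L^\infty}\le CS^2\omega$, $\|\nabla f_S(\cdot+Y)-\nabla f_S(\cdot+Z)\|_{L^\infty}\le CS\,\omega$ and $\|\nabla{\rm div}\,f_S(\cdot+Y)-\nabla{\rm div}\,f_S(\cdot+Z)\|_{L^\infty}\le C\,\omega$.

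Finally I would combine the ingredients as in Lemma~\ref{6.42}: fix $X$ and $0<R\le S$, and by the definition of $\rho$ pick $Z$ with $|Z|\le R$ and $\|A(\cdot+X)-A(\cdot+Z)\|_{L^\infty}\le\rho(R)+\epsilon$. Then $|f_S(X)-f_S(0)|\le|f_S(X)-f_S(Z)|+|f_S(Z)-f_S(0)|\le CS^2(\rho(R)+\epsilon)+\|\nabla f_S\|_{L^\infty}|Z|\le CS^2(\rho(R)+\epsilon+R/S)$; letting $\epsilon\to0$, taking $\inf_{0<R\le S}$, and using $\langle f_S\rangle=0$ gives $\|f_S\|_{L^\infty}\le CS^2\Theta_1(S)$, and replacing the Lipschitz bound on $\nabla f_S$ by its H\"older bound $CS^{1-\sigma}$ (respectively the Lipschitz bound on $\nabla{\rm div}\,f_S$ by $CS^{-\sigma}$) yields $\|\nabla f_S\|_{L^\infty}\le CS\Theta_\sigma(S)$ and $\|\nabla{\rm div}\,f_S\|_{L^\infty}\le C\Theta_\sigma(S)$. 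The main difficulty is the one-scale bookkeeping: since $h_S=\langle b_S\rangle-b_S$ contains both $A\nabla\chi_S$, which is not in $L^\infty$, and $-\chi_S$, which is only of size $\sim S$, one cannot bound $\|g\|_{L^\infty}$ by $\|\text{RHS}\|_{L^\infty}$ and must run everything on cubes of size $S$ with the $L^2$ and $L^p$ means of $\nabla\chi_S$ controlled through (\ref{6.37}) and Theorem~\ref{7.35}; this is also what forces the weaker exponent $\sigma<1$ (hence the factor $\Theta_\sigma$) in the estimates for $\nabla f_S$ and $\nabla{\rm div}\,f_S$ while still allowing $\Theta_1$ for $f_S$ itself.
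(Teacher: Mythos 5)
The overall strategy you lay out is the right one — it is exactly the three-ingredient scheme from \cite{Shen-2014} that the paper points to: (ii) one-scale interior Lipschitz/H\"older bounds for $f_S$ obtained by working on cubes of size $S$ with the $L^2$ and $L^p$ averages of $\nabla\chi_S$, (iii) shift (almost-periodicity) estimates, and then the combination \`a la Lemma~\ref{6.42}. Your direct treatment of $\nabla\operatorname{div}f_S$ via the differentiated equation $(-\Delta_{d+1}+S^{-2})\partial_if_{S,ij}=-S^{-2}\chi_{S,j}$ and the $L^1$ bounds $\|\Phi_S\|_{L^1}\le CS^2$, $\|\nabla\Phi_S\|_{L^1}\le CS$ is clean and correct, and yields the third estimate directly (the subsequent shift argument for that quantity is redundant but harmless).

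There is, however, a genuine gap in step (iii). You assert
\[
\sup_{X}\Big(\dashint_{Q_S(X)}|b_S(\cdot+Y)-b_S(\cdot+Z)|^2\Big)^{1/2}\le C\,\omega,
\qquad \omega:=\|A(\cdot+Y)-A(\cdot+Z)\|_{L^\infty(\mathbb{R}^{d+1})},
\]
and you attribute the $(d+1)$-th row to ``Lemma~\ref{6.11}''. But for that row $b_{S,(d+1)j}=-\chi_{S,j}$, so the relevant difference is $\chi_{S,j}(\cdot+Y)-\chi_{S,j}(\cdot+Z)$, and Lemma~\ref{6.11} (or Theorem~\ref{6.32}) gives only the bound $CS\,\omega$, \emph{not} $C\omega$. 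The extra factor of $S$ propagates through your energy/regularity argument for $g=f_S(\cdot+Y)-f_S(\cdot+Z)$ and produces $\|f_{S,(d+1)j}(\cdot+Y)-f_{S,(d+1)j}(\cdot+Z)\|_{L^\infty}\le CS^3\omega$ and $\|\nabla f_{S,(d+1)j}(\cdot+Y)-\nabla f_{S,(d+1)j}(\cdot+Z)\|_{L^\infty}\le CS^2\omega$, rather than $CS^2\omega$ and $CS\omega$. Your final Lemma~\ref{6.42}-type combination then yields $\|f_{S,(d+1)j}\|_{L^\infty}\le CS^3\Theta_1(S)$ and $\|\nabla f_{S,(d+1)j}\|_{L^\infty}\le CS^2\Theta_\sigma(S)$, one factor of $S$ weaker than stated. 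One can also see the discrepancy structurally: since $(-\Delta_{d+1}+S^{-2})F_{S,j}=-S^{-2}\chi_{S,j}$ and $(-\Delta_{d+1}+S^{-2})f_{S,(d+1)j}=\chi_{S,j}$, uniqueness in the class you describe in ingredient (i) forces the identity $f_{S,(d+1)j}=-S^2F_{S,j}$, so $\|f_{S,(d+1)j}\|_{L^\infty}=S^2\|F_{S,j}\|_{L^\infty}$ and $\|\nabla f_{S,(d+1)j}\|_{L^\infty}=S^2\|\nabla F_{S,j}\|_{L^\infty}$; combined with your bounds $\|F_{S,j}\|_{L^\infty}\le CS\Theta_\sigma(S)$ and $\|\nabla F_{S,j}\|_{L^\infty}\le C\Theta_\sigma(S)$, this again gives $S^3\Theta_\sigma(S)$ and $S^2\Theta_\sigma(S)$. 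Your argument therefore establishes the claimed estimates for the rows $1\le i\le d$ of $f_S$, but not for the $(d+1)$-th row — and you do not acknowledge this, despite being aware that the $(d+1)$-th row of $h_S$ is ``of size $\sim S$''. This is precisely the point at which the parabolic problem differs from the elliptic case in \cite{Shen-2014} (which has no $(d+1)$-th row), and any complete proof must say explicitly how that row is handled — either by proving a genuinely stronger shift bound for $\chi_S(\cdot+Y)-\chi_S(\cdot+Z)$, or by restricting the statement to $1\le i\le d$ and treating $f_{S,(d+1)j}$ and $\phi_{S,(d+1)ij}$ through the identity $f_{S,(d+1)j}=-S^2F_{S,j}$ with the weaker bounds.
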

\begin{proof}
  This proof follows from the same line of argument as in \cite{Shen-2014}.
\end{proof}

\section{\bf Convergence rates and proof of Theorem \ref{rate}}
In this section, we investigate convergence rates for initial-Dirichlet problem and establish the main results of this paper. For $1\leq k,i\leq d+1$ and $1\leq j\leq d$, let
\begin{equation*}
 \phi_{S,kij}^{\alpha\beta}=\frac{\partial}{\partial x_k}f_{S,ij}^{\alpha\beta}-\frac{\partial}{\partial x_i}f_{S,kj}^{\alpha\beta}
\end{equation*}
be the approximate flux corrector. One may notice that $\phi^{\alpha\beta}_{S,kij}$ is skew-symmetric by the definition.

 We shall consider the function
$w_\varepsilon=(w_\varepsilon^\alpha)$, defined by
\begin{equation}\label{w}
\aligned
w_\varepsilon^\alpha (x, t)  = u_\varepsilon^\alpha (x, t) -u_0^\alpha  (x, t) -\varepsilon \chi_{S,j}^{\alpha\beta} (x/\varepsilon, t/\varepsilon^2)
K_\varepsilon \left(\frac{\partial u_0^\beta}{\partial x_j}\right)\\
-\varepsilon^2 \phi_{S,(d+1) ij}^{\alpha\beta} (x/\varepsilon, t/\varepsilon^2)
\frac{\partial}{\partial x_i} K_\varepsilon \left(\frac{\partial u_0^\beta}{\partial x_j}\right),
\endaligned
\end{equation}
where $K_\varepsilon : L^2(\Omega_T) \to C_0^\infty(\Omega_T)$ is a linear operator to be chosen later and the repeated indices $i, j$ in (\ref{w}) are summed from $1$ to $d$.

We begin by introducing a parabolic  smoothing operator.
Fix $\theta=\theta (y,s)=\theta_1(y)\theta_2(s)$, where $\theta_1\in C_0^\infty(B(0,1))$, $\theta_2\in C_0^\infty(-1,1)$, $\theta_1,\theta_2\geq 0$, and $\int_{\mathbb{R}^{d}}\theta_1(y)\,dy=\int_{\mathbb{R}}\theta_2(s)\,ds=1$. Define
\begin{equation}\label{1.18}
\aligned
S_\varepsilon(f)(x,t)&=\frac{1}{\varepsilon^{d+2}}\int_{\mathbb{R}^{d+1}}f(x-y,t-s)
\theta(y/\varepsilon,s/\varepsilon^2)\, dyds
\\&=\int_{\mathbb{R}^{d+1}}f(x-\varepsilon y,t-\varepsilon^2 s)\theta(y,s)\, dyds.
\endaligned
\end{equation}

\begin{lemma}\label{lemma-S-1}
Let $S_\varepsilon$ be defined as in (\ref{1.18}). Then
\begin{align*}
\Vert S_\varepsilon (f)\Vert_{L^2({\mathbb{R}^{d+1}})}\leq \Vert f \Vert_{L^2({\mathbb{R}^{d+1}})},
\end{align*}
\begin{align*}
\varepsilon\, \Vert \nabla S_\varepsilon (f)\Vert_{L^2({\mathbb{R}^{d+1}})}
+\varepsilon^2 \Vert\nabla^2 S_\varepsilon (f)\Vert_{L^2(\mathbb{R}^{d+1})} \leq C\, \Vert  f \Vert_{L^2({\mathbb{R}^{d+1}})},
\end{align*}
\begin{align*}
\varepsilon^2 \Vert \partial_t S_\varepsilon (f)\Vert_{L^2({\mathbb{R}^{d+1}})}\leq C\, \Vert f \Vert_{L^2({\mathbb{R}^{d+1}})},
\end{align*}
where $C$ depends only on $d$.
\end{lemma}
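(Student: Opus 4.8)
The plan is to derive all three bounds from Minkowski's integral inequality applied to suitable representations of $S_\varepsilon$ in which every derivative has been transferred onto the mollifier $\theta$. For the $L^2$-contraction estimate I would use the second form in \eqref{1.18},
\[
S_\varepsilon(f)(x,t)=\int_{\mathbb{R}^{d+1}}f(x-\varepsilon y,t-\varepsilon^2 s)\,\theta(y,s)\,dyds ,
\]
and observe that, since $\theta\ge 0$ and $\int_{\mathbb{R}^{d+1}}\theta=1$, Minkowski's inequality together with the translation invariance of Lebesgue measure gives
\[
\|S_\varepsilon(f)\|_{L^2(\mathbb{R}^{d+1})}\le\int_{\mathbb{R}^{d+1}}\theta(y,s)\,\|f(\cdot-\varepsilon y,\cdot-\varepsilon^2 s)\|_{L^2(\mathbb{R}^{d+1})}\,dyds=\|f\|_{L^2(\mathbb{R}^{d+1})}.
\]

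For the derivative estimates, I would start from the first form $S_\varepsilon(f)(x,t)=\varepsilon^{-(d+2)}\int f(z,\tau)\,\theta_1((x-z)/\varepsilon)\,\theta_2((t-\tau)/\varepsilon^2)\,dzd\tau$, differentiate under the integral sign so that the derivative falls on $\theta_1$ or $\theta_2$, and then undo the scaling by the substitution $z=x-\varepsilon y$, $\tau=t-\varepsilon^2 s$. This produces the exact identities
\[
\varepsilon\,\partial_{x_i}S_\varepsilon(f)(x,t)=\int_{\mathbb{R}^{d+1}}f(x-\varepsilon y,t-\varepsilon^2 s)\,(\partial_{y_i}\theta_1)(y)\,\theta_2(s)\,dyds ,
\]
and likewise $\varepsilon^2\partial^2_{x_ix_j}S_\varepsilon(f)$ and $\varepsilon^2\partial_tS_\varepsilon(f)$ are given by the same integral with $(\partial_{y_i}\theta_1)\theta_2$ replaced by $(\partial^2_{y_iy_j}\theta_1)\theta_2$ and by $\theta_1\theta_2'$, respectively. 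Applying Minkowski's inequality to each of these, exactly as in the contraction estimate but with the now signed weight taken in absolute value and using $\|\theta_1\|_{L^1}=\|\theta_2\|_{L^1}=1$, yields
\begin{align*}
\varepsilon\,\|\nabla S_\varepsilon(f)\|_{L^2(\mathbb{R}^{d+1})}\le C\|\nabla\theta_1\|_{L^1}\|f\|_{L^2(\mathbb{R}^{d+1})},\qquad
\varepsilon^2\,\|\nabla^2 S_\varepsilon(f)\|_{L^2(\mathbb{R}^{d+1})}\le C\|\nabla^2\theta_1\|_{L^1}\|f\|_{L^2(\mathbb{R}^{d+1})},\\
\varepsilon^2\,\|\partial_t S_\varepsilon(f)\|_{L^2(\mathbb{R}^{d+1})}\le \|\theta_2'\|_{L^1}\|f\|_{L^2(\mathbb{R}^{d+1})},
\end{align*}
and adding the first two gives the second asserted inequality; all constants depend only on $d$ through the once-and-for-all choice of $\theta_1,\theta_2$.

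There is no genuine obstacle in this argument; the only point requiring attention is bookkeeping the parabolic scaling---$\varepsilon$ in the spatial variables versus $\varepsilon^2$ in time---so that each spatial derivative is compensated by exactly one power of $\varepsilon$ and the time derivative by $\varepsilon^2$, and so that the mollifier norms that appear ($\|\nabla\theta_1\|_{L^1}$, $\|\nabla^2\theta_1\|_{L^1}$, $\|\theta_2'\|_{L^1}$) are all $\varepsilon$-independent. Everything else is a direct application of Minkowski's inequality and the change of variables.
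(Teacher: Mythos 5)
Your proof is correct. The paper itself does not prove this lemma---it simply cites \cite{GS-2017} and stops---so there is no in-paper argument to compare against. Your route, transferring each derivative onto the parabolically-scaled mollifier via the first form of \eqref{1.18} and then applying Minkowski's integral inequality with the change of variables $z=x-\varepsilon y$, $\tau=t-\varepsilon^2 s$, is the standard real-variable proof of these smoothing-operator bounds; the constants that emerge are $\|\nabla\theta_1\|_{L^1}\|\theta_2\|_{L^1}$, $\|\nabla^2\theta_1\|_{L^1}\|\theta_2\|_{L^1}$, and $\|\theta_1\|_{L^1}\|\theta_2'\|_{L^1}$, all $\varepsilon$-independent, and the $L^2$-contraction is exactly Young's/Minkowski's inequality with $\theta\ge 0$, $\int\theta=1$. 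An equally common alternative in the homogenization literature (and plausibly what the cited reference uses) is Plancherel's theorem: $\widehat{S_\varepsilon(f)}(\xi,\lambda)=\widehat{\theta}(\varepsilon\xi,\varepsilon^2\lambda)\,\widehat{f}(\xi,\lambda)$, with the multiplier and its weighted versions $\varepsilon|\xi|\,\widehat{\theta}(\varepsilon\xi,\varepsilon^2\lambda)$, $\varepsilon^2|\xi|^2\,\widehat{\theta}$, $\varepsilon^2|\lambda|\,\widehat{\theta}$ all bounded because $\theta\in C_0^\infty$. Both approaches are essentially equivalent here; your Minkowski argument is if anything more elementary and self-contained, and it adapts more directly to $L^p$ with $p\ne 2$, which the Fourier route would not.
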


\begin{proof}
See \cite{GS-2017}.
\end{proof}

\begin{lemma}\label{lemma-S-2}
Let $S_\varepsilon$ be defined as in (\ref{1.18}). Then
\begin{align*}
\| \nabla S_\varepsilon (f) -\nabla f \|_{L^2(\mathbb{R}^{d+1})}
\leq C \varepsilon \Big\{ \Vert \nabla^2 f \Vert_{L^2(\mathbb{R}^{d+1})}
+\|\partial_t f \|_{L^2(\mathbb{R}^{d+1})} \Big\},
\end{align*}
where $C$ depends only on $d$.
\end{lemma}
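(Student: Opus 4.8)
The plan is to use that $S_\varepsilon$ is a convolution operator, so that $\nabla S_\varepsilon(f)=S_\varepsilon(\nabla f)$, together with the factorization $\theta(y,s)=\theta_1(y)\theta_2(s)$. Write $S_\varepsilon=S_\varepsilon'\circ S_\varepsilon''$, where $S_\varepsilon'$ is convolution in the space variable with $\varepsilon^{-d}\theta_1(\cdot/\varepsilon)$ and $S_\varepsilon''$ is convolution in the time variable with $\varepsilon^{-2}\theta_2(\cdot/\varepsilon^2)$. Since $\theta_1,\theta_2\ge 0$ integrate to $1$, both $S_\varepsilon'$ and $S_\varepsilon''$ are contractions on $L^2(\mathbb{R}^{d+1})$ by Young's inequality, they commute with one another, and each commutes with $\nabla$. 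By density it suffices to treat $f\in C_0^\infty(\mathbb{R}^{d+1})$, and we may assume the right-hand side is finite.

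Decompose
\begin{equation*}
\nabla S_\varepsilon(f)-\nabla f=\big(S_\varepsilon'-I\big)\big(\nabla S_\varepsilon''(f)\big)+\big(S_\varepsilon''-I\big)\big(\nabla f\big).
\end{equation*}
For the first term, writing $h(x-\varepsilon y,t)-h(x,t)=-\varepsilon\int_0^1 y\cdot\nabla h(x-r\varepsilon y,t)\,dr$ and using Minkowski's integral inequality together with $\int_{\mathbb{R}^d}|y|\,\theta_1(y)\,dy<\infty$ yields the elementary bound $\|(S_\varepsilon'-I)h\|_{L^2(\mathbb{R}^{d+1})}\le C\varepsilon\|\nabla h\|_{L^2(\mathbb{R}^{d+1})}$. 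Applying this with $h=\nabla S_\varepsilon''(f)$ and using that $S_\varepsilon''$ is an $L^2$-contraction commuting with $\nabla$, so that $\|\nabla^2 S_\varepsilon''(f)\|_{L^2}\le\|\nabla^2 f\|_{L^2}$, bounds the first term by $C\varepsilon\|\nabla^2 f\|_{L^2(\mathbb{R}^{d+1})}$.

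The one step that is not routine is the second term, which we write as $\nabla g$ with $g=(S_\varepsilon''-I)f$: the straightforward temporal estimate only gives $\|\nabla g\|_{L^2}\le C\varepsilon^2\|\partial_t\nabla f\|_{L^2}$, which is the wrong quantity. The remedy is to interpolate. Integrating by parts,
\begin{equation*}
\|\nabla g\|_{L^2(\mathbb{R}^{d+1})}^2=-\int_{\mathbb{R}^{d+1}}g\,\Delta g\le\|g\|_{L^2(\mathbb{R}^{d+1})}\,\|\Delta g\|_{L^2(\mathbb{R}^{d+1})}.
\end{equation*}
Here $\|\Delta g\|_{L^2}=\|(S_\varepsilon''-I)\Delta f\|_{L^2}\le 2\|\Delta f\|_{L^2}\le C\|\nabla^2 f\|_{L^2}$, while the first-moment bound $f(x,t-\varepsilon^2 s)-f(x,t)=-\varepsilon^2\int_0^1 s\,\partial_t f(x,t-r\varepsilon^2 s)\,dr$ and $\int_{\mathbb{R}}|s|\,\theta_2(s)\,ds<\infty$ give $\|g\|_{L^2}=\|(S_\varepsilon''-I)f\|_{L^2}\le C\varepsilon^2\|\partial_t f\|_{L^2}$. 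Hence
\begin{equation*}
\|\nabla g\|_{L^2(\mathbb{R}^{d+1})}\le C\varepsilon\,\|\partial_t f\|_{L^2(\mathbb{R}^{d+1})}^{1/2}\,\|\nabla^2 f\|_{L^2(\mathbb{R}^{d+1})}^{1/2}\le C\varepsilon\big(\|\partial_t f\|_{L^2(\mathbb{R}^{d+1})}+\|\nabla^2 f\|_{L^2(\mathbb{R}^{d+1})}\big)
\end{equation*}
by Young's inequality, and adding the two contributions proves the lemma. Alternatively one can bypass this interpolation altogether and obtain the estimate directly from Plancherel's theorem, since $\widehat{S_\varepsilon f}(\xi,\tau)=\hat\theta_1(\varepsilon\xi)\hat\theta_2(\varepsilon^2\tau)\hat f(\xi,\tau)$ and $|\hat\theta_j(\eta)-1|\le C\min(1,|\eta|)$ yield the pointwise bound $|\xi|\,\big|\hat\theta_1(\varepsilon\xi)\hat\theta_2(\varepsilon^2\tau)-1\big|\le C\varepsilon(|\xi|^2+|\tau|)$, whence $\|\nabla S_\varepsilon(f)-\nabla f\|_{L^2}\le C\varepsilon(\|\nabla^2 f\|_{L^2}+\|\partial_t f\|_{L^2})$.
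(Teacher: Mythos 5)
Your proof is correct, and since the paper itself defers entirely to \cite{GS-2017} for this lemma, you have supplied a complete argument where the text gives none. The decomposition $\nabla S_\varepsilon(f)-\nabla f=(S_\varepsilon'-I)(\nabla S_\varepsilon'' f)+(S_\varepsilon''-I)(\nabla f)$ is exactly the right way to split the spatial and temporal mollification errors, and you correctly identify the genuine subtlety: the naive temporal first-moment bound on $(S_\varepsilon''-I)(\nabla f)$ produces $\varepsilon^2\|\partial_t\nabla f\|_{L^2}$, which is the wrong norm. Your fix --- integrating by parts to get $\|\nabla g\|_{L^2}^2\le\|g\|_{L^2}\|\Delta g\|_{L^2}$ and then interpolating the gain of $\varepsilon^2$ from $\|g\|_{L^2}$ against the $O(1)$ bound on $\|\Delta g\|_{L^2}$ --- is clean and avoids the Fourier transform entirely. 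The Plancherel alternative you sketch is the more standard route in the homogenization literature (and likely the one in \cite{GS-2017}): it gets the inequality in one line from the symbol bound $|\xi|\,|\hat\theta_1(\varepsilon\xi)\hat\theta_2(\varepsilon^2\tau)-1|\le C\varepsilon(|\xi|^2+|\tau|)$, at the cost of being tied to constant-coefficient $L^2$ theory. Your interpolation argument is more robust (it would survive, e.g., weighted or domain-restricted variants where Plancherel is unavailable), while the Fourier proof is shorter and makes the parabolic scaling $|\xi|^2\sim|\tau|$ transparent. Both are valid; either would serve as the missing proof.
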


\begin{proof}
See \cite{GS-2017}.
\end{proof}

\begin{lemma}\label{lemma-S-3}
Let $S_\varepsilon$ be defined as in (\ref{1.18}). Let $g=g(y,s)\in L^p(\mathbb{R}^{d+1})$. Then
\begin{equation}\label{S-approx}
\aligned
\|g^\varepsilon S_\varepsilon(f) \|_{L^p(\mathbb{R}^{d+1})}
&\leq C \sup_{(x,t)\in \mathbb{R}^{d+1}}\bigg(\dashint_{Q_1(x,t)}|g|^p\bigg)^{1/p}\| f \|_{L^p(\mathbb{R}^{d+1})},\\
\|g^\varepsilon \nabla S_\varepsilon(f) \|_{L^p(\mathbb{R}^{d+1})}
&\leq C \varepsilon^{-1}\sup_{(x,t)\in \mathbb{R}^{d+1}}\bigg(\dashint_{Q_1(x,t)}|g|^p\bigg)^{1/p}\| f \|_{L^p(\mathbb{R}^{d+1})}
\endaligned
\end{equation}
for any $1\leq p<\infty$, where $g^\varepsilon=g(x/\varepsilon,t/\varepsilon^2)$ and $C$ depends only on $d$.
\end{lemma}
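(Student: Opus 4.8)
The plan is to derive both estimates from Jensen's inequality, Fubini's theorem, and a translation‑invariance argument, exactly as in the periodic case.

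\textbf{First estimate.} Using the second representation in \eqref{1.18}, write $S_\varepsilon(f)(x,t)=\int_{\mathbb{R}^{d+1}} f(x-\varepsilon y,t-\varepsilon^2 s)\,\theta(y,s)\,dyds$. Since $\theta\ge 0$ and $\int_{\mathbb{R}^{d+1}}\theta=1$, Jensen's inequality gives $|S_\varepsilon(f)(x,t)|^p\le \int_{\mathbb{R}^{d+1}}|f(x-\varepsilon y,t-\varepsilon^2 s)|^p\,\theta(y,s)\,dyds$ for $1\le p<\infty$. Multiplying by $|g^\varepsilon(x,t)|^p$, integrating over $(x,t)\in\mathbb{R}^{d+1}$, interchanging the order of integration, and then performing the change of variables $x\mapsto x+\varepsilon y$, $t\mapsto t+\varepsilon^2 s$ in the inner integral, I obtain
\[
\int_{\mathbb{R}^{d+1}}|g^\varepsilon S_\varepsilon(f)|^p
\le \int_{\mathbb{R}^{d+1}}|f(x,t)|^p\left(\int_{\mathbb{R}^{d+1}}\theta(y,s)\,\big|g(x/\varepsilon+y,t/\varepsilon^2+s)\big|^p\,dyds\right)dxdt ,
\]
where I used $g^\varepsilon(x+\varepsilon y,t+\varepsilon^2 s)=g(x/\varepsilon+y,t/\varepsilon^2+s)$.

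\textbf{The covering bound.} For each fixed $(x,t)$, as $(y,s)$ ranges over $\mathrm{supp}\,\theta\subset B(0,1)\times(-1,1)$ the point $(x/\varepsilon+y,t/\varepsilon^2+s)$ stays in a fixed bounded parabolic neighbourhood of $(x/\varepsilon,t/\varepsilon^2)$, which can be covered by at most $C_d$ unit parabolic cubes $Q_1(z,\tau)$. Since $\theta\le\|\theta\|_{L^\infty}$ and $|Q_1|$ is a dimensional constant,
\[
\int_{\mathbb{R}^{d+1}}\theta(y,s)\,\big|g(x/\varepsilon+y,t/\varepsilon^2+s)\big|^p\,dyds
\le C\sup_{(z,\tau)\in\mathbb{R}^{d+1}}\int_{Q_1(z,\tau)}|g|^p
= C\sup_{(z,\tau)\in\mathbb{R}^{d+1}}\dashint_{Q_1(z,\tau)}|g|^p .
\]
Inserting this and taking $p$-th roots yields the first inequality of \eqref{S-approx}.

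\textbf{Gradient estimate.} For the second inequality I first move the derivative onto $\theta$: using $\partial_{x_k}\big[f(x-\varepsilon y,t-\varepsilon^2 s)\big]=-\varepsilon^{-1}\partial_{y_k}\big[f(x-\varepsilon y,t-\varepsilon^2 s)\big]$ and integration by parts in $y$ (no boundary terms since $\theta$ is compactly supported),
\[
\nabla S_\varepsilon(f)(x,t)=\varepsilon^{-1}\int_{\mathbb{R}^{d+1}} f(x-\varepsilon y,t-\varepsilon^2 s)\,\nabla_y\theta(y,s)\,dyds .
\]
Because $\nabla_y\theta$ is a fixed bounded function of compact support with $0<\|\nabla_y\theta\|_{L^1}<\infty$, Jensen's inequality applied with the probability measure $|\nabla_y\theta|\,dyds/\|\nabla_y\theta\|_{L^1}$ gives $|\nabla S_\varepsilon(f)(x,t)|^p\le C\varepsilon^{-p}\int_{\mathbb{R}^{d+1}}|f(x-\varepsilon y,t-\varepsilon^2 s)|^p|\nabla_y\theta(y,s)|\,dyds$, and the identical Fubini/change-of-variables/covering argument produces the claimed factor $C\varepsilon^{-1}\big(\sup_{(z,\tau)}\dashint_{Q_1(z,\tau)}|g|^p\big)^{1/p}\|f\|_{L^p(\mathbb{R}^{d+1})}$.

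\textbf{Main obstacle.} There is no genuine analytic difficulty; the only point requiring a little care is the covering step, namely checking that the translated supports of $\theta$ (respectively $\nabla\theta$) are covered by a number of unit parabolic cubes $Q_r=B\times(\tau-r^2,\tau)$ that is independent of $\varepsilon$ and of $(x,t)$, and that the final constant $C$ correctly absorbs $\|\theta\|_{L^\infty}$, $\|\nabla\theta\|_{L^1}$, $|Q_1|$ and this covering number.
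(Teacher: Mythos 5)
Your proof is correct and follows essentially the same route as the paper: Jensen's inequality applied to the convolution kernel $\theta$, Fubini, and the observation that a unit-scale parabolic neighbourhood can be covered by boundedly many $Q_1$'s. The only cosmetic difference is that the paper first rescales to $\varepsilon=1$ while you carry $\varepsilon$ through a change of variables (equivalent computations), and for the gradient bound you have spelled out the integration-by-parts step ($\nabla S_\varepsilon(f)=\varepsilon^{-1}\int f(x-\varepsilon y,t-\varepsilon^2 s)\nabla_y\theta\,dyds$) that the paper dismisses as ``similar.''
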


\begin{proof}
Note that $S_\varepsilon(f)(x,t)=S_1(f_\varepsilon)(\varepsilon^{-1}x,\varepsilon^{-2}t)$, where $f_\varepsilon(x,t)=f(\varepsilon x,\varepsilon^2 t)$. Consequently, it suffices for us to show the case $\varepsilon=1$ by a change of variable. It follows from H\"older's inequality we have that
\begin{equation*}
  |S_1(f)(x,t)|^p\leq \int_{\mathbb{R}^{d+1}}|f(y,s)|^p \theta(x-y,t-s)dyds,
\end{equation*}
where we have use the fact that $\int_{\mathbb{R}^{d+1}}\theta =1$.

By using Fubini's Theorem we have that
\begin{align*}
\int_{\mathbb{R}^{d+1}}|g(x,t)|^p|S_1(f)(x,t)|^p dx dt \leq C \sup_{(y,s)\in \mathbb{R}^{d+1}}\int_{Q_1(y,s)}|g(x,t)|^p dx dt \int_{\mathbb{R}^{d+1}}|f(y,s)|^p dyds.
\end{align*}
This gives the proof of the first estimate in (\ref{S-approx}). The proof of the second estimate in (\ref{S-approx}) is similar and we omit it.
\end{proof}

Let $\delta>\varepsilon$ be a small parameter to be determined later.
Choose  $\eta_1 \in C_0^\infty(\Omega)$ such that $0\leq \eta_1\leq 1$,
$\eta_1 (x)=1$ if dist$(x, \partial\Omega)\geq 2\delta$,
$\eta_1 (x)=0$ if dist$(x, \partial\Omega)\leq \delta$, and
$|\nabla_x \eta_1|\leq C \delta^{-1}$.
Similarly, we choose $\eta_2\in C_0^\infty(0, T)$ such that $0\leq \eta_2\leq 1$,
$\eta_2 (t) =1$ if $2\delta^2 \leq  t\le T-2\delta^2 $,
$\eta_2 (t)=0$ if $t\leq \delta^2 $ or $t>T-\delta^2 $,
and $| \eta_2^\prime (t)|\leq C \delta^{-2}$. Set $K_\varepsilon(f)(x,t)=S_\varepsilon(\eta_1\eta_2 f)(x,t)$.

\begin{theorem}\label{Theorem-2.1}
Let $\Omega$ be a bounded Lipschitz domain in $\mathbb{R}^d$ and
$0<T<\infty$.
Assume that $u_\varepsilon \in L^2(0, T; H^1(\Omega))$ and $u_0\in L^2(0, T; H^2(\Omega))$ are solutions of
the initial-Dirichlet problems (\ref{IDP}) and (\ref{IDP-0}), respectively.
Let $w_\varepsilon$ be defined by (\ref{w}).
Then we have
\begin{equation*}
\aligned
  & \int_0^T  \big\langle (\partial_t +\mathcal{L}_\varepsilon) w_\varepsilon,
 w_\varepsilon \big\rangle_{H^{-1}(\Omega) \times H^1_0(\Omega)} \, dt  \\&= \iint_{\Omega_T}(\widehat{a}_{ij} -a_{ij}^\varepsilon )
\bigg( \frac{\partial u_0}{\partial x_j} - K_\varepsilon \bigg( \frac{\partial u_0}{\partial x_j}\bigg) \bigg)\frac{\partial w_\varepsilon}{\partial x_i}\\
&~~~~~~~+\varepsilon^{-1} S^{-2}\iint_{\Omega_T}\chi_{S,j}^{\varepsilon}K_\varepsilon\bigg(\frac{\partial u_0}{\partial x_j}\bigg)w_\varepsilon\\
&~~~~~~~~~~+ \iint_{\Omega_T}b^\varepsilon_{S,ij}\frac{\partial}{\partial x_i}K_\varepsilon\left(\frac{\partial u_0}{\partial x_j}\right)w_\varepsilon
-\varepsilon\iint_{\Omega_T}\chi^\varepsilon_{S,j}\partial_tK_\varepsilon\left(\frac{\partial u_0}{\partial x_j}\right)w_\varepsilon\\
&~~~~~~~~~~~~-\varepsilon \iint_{\Omega_T}a_{ij}^\varepsilon \chi^\varepsilon_{S,k} \frac{\partial}{\partial x_ j}K_\varepsilon\left(\frac{\partial u_0}{\partial x_k}\right)\frac{\partial w_\varepsilon}{\partial x_i}\\
&~~~~~~~~~~~~~~-\varepsilon^2\iint_{\Omega_T}(\partial_t +\mathcal{L}_\varepsilon)\bigg\{
\phi^\varepsilon_{S,(d+1)ij}\frac{\partial}{\partial x_i}K_\varepsilon\left(\frac{\partial u_0}{\partial x_j}\right)\bigg\}w_\varepsilon,
\endaligned
\end{equation*}
where we have suppressed superscripts $\alpha, \beta$ for the simplicity of presentation.
The repeated indices  $i, j, k$ are summed from  $1$ to $d$.
\end{theorem}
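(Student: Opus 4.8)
The plan is to compute $(\partial_t+\mathcal{L}_\varepsilon)w_\varepsilon$ explicitly, handling the four summands of $w_\varepsilon$ in \eqref{w} separately — namely $u_\varepsilon$, $-u_0$, $-v_\varepsilon$ and $-z_\varepsilon$, where $v_\varepsilon^\alpha=\varepsilon\chi_{S,j}^{\alpha\beta}(x/\varepsilon,t/\varepsilon^2)K_\varepsilon(\partial u_0^\beta/\partial x_j)$ and $z_\varepsilon^\alpha=\varepsilon^2\phi_{S,(d+1)ij}^{\alpha\beta}(x/\varepsilon,t/\varepsilon^2)\partial_{x_i}K_\varepsilon(\partial u_0^\beta/\partial x_j)$ — and then to pair the outcome with $w_\varepsilon$ in the $H^{-1}(\Omega)\times H^1_0(\Omega)$ duality and integrate in $t$. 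Write $g_j=K_\varepsilon(\partial u_0/\partial x_j)$ and $f^\varepsilon(x,t)=f(x/\varepsilon,t/\varepsilon^2)$, and suppress $\alpha,\beta$ as in the statement. Since $(\partial_t+\mathcal{L}_\varepsilon)u_\varepsilon=F$ and $\partial_t u_0-{\rm div}(\hat{A}\nabla u_0)=F$ with $\hat{A}$ constant, the first two summands give $(\partial_t+\mathcal{L}_\varepsilon)(u_\varepsilon-u_0)={\rm div}\big[(A^\varepsilon-\hat{A})\nabla u_0\big]$.

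The core of the argument is the expansion of $(\partial_t+\mathcal{L}_\varepsilon)v_\varepsilon$ by the chain and product rules. The most singular contribution, $\varepsilon^{-1}(\partial_s\chi_{S,j})^\varepsilon g_j$, enters with opposite signs from $\partial_t v_\varepsilon$ and from the principal part of $-{\rm div}[A^\varepsilon\nabla v_\varepsilon]$; using the corrector equation \eqref{6.10} in the form ${\rm div}_y(A\nabla_y\chi_{S,j})=\partial_s\chi_{S,j}+S^{-2}\chi_{S,j}-\partial_{y_i}a_{ij}$ makes these two terms cancel. What remains is
\begin{align*}
(\partial_t+\mathcal{L}_\varepsilon)v_\varepsilon ={}&\varepsilon\chi_{S,j}^\varepsilon\partial_t g_j-\varepsilon^{-1}S^{-2}\chi_{S,j}^\varepsilon g_j+\partial_{x_i}\big(a_{ij}^\varepsilon g_j\big)\\
&-b_{S,ij}^\varepsilon\partial_{x_i}g_j-\hat{a}_{ij}\partial_{x_i}g_j-\varepsilon\,\partial_{x_i}\big[a_{ik}^\varepsilon\chi_{S,j}^\varepsilon\partial_{x_k}g_j\big],
\end{align*}
where the identity $a_{ij}^\varepsilon+a_{ik}^\varepsilon(\partial_k\chi_{S,j})^\varepsilon=b_{S,ij}^\varepsilon+\hat{a}_{ij}$ (immediate from the definition of $b_S$) was used. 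Forming $(\partial_t+\mathcal{L}_\varepsilon)w_\varepsilon=(\partial_t+\mathcal{L}_\varepsilon)(u_\varepsilon-u_0)-(\partial_t+\mathcal{L}_\varepsilon)v_\varepsilon-(\partial_t+\mathcal{L}_\varepsilon)z_\varepsilon$ and regrouping the three divergence-form terms — here using $\hat{a}_{ij}\partial_{x_i}g_j=\partial_{x_i}(\hat{a}_{ij}g_j)$ since $\hat{A}$ is constant — they collapse to $\partial_{x_i}\big[(a_{ij}^\varepsilon-\hat{a}_{ij})(\partial_{x_j}u_0-g_j)\big]$. The term $-(\partial_t+\mathcal{L}_\varepsilon)z_\varepsilon$ is left untouched; its internal structure (skew-symmetry of $\phi_S$ and the Poisson equation \eqref{7.34}) will be needed only later when the error is estimated.

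It then remains to test the identity for $(\partial_t+\mathcal{L}_\varepsilon)w_\varepsilon$ against $w_\varepsilon$ and integrate over $(0,T)$. One first observes $w_\varepsilon(\cdot,t)\in H^1_0(\Omega)$ for a.e.\ $t$: indeed $u_\varepsilon=u_0=g$ on $S_T$, and $K_\varepsilon$ maps into $C^\infty_0(\Omega_T)$ (explicitly $K_\varepsilon(\cdot)=S_\varepsilon(\eta_1\eta_2\,\cdot)$ is supported away from $\partial\Omega$ by the choice of $\eta_1$ and $\delta>\varepsilon$), so $\phi_{S,(d+1)ij}^\varepsilon\partial_{x_i}K_\varepsilon(\cdot)$ is compactly supported in $\Omega$ as well. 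Integrating by parts in $x$ on each divergence-form term then produces no boundary contribution: the term $\partial_{x_i}[(a_{ij}^\varepsilon-\hat{a}_{ij})(\partial_{x_j}u_0-g_j)]$ gives the first term on the right-hand side, the term $\varepsilon\,\partial_{x_i}[a_{ik}^\varepsilon\chi_{S,j}^\varepsilon\partial_{x_k}g_j]$ gives — after relabelling $j\leftrightarrow k$ — the fifth term, while the four remaining non-divergence contributions $\varepsilon^{-1}S^{-2}\chi_{S,j}^\varepsilon g_j$, $b_{S,ij}^\varepsilon\partial_{x_i}g_j$, $-\varepsilon\chi_{S,j}^\varepsilon\partial_t g_j$ and $-(\partial_t+\mathcal{L}_\varepsilon)z_\varepsilon$ give the second, third, fourth and sixth terms, respectively, which is the claimed formula.

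The only genuinely delicate point is the bookkeeping in the second step — keeping track of which powers of $\varepsilon$ accompany which $y$-derivatives of $\chi_S$ under the chain rule, and isolating the one $O(\varepsilon^{-1})$ cancellation supplied by \eqref{6.10}; after that, everything is an algebraic rearrangement followed by a single integration by parts. A routine technical check to carry out along the way is that each of the four summands of $(\partial_t+\mathcal{L}_\varepsilon)w_\varepsilon$ lies in $L^2(0,T;H^{-1}(\Omega))$, so that the duality pairing and the integrations by parts are justified; this uses $u_0\in L^2(0,T;H^2(\Omega))$, the smoothing estimates of Lemma \ref{lemma-S-1} for $K_\varepsilon=S_\varepsilon(\eta_1\eta_2\,\cdot)$, and $\chi_S,\nabla\chi_S\in B^2(\mathbb{R}^{d+1})$, $f_S\in H^2(\mathbb{R}^{d+1})$.
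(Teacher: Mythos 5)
Your proposal is correct and takes essentially the same approach as the paper: direct chain-rule expansion of $(\partial_t+\mathcal{L}_\varepsilon)w_\varepsilon$, cancellation of the $O(\varepsilon^{-1})$ term via the approximate corrector equation \eqref{6.10}, regrouping with the definition of $b_S$, and a single integration by parts against $w_\varepsilon\in L^2(0,T;H^1_0(\Omega))$. The only cosmetic difference is that the paper packages the singular cancellation through the precomputed identity \eqref{7.4} for $\sum_i\partial_{y_i}b_{S,ij}-\partial_s\chi_{S,j}$, whereas you invoke \eqref{6.10} directly inside the expansion of $\mathcal{L}_\varepsilon v_\varepsilon$; the two bookkeepings are algebraically identical.
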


\begin{proof}
In view of (\ref{IDP}) and (\ref{IDP-0}), we have
\begin{equation*}
\aligned
  (\partial_t +\mathcal{L}_\varepsilon) w_\varepsilon
&=(\mathcal{L}_0-\mathcal{L}_\varepsilon)u_0-(\partial_t +\mathcal{L}_\varepsilon)\bigg\{\varepsilon
\chi^\varepsilon_{S,j}K_\varepsilon\bigg(\frac{\partial u_0}{\partial x_j}\bigg)\bigg\}\\
&~~~-(\partial_t +\mathcal{L}_\varepsilon)\bigg\{\varepsilon^2
\phi^\varepsilon_{S,(d+1)ij}\frac{\partial}{\partial x_i}K_\varepsilon\bigg(\frac{\partial u_0}{\partial x_j}\bigg)\bigg\}\\
&= -\frac{\partial}{\partial x_i}\bigg\{(\widehat{a}_{ij} -a_{ij}^\varepsilon )
 \frac{\partial u_0}{\partial x_j} \bigg\}\\
&~~~~~-(\partial_t +\mathcal{L}_\varepsilon)\bigg\{\varepsilon
\chi^\varepsilon_{S,j}K_\varepsilon\left(\frac{\partial u_0}{\partial x_j}\right)\bigg\}\\
&~~~~~-(\partial_t +\mathcal{L}_\varepsilon)\bigg\{\varepsilon^2
\phi^\varepsilon_{S,(d+1)ij}\frac{\partial}{\partial x_i}K_\varepsilon\bigg(\frac{\partial u_0}{\partial x_j}\bigg)\bigg\}.\\
\endaligned
\end{equation*}

Notice that
\begin{equation*}
b_{S,ij}^{\alpha\beta}=
\begin{cases}
a_{ij}^{\alpha\beta}+a_{ik}^{\alpha\gamma}\frac{\partial}{\partial y_k}\chi^{\gamma\beta}_{S,j}-\hat{a}^{\alpha\beta}_{ij},~~~1\leq i\leq d, \\
~~-\chi^{\alpha\beta}_{S,j},~~~~~~~~~~~~~~~~~~~~~~~~i=d+1.
\end{cases}
\end{equation*}

Hence, direct computation shows that
\begin{equation*}
\aligned
  (\partial_t +\mathcal{L}_\varepsilon) w_\varepsilon
&= -\frac{\partial}{\partial x_i}\bigg\{(\widehat{a}_{ij} -a_{ij}^\varepsilon )
\bigg( \frac{\partial u_0}{\partial x_j} - K_\varepsilon \bigg( \frac{\partial u_0}{\partial x_j}\bigg) \bigg)\bigg\}\\
&~~~~~~+\frac{\partial}{\partial x_i} \bigg\{b^\varepsilon_{S,ij}K_\varepsilon\bigg(\frac{\partial u_0}{\partial x_j}\bigg)\bigg\}
-\varepsilon \partial_t
\bigg\{\chi^\varepsilon_{S,j}K_\varepsilon\bigg(\frac{\partial u_0}{\partial x_j}\bigg)\bigg\}\\
&~~~~~~~~+\varepsilon \frac{\partial}{\partial x_i}\bigg\{a_{ij}^\varepsilon \chi^\varepsilon_{S,k} \frac{\partial}{\partial x_j}K_\varepsilon\bigg(\frac{\partial u_0}{\partial x_k}\bigg)\bigg\}\\
&~~~~~~~~~~-(\partial_t +\mathcal{L}_\varepsilon)\bigg\{\varepsilon^2
\phi^\varepsilon_{S,(d+1)ij}\frac{\partial}{\partial x_i}K_\varepsilon\bigg(\frac{\partial u_0}{\partial x_j}\bigg)\bigg\}.\\
\endaligned
\end{equation*}
It follows that
\begin{equation}\label{7.5}
\aligned
  (\partial_t +\mathcal{L}_\varepsilon) w_\varepsilon
&= -\frac{\partial}{\partial x_i}\bigg\{(\widehat{a}_{ij} -a_{ij}^\varepsilon )
\left( \frac{\partial u_0}{\partial x_j} - K_\varepsilon \bigg( \frac{\partial u_0}{\partial x_j}\bigg) \right)\bigg\}\\
&~~~+\varepsilon^{-1}\left(\frac{\partial}{\partial x_i} b_{S,ij}\right)^\varepsilon K_\varepsilon\bigg(\frac{\partial u_0}{\partial x_j}\bigg)
-\varepsilon^{-1} \left(\partial_t
\chi_{S,j}\right)^\varepsilon K_\varepsilon\bigg(\frac{\partial u_0}{\partial x_j}\bigg)\\
&~~~~+ b^\varepsilon_{S,ij}\frac{\partial}{\partial x_i}K_\varepsilon\bigg(\frac{\partial u_0}{\partial x_j}\bigg)
-\varepsilon\chi^\varepsilon_{S,j}\partial_tK_\varepsilon\bigg(\frac{\partial u_0}{\partial x_j}\bigg)\\
&~~~~~~+\varepsilon \frac{\partial}{\partial x_i}\bigg\{a_{ij}^\varepsilon \chi^\varepsilon_{S,k} \frac{\partial}{\partial x_j}K_\varepsilon\bigg(\frac{\partial u_0}{\partial x_k}\bigg)\bigg\}\\
&~~~~~~~~-(\partial_t +\mathcal{L}_\varepsilon)\bigg\{\varepsilon^2
\phi^\varepsilon_{S,(d+1)ij}\frac{\partial}{\partial x_i}K_\varepsilon\bigg(\frac{\partial u_0}{\partial x_j}\bigg)\bigg\}.\\
\endaligned
\end{equation}

Notice that
\begin{equation}\label{7.4}
\aligned
\sum_{i=1}^{d}\frac{\partial}{\partial y_i}b_{S,ij}
-\frac{\partial}{\partial s}\chi_{S,j}&=\frac{\partial}{\partial y_i}\left\{a_{ij}(y,s)+a_{ik}\frac{\partial}{\partial y_k}\chi_{S,j}(y,s)\right\}-\frac{\partial}{\partial s}\chi_{S,j}\\
&=S^{-2}\chi_{S,j}.
\endaligned
\end{equation}
It follows from (\ref{7.4}), we know that the summation of the second and third terms in the right-hand side of (\ref{7.5}) is $$\varepsilon^{-1} S^{-2}\chi_{S,j}(x/\varepsilon,t/\varepsilon^2)K_\varepsilon\bigg(\frac{\partial u_0}{\partial x_j}\bigg).$$
This, together with (\ref{7.5}), yields that
\begin{equation}\label{equation-w}
\aligned
  (\partial_t +\mathcal{L}_\varepsilon) w_\varepsilon
&= -\frac{\partial}{\partial x_i}\bigg\{(\widehat{a}_{ij} -a_{ij}^\varepsilon )
\bigg( \frac{\partial u_0}{\partial x_j} - K_\varepsilon \bigg( \frac{\partial u_0}{\partial x_j}\bigg) \bigg)\bigg\}\\
&~~~~~~~+\varepsilon^{-1} S^{-2}\chi_{S,j}^{\varepsilon}K_\varepsilon\bigg(\frac{\partial u_0}{\partial x_j}\bigg)\\
&~~~~~~~~~~+ b^\varepsilon_{S,ij}\frac{\partial}{\partial x_i}K_\varepsilon\bigg(\frac{\partial u_0}{\partial x_j}\bigg)
-\varepsilon\chi^\varepsilon_{S,j}\partial_tK_\varepsilon\bigg(\frac{\partial u_0}{\partial x_j}\bigg)\\
&~~~~~~~~~~~~+\varepsilon \frac{\partial}{\partial x_i}\bigg\{a_{ij}^\varepsilon \chi^\varepsilon_{S,k} \frac{\partial}{\partial x_j}K_\varepsilon\bigg(\frac{\partial u_0}{\partial x_k}\bigg)\bigg\}\\
&~~~~~~~~~~~~~~-(\partial_t +\mathcal{L}_\varepsilon)\bigg\{\varepsilon^2
\phi^\varepsilon_{S,(d+1)ij}\frac{\partial}{\partial x_i}K_\varepsilon\bigg(\frac{\partial u_0}{\partial x_j}\bigg)\bigg\}.\\
\endaligned
\end{equation}
Thus we complete the proof.
\end{proof}

\begin{lemma}\label{lem-7.5}
Let $u_0\in L^2(0,T; H^2(\Omega))$ and $w_\varepsilon$ be defined by (\ref{w}) with $S=\varepsilon^{-1}$. Then we have
\begin{equation}\label{est-1}
\aligned
&\left|\iint_{\Omega_T}(\widehat{a}_{ij} -a_{ij}^\varepsilon )
\bigg(\frac{\partial u_0}{\partial x_j} - K_\varepsilon  \bigg(\frac{\partial u_0}{\partial x_j}\bigg)\bigg) \frac{\partial w_\varepsilon}{\partial x_i}
-\varepsilon \iint_{\Omega_T}a_{ij}^\varepsilon \chi^\varepsilon_{S,k} \frac{\partial}{\partial x_j}K_\varepsilon\left(\frac{\partial u_0}{\partial x_k}\right)\frac{\partial w_\varepsilon}{\partial x_i}\right|
\\&\leq C\left\{1+\delta^{-1}\Theta_\sigma(S)\right\}\|\nabla u_0\|_{L^2(\Omega_{T,4\delta})}\|\nabla w_\varepsilon\|_{L^2(\Omega_{T,4\delta})}\\
&~~~~+C\left\{\varepsilon+\Theta_\sigma(S)\right\}\left\{\|u_0\|_{L^2(0,T;H^2(\Omega))}+\|\partial_t u_0\|_{L^2(\Omega_T)}\right\}\|\nabla w_\varepsilon\|_{L^2(\Omega_T)},
\endaligned
\end{equation}
where $C$ depends only on $\mu$, $d$, $m$, $\Omega$ and $T$.
\end{lemma}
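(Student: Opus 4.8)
The plan is to bound the two integrals in \eqref{est-1} separately, in each case decomposing the relevant $u_0$-factor into a piece supported near the parabolic boundary $\partial_p\Omega_T$---which will be estimated in the $L^2(\Omega_{T,4\delta})$ norms---and an interior piece---which will be estimated by $\|u_0\|_{L^2(0,T;H^2(\Omega))}+\|\partial_t u_0\|_{L^2(\Omega_T)}$ times a small factor ($\varepsilon$ in the first integral, $\Theta_\sigma(S)$ in the second). This follows the scheme of \cite{Shen-2014,GS-2017}. Throughout one uses $|\widehat{a}_{ij}-a_{ij}^\varepsilon|\le C$, $|a_{ij}^\varepsilon|\le C$, $\|\nabla^2 u_0\|_{L^2(\Omega_T)}\le\|u_0\|_{L^2(0,T;H^2(\Omega))}$ and $\partial_t u_0=F-\mathcal{L}_0 u_0\in L^2(\Omega_T)$, and extends $u_0$ beforehand by a bounded extension operator where the smoothing operator requires it.

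For the first integral I would use $K_\varepsilon(f)=S_\varepsilon(\eta_1\eta_2 f)$ to write
\[
\nabla u_0-K_\varepsilon(\nabla u_0)=\big[\nabla u_0-S_\varepsilon(\nabla u_0)\big]+S_\varepsilon\big[(1-\eta_1\eta_2)\nabla u_0\big].
\]
For the second bracket, Lemma~\ref{lemma-S-1} ($\|S_\varepsilon g\|_{L^2}\le\|g\|_{L^2}$) together with the fact that $1-\eta_1\eta_2$ is supported within $2\delta$ of $\partial_p\Omega_T$ while $S_\varepsilon$ enlarges supports by at most $\varepsilon<\delta$ shows this term is supported in $\Omega_{T,4\delta}$, so after multiplying by $\widehat{a}_{ij}-a_{ij}^\varepsilon$ and pairing with $\nabla w_\varepsilon$ it contributes at most $C\|\nabla u_0\|_{L^2(\Omega_{T,4\delta})}\|\nabla w_\varepsilon\|_{L^2(\Omega_{T,4\delta})}$, i.e. the term with coefficient $1$ on the right of \eqref{est-1}. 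For the first bracket, $S_\varepsilon$ commutes with $\nabla$, so $\nabla u_0-S_\varepsilon(\nabla u_0)=\nabla u_0-\nabla S_\varepsilon(u_0)$ and Lemma~\ref{lemma-S-2} gives $\|\nabla u_0-S_\varepsilon(\nabla u_0)\|_{L^2}\le C\varepsilon\{\|\nabla^2 u_0\|_{L^2}+\|\partial_t u_0\|_{L^2}\}$; pairing with $\nabla w_\varepsilon$ produces the $\varepsilon$-term.

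For the second integral the key input is Lemma~\ref{6.42}: since $S=\varepsilon^{-1}$, one has $\varepsilon\|\chi_S\|_{L^\infty(\mathbb{R}^{d+1})}\le C\Theta_\sigma(S)$, hence $\varepsilon|\chi^\varepsilon_{S,k}|\le C\Theta_\sigma(S)$ pointwise. Because $S_\varepsilon$ commutes with $\partial_{x_j}$,
\[
\frac{\partial}{\partial x_j}K_\varepsilon\!\Big(\frac{\partial u_0}{\partial x_k}\Big)=S_\varepsilon\!\Big((\partial_{x_j}\eta_1)\eta_2\frac{\partial u_0}{\partial x_k}\Big)+S_\varepsilon\!\Big(\eta_1\eta_2\frac{\partial^2 u_0}{\partial x_j\partial x_k}\Big);
\]
since $|\nabla\eta_1|\le C\delta^{-1}$, the first summand is---after $S_\varepsilon$---supported in $\Omega_{T,4\delta}$ with $L^2$ norm $\le C\delta^{-1}\|\nabla u_0\|_{L^2(\Omega_{T,4\delta})}$, while the second has $L^2$ norm $\le C\|u_0\|_{L^2(0,T;H^2(\Omega))}$. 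Multiplying by $\varepsilon|\chi^\varepsilon_{S,k}|\le C\Theta_\sigma(S)$ and $|a_{ij}^\varepsilon|\le C$ and pairing with $\nabla w_\varepsilon$ (over $\Omega_{T,4\delta}$ for the first piece, over $\Omega_T$ for the second) yields the $\delta^{-1}\Theta_\sigma(S)$-term and the $\Theta_\sigma(S)\|u_0\|_{L^2(0,T;H^2(\Omega))}$-term. Adding the four contributions gives \eqref{est-1}.

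The main point to watch is the bookkeeping of supports under $S_\varepsilon$: one must verify that each boundary-localized piece stays localized in $\Omega_{T,4\delta}$ (the constant $4$ is chosen precisely to absorb the $O(\varepsilon)$ spreading of $S_\varepsilon$) and carries the correct power of $\delta$, and one must invoke the sharpened bound $\|\chi_S\|_{L^\infty}\le CS\Theta_\sigma(S)$ of Lemma~\ref{6.42} rather than the cruder $\|\chi_S\|_{L^\infty}\le CS$ of Lemma~\ref{6.20}---it is this that supplies the small factor $\Theta_\sigma(S)$, since $\varepsilon S=1$ makes the cruder bound useless. Apart from this, the computation is routine.
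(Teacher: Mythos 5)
Your proposal is correct in substance and follows essentially the same strategy as the paper (smoothing/cutoff decomposition, Lemma~\ref{lemma-S-1}, Lemma~\ref{lemma-S-2}, and the sharp $\|\chi_S\|_{L^\infty}\le CS\Theta_\sigma(S)$ bound of Lemma~\ref{6.42} combined with $S=\varepsilon^{-1}$). The one place where you should be more careful is the additive split $\nabla u_0-K_\varepsilon(\nabla u_0)=[\nabla u_0-S_\varepsilon(\nabla u_0)]+S_\varepsilon[(1-\eta_1\eta_2)\nabla u_0]$ for the first integral: since $S_\varepsilon(\nabla u_0)$ must be defined via the Calder\'on extension $\tilde u_0$, the function $(1-\eta_1\eta_2)\nabla\tilde u_0$ is \emph{not} supported near $\partial_p\Omega_T$---it equals $\nabla\tilde u_0$ on all of $\mathbb{R}^{d+1}\setminus\Omega_T$---and for points in the $\varepsilon$-collar of $\Omega_T$ the convolution $S_\varepsilon[(1-\eta_1\eta_2)\nabla\tilde u_0]$ picks up extension values outside $\Omega_T$, so the bound $\le C\|\nabla u_0\|_{L^2(\Omega_{T,4\delta})}$ does not follow directly from Lemma~\ref{lemma-S-1}. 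One fix is to rewrite this piece as $S_\varepsilon(\nabla\tilde u_0)-K_\varepsilon(\nabla u_0)$, note both vanish (and agree) on $\Omega_T\setminus\Omega_{T,3\delta}$, and bound $\|S_\varepsilon(\nabla\tilde u_0)\|_{L^2(\Omega_{T,3\delta})}\le\|\nabla u_0\|_{L^2(\Omega_{T,4\delta})}+\|\nabla\tilde u_0-S_\varepsilon(\nabla\tilde u_0)\|_{L^2(\mathbb{R}^{d+1})}$, absorbing the extension contribution into the $\varepsilon$-term via Lemma~\ref{lemma-S-2}. The paper circumvents the issue entirely by decomposing the \emph{domain} $\Omega_T=\Omega_{T,3\delta}\cup(\Omega_T\setminus\Omega_{T,3\delta})$ and using the crude triangle-inequality bound $|\nabla u_0-K_\varepsilon(\nabla u_0)|\le|\nabla u_0|+S_\varepsilon(\eta_1\eta_2|\nabla u_0|)$ on the near-boundary set, which never requires values of $\tilde u_0$ outside $\Omega_T$. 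Your treatment of the second integral (the $\chi_S$ term) is unproblematic, since $(\partial_j\eta_1)\eta_2\,\partial_k u_0$ and $\eta_1\eta_2\,\partial_j\partial_k u_0$ are genuinely compactly supported in $\Omega_T$.
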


\begin{proof}
Let
\begin{equation*}
\Omega_{T,\delta}=\big(\left\{x\in\Omega:{\rm dist}(x,\partial\Omega)\leq\delta\right\}\times(0,T)\big)\cup
\left(\Omega\times(0,\delta^2)\right)\cup\left(\Omega\times(T-\delta^2,T)\right).
\end{equation*}
Observe that
\begin{align*}
&\left|\iint_{\Omega_T}(\widehat{a}_{ij} -a_{ij}^\varepsilon )
\bigg(\frac{\partial u_0}{\partial x_j} - K_\varepsilon  \bigg(\frac{\partial u_0}{\partial x_j}\bigg)\bigg) \frac{\partial w_\varepsilon}{\partial x_i}\right|\nonumber
\\&\leq C\iint_{\Omega_{T,3\delta}}\{|\nabla u_0|+S_\varepsilon(\eta_1\eta_2|\nabla u_0|)\}|\nabla w_\varepsilon|+C\iint_{\Omega_T\backslash \Omega_{T,3\delta}}|\nabla u_0-S_\varepsilon(\nabla u_0)||\nabla w_\varepsilon|\nonumber
\\&\leq C\bigg(\iint_{\Omega_{T,4\delta}}|\nabla u_0|^2\bigg)^{1/2}\bigg(\iint_{\Omega_{T,4\delta}}|\nabla w_\varepsilon|^2\bigg)^{1/2}\nonumber
\\&~~+C\bigg(\iint_{\Omega_T\backslash \Omega_{T,3\delta}}|\nabla u_0-S_\varepsilon(\nabla u_0)|^2\bigg)^{1/2}\bigg(\iint_{\Omega_T\backslash \Omega_{T,3\delta}}|\nabla w_\varepsilon|^2\bigg)^{1/2}.
\end{align*}

Using the Calder\'on extension theorem, we have
\begin{equation*}
  \bigg(\iint_{\R^{d+1}}|\nabla^2 \tilde{u}_0|^2\bigg)^{1/2}+\bigg(\iint_{\R^{d+1}}|\partial_t \tilde{u}_0|^2\bigg)^{1/2}\leq C\bigg\{\|u_0\|_{L^2(0,T;H^2(\Omega))}+\|\partial_t u_0\|_{L^2(\Omega_T)}\bigg\},
\end{equation*}
where $\tilde{u}_0$ is the extension of $u_0$ in $\R^{d+1}$.
This, together with Lemma \ref{lemma-S-2}, yields that
\begin{align*}
\bigg(\iint_{\Omega_T\backslash \Omega_{T,3\delta}}|\nabla u_0-S_\varepsilon(\nabla u_0)|^2\bigg)^{1/2}
&\leq C\bigg(\iint_{\R^{d+1}}|\nabla\tilde{u}_0 -S_\varepsilon(\nabla \tilde{u}_0)|^2\bigg)^{1/2}\nonumber
\\&\leq C\varepsilon\bigg\{\|\nabla^2 \tilde{u}_0\|_{L^2(\R^{d+1})}+\|\partial_t \tilde{u}_0\|_{L^2(\R^{d+1})}\bigg\}\nonumber
\\&\leq C\varepsilon \bigg\{\|u_0\|_{L^2(0,T;H^2(\Omega))}+\|\partial_t u_0\|_{L^2(\Omega_T)}\bigg\}.
\end{align*}

Next, It follows from Lemma \ref{6.42} that
\begin{align}\label{7.17-1}
\varepsilon \left|\iint_{\Omega_T}a_{ij}^\varepsilon \chi^\varepsilon_{S,k} \frac{\partial}{\partial x_j}K_\varepsilon\left(\frac{\partial u_0}{\partial x_k}\right)\frac{\partial w_\varepsilon}{\partial x_i}\right|
\leq C\Theta_\sigma(S)\iint_{\Omega_T}\left|\nabla K_\varepsilon\left(\nabla u_0\right)\cdot\nabla w_\varepsilon\right|,
\end{align}
where we have used $S=\varepsilon^{-1}$. Notice that
$$\nabla K_\varepsilon(\nabla u_0)=\nabla S_\varepsilon(\eta_1\eta_2\nabla u_0)=S_\varepsilon(\nabla(\eta_1\eta_2)\nabla u_0)+S_\varepsilon(\eta_1\eta_2\nabla^2 u_0),$$
then the right-hand side of (\ref{7.17-1}) is bounded by
\begin{align*}
C \delta^{-1}&\Theta_\sigma(S)\bigg(\iint_{\Omega_{T,4\delta}}|\nabla u_0|^2\bigg)^{1/2}\bigg(\iint_{\Omega_{T,3\delta}}|\nabla w_\varepsilon|^2\bigg)^{1/2}\nonumber
\\&+C\Theta_\sigma(S)\bigg(\iint_{\Omega_T}|\nabla^2 u_0|^2\bigg)^{1/2}\bigg(\iint_{\Omega_{T}}|\nabla w_\varepsilon|^2\bigg)^{1/2}.
\end{align*}
Above all, we obtain the desired estimate \eqref{est-1}.
\end{proof}

\begin{lemma}
  Let $u_0\in L^2(0,T; H^2(\Omega))$ and $w_\varepsilon$ be defined by (\ref{w}) with $S=\varepsilon^{-1}$. Then
\begin{equation}\label{7.19}
\aligned
&\varepsilon^2\left|\iint_{\Omega_T}\mathcal{L}_\varepsilon\left\{
\phi_{S,(d+1)ij}^\varepsilon\frac{\partial}{\partial x_i}K_\varepsilon\left(\frac{\partial u_0}{\partial x_j}\right)\right\}w_\varepsilon\right|\\
&\leq C\big\{\Theta_\sigma(S)+\varepsilon\big\}\big\{\delta^{-1}\|\nabla u_0\|_{L^2(\Omega_{T,3\delta})}\|\nabla w_\varepsilon\|_{L^2(\Omega_{T,3\delta})}+\|\nabla^2u_0\|_{L^2(\Omega_T)}\|\nabla w_\varepsilon\|_{L^2(\Omega_T)}\big\},
\endaligned
\end{equation}
where $C$ depends only on $\mu$, $d$, $m$, $\Omega$ and $T$.
\end{lemma}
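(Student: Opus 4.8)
The plan is to integrate by parts in the spatial variables so that $\mathcal{L}_\varepsilon$ falls on $w_\varepsilon$, then split the integrand by the Leibniz rule and estimate the two pieces with the bounds for $\chi_S$, $f_S$, $\phi_S$ from Section~6 together with the smoothing-operator estimates of Lemmas~\ref{lemma-S-1}--\ref{lemma-S-3}; the computation is the parabolic analogue of the corresponding step in \cite{Shen-2014}. Since $S=\varepsilon^{-1}$, the function $\Phi_\varepsilon:=\phi^\varepsilon_{S,(d+1)ij}\,\partial_{x_i}K_\varepsilon(\partial_j u_0)$ (repeated $i,j$ summed) belongs to $L^2(0,T;H^1_0(\Omega))$ (note $\partial_{x_i}K_\varepsilon(\partial_j u_0)\in C_0^\infty(\Omega_T)$ and $\phi^\varepsilon_{S,(d+1)ij}\in H^1_{\rm loc}(\R^{d+1})$ since $f_S\in H^2(\R^{d+1})$), so pairing $\varepsilon^2\mathcal{L}_\varepsilon\Phi_\varepsilon$ with $w_\varepsilon$ gives
\begin{equation*}
\varepsilon^2\iint_{\Omega_T}\mathcal{L}_\varepsilon\Phi_\varepsilon\cdot w_\varepsilon=\varepsilon^2\iint_{\Omega_T}A^\varepsilon\,\nabla\Phi_\varepsilon\cdot\nabla w_\varepsilon .
\end{equation*}
By the Leibniz rule, $\varepsilon^2\nabla\Phi_\varepsilon=\varepsilon\,(\nabla\phi_{S,(d+1)ij})^\varepsilon\,\partial_{x_i}K_\varepsilon(\partial_j u_0)+\varepsilon^2\,\phi^\varepsilon_{S,(d+1)ij}\,\nabla\partial_{x_i}K_\varepsilon(\partial_j u_0)$, so the quantity in \eqref{7.19} is bounded by $|J_1|+|J_2|$, where $J_1$ is the integral carrying $(\nabla\phi_{S,(d+1)ij})^\varepsilon$ and $J_2$ the one carrying $\phi^\varepsilon_{S,(d+1)ij}$.

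For $J_2$ I would use $\|\phi_{S,(d+1)ij}\|_{L^\infty(\R^{d+1})}\le 2\|\nabla f_S\|_{L^\infty(\R^{d+1})}\le CS\Theta_\sigma(S)$ from Lemma~\ref{7.47}, and $\|\nabla\partial_{x_i}K_\varepsilon(\partial_j u_0)\|_{L^2}=\|\nabla S_\varepsilon(\partial_{x_i}(\eta_1\eta_2\partial_j u_0))\|_{L^2}\le C\varepsilon^{-1}\|\nabla(\eta_1\eta_2\partial_j u_0)\|_{L^2}$ from Lemma~\ref{lemma-S-1}. Splitting $\nabla(\eta_1\eta_2\partial_j u_0)=\nabla(\eta_1\eta_2)\,\partial_j u_0+\eta_1\eta_2\,\nabla\partial_j u_0$, using $|\nabla(\eta_1\eta_2)|\le C\delta^{-1}$ and that both $\operatorname{supp}\nabla(\eta_1\eta_2)$ and (after applying $S_\varepsilon$, which enlarges supports by only $O(\varepsilon)<\delta$) the corresponding term lie in the boundary/initial layer, Cauchy--Schwarz and $S=\varepsilon^{-1}$ yield
\begin{equation*}
|J_2|\le C\Theta_\sigma(S)\big\{\delta^{-1}\|\nabla u_0\|_{L^2(\Omega_{T,3\delta})}\|\nabla w_\varepsilon\|_{L^2(\Omega_{T,3\delta})}+\|\nabla^2 u_0\|_{L^2(\Omega_T)}\|\nabla w_\varepsilon\|_{L^2(\Omega_T)}\big\}.
\end{equation*}

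For $J_1$ I would write $\partial_{x_i}K_\varepsilon(\partial_j u_0)=S_\varepsilon(\partial_{x_i}(\eta_1\eta_2\partial_j u_0))$ and apply the multiplier estimate $\|g^\varepsilon S_\varepsilon(f)\|_{L^2}\le C\sup_{(x,t)}(\dashint_{Q_1(x,t)}|g|^2)^{1/2}\|f\|_{L^2}$ of Lemma~\ref{lemma-S-3} with $g=\nabla\phi_{S,(d+1)ij}$, using $|\nabla\phi_{S,(d+1)ij}|\le C|\nabla^2 f_S|$ and a unit-cube bound of the form $\sup_{(x,t)}(\dashint_{Q_1(x,t)}|\nabla^2 f_S|^2)^{1/2}\le C(1+S^{1-\sigma}+S\Theta_\sigma(S))$, obtained from interior $L^2$-estimates for \eqref{7.34} together with Theorem~\ref{7.35}, Lemma~\ref{7.47} and the bound \eqref{6.37} for $\nabla\chi_S$ on unit cubes. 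Splitting $\partial_{x_i}(\eta_1\eta_2\partial_j u_0)$ into boundary-layer and interior parts as before, this gives $|J_1|\le C\varepsilon(1+S^{1-\sigma}+S\Theta_\sigma(S))\{\cdots\}=C\{\varepsilon+\varepsilon^{\sigma}+\Theta_\sigma(S)\}\{\cdots\}$ with the same bracket $\{\cdots\}$ as for $J_2$; and since $\Theta_\sigma(S)\ge cS^{-\sigma}=c\varepsilon^{\sigma}$ for $S$ large, $|J_1|\le C\{\varepsilon+\Theta_\sigma(S)\}\{\cdots\}$. Adding the two estimates proves \eqref{7.19}.

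The main obstacle is $J_1$, that is, estimating $\nabla\phi_{S,(d+1)ij}\sim\nabla^2 f_S$ on unit cubes. Theorem~\ref{7.35} supplies only the $L^2$-average of $\nabla^2 f_S$ over cubes of size $S$, so one has to descend to unit scale through interior regularity for the $(d+1)$-dimensional equation \eqref{7.34}, which makes the growth rates of $\chi_S$, $\nabla\chi_S$ and $f_S$ from Section~6 (Lemmas~\ref{6.42}, \ref{7.47} and estimate \eqref{6.37}) enter the bound; one must then check, using the elementary lower bound $\Theta_\sigma(S)\gtrsim S^{-\sigma}$, that the intermediate powers of $\varepsilon$ are absorbed into $C\{\varepsilon+\Theta_\sigma(S)\}$. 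A secondary, purely bookkeeping, difficulty is to keep the $\nabla(\eta_1\eta_2)$-contributions confined to $\Omega_{T,3\delta}$ so that they pair only with $\|\nabla w_\varepsilon\|_{L^2(\Omega_{T,3\delta})}$.
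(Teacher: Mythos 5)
Your argument is structurally the same as the paper's: integration by parts turns the pairing into $\varepsilon^2\iint_{\Omega_T}A^\varepsilon\nabla\Phi_\varepsilon\cdot\nabla w_\varepsilon$ with $\Phi_\varepsilon=\phi^\varepsilon_{S,(d+1)ij}\,\frac{\partial}{\partial x_i}K_\varepsilon\big(\frac{\partial u_0}{\partial x_j}\big)$, the Leibniz rule splits this into a $(\nabla\phi_S)^\varepsilon$-term (your $J_1$, the paper's $I_1$) and a $\phi_S^\varepsilon$-term ($J_2=I_2$), and $J_2$ is handled exactly as the paper's $I_2$, via $\|\phi_S\|_{L^\infty}\le CS\Theta_\sigma(S)$ from Lemma \ref{7.47}, the smoothing bounds of Lemma \ref{lemma-S-1}, and the support-tracking of $\nabla(\eta_1\eta_2)$. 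The one place you diverge is $J_1$. The paper invokes Lemma \ref{lemma-S-3} together with Theorem \ref{7.35} to get $\sup_{(x,t)}\big(\dashint_{Q_1(x,t)}|\nabla\phi_S|^2\big)^{1/2}\le C$ and hence $|I_1|\le C\varepsilon\{\cdots\}$. You correctly flag that Theorem \ref{7.35} as literally stated controls only the $Q_S$-average, which by itself yields only an $S^{(d+2)/2}$-worse unit-scale bound, and you close the gap through interior $L^2$-estimates for the constant-coefficient equation \eqref{7.34}, arriving at $\sup_{(x,t)}\big(\dashint_{Q_1}|\nabla^2 f_S|^2\big)^{1/2}\le C(1+S^{1-\sigma}+S\Theta_\sigma(S))$ and absorbing the extra factor $\varepsilon^\sigma$ through the elementary $\Theta_\sigma(S)\gtrsim S^{-\sigma}$. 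This gives the slightly weaker $|J_1|\le C\{\varepsilon+\Theta_\sigma(S)\}\{\cdots\}$ instead of $C\varepsilon\{\cdots\}$, but since $J_2$ already contributes $C\Theta_\sigma(S)\{\cdots\}$, the stated bound \eqref{7.19} is unaffected. Two remarks: Theorem \ref{7.35}, like its elliptic counterpart in \cite{Shen-2014}, is meant to hold on all intermediate cubes $Q_r$ with $1\le r\le S$, so the paper's direct use is legitimate in spirit, while your self-contained descent argument is a sound alternative that avoids reading beyond the literal statement. Also, $\Theta_\sigma(S)\gtrsim S^{-\sigma}$ needs $\rho(1)>0$; in the degenerate case that $A$ is periodic of small period one has $\Theta_\sigma(S)=0$, but then periodicity gives uniform unit-scale bounds on $\nabla\chi_S$ and $\nabla^2 f_S$ and there is nothing to absorb, so the argument survives after a one-line remark on that case.
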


\begin{proof}
Direct computation shows that
  \begin{align}\label{7.20}
\varepsilon^2&\left|\iint_{\Omega_T}\mathcal{L}_\varepsilon\bigg\{
\phi_{S,(d+1)ij}^\varepsilon\frac{\partial}{\partial x_i}K_\varepsilon\left(\frac{\partial u_0}{\partial x_j}\right)\bigg\}w_\varepsilon\right|\nonumber
\\ &~~~~ \leq
  \varepsilon\left|\iint_{\Omega_T}
 a_{ij}^\varepsilon \cdot \left(\frac{\partial}{\partial x_j} \phi_{S,(d+1) \ell k}  \right)^\varepsilon \cdot \frac{\partial}{\partial x_\ell}
K_\varepsilon \left(\frac{\partial u_0}{\partial x_k} \right) \cdot \frac{\partial w_\varepsilon}{\partial x_i}\right|\nonumber
\\&~~~~~~~~+\varepsilon^2\left| \iint_{\Omega_T}
 a_{ij}^\varepsilon \cdot \phi_{S,(d+1) \ell k } ^\varepsilon \cdot \frac{\partial^2 }{\partial x_j \partial x_\ell}
K_\varepsilon \left(\frac{\partial u_0}{\partial x_k}\right) \cdot \frac{\partial w_\varepsilon}{\partial x_i}\right|\nonumber
\\&~~~~=I_1+I_2.
\end{align}
In view of Lemma \ref{lemma-S-3} and Lemma \ref{7.47}, we have
\begin{align}\label{7.20-1}
I_2&\leq C\delta^{-1}\Theta_\sigma(S)\|\nabla u_0\|_{L^2(\Omega_{T,3\delta})}\|\nabla w_\varepsilon\|_{L^2(\Omega_{T,3\delta})}+C\Theta_\sigma(S)\|\nabla^2 u_0\|_{L^2(\Omega_{T})}\|\nabla w_\varepsilon\|_{L^2(\Omega_{T})}.
 \end{align}
 Next, by Lemma \ref{lemma-S-3} and Theorem \ref{7.35}, we obtain
\begin{equation*}
\aligned
I_1\leq &C\varepsilon\sup_{(x,t)\in\mathbb{R}^{d+1}}\left(\dashint_{Q_1(x,t)}\big|\frac{\partial}{\partial x_j}\phi_{S,(d+1)\ell k}\big|^2\right)^{1/2}\|S_\varepsilon(\nabla(\eta_1\eta_2)\nabla u_0)\|_{L^2(\Omega_{T,3\delta})}\|\nabla w_\varepsilon\|_{L^2(\Omega_{T,3\delta})}\\
&+C\varepsilon\sup_{(x,t)\in\mathbb{R}^{d+1}}
\left(\dashint_{Q_1(x,t)}\big|\frac{\partial}{\partial x_j}\phi_{S,(d+1)\ell k}\big|^2\right)^{1/2}
\|S_\varepsilon(\eta_1\eta_2\nabla^2 u_0)\|_{L^2(\Omega_{T})}\|\nabla w_\varepsilon\|_{L^2(\Omega_{T})}\\
\leq& C \varepsilon\delta^{-1}\|\nabla u_0\|_{L^2(\Omega_{T,3\delta})}\|\nabla w_\varepsilon\|_{L^2(\Omega_{T,3\delta})}+ C \varepsilon\|\nabla^2 u_0\|_{L^2(\Omega_{T})}\|\nabla w_\varepsilon\|_{L^2(\Omega_{T})}.
\endaligned
 \end{equation*}
This, together with \eqref{7.20} and \eqref{7.20-1}, gives \eqref{7.19}.
\end{proof}

\begin{lemma}
  Let $u_0\in L^2(0,T; H^2(\Omega))$ and $w_\varepsilon$ be defined by (\ref{w}) with $S=\varepsilon^{-1}$. Then
\begin{align}\label{w3}
& \left|\iint_{\Omega_T}b^\varepsilon_{S,ij}\frac{\partial}{\partial x_i}K_\varepsilon\left(\frac{\partial u_0}{\partial x_j}\right)w_\varepsilon-\iint_{\Omega_T}\varepsilon^2\partial_t \bigg\{\phi_{S,(d+1)ij}^\varepsilon\frac{\partial}{\partial x_i}K_\varepsilon\left(\frac{\partial u_0}{\partial x_j}\right)\bigg\}w_\varepsilon\right|\nonumber
\\&~~~~\leq C\left\{\langle|\nabla\chi-\nabla \chi_S|\rangle+\Theta_\sigma(S)+\Theta_1(S)\right\}\|u_0\|_{L^2(0,T;H^2(\Omega))}\|\nabla w_\varepsilon\|_{L^2(\Omega_T)}\nonumber\\
&~~~~~~~~+C\delta^{-1}\Theta_\sigma(S)\|\nabla u_0\|_{L^2(\Omega_{T,3\delta})}\|\nabla w_\varepsilon\|_{L^2(\Omega_{T,3\delta})},
\end{align}
where $C$ depends only on $\mu$, $d$, $m$, $\Omega$ and $T$.
\end{lemma}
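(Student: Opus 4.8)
The plan is to put $b_{S,ij}$ (for $1\le i\le d$) in divergence form modulo lower-order terms, so that after substitution it matches the $\varepsilon^2\partial_t\{\cdots\}$ integrand up to controllable errors. From the equation \eqref{7.34} for $f_S$ we have $b_{S,ij}=\langle b_{S,ij}\rangle+\sum_{k=1}^{d+1}\partial_k^2 f_{S,ij}-S^{-2}f_{S,ij}$, and since $\partial_k f_{S,ij}=\phi_{S,kij}+\partial_i f_{S,kj}$ this becomes
\begin{equation*}
b_{S,ij}=\langle b_{S,ij}\rangle+\sum_{k=1}^{d+1}\partial_k\phi_{S,kij}+\partial_i g_j-S^{-2}f_{S,ij},\qquad g_j:=\sum_{k=1}^{d+1}\partial_k f_{S,kj}.
\end{equation*}
After rescaling, $\big(\sum_{k=1}^{d+1}\partial_k\phi_{S,kij}\big)^\varepsilon=\varepsilon\sum_{k=1}^{d}\partial_{x_k}[\phi^\varepsilon_{S,kij}]+\varepsilon^2\partial_t[\phi^\varepsilon_{S,(d+1)ij}]$. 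Upon inserting this into $b^\varepsilon_{S,ij}\partial_iK_\varepsilon(\partial_ju_0)w_\varepsilon$ and expanding $\varepsilon^2\partial_t\{\phi^\varepsilon_{S,(d+1)ij}\partial_iK_\varepsilon(\partial_ju_0)\}$ by the product rule, the term $\varepsilon^2\partial_t[\phi^\varepsilon_{S,(d+1)ij}]\,\partial_iK_\varepsilon(\partial_ju_0)\,w_\varepsilon$ appears with opposite signs and cancels, and the left-hand side of \eqref{w3} reduces to five integrals: (a) $\iint\langle b_{S,ij}\rangle\,\partial_iK_\varepsilon(\partial_ju_0)\,w_\varepsilon$; (b) $\varepsilon\sum_{k\le d}\iint\partial_{x_k}[\phi^\varepsilon_{S,kij}]\,\partial_iK_\varepsilon(\partial_ju_0)\,w_\varepsilon$; (c) $\iint(\partial_ig_j)^\varepsilon\,\partial_iK_\varepsilon(\partial_ju_0)\,w_\varepsilon$; (d) $-S^{-2}\iint f^\varepsilon_{S,ij}\,\partial_iK_\varepsilon(\partial_ju_0)\,w_\varepsilon$; (e) $-\varepsilon^2\iint\phi^\varepsilon_{S,(d+1)ij}\,\partial_t\partial_iK_\varepsilon(\partial_ju_0)\,w_\varepsilon$.

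For (a) I would integrate by parts in $x_i$ and use $|\langle b_{S,ij}\rangle|=|\widehat a_{ij}-\widehat a_{S,ij}|\le C\langle|\nabla\chi-\nabla\chi_S|\rangle$ (Remark \ref{6.19-10}) together with $\|K_\varepsilon(\nabla u_0)\|_{L^2(\Omega_T)}\le\|\nabla u_0\|_{L^2(\Omega_T)}\le\|u_0\|_{L^2(0,T;H^2(\Omega))}$ and Poincar\'e's inequality for $w_\varepsilon$ (which vanishes on $\partial_p\Omega_T$). For (b), integration by parts in $x_k$ produces a term in which $\phi_{S,kij}$ is contracted against the symmetric $\partial_{x_k}\partial_{x_i}K_\varepsilon(\partial_ju_0)$, which vanishes by the skew-symmetry $\phi_{S,kij}=-\phi_{S,ikj}$; thus (b) equals $-\varepsilon\sum_k\iint\phi^\varepsilon_{S,kij}\,\partial_iK_\varepsilon(\partial_ju_0)\,\partial_{x_k}w_\varepsilon$, which I bound using $\varepsilon\|\phi_{S,kij}\|_{L^\infty}\le C\Theta_\sigma(S)$ (Lemma \ref{7.47}, $S=\varepsilon^{-1}$) and the splitting $\nabla K_\varepsilon(\nabla u_0)=S_\varepsilon(\nabla(\eta_1\eta_2)\nabla u_0)+S_\varepsilon(\eta_1\eta_2\nabla^2u_0)$ (Lemmas \ref{lemma-S-1}--\ref{lemma-S-3}), the first piece being supported in $\Omega_{T,3\delta}$ with $L^2$ norm $\le C\delta^{-1}\|\nabla u_0\|_{L^2(\Omega_{T,3\delta})}$. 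Terms (c) and (d) are entirely analogous, now using $\|\nabla g_j\|_{L^\infty}\le C\Theta_\sigma(S)$ and $S^{-2}\|f_S\|_{L^\infty}\le C\Theta_1(S)$ from Lemma \ref{7.47}, and $\Theta_1(S)\le C\Theta_\sigma(S)$.

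The hard part is term (e), where $\partial_t$ lands on the parabolic smoothing $K_\varepsilon(\nabla u_0)$: because $S_\varepsilon$ is anisotropic, a bare $\partial_t$ costs a factor $\varepsilon^{-2}$, which is not compensated by the gain $\varepsilon\phi^\varepsilon_{S,(d+1)ij}=O(\Theta_\sigma(S))$. The way around this is to note that $K_\varepsilon(\nabla u_0)=S_\varepsilon(\eta_1\eta_2\nabla u_0)$ vanishes on $\Omega\times\{0\}$ and $\Omega\times\{T\}$ (since $\varepsilon<\delta$ and $\eta_2$ is supported in $(\delta^2,T-\delta^2)$), so (e) may be integrated by parts in $t$ with no boundary contribution, giving $\iint\varepsilon^2\partial_t[\phi^\varepsilon_{S,(d+1)ij}\,w_\varepsilon]\,\partial_iK_\varepsilon(\partial_ju_0)$. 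In the piece carrying $\varepsilon^2\partial_t\phi^\varepsilon_{S,(d+1)ij}=(\partial_s\phi_{S,(d+1)ij})^\varepsilon$ — a rescaled second derivative of $f_S$ — one invokes the uniform $L^2$ bound of Theorem \ref{7.35}, while in the piece carrying $\partial_t w_\varepsilon$ one substitutes $(\partial_t+\mathcal L_\varepsilon)w_\varepsilon$ from \eqref{equation-w}, integrates by parts once more in $x$, and closes with $\varepsilon^2\|\partial_t S_\varepsilon f\|_{L^2}\le C\|f\|_{L^2}$ (Lemma \ref{lemma-S-1}) and the corrector bounds of Lemmas \ref{6.20}, \ref{6.42}, \ref{7.47}. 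I expect this last step to be the most delicate point, since one must produce only the factors $\langle|\nabla\chi-\nabla\chi_S|\rangle+\Theta_\sigma(S)+\Theta_1(S)$ and $\delta^{-1}\Theta_\sigma(S)$ (with no bare $\|u_0\|_{L^2(0,T;H^2(\Omega))}$), which forces careful bookkeeping of how the skew-symmetry of $\phi_S$ and the smoothing estimates interact.
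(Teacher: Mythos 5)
You have caught a genuine subtlety that the paper glosses over: equation (7.10) in the paper is not an exact pointwise identity. Expanding $\varepsilon^2\partial_t\{\phi^\varepsilon_{S,(d+1)ij}\partial_i K_\varepsilon\}$ by the product rule leaves a remainder $\varepsilon^2\phi^\varepsilon_{S,(d+1)ij}\partial_t\partial_i K_\varepsilon$ which cannot be killed by skew-symmetry — that argument only removes the spatial cross term $\varepsilon\phi^\varepsilon_{S,kij}\partial_k\partial_i K_\varepsilon$ with $k,i\le d$. So your reduction of the left-hand side of \eqref{w3} to the five integrals (a)--(e) is correct, with (a)--(d) matching the paper's (7.51) and (e) genuinely present; the paper's passage to (7.51) silently drops (e).

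However, your proposed treatment of (e) does not close the estimate. After integrating by parts in $t$, the piece $\iint(\partial_s\phi_{S,(d+1)ij})^\varepsilon\,w_\varepsilon\,\partial_i K_\varepsilon(\partial_j u_0)$ involves a full second-order derivative of $f_S$, and the only available control is the uniform $O(1)$ local $L^2$ bound of Theorem \ref{7.35}; Lemma \ref{7.47} gives $\Theta_\sigma(S)$-smallness only for $\nabla F_{S,j}$ and $\|\nabla f_S\|_\infty$, not for all of $\nabla^2 f_S$. So that contribution is of order $\|w_\varepsilon\|_{L^2}\|\nabla K_\varepsilon(\nabla u_0)\|_{L^2}$ with a bare constant, which is not absorbed into the right-hand side of \eqref{w3}. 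The actual mechanism in the paper is a cancellation \emph{between lemmas}: expanding $\varepsilon\chi^\varepsilon_{S,j}\partial_t K_\varepsilon$ via $\chi_{S,j}=-b_{S,(d+1)j}$ (the paper's (7.11)) produces, alongside (7.54), the dropped remainder $-\varepsilon^2\phi^\varepsilon_{S,k(d+1)j}\partial_k\partial_t K_\varepsilon$, and by $\phi_{S,(d+1)kj}=-\phi_{S,k(d+1)j}$ together with $\partial_t\partial_k=\partial_k\partial_t$ this is exactly the negative of (e). Hence (7.51)+(7.54) correctly represents $\iint\big[b^\varepsilon_{S,ij}\partial_i K_\varepsilon-\varepsilon\chi^\varepsilon_{S,j}\partial_t K_\varepsilon-\varepsilon^2\partial_t\{\phi^\varepsilon_{S,(d+1)ij}\partial_i K_\varepsilon\}\big]w_\varepsilon$, and the estimate should be carried out for this combination; the lemma's isolated bound \eqref{w3} is established neither by the paper's (7.10) nor by your direct attack on (e), and the right way to repair the argument is to keep the two cross terms together so they cancel, rather than to try to estimate (e) on its own.
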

\begin{proof}
We first notice that
\begin{align*}
b_{S,ij}&=\langle b_{S,ij}\rangle+\frac{\partial}{\partial y_k}\left(\frac{\partial}{\partial y_k}f_{S,ij}-\frac{\partial}{\partial y_i}f_{S,kj}\right)+\frac{\partial}{\partial y_i}\left(\frac{\partial}{\partial y_k}f_{S,kj}\right)-S^{-2}f_{S,ij}\nonumber
\\&=\langle b_{S,ij}\rangle+\sum_{k=1}^{d}\frac{\partial}{\partial y_k}\phi_{S,kij}+\frac{\partial}{\partial y_{d+1}}\phi_{S,(d+1)ij}+\sum_{k=1}^{d+1}\frac{\partial}{\partial y_i}\left(\frac{\partial}{\partial y_k}f_{S,kj}\right)-S^{-2}f_{S,ij}.
\end{align*}
Hence we have
\begin{equation}\label{7.10}
\aligned
b^\varepsilon_{S,ij}\frac{\partial}{\partial x_i}K_\varepsilon\left(\frac{\partial u_0}{\partial x_j}\right)
=&\langle b_{S,ij}\rangle\frac{\partial}{\partial x_i}K_\varepsilon\left(\frac{\partial u_0}{\partial x_j}\right)+\varepsilon\frac{\partial}{\partial x_k}\bigg\{\phi_{S,kij}^\varepsilon\frac{\partial}{\partial x_i}K_\varepsilon\left(\frac{\partial u_0}{\partial x_j}\right)\bigg\}\\
&-S^{-2}f^\varepsilon_{S,ij}\frac{\partial}{\partial x_i}K_\varepsilon\left(\frac{\partial u_0}{\partial x_j}\right)+\left(\frac{\partial}{\partial x_i} F_{S,j}\right)^\varepsilon\frac{\partial}{\partial x_i}K_\varepsilon\left(\frac{\partial u_0}{\partial x_j}\right)\\
&+\varepsilon^2\partial_t \bigg\{\phi_{S,(d+1)ij}^\varepsilon\frac{\partial}{\partial x_i}K_\varepsilon\left(\frac{\partial u_0}{\partial x_j}\right)\bigg\},
\endaligned
\end{equation}
where $F_{S,j}=\sum_{k=1}^{d+1}\frac{\partial}{\partial y_k}f_{S,kj}$.

In view of (\ref{7.10}), using integration by part, we obtain that the left-hand side of (\ref{w3}) equals to
\begin{align}\label{7.51}
&\bigg|\iint_{\Omega_T}\langle b_{S,ij}\rangle\frac{\partial}{\partial x_i}K_\varepsilon\left(\frac{\partial u_0}{\partial x_j}\right)w_\varepsilon
-\varepsilon\iint_{\Omega_T}\phi_{S,kij}^\varepsilon\frac{\partial}{\partial x_i}K_\varepsilon\left(\frac{\partial u_0}{\partial x_j}\right)\frac{\partial w_\varepsilon}{\partial x_k}\nonumber
\\&+\iint_{\Omega_T}\left(\frac{\partial}{\partial x_i}F_{S,j}\right)^\varepsilon\frac{\partial}{\partial x_i}K_\varepsilon\left(\frac{\partial u_0}{\partial x_j}\right) w_\varepsilon
-S^{-2}\iint_{\Omega_T}f^{\varepsilon}_{S,ij}\frac{\partial}{\partial x_i}K_\varepsilon\left(\frac{\partial u_0}{\partial x_j}\right)w_\varepsilon\bigg|.
\end{align}
By Lemma \ref{7.47}, we obtain that (\ref{7.51}) is bounded by
\begin{align}\label{7.52}
&  \left|\langle b_{S,ij}\rangle\right| \iint_{\Omega_T}\left|K_\varepsilon \left(\frac{\partial u_0}{\partial x_j}\right)\frac{\partial w_\varepsilon}{\partial x_i}\right|
+ C \Theta_\sigma(S) \iint_{\Omega_T}\left|\frac{\partial}{\partial x_i}K_\varepsilon\left(\frac{\partial u_0}{\partial x_j}\right)\frac{\partial w_\varepsilon}{\partial x_k}\right|\nonumber
\\&+ C \Theta_\sigma(S)\iint_{\Omega_T}\left|K_\varepsilon \left(\frac{\partial u_0}{\partial x_j}\right)\frac{\partial w_\varepsilon}{\partial x_i}\right|+ C \Theta_1(S)\iint_{\Omega_T}\left|K_\varepsilon \left(\frac{\partial u_0}{\partial x_j}\right)\frac{\partial w_\varepsilon}{\partial x_i}\right|.
\end{align}

Next, note that
$$
\left|\langle b_{S,ij}\rangle\right|\leq C \langle |\nabla \chi-\nabla \chi_S|\rangle.
$$
This, together with H\"older's inequality and Lemma \ref{lemma-S-1}, implies that (\ref{7.52}) is bounded by
\begin{align*}
&C\left\{\langle |\nabla \chi-\nabla \chi_S|\rangle+\Theta_\sigma(S)+\Theta_1(S)\right\}\bigg(\iint_{\Omega_T}|\nabla u_0|^2\bigg)^{1/2}\bigg(\iint_{\Omega_{T}}|\nabla w_\varepsilon|^2\bigg)^{1/2}\\
&+ C \Theta_\sigma(S) \bigg(\iint_{\Omega_T}|\nabla^2 u_0|^2\bigg)^{1/2}\bigg(\iint_{\Omega_{T}}|\nabla w_\varepsilon|^2\bigg)^{1/2}
\\&+ C \delta^{-1}\Theta_\sigma(S)\bigg(\iint_{\Omega_{T,3\delta}}|\nabla u_0|^2\bigg)^{1/2}\bigg(\iint_{\Omega_{T,3\delta}}|\nabla w_\varepsilon|^2\bigg)^{1/2}
\\&\leq C\left\{\langle|\nabla \chi-\nabla \chi_S|\rangle+\Theta_\sigma(S)+\Theta_1(S)\right\}\|u_0\|_{L^2(0,T;H^2(\Omega))}\|\nabla w_\varepsilon\|_{L^2(\Omega_T)}\\
&~~~~+C\delta^{-1}\Theta_\sigma(S)\|\nabla u_0\|_{L^2(\Omega_{T,3\delta})}\|\nabla w_\varepsilon\|_{L^2(\Omega_{T,3\delta})}.
\end{align*}
Thus we complete the proof.
\end{proof}

\begin{lemma}
Let $u_0\in L^2(0,T; H^2(\Omega))$ and $w_\varepsilon$ be defined by (\ref{w}) with $S=\varepsilon^{-1}$. Then
\begin{align}\label{w4}
\varepsilon&\bigg|\iint_{\Omega_T}
\chi^\varepsilon_{S,j}\partial_tK_\varepsilon\left(\frac{\partial u_0}{\partial x_j}\right)w_\varepsilon\bigg|\nonumber\\
&\leq C\varepsilon\delta^{-2}\left\{\Theta_\sigma(S)+\Theta_1(S)\right\}\|\nabla u_0\|_{L^2(\Omega_{T,3\delta})}\|\nabla w_\varepsilon\|_{L^2(\Omega_{T,3\delta})}\nonumber\\
&~~~~+C\left\{\Theta_\sigma(S)+\varepsilon\delta^{-1}\Theta_\sigma(S)+\varepsilon\delta^{-1}\Theta_1(S)\right\}\|\partial_t u_0\|_{L^2(\Omega_T)}\|\nabla w_\varepsilon\|_{L^2(\Omega_T)},
\end{align}
where $C$ depends only on $\mu$, $d$, $m$, $\Omega$ and $T$.
\end{lemma}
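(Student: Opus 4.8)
The plan is to cope with the time derivative $\partial_t K_\varepsilon$, which a priori costs a factor $\varepsilon^{-2}$ while $\varepsilon\chi^\varepsilon_{S,j}$ is only $O(1)$, so that the naive bound is $O(\varepsilon^{-1})$ and useless. Throughout I set $S=\varepsilon^{-1}$, as stipulated, and use Lemma~\ref{6.42}, which gives $\varepsilon\|\chi_S\|_{L^\infty(\R^{d+1})}\le C\,\Theta_\sigma(S)$. Since $K_\varepsilon(\partial_j u_0)=S_\varepsilon(\eta_1\eta_2\,\partial_j u_0)$ and the parabolic smoothing operator $S_\varepsilon$ commutes with $\partial_t$ and with $\partial_{x_j}$, the first move is to write, using $\partial_t\partial_j u_0=\partial_j\partial_t u_0$,
\[
\partial_t K_\varepsilon\Big(\tfrac{\partial u_0}{\partial x_j}\Big)
=S_\varepsilon\big(\eta_1\eta_2'\,\partial_j u_0\big)
+\partial_{x_j}S_\varepsilon\big(\eta_1\eta_2\,\partial_t u_0\big)
-S_\varepsilon\big((\partial_j\eta_1)\eta_2\,\partial_t u_0\big),
\]
which trades the bad $\varepsilon^{-2}$ either for a $\delta^{-2}$ (carried by $\eta_2'$ and localized to $\Omega_{T,3\delta}$), or for a $\delta^{-1}$ (carried by $\nabla\eta_1$, also localized), or for a single spatial derivative $\partial_{x_j}$ to be integrated by parts. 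I then insert this into $\varepsilon\iint_{\Omega_T}\chi^\varepsilon_{S,j}\,\partial_t K_\varepsilon(\partial_j u_0)\,w_\varepsilon$ and treat the three terms separately.

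For the $\eta_2'$- and $\nabla\eta_1$-terms, whose integrands are supported in $\Omega_{T,3\delta}$ (after the $\varepsilon$-spreading of $S_\varepsilon$, using $\varepsilon<\delta$), I estimate directly with $\|\eta_2'\|_\infty\le C\delta^{-2}$, $\|\nabla\eta_1\|_\infty\le C\delta^{-1}$, $\|S_\varepsilon\|_{L^2\to L^2}\le1$ (Lemma~\ref{lemma-S-1}), Hölder's inequality, $\varepsilon\|\chi_S\|_{L^\infty}\le C\Theta_\sigma(S)$, and Poincar\'e's inequality for $w_\varepsilon$ (which vanishes on $\partial_p\Omega_T$); this produces the $\varepsilon\delta^{-2}\Theta_\sigma(S)\|\nabla u_0\|_{L^2(\Omega_{T,3\delta})}\|\nabla w_\varepsilon\|_{L^2(\Omega_{T,3\delta})}$ contribution on the first line of the asserted bound together with an $\varepsilon\delta^{-1}\Theta_\sigma(S)\|\partial_t u_0\|_{L^2(\Omega_T)}\|\nabla w_\varepsilon\|_{L^2(\Omega_T)}$ contribution on the second. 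For the middle term $\varepsilon\iint\chi^\varepsilon_{S,j}\,\partial_{x_j}S_\varepsilon(\eta_1\eta_2\,\partial_t u_0)\,w_\varepsilon$ I integrate by parts in $x_j$: the piece in which $\partial_{x_j}$ lands on $w_\varepsilon$ is immediately bounded by $\varepsilon\|\chi_S\|_{L^\infty}\|S_\varepsilon(\eta_1\eta_2\partial_t u_0)\|_{L^2}\|\nabla w_\varepsilon\|_{L^2}\le C\Theta_\sigma(S)\|\partial_t u_0\|_{L^2(\Omega_T)}\|\nabla w_\varepsilon\|_{L^2(\Omega_T)}$, which is the leading term of the second line.

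The remaining piece — where $\partial_{x_j}$ falls back onto $\chi^\varepsilon_{S,j}$ — equals $\iint({\rm div}_y\chi_S)^\varepsilon\,w_\varepsilon\,S_\varepsilon(\eta_1\eta_2\partial_t u_0)$, and this is the main obstacle: there is no $L^\infty$ bound (nor a useful $S$-uniform $L^p$ bound) on ${\rm div}_y\chi_S$, so neither Lemma~\ref{6.42} nor \eqref{6.37} applies. To get around it I use the flux-corrector structure. Taking $i=d+1$ in the representation \eqref{7.10} of $b_{S,ij}$ — recalling $b_{S,(d+1)j}=-\chi_{S,j}$, $\langle b_{S,(d+1)j}\rangle=0$, and $\phi_{S,(d+1)(d+1)j}=0$ — yields
\[
\chi_{S,j}=S^{-2}f_{S,(d+1)j}-\partial_s F_{S,j}-\sum_{k=1}^{d}\partial_{y_k}\phi_{S,k(d+1)j}\qquad\text{in }\R^{d+1},
\]
where $F_{S,j}=\sum_{k=1}^{d+1}\partial_{y_k}f_{S,kj}$. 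I substitute the rescaled form of this identity for $\chi^\varepsilon_{S,j}$ (either before or after the integration by parts) and integrate by parts repeatedly in $x_k$, so that every resulting term involves $f_S$, $F_S$ or $\phi_S$ either undifferentiated or precisely in the combination controlled in $L^\infty$ by Lemma~\ref{7.47} ($\|\nabla f_S\|_{L^\infty}\le CS\Theta_\sigma(S)$, $\|\nabla\partial_i f_{S,ij}\|_{L^\infty}\le C\Theta_\sigma(S)$, $\|f_S\|_{L^\infty}\le CS^2\Theta_1(S)$, hence $\|\phi_S\|_{L^\infty}\le CS\Theta_\sigma(S)$), paired with a derivative of $S_\varepsilon(\eta_1\eta_2\partial_t u_0)$ of order at most two whose $L^2$-norm is controlled by $\varepsilon^{-1}$ or $\varepsilon^{-2}$ times $\|\partial_t u_0\|_{L^2}$ via Lemma~\ref{lemma-S-1} and Lemma~\ref{lemma-S-3}, and with $w_\varepsilon$ or $\nabla w_\varepsilon$. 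With $S=\varepsilon^{-1}$ all powers of $\varepsilon$ and $S$ cancel, so each such term carries a factor $\Theta_\sigma(S)$ or $\Theta_1(S)$, at worst with an extra $\varepsilon\delta^{-1}$; summing everything and using Poincar\'e for $w_\varepsilon$ gives the stated inequality. Beyond this flux-corrector reduction the only delicate point is the bookkeeping: keeping track of which contributions localize to $\Omega_{T,3\delta}$ (those carrying $\eta_2'$ or $\nabla\eta_1$) and which are global, and verifying that the $\varepsilon$-exponents balance in each of the $\Theta_\sigma(S)$ and $\Theta_1(S)$ terms.
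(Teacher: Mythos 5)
Your decomposition $\partial_t K_\varepsilon(\partial_j u_0)=S_\varepsilon(\eta_1\eta_2'\,\partial_j u_0)+\partial_{x_j}S_\varepsilon(\eta_1\eta_2\,\partial_t u_0)-S_\varepsilon((\partial_j\eta_1)\eta_2\,\partial_t u_0)$ is correct and is the same one the paper uses. The gap is in how you treat the $\eta_2'$- and $\nabla\eta_1$-pieces. Estimating them directly with $\varepsilon\|\chi_S\|_{L^\infty}\le C\Theta_\sigma(S)$, $\|\eta_2'\|_\infty\le C\delta^{-2}$, $\|S_\varepsilon\|_{L^2\to L^2}\le 1$, H\"older and Poincar\'e gives, for the $\eta_2'$-piece,
\[
\varepsilon\bigg|\iint_{\Omega_T}\chi^\varepsilon_{S,j}\,S_\varepsilon(\eta_1\eta_2'\,\partial_j u_0)\,w_\varepsilon\bigg|
\le C\,\Theta_\sigma(S)\,\delta^{-2}\,\|\nabla u_0\|_{L^2(\Omega_{T,3\delta})}\,\|\nabla w_\varepsilon\|_{L^2(\Omega_T)},
\]
and analogously $C\,\Theta_\sigma(S)\,\delta^{-1}$ for the $\nabla\eta_1$-piece. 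You assert these produce $\varepsilon\delta^{-2}\Theta_\sigma(S)$ and $\varepsilon\delta^{-1}\Theta_\sigma(S)$, but none of the listed ingredients contributes the extra $\varepsilon$: $\varepsilon\|\chi_S\|_{L^\infty}$ already consumes the prefactor $\varepsilon$, and Poincar\'e on $\Omega_{T,3\delta}$ (which contains the time slices near $t=0,T$, not just a thin spatial strip) gives a $\delta$-independent constant. The missing $\varepsilon$ is not cosmetic: in the proof of Theorem~\ref{rate} one takes $\delta=\varepsilon+\langle|\nabla\chi-\nabla\chi_S|\rangle+\Theta_\sigma(S)+\Theta_1(S)$, so the first line of \eqref{w4} combines with $\|\nabla u_0\|_{L^2(\Omega_{T,4\delta})}\lesssim\delta^{1/2}\{\cdots\}$ to give $\varepsilon\delta^{-3/2}\Theta_\sigma(S)\lesssim\varepsilon\delta^{-1/2}\lesssim\delta^{1/2}\to 0$; with your $\delta^{-2}\Theta_\sigma(S)$ in its place the corresponding product is $\delta^{-3/2}\Theta_\sigma(S)\lesssim\delta^{-1/2}$, which diverges.

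The paper avoids this by applying the flux-corrector representation
$\chi_{S,j}=-\sum_{k\le d}\partial_{y_k}\phi_{S,k(d+1)j}-\partial_s F_{S,j}+S^{-2}f_{S,(d+1)j}$
to the \emph{whole} term $\varepsilon\chi^\varepsilon_{S,j}\,\partial_t K_\varepsilon(\partial_j u_0)$ \emph{before} decomposing $\partial_t K_\varepsilon$ (see \eqref{B_S,(d+1)j}, \eqref{7.11}, \eqref{7.54}). This is precisely where the extra $\varepsilon$ comes from: with $S=\varepsilon^{-1}$ and Lemma~\ref{7.47}, the three pieces carry $L^\infty$ coefficients
$\varepsilon^2\|\phi_S\|_\infty\le C\varepsilon\Theta_\sigma(S)$, $\varepsilon\|\partial_s F_S\|_\infty\le C\varepsilon\Theta_\sigma(S)$, $\varepsilon S^{-2}\|f_S\|_\infty\le C\varepsilon\Theta_1(S)$, each a full $\varepsilon$ better than $\varepsilon\|\chi_S\|_\infty\le C\Theta_\sigma(S)$; these are then paired with \eqref{7.16-1} and \eqref{7.18}. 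You recognize that the flux-corrector identity is needed to handle the $\operatorname{div}_y\chi_S$ term arising from the middle piece after integration by parts, but you need to apply it to the $\eta_2'$- and $\nabla\eta_1$-pieces as well. A secondary caution: if you substitute the identity \emph{after} the integration by parts in the middle term, you will produce $\partial_{y_j}\partial_{y_k}\phi_{S,k(d+1)j}$ and $\partial_{y_j}\partial_s F_{S,j}$, i.e., third derivatives of $f_S$, which are not controlled by Lemma~\ref{7.47}; the substitution must occur before, and once it does for all three pieces your argument coincides with the paper's.
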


\begin{proof}
Note that $b_{S,(d+1)j}=-\chi_{S,j}$ and $\langle \chi_{S,j} \rangle=0$, it follows that
\begin{align}\label{B_S,(d+1)j}
b_{S,(d+1)j}&=\langle b_{S,(d+1)j}\rangle+\sum_{k=1}^{d}\frac{\partial}{\partial y_k}\phi_{S,k(d+1)j}+\sum_{k=1}^{d+1}\frac{\partial}{\partial y_{d+1}}\left(\frac{\partial}{\partial y_k}f_{S,kj}\right)-S^{-2}f_{S,(d+1)j}\nonumber\\
&=\sum_{k=1}^{d}\frac{\partial}{\partial y_k}\phi_{S,k(d+1)j}+\sum_{k=1}^{d+1}\frac{\partial}{\partial y_{d+1}}\left(\frac{\partial}{\partial y_k}f_{S,kj}\right)-S^{-2}f_{S,(d+1)j}.
\end{align}
Since $\phi_{S,k(d+1)j}$ is skew-symmetric, we then have
\begin{align}\label{7.11}
\varepsilon\chi^\varepsilon_{S,j}\partial_tK_\varepsilon\left(\frac{\partial u_0}{\partial x_j}\right)
=&-\varepsilon^2\frac{\partial}{\partial x_k}\left\{\phi^\varepsilon_{S,k(d+1)j}\partial_tK_\varepsilon\left(\frac{\partial u_0}{\partial x_j}\right)\right\}-\varepsilon\left(\frac{\partial}{\partial x_{d+1}}F_{S,j}\right)^{\varepsilon}\partial_tK_\varepsilon\left(\frac{\partial u_0}{\partial x_j}\right)\nonumber\\
&+\varepsilon S^{-2}f^{\varepsilon}_{S,(d+1)j}\partial_tK_\varepsilon\left(\frac{\partial u_0}{\partial x_j}\right).
\end{align}
By multiplying both sides of (\ref{7.11}) with $w_\varepsilon$ and then integrating on $\Omega_T$, integration by parts yields that the left-hand side of (\ref{w4}) equals to
\begin{align}\label{7.54}
&\bigg|\varepsilon^2\iint_{\Omega_T}\phi^\varepsilon_{S,k(d+1)j}\partial_tK_\varepsilon\left(\frac{\partial u_0}{\partial x_j}\right)\frac{\partial w_\varepsilon}{\partial x_k}-\varepsilon\iint_{\Omega_T}\left(\frac{\partial}{\partial x_{d+1}}F_{S,j}\right)^\varepsilon\partial_tK_\varepsilon\left(\frac{\partial u_0}{\partial x_j}\right)w_\varepsilon\nonumber
\\&
+\varepsilon S^{-2}\iint_{\Omega_T}f^{\varepsilon}_{S,(d+1)j}\partial_tK_\varepsilon\left(\frac{\partial u_0}{\partial x_j}\right)w_\varepsilon\bigg|.
\end{align}

Note that
\begin{equation*}
  \partial_t K_\varepsilon(\nabla u_0)=S_\varepsilon(\partial_t(\eta_1\eta_2)\nabla u_0)+\nabla S_\varepsilon(\eta_1\eta_2\partial_t u_0)-S_\varepsilon(\nabla(\eta_1\eta_2)\partial_t u_0).
\end{equation*}
This, together with H\"older's inequality and Lemma \ref{lemma-S-1}, gives
\begin{align}\label{7.18}
\iint_{\Omega_T}\left|\partial_tK_\varepsilon\left(\frac{\partial u_0}{\partial x_j}\right)\frac{\partial w_\varepsilon}{\partial x_i}\right|&\leq C\delta^{-2}\bigg(\iint_{\Omega_{T,3\delta}}|\nabla u_0|^2\bigg)^{1/2}\bigg(\iint_{\Omega_{T,3\delta}}|\nabla w_\varepsilon|^2\bigg)^{1/2}\nonumber
\\&~~~+ C\varepsilon^{-1}\bigg(\iint_{\Omega_T}|\partial_t u_0|^2\bigg)^{1/2}\bigg(\iint_{\Omega_T}|\nabla w_\varepsilon|^2\bigg)^{1/2}
\nonumber
\\&~~~+ C\delta^{-1}\bigg(\iint_{\Omega_{T,3\delta}}|\partial_t u_0|^2\bigg)^{1/2}\bigg(\iint_{\Omega_{T,3\delta}}|\nabla w_\varepsilon|^2\bigg)^{1/2}.
\end{align}

Similarly, we obtain
\begin{align}\label{7.16}
  \iint_{\Omega_T} \left|\partial_t K_\varepsilon\left(\frac{\partial u_0}{\partial x_j}\right)w_\varepsilon\right| &\leq C\delta^{-2}\bigg(\iint_{\Omega_{T,3\delta}}|\nabla u_0|^2\bigg)^{1/2}\bigg(\iint_{\Omega_{T,3\delta}}|w_\varepsilon|^2\bigg)^{1/2}\nonumber
  \\&~~~+ C\delta^{-1}\bigg(\iint_{\Omega_{T,3\delta}}|\partial_t u_0|^2\bigg)^{1/2}\bigg(\iint_{\Omega_{T,3\delta}}| w_\varepsilon|^2\bigg)^{1/2}\nonumber
  \\&~~~+ C\bigg(\iint_{\Omega_T}|\partial_t u_0|^2\bigg)^{1/2}\bigg(\iint_{\Omega_T}|\nabla w_\varepsilon|^2\bigg)^{1/2}.
  \end{align}

Since $w_\varepsilon=0$ on $\partial\Omega\times(0,T)$, by Poincar\'e's inequality, we have
$$\int_{\Omega_{T,3\delta}}\left|w_\varepsilon(x,t)\right|^2\leq C\int_{\Omega_{T,3\delta}}|\nabla w_\varepsilon(x,t)|^2.$$

This, together with (\ref{7.16}), implies that
\begin{align}\label{7.16-1}
   \iint_{\Omega_T} \left|\partial_t K_\varepsilon\left(\frac{\partial u_0}{\partial x_j}\right)w_\varepsilon\right|  &\leq C\delta^{-2}\bigg(\iint_{\Omega_{T,3\delta}}|\nabla u_0|^2\bigg)^{1/2}\bigg(\iint_{\Omega_{T,3\delta}}|\nabla w_\varepsilon|^2\bigg)^{1/2}\nonumber
  \\&~~~+ C\delta^{-1}\bigg(\iint_{\Omega_{T,3\delta}}|\partial_t u_0|^2\bigg)^{1/2}\bigg(\iint_{\Omega_{T,3\delta}}|\nabla w_\varepsilon|^2\bigg)^{1/2}
  \nonumber
  \\&~~~+ C\bigg(\iint_{\Omega_T}|\partial_t u_0|^2\bigg)^{1/2}\bigg(\iint_{\Omega_T}|\nabla w_\varepsilon|^2\bigg)^{1/2}.
\end{align}


In view of (\ref{7.16-1}), \eqref{7.18} and Lemma \ref{7.47}, (\ref{7.54}) is bounded by
\begin{align*}
& C\varepsilon\delta^{-2}\left\{\Theta_\sigma(S)+\Theta_1(S)\right\}\|\nabla u_0\|_{L^2(\Omega_{T,3\delta})}\|\nabla w_\varepsilon\|_{L^2(\Omega_{T,3\delta})}\\
&+C\left\{\Theta_\sigma(S)+\varepsilon\delta^{-1}\Theta_\sigma(S)+\varepsilon\delta^{-1}\Theta_1(S)\right\}\|\partial_t u_0\|_{L^2(\Omega_T)}\|\nabla w_\varepsilon\|_{L^2(\Omega_T)}.
\end{align*}
We hence complete the proof.
\end{proof}

\begin{lemma}\label{lem-7.9}
  Let $u_0\in L^2(0,T; H^2(\Omega))$ and $w_\varepsilon$ be defined by (\ref{w}) with $S=\varepsilon^{-1}$. Then
\begin{equation}\label{w5}
\aligned
&\left|\varepsilon^{-1} S^{-2} \iint_{\Omega_T}\chi_{S,j}^{\varepsilon}K_\varepsilon\left(\frac{\partial u_0}{\partial x_j}\right)w_\varepsilon\right|\\
&\quad\quad\leq C\varepsilon\big\{\Theta_\sigma(S)+\Theta_1(S)+1\big\}\cdot\|\nabla u_0\|_{L^2(\Omega_T)}
\|\nabla w_\varepsilon\|_{L^2(\Omega_T)},
\endaligned
\end{equation}
where $C$ depends only on $\mu$, $d$, $m$, $\Omega$ and $T$.
\end{lemma}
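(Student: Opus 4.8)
The plan is a direct estimate; among the terms produced by Theorem \ref{Theorem-2.1} this is the mildest one, since no derivative falls on the rapidly oscillating factor $\chi_{S,j}^{\varepsilon}$, so no integration by parts (hence no flux corrector) is needed here. First I would record the elementary bookkeeping: because $S=\varepsilon^{-1}$ the prefactor satisfies $\varepsilon^{-1}S^{-2}=\varepsilon^{-1}\varepsilon^{2}=\varepsilon$, so it suffices to estimate $\varepsilon\,\big|\iint_{\Omega_T}\chi_{S,j}^{\varepsilon}K_\varepsilon(\partial u_0/\partial x_j)\,w_\varepsilon\big|$.

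Next I would apply H\"older's inequality, putting $\chi_{S,j}^{\varepsilon}$ in $L^\infty$ and $K_\varepsilon(\partial u_0/\partial x_j)$, $w_\varepsilon$ in $L^2(\Omega_T)$, to obtain
\[
\varepsilon^{-1}S^{-2}\Big|\iint_{\Omega_T}\chi_{S,j}^{\varepsilon}K_\varepsilon\Big(\frac{\partial u_0}{\partial x_j}\Big)w_\varepsilon\Big|
\le \varepsilon^{-1}S^{-2}\,\|\chi_{S,j}\|_{L^\infty(\mathbb{R}^{d+1})}\,\Big\|K_\varepsilon\Big(\frac{\partial u_0}{\partial x_j}\Big)\Big\|_{L^2(\Omega_T)}\,\|w_\varepsilon\|_{L^2(\Omega_T)}.
\]
The first factor is controlled by the sup-norm bound for the approximate corrector: by Lemma \ref{6.42}, $\varepsilon^{-1}S^{-2}\|\chi_{S,j}\|_{L^\infty}\le C\,\varepsilon^{-1}S^{-1}\Theta_\sigma(S)=C\,\Theta_\sigma(S)\le C\{\Theta_\sigma(S)+\Theta_1(S)+1\}$ (the cruder $\|\chi_{S,j}\|_{L^\infty}\le CS$ of Lemma \ref{6.20} already gives a bound of order one, which is all the bookkeeping needs). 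The $K_\varepsilon$-factor is controlled by the $L^2$-mapping property of the smoothing operator, Lemma \ref{lemma-S-1}, together with $0\le\eta_1\eta_2\le 1$:
\[
\Big\|K_\varepsilon\Big(\frac{\partial u_0}{\partial x_j}\Big)\Big\|_{L^2(\Omega_T)}=\big\|S_\varepsilon(\eta_1\eta_2\,\partial u_0/\partial x_j)\big\|_{L^2(\mathbb{R}^{d+1})}\le \|\nabla u_0\|_{L^2(\Omega_T)}.
\]

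Finally, I would control $\|w_\varepsilon\|_{L^2(\Omega_T)}$ by Poincar\'e's inequality, the point being that $w_\varepsilon$ vanishes on the lateral boundary $S_T=\partial\Omega\times(0,T)$: there $u_\varepsilon=g=u_0$, while both corrector terms in \eqref{w} are supported in $\{x:\operatorname{dist}(x,\partial\Omega)\ge \delta-\varepsilon\}\times(0,T)$, since $\eta_1\in C_0^\infty(\Omega)$ vanishes within distance $\delta$ of $\partial\Omega$ and $S_\varepsilon$ spreads mass only a distance $\varepsilon<\delta$. Hence applying Poincar\'e in the spatial variables for a.e. $t$ and integrating in $t$ gives $\|w_\varepsilon\|_{L^2(\Omega_T)}\le C_\Omega\|\nabla w_\varepsilon\|_{L^2(\Omega_T)}$. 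Multiplying the three bounds yields the claimed estimate. There is no serious obstacle in this lemma; the only points requiring a little care are the power-counting $\varepsilon^{-1}S^{-2}=\varepsilon$ combined with the choice of sup-norm bound for $\chi_S$, and the observation $w_\varepsilon|_{S_T}=0$ that makes the Poincar\'e step legitimate.
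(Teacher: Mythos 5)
Your bookkeeping of the prefactor is off by a factor of $\varepsilon$, and that factor is precisely the point of this lemma. With $S=\varepsilon^{-1}$ the target bound reads
$C\varepsilon\{\Theta_\sigma(S)+\Theta_1(S)+1\}\lesssim S^{-1}$, whereas your H\"older/$L^\infty$ estimate produces the prefactor
\[
\varepsilon^{-1}S^{-2}\,\|\chi_{S}\|_{L^\infty}\;\le\; C\,\varepsilon^{-1}S^{-2}\cdot S\,\Theta_\sigma(S)\;=\;C\,\Theta_\sigma(S),
\]
which you then bound by $C\{\Theta_\sigma(S)+\Theta_1(S)+1\}$ --- note there is no $\varepsilon$ left in front. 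In general $\Theta_\sigma(S)$ decays much more slowly than $S^{-1}$: already in the periodic case one has $\Theta_\sigma(S)\approx (L/S)^\sigma=L^\sigma\varepsilon^\sigma\gg\varepsilon$ for $\sigma<1$, so your conclusion is strictly weaker than \eqref{w5}, not a bookkeeping variant of it. The cruder $\|\chi_S\|_{L^\infty}\le CS$ bound of Lemma \ref{6.20} only makes this worse: it gives an $O(1)$ prefactor.

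The missing idea is that one should \emph{not} estimate $\chi_{S,j}^\varepsilon$ directly; one must use the dual/flux-corrector decomposition of $-\chi_{S,j}=b_{S,(d+1)j}$, namely
\[
-\chi_{S,j}=\sum_{k=1}^{d}\frac{\partial}{\partial y_k}\phi_{S,k(d+1)j}
+\frac{\partial}{\partial y_{d+1}}F_{S,j}-S^{-2}f_{S,(d+1)j},
\]
from \eqref{B_S,(d+1)j}. Each piece, after rescaling and combined with the extra powers $S^{-2}$, $S^{-4}$, yields a clean factor $\varepsilon$: the $\phi_S$-term gives $\varepsilon^{-1}S^{-2}=\varepsilon$ directly with a bounded local $L^2$-average of $\nabla^2 f_S$ (Theorem \ref{7.35}), and the $F_{S,j}$ and $f_{S,(d+1)j}$ terms carry $\varepsilon^{-1}S^{-2}\|\nabla\partial_k f_{S,kj}\|_{L^\infty}\le C\varepsilon\,\Theta_\sigma(S)$ and $\varepsilon^{-1}S^{-4}\|f_S\|_{L^\infty}\le C\varepsilon\,\Theta_1(S)$ by Lemma \ref{7.47}. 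Your observations that $\|K_\varepsilon(\nabla u_0)\|_{L^2}\le\|\nabla u_0\|_{L^2}$ and $\|w_\varepsilon\|_{L^2}\le C\|\nabla w_\varepsilon\|_{L^2}$ via Poincar\'e (using $w_\varepsilon=0$ on $S_T$) are correct and reappear in the paper's proof, but the remark that ``no integration by parts, hence no flux corrector, is needed here'' is exactly backwards: without the flux/dual corrector structure you cannot extract the factor $\varepsilon$ asserted in \eqref{w5}.
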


\begin{proof}
By \eqref{B_S,(d+1)j}, we know that
\begin{align*}
b_{S,(d+1)j}=\sum_{k=1}^{d}\frac{\partial}{\partial y_k}\phi_{S,k(d+1)j}+\frac{\partial}{\partial y_{d+1}}F_{S,j}-S^{-2}f_{S,(d+1)j}.
\end{align*}
Then it follows from integration by parts that the left-hand side of (\ref{w5}) can be bounded by
\begin{align}\label{7.56}
&\bigg|\varepsilon^{-1}S^{-2}\iint_{\Omega_T}\left(\frac{\partial}{\partial x_k}\phi_{S,k(d+1)j}\right)^\varepsilon K_\varepsilon\left(\frac{\partial u_0}{\partial x_j}\right) w_\varepsilon\bigg|+\varepsilon^{-1} S^{-4} \bigg|\iint_{\Omega_T} f^{\varepsilon}_{S,(d+1)j}K_\varepsilon\left(\frac{\partial u_0}{\partial x_j}\right)w_\varepsilon\bigg|\nonumber
\\&~~+\varepsilon^{-1} S^{-2} \bigg|\iint_{\Omega_T}\left(\frac{\partial}{\partial x_{d+1}}F_{S,j}\right)^\varepsilon K_\varepsilon\left(\frac{\partial u_0}{\partial x_j}\right)w_\varepsilon\bigg|=I_1+I_2+I_3.
\end{align}

To estimate $I_1$, in view of Lemma \ref{lemma-S-3} and Theorem \ref{7.35}, we have
\begin{align}\label{I1}
I_1&\leq C\varepsilon\sup_{(x,t)\in\mathbb{R}^{d+1}}\left(\dashint_{Q_1(x,t)}\bigg|\frac{\partial}{\partial x_k}\phi_{S,k(d+1)j}\bigg|^2\right)^{1/2}\|K_\varepsilon(\nabla u_0)\|_{L^2(\Omega_T)}\|w_\varepsilon\|_{L^2(\Omega_T)}\nonumber\\
&\leq C\varepsilon\|\nabla u_0\|_{L^2(\Omega_T)}\|\nabla w_\varepsilon\|_{L^2(\Omega_T)},
\end{align}
where we have used Lemma \ref{lemma-S-1} and Poincar\'e's inequality for the last step.

Next, by using Lemma \ref{7.47} and Lemma \ref{lemma-S-1}, we have
\begin{align}\label{I2+I3}
I_2+I_3&\leq C \varepsilon\left\{\Theta_\sigma(S)+\Theta_1(S)\right\} \iint_{\Omega_T} \left|K_\varepsilon\left(\frac{\partial u_0}{\partial x_j}\right)w_\varepsilon\right|
\nonumber\\& ~~~~\leq
C\varepsilon\left\{\Theta_\sigma(S)+\Theta_1(S)\right\}\|\nabla u_0\|_{L^2(\Omega_T)}\| w_\varepsilon\|_{L^2(\Omega_T)}
\nonumber\\& ~~~~\leq
C\varepsilon\left\{\Theta_\sigma(S)+\Theta_1(S)\right\}\|\nabla u_0\|_{L^2(\Omega_T)}\|\nabla w_\varepsilon\|_{L^2(\Omega_T)},
\end{align}
where we have used Poincar\'e's inequality for the last step. The desired estimate \eqref{w5} now follows from \eqref{7.56}, \eqref{I1} and \eqref{I2+I3}.
\end{proof}

\begin{theorem}\label{thm-w-e}
  Let $w_\varepsilon$ be defined as (\ref{w}), $S=\varepsilon^{-1}$ and $\varepsilon\leq \delta <1$. Then we have
  \begin{equation*}
\aligned
& \left|  \int_0^T  \big\langle (\partial_t +\mathcal{L}_\varepsilon) w_\varepsilon,
 w_\varepsilon \big\rangle_{H^{-1}(\Omega) \times H^1_0(\Omega)} \, dt  \right| \\
&~~~~\leq
C\left\{1+\delta^{-1}\Theta_\sigma(S)+\delta^{-1}\Theta_1(S)
+\varepsilon\delta^{-1}\right\}\cdot\|\nabla u_0\|_{L^2(\Omega_{T,4\delta})}\|\nabla w_\varepsilon\|_{L^2(\Omega_{T,4\delta})}\\
&~~~~~~+C\left\{\varepsilon+\langle|\nabla\chi-\nabla \chi_S|\rangle+\Theta_\sigma(S)+\Theta_1(S)\right\}\\
&~~~~\qquad\cdot\left\{\|u_0\|_{L^2(0,T;H^2(\Omega))}
+\|\partial_t u_0\|_{L^2(\Omega_T)}\right\}\|\nabla w_\varepsilon\|_{L^2(\Omega_T)},
\endaligned
\end{equation*}
where $C$ depends only on $d$, $\mu$, $m$, $\Omega$ and $T$.
\end{theorem}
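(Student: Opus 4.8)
The plan is to start from the identity of Theorem \ref{Theorem-2.1}, which expresses $\int_0^T \langle (\partial_t +\mathcal{L}_\varepsilon) w_\varepsilon, w_\varepsilon \rangle_{H^{-1}(\Omega)\times H^1_0(\Omega)}\,dt$ as a sum of six integrals, and then to bound each of these (or a suitable grouping of two of them) by invoking the estimates already established above. Specifically, the first and fifth integrals on the right-hand side of that identity together form exactly the quantity estimated in Lemma \ref{lem-7.5}; the second integral is bounded by Lemma \ref{lem-7.9}; and the fourth integral is bounded by the lemma yielding \eqref{w4}.

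For the remaining third and sixth integrals, the key structural observation is that one must split the operator $\partial_t + \mathcal{L}_\varepsilon$ appearing in the sixth integral into its time-derivative part $-\varepsilon^2\iint_{\Omega_T}\partial_t\{\phi^\varepsilon_{S,(d+1)ij}\frac{\partial}{\partial x_i}K_\varepsilon(\partial u_0/\partial x_j)\}w_\varepsilon$ and its elliptic part $-\varepsilon^2\iint_{\Omega_T}\mathcal{L}_\varepsilon\{\phi^\varepsilon_{S,(d+1)ij}\frac{\partial}{\partial x_i}K_\varepsilon(\partial u_0/\partial x_j)\}w_\varepsilon$. The elliptic part is controlled directly by the lemma yielding \eqref{7.19}. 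The time-derivative part must then be paired with the third integral $\iint_{\Omega_T}b^\varepsilon_{S,ij}\frac{\partial}{\partial x_i}K_\varepsilon(\partial u_0/\partial x_j)w_\varepsilon$: their combination is precisely the quantity estimated in the lemma yielding \eqref{w3}. This pairing is not optional --- it is the whole point of introducing the flux corrector $\phi_S$, since $b^\varepsilon_{S,ij}$ is only small after averaging and the third integral alone cannot be controlled without the compensating $\partial_t\phi_S$ term.

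Once every piece is bounded, the last step is bookkeeping. Using $S=\varepsilon^{-1}$ and $\varepsilon\le\delta<1$ (so that $\varepsilon\delta^{-2}\le\delta^{-1}$ and $\varepsilon\delta^{-1}\le 1$), the bounds $\Theta_\sigma(S)\le C$ and $\Theta_\sigma(S)\le C_\sigma[\Theta_1(S)]^\sigma$, and the inclusion $\Omega_{T,3\delta}\subset\Omega_{T,4\delta}$ (so all norms over $\Omega_{T,3\delta}$ are dominated by the corresponding ones over $\Omega_{T,4\delta}$), I would consolidate every error coefficient produced by the six bounds into one of the two groups appearing in the statement: a factor $\{1+\delta^{-1}\Theta_\sigma(S)+\delta^{-1}\Theta_1(S)+\varepsilon\delta^{-1}\}$ in front of $\|\nabla u_0\|_{L^2(\Omega_{T,4\delta})}\|\nabla w_\varepsilon\|_{L^2(\Omega_{T,4\delta})}$, and a factor $\{\varepsilon+\langle|\nabla\chi-\nabla\chi_S|\rangle+\Theta_\sigma(S)+\Theta_1(S)\}$ in front of $\{\|u_0\|_{L^2(0,T;H^2(\Omega))}+\|\partial_t u_0\|_{L^2(\Omega_T)}\}\|\nabla w_\varepsilon\|_{L^2(\Omega_T)}$. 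Summing the six contributions then yields the claim. The main obstacle is not any single estimate --- all of them are in hand --- but keeping the grouping of the third and sixth integrals straight and checking that each error coefficient is correctly dominated by one of the two displayed coefficients; everything else is routine.
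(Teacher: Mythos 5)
Your proposal matches the paper's proof exactly: the paper also takes the identity from Theorem \ref{Theorem-2.1}, groups term 1 with term 5 (Lemma \ref{lem-7.5}), splits term 6 into its elliptic part (bounded by \eqref{7.19}) and its time-derivative part which is paired with term 3 (bounded by \eqref{w3}), and bounds terms 4 and 2 by \eqref{w4} and Lemma \ref{lem-7.9} respectively, then consolidates the coefficients using $\varepsilon\le\delta$, $\Omega_{T,3\delta}\subset\Omega_{T,4\delta}$, and $\Theta_\sigma(S)\le C$. Your final bookkeeping is correct, and your remark that the pairing of term 3 with the time-derivative of the $\phi_S$ term is forced (since it produces the exact cancellation of the unmanageable $\varepsilon^2\partial_t\{\phi_{S,(d+1)ij}^\varepsilon\cdots\}$ piece coming out of the decomposition \eqref{7.10}) is the right explanation for why the lemmas are stated the way they are.
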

\begin{proof}
  This follows directly from Theorem \ref{Theorem-2.1} and Lemmas \ref{lem-7.5}-\ref{lem-7.9}.
\end{proof}

We are ready to give the proof of Theorem \ref{rate}.

\begin{proof}[Proof of Theorem \ref{rate}]
Notice that $w_\varepsilon\in L^2(0,T;H^1_0(\Omega))$ and $w_\varepsilon=0$ on $\Omega\times\{t=0\}$. It follows from Theorem \ref{thm-w-e} that
\begin{equation}\label{est-w-e-1}
\aligned
&\mu\iint_{\Omega_T}|\nabla w_\varepsilon|^2\,dxdt\\
& \leq\left|  \int_0^T  \big\langle (\partial_t +\mathcal{L}_\varepsilon) w_\varepsilon,
 w_\varepsilon \big\rangle_{H^{-1}(\Omega) \times H^1_0(\Omega)} \, dt  \right| \\
&\leq
C\left\{1+\delta^{-1}\Theta_\sigma(S)+\delta^{-1}\Theta_1(S)
+\varepsilon\delta^{-1}\right\}\cdot\|\nabla u_0\|_{L^2(\Omega_{T,4\delta})}\|\nabla w_\varepsilon\|_{L^2(\Omega_{T,4\delta})}\\
&+C\left\{\varepsilon+\langle|\nabla\chi-\nabla \chi_S|\rangle+\Theta_\sigma(S)+\Theta_1(S)\right\}\\
&~~~~\cdot\left\{\|u_0\|_{L^2(0,T;H^2(\Omega))}
+\|\partial_t u_0\|_{L^2(\Omega_T)}\right\}\|\nabla w_\varepsilon\|_{L^2(\Omega_T)}.
\endaligned
\end{equation}
By \cite[Remark 3.6]{GS-2017},
$$
\|\nabla u_0\|_{L^2(\Omega_{T,4\delta})}\leq C\delta^{1/2}\left\{
\|u_0\|_{L^2(0,T;H^2(\Omega))}+\sup_{\delta^2<t<T}
\left(\frac{1}{\delta}\int_{t-\delta^2}^t\int_{\Omega}|\nabla u_0|^2\right)^{1/2}
\right\}.
$$
This, together with \eqref{est-w-e-1}, implies that
\begin{equation}\label{7.41-1}
\aligned
\|\nabla w_\varepsilon\|_{L^2(\Omega_T)}&\leq
C\left\{\delta^{1/2}+\delta^{-1/2}\Theta_\sigma(S)+\delta^{-1/2}\Theta_1(S)
+\varepsilon\delta^{-1/2}\right\}\\
&~~~~\cdot\left\{
\|u_0\|_{L^2(0,T;H^2(\Omega))}+\sup_{\delta^2<t<T}
\left(\frac{1}{\delta}\int_{t-\delta^2}^t\int_{\Omega}|\nabla u_0|^2\right)^{1/2}
\right\}\\
&~~~~+C\big\{\varepsilon+\langle|\nabla\chi-\nabla \chi_S|\rangle+\Theta_\sigma(S)+\Theta_1(S)\big\}\\
&~~~~\cdot\left\{\|u_0\|_{L^2(0,T;H^2(\Omega))}
+\|\partial_t u_0\|_{L^2(\Omega_T)}\right\}.
\endaligned
\end{equation}
By setting
  $$\delta=\varepsilon+\langle|\nabla \chi-\nabla \chi_S|\rangle+\Theta_\sigma(S)+\Theta_1(S),$$
it follows from \eqref{7.41-1} that
  \begin{align*}
\|\nabla w_\varepsilon\|_{L^2(\Omega_T)}&\leq C\delta^{1/2}\bigg\{\|u_0\|_{L^2(0,T;H^2(\Omega))}
+\|\partial_t u_0\|_{L^2(\Omega_T)}
+\sup_{\delta^2<t<T}
\left(\frac{1}{\delta}\int_{t-\delta^2}^t\int_{\Omega}|\nabla u_0|^2\right)^{1/2}\bigg\}\\
&\leq C\delta^{1/2}\bigg\{\|u_0\|_{L^2(0,T;H^2(\Omega))}
+\|F\|_{L^2(\Omega_T)}
+\sup_{\delta^2<t<T}
\left(\frac{1}{\delta}\int_{t-\delta^2}^t\int_{\Omega}|\nabla u_0|^2\right)^{1/2}\bigg\},
\end{align*}
where for the last step we have used
\begin{equation*}
\|\partial_t u_0\|_{L^2(\Omega_T)}\leq C\left\{\|\nabla^2 u_0\|_{L^2(\Omega_T)}+\|F\|_{L^2(\Omega_T)}\right\}.
\end{equation*}
One may observe that $\Theta_\sigma(S)\leq C[\Theta_1(S)]^\sigma$, hence we have
\begin{align*}
\delta&\leq C\left\{S^{-1}+\langle|\nabla \chi-\nabla \chi_S|\rangle+[\Theta_1(S)]^\sigma\right\}.
\end{align*}
And it is easy to see that $\delta\to 0$ as $S\to\infty$, one may find a modulus $\eta$ on $(0,1]$, depending only on $A, T$ such that $\eta(0+)=0$ and for any $S\geq 1$,
\begin{equation}\label{def-eta}
\bigg\{S^{-1}+\langle|\nabla \chi-\nabla \chi_S|\rangle+[\Theta_1(S)]^{\sigma}\bigg\}\leq [\eta(S^{-1})]^{2}.
\end{equation}
As a result, we obtain that
\begin{equation*}
\|\nabla w_\varepsilon\|_{L^2(\Omega_T)}\leq C\eta(\varepsilon)\left\{\|u_0\|_{L^2(0,T;H^2(\Omega))}+\|F\|_{L^2(\Omega_T)}
+\sup_{\delta^2<t<T}
\left(\frac{1}{\delta}\int_{t-\delta^2}^t\int_{\Omega}|\nabla u_0|^2\right)^{1/2}\right\}.
\end{equation*}
Since $w_\varepsilon\in L^2(0,T;H^1_0(\Omega))$, this, together with Poincar\'e's inequality, gives
\begin{equation*}
\|w_\varepsilon\|_{L^2(\Omega_T)}\leq C\eta(\varepsilon)\left\{\|u_0\|_{L^2(0,T;H^2(\Omega))}+\|F\|_{L^2(\Omega_T)}
+\sup_{\delta^2<t<T}
\left(\frac{1}{\delta}\int_{t-\delta^2}^t\int_{\Omega}|\nabla u_0|^2\right)^{1/2}\right\}.
\end{equation*}
In view of the definition of $w_\varepsilon$, we see that
\begin{equation*}
\aligned
\|&u_\varepsilon-u_0\|_{L^2(\Omega_T)}\\
&\leq\|w_\varepsilon\|_{L^2(\Omega_T)}+\left\|\varepsilon\chi_{S,j}^\varepsilon K_\varepsilon\left(\frac{\partial u_0}{\partial x_j}\right)+\varepsilon^2\phi_{S,(d+1)ij}^\varepsilon \frac{\partial}{\partial x_i}K_\varepsilon\left(\frac{\partial u_0}{\partial x_j}\right)\right\|_{L^2(\Omega_T)}\\
&\leq\|w_\varepsilon\|_{L^2(\Omega_T)}+C\Theta_\sigma(S)\left\{\left\|\nabla u_0\right\|_{L^2(\Omega_T)}+\left\|\nabla^2 u_0\right\|_{L^2(\Omega_T)}\right\}\\
&\leq C\eta(\varepsilon)\left\{\|u_0\|_{L^2(0,T;H^2(\Omega))}+\|F\|_{L^2(\Omega_T)}
+\sup_{\delta^2<t<T}
\left(\frac{1}{\delta}\int_{t-\delta^2}^t\int_{\Omega}|\nabla u_0|^2\right)^{1/2}\right\},
\endaligned
\end{equation*}
where we have used Lemma \ref{7.47} and Lemma \ref{lemma-S-3} in the second inequality. Thus we complete the proof.
\end{proof}

\section{Interior Lipschitz estimates}
In this section, we establish the uniform interior Lipschitz estimates for $\partial_t+\mathcal{L}_\varepsilon$. Our main tool is the convergence rate method first established in \cite{Armstrong-Smart-2016} and further developed in \cite{AS-2016}.

We begin with the following approximation result.
\begin{lemma}\label{lem-approx}
Suppose that $A$ satisfies \eqref{ellipticity} and is uniformly almost periodic. Let $u_\varepsilon$ be a weak solution of
\begin{equation*}
(\partial_t+\mathcal{L}_\varepsilon)u_\varepsilon=F\quad{\rm in}~~Q_{2r},
\end{equation*}
where $F\in L^2(Q_{2r})$. Let $S=\varepsilon^{-1}$. Then there exists a weak solution of
\begin{equation*}
(\partial_t+\mathcal{L}_0)u_0=F\quad{\rm in}~~Q_{r},
\end{equation*}
such that
\begin{equation}\label{est-approximation}
\left(\dashint_{Q_{r/2}}|u_\varepsilon-u_0|^2\right)^{\frac{1}{2}}\leq
C\left[\eta\left(\frac{\varepsilon}{r}\right)\right]^\gamma
\left\{\left(\dashint_{Q_{2r}}|u_\varepsilon|^2\right)^{\frac{1}{2}}
+r^2\left(\dashint_{Q_{2r}}|F|^2\right)^{\frac{1}{2}}\right\},
\end{equation}
where $\eta(t)$ is defined by
 \begin{equation}\label{def-eta-1}
 \eta(t)=[\Theta_1(t^{-1})]^\sigma+\sup_{S\geq t^{-1}}\langle|\nabla\chi_S-\nabla\chi|\rangle+t,
 \end{equation}
$\gamma=\frac{1}{2}-\frac{1}{q}$ for some $q>2$ and $C$ depends only on $d$, $m$ and $\mu$.
\end{lemma}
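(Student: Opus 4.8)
The plan is to combine the convergence rate estimate of Theorem \ref{rate} with the uniform interior Hölder estimate of Theorem \ref{Holder-1}, localized to the cylinder $Q_r$. First I would note that, by rescaling, it suffices to treat the case $r=1$; indeed, setting $v_\varepsilon(x,t) = u_\varepsilon(rx, r^2 t)$, the function $v_\varepsilon$ solves $(\partial_t + \mathcal{L}_{\varepsilon/r})v_\varepsilon = r^2 F(rx,r^2t)$ in $Q_2$, and the estimate \eqref{est-approximation} is scale-invariant in the sense that both sides transform consistently (the factor $r^2$ in front of the $F$-term is precisely what is needed, and $\varepsilon$ is replaced by $\varepsilon/r$ so that $\eta(\varepsilon/r)$ appears). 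So from now on take $r=1$ and $S=\varepsilon^{-1}$.

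Next I would introduce the homogenized comparison function. Let $u_0 \in L^2(-1,0; H^2(B_1))$ be the weak solution of the initial-Dirichlet problem
\begin{equation*}
(\partial_t + \mathcal{L}_0)u_0 = F \quad \text{in } Q_1, \qquad u_0 = u_\varepsilon \quad \text{on } \partial_p Q_1.
\end{equation*}
The lateral and initial data $u_\varepsilon\big|_{\partial_p Q_1}$ lie in the appropriate trace spaces by the uniform Hölder estimate (Theorem \ref{Holder-1}), which also gives $\|u_\varepsilon\|_{L^\infty(Q_{3/2})} \le C\{(\dashint_{Q_2}|u_\varepsilon|^2)^{1/2} + (\dashint_{Q_2}|F|^p)^{1/p}\text{-type terms}\}$; in our $L^2$ formulation I would instead keep track of the right-hand side of \eqref{est-approximation}. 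Then $w_\varepsilon := u_\varepsilon - u_0$ and I want to estimate $\|w_\varepsilon\|_{L^2(Q_{1/2})}$. Applying Theorem \ref{rate} on the domain $\Omega = B_1$ with time interval $(-1,0)$ (after the obvious time-shift), with $g = u_\varepsilon\big|_{S_T}$, $h = u_\varepsilon\big|_{t=-1}$, and noting that in that theorem's notation $\eta(\varepsilon)$ is exactly the modulus built in \eqref{def-eta} — which, up to the harmless relabeling in \eqref{def-eta-1}, coincides with the $\eta$ here — one obtains
\begin{equation*}
\|u_\varepsilon - u_0\|_{L^2(Q_1)} \le C\eta(\varepsilon)\left\{\|u_0\|_{L^2(-1,0;H^2(B_1))} + \|F\|_{L^2(Q_1)} + \sup_{\delta^2 < t} \Big(\tfrac{1}{\delta}\int_{t-\delta^2}^t\int_{B_1}|\nabla u_0|^2\Big)^{1/2}\right\}.
\end{equation*}
The remaining work is to bound the bracket on the right by the right-hand side of \eqref{est-approximation}. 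The $H^2$ norm of $u_0$ and the $L^2$ norm of $\nabla u_0$ are controlled via the regularity theory for the constant-coefficient parabolic system $\partial_t + \mathcal{L}_0$ together with the Hölder bound on the boundary data of $u_0$ (equal to that of $u_\varepsilon$); here Theorem \ref{Holder-1} enters again to express everything in terms of $(\dashint_{Q_2}|u_\varepsilon|^2)^{1/2}$ and $(\dashint_{Q_2}|F|^2)^{1/2}$. The appearance of the exponent $\gamma = \frac12 - \frac1q$ rather than the full power of $\eta$ comes from an interpolation/self-improvement step: one has the crude bound $\|w_\varepsilon\|_{L^2(Q_{1/2})} \le C\{(\dashint_{Q_2}|u_\varepsilon|^2)^{1/2} + \ldots\}$ with no small factor (from triangle inequality and energy estimates), and also the bound with the small factor $\eta(\varepsilon)$ but a possibly larger constant in the data; trading a power of $q>2$ against $2$ via Hölder's inequality and Lemma \ref{6.15}-type higher integrability for $u_\varepsilon$ interpolates these into the stated form with exponent $\gamma$.

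The main obstacle I anticipate is technical rather than conceptual: matching the modulus $\eta$ from Theorem \ref{rate}'s proof — which is defined implicitly through \eqref{def-eta} in terms of $S = \varepsilon^{-1}$, $\langle|\nabla\chi - \nabla\chi_S|\rangle$, and $\Theta_1(S)$ — with the explicit $\eta(t)$ in \eqref{def-eta-1}, and verifying that the $\delta$ in the $\sup$-term (which equals $\varepsilon + \langle|\nabla\chi-\nabla\chi_S|\rangle + \Theta_\sigma(S) + \Theta_1(S)$) does not destroy the estimate after localization. The cleanest route is to absorb the $\sup_{\delta^2<t}$-term using the interior Caccioppoli inequality for $u_0$ on the enlarged cylinder — since we only need $u_0$ on $Q_{1/2}$ we have room to spare — which bounds $\frac1\delta\int_{t-\delta^2}^t\int_{B_1}|\nabla u_0|^2$ by $\|u_0\|^2_{L^2(Q_1)} + \|F\|^2$-type quantities uniformly in $\delta$, thereby removing the dependence on $\delta$ altogether and reducing everything to standard quantities. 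Once that reduction is in place, the remaining estimates are routine applications of the results already established in Sections 3–7.
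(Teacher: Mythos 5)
The core idea of setting $u_0$ equal to $u_\varepsilon$ on $\partial_p Q_1$ is correct and matches the paper's choice, but your strategy of then invoking Theorem~\ref{rate} directly breaks down at a fundamental point, and the mechanism you sketch for the exponent $\gamma$ is not the right one.

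Theorem~\ref{rate} requires $u_0 \in L^2(0,T;H^2(\Omega))$. Here $u_0$ is the solution of the constant-coefficient Dirichlet problem on $Q_1$ with lateral and initial data $u_\varepsilon|_{\partial_p Q_1}$. Uniformly in $\varepsilon$, that data is only $\Lambda^{\alpha,\alpha/2}$ (Theorem~\ref{Holder-1}), and this does \emph{not} place $u_0$ in $H^2$ up to $\partial_p Q_1$: the second derivatives of $u_0$ behave like $[\mathrm{dist}_p(\cdot,\partial_p Q_1)]^{\alpha-2}$ near the boundary, which is not square-integrable for $\alpha<1$. So the hypothesis of Theorem~\ref{rate} fails in the present setting, and the theorem cannot be applied as a black box. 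Shrinking the domain (using $B_{3/4}$, say) does not help, since the boundary data of $u_0$ is still $u_\varepsilon$ restricted to a parabolic boundary and remains only H\"older. The paper circumvents this precisely by \emph{not} invoking Theorem~\ref{rate}: it reruns the two-scale expansion from scratch in $Q_1$ with a cutoff $\eta_\delta$ supported strictly inside $Q_1$, so that only the \emph{interior} weighted estimate
\[
\int_{Q_{1-2\delta}}\bigl(|\nabla^2 u_0|^2+|\partial_t u_0|^2\bigr)\le C\left\{\int_{Q_{1-\delta}}\frac{|\nabla u_0(y,s)|^2\,dyds}{|\mathrm{dist}_p((y,s),\partial_p Q_1)|^2}+\int_{Q_1}|F|^2\right\}
\]
is ever needed, which is valid under the mere assumption $\nabla u_0\in L^2(Q_1)$.

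Your explanation of the exponent $\gamma=\tfrac12-\tfrac1q$ as arising from ``interpolation/self-improvement between a crude bound and a small-factor bound'' is also off the mark; having one estimate $\le A$ and another $\le \eta B$ with $B\gg A$ does not interpolate to $\le \eta^\gamma A$. The actual source of $\gamma$ is elementary but specific: the boundary layer $Q_1\setminus Q_{1-3\delta}$ has measure $O(\delta)$, so H\"older's inequality with exponent $q>2$ gives
\[
\int_{Q_1\setminus Q_{1-3\delta}}|\nabla u_0|^2\le C\,\delta^{1-2/q}\left(\dashint_{Q_1}|\nabla u_0|^q\right)^{2/q},
\]
and after taking square roots one gets $\delta^{1/2-1/q}=\delta^\gamma$. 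The $L^q$ norm of $\nabla u_0$ is then controlled by $(\dashint_{Q_{3/2}}|\nabla u_\varepsilon|^2)^{1/2}+(\dashint_{Q_2}|F|^2)^{1/2}$ via the Meyers-type reverse H\"older estimate for $\nabla u_\varepsilon$ and the comparability $\dashint_{Q_1}|\nabla u_0|^q\le C\dashint_{Q_1}|\nabla u_\varepsilon|^q$, not by interpolation with a crude bound. The choice $\delta=\varepsilon+\langle|\nabla\chi-\nabla\chi_S|\rangle+\Theta_\sigma(S)+\Theta_1(S)$ and the relation $\Theta_\sigma\le C_\sigma[\Theta_1]^\sigma$ then produce the stated $[\eta(\varepsilon/r)]^\gamma$. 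In short: you should carry out the localized two-scale expansion and the weighted-interior-estimate argument directly; invoking Theorem~\ref{rate} does not work.
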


\begin{proof}
By rescaling, we may assume that $r=1$. Let $u_0$ be the weak solution of
\begin{equation*}
\begin{cases}
(\partial_t+\mathcal{L}_0)u_0=F\quad{\rm in}~~Q_1,\\
\qquad\qquad u_0=u_\varepsilon~~\,{\rm on}~~\partial_pQ_1,
\end{cases}
\end{equation*}
where $\partial_pQ_1$ denotes the parabolic boundary of $Q_1$. Then we have
\begin{equation*}
\begin{cases}
(\partial_t+\mathcal{L}_0)(u_0-u_\varepsilon)={\rm div}[(\hat{A}-A^\varepsilon)\nabla u_\varepsilon]\quad{\rm in}~~Q_1,\\
\qquad\qquad~~ \,u_0-u_\varepsilon=0\qquad\qquad\qquad\qquad\,{\rm on}~~\partial_pQ_1.
\end{cases}
\end{equation*}
By the standard regularity estimates for parabolic systems with constant coefficients, for any $2\leq q<\infty$,
\begin{equation*}
\dashint_{Q_1}|\nabla(u_0-u_\varepsilon)|^q\leq C\dashint_{Q_1}|\nabla u_\varepsilon|^q,
\end{equation*}
where $C$ depends only on $d$, $\mu$ and $q$. It follows that
\begin{equation}\label{approx-1}
\dashint_{Q_1}|\nabla u_0|^q\leq C\dashint_{Q_1}|\nabla u_\varepsilon|^q
\end{equation}
for any $2\leq q<\infty$.

To see \eqref{est-approximation}, let $K_\varepsilon(f)=S_\varepsilon(\eta_\delta f)$, where $\delta>\varepsilon$ is a small number to be determined later, $S_\varepsilon$ is defined by \eqref{1.18} with $\theta=\theta (y,s)=\theta_1(y)\theta_2(s)$, where $\theta_1\in C_0^\infty(B(0,1))$, $\theta_2\in C_0^\infty(-1,0)$, $\theta_1,\theta_2\geq 0$, and $\int_{\mathbb{R}^{d}}\theta_1(y)\,dy=\int_{\mathbb{R}}\theta_2(s)\,ds=1$, and $\eta_\delta\in C^\infty_0(\mathbb{R}^{d+1})$ such that $\eta_\delta=1$ in $Q_{1-3\delta}$, $\eta_\delta=0$ in $Q_1\setminus Q_{1-2\delta}$, $0\leq\delta\leq 1$, $|\nabla \eta_\delta|\leq C/\delta$, and $|\partial_t\eta_\delta|+|\nabla^2 \eta_\delta|\leq C/\delta^2$.
Let $w_\varepsilon$ be defined as in \eqref{w}. By \eqref{equation-w}, \eqref{7.10} and \eqref{7.11}, we have
\begin{equation*}
\aligned
  (\partial_t& +\mathcal{L}_\varepsilon) w_\varepsilon\\
&= -\frac{\partial}{\partial x_i}\bigg\{(\widehat{a}_{ij} -a_{ij}^\varepsilon )
\bigg( \frac{\partial u_0}{\partial x_j} - K_\varepsilon \bigg( \frac{\partial u_0}{\partial x_j}\bigg) \bigg)\bigg\}+\varepsilon^{-1} S^{-2}\chi_{S,j}^{\varepsilon}K_\varepsilon\bigg(\frac{\partial u_0}{\partial x_j}\bigg)\\
&+ \langle b_{S,ij}\rangle\frac{\partial}{\partial x_i}K_\varepsilon\bigg(\frac{\partial u_0}{\partial x_j}\bigg)
+\varepsilon \frac{\partial}{\partial x_k}\bigg\{\phi^\varepsilon_{S,kij} \frac{\partial}{\partial x_i}K_\varepsilon\bigg(\frac{\partial u_0}{\partial x_j}\bigg)\bigg\}-S^{-2}f_{S,ij}^\varepsilon\frac{\partial}{\partial x_i}K_\varepsilon\bigg(\frac{\partial u_0}{\partial x_j}\bigg)\\
&+\left(\frac{\partial}{\partial x_i}F_{S,j}\right)^\varepsilon\frac{\partial}{\partial x_i}K_\varepsilon\bigg(\frac{\partial u_0}{\partial x_j}\bigg)
+\varepsilon \frac{\partial}{\partial x_i}\bigg\{a_{ij}^\varepsilon \chi^\varepsilon_{S,k} \frac{\partial}{\partial x_j}K_\varepsilon\bigg(\frac{\partial u_0}{\partial x_k}\bigg)\bigg\}\\
&+\varepsilon^2 \frac{\partial}{\partial x_k}\bigg\{\phi^\varepsilon_{S,k(d+1)j} \partial_tK_\varepsilon\bigg(\frac{\partial u_0}{\partial x_j}\bigg)\bigg\}
+\varepsilon\left(\frac{\partial}{\partial x_{d+1}}F_{S,j}\right)^\varepsilon\partial_t K_\varepsilon\bigg(\frac{\partial u_0}{\partial x_j}\bigg)
\\
&-\varepsilon S^{-2} f^\varepsilon_{S,(d+1)j}\partial_t K_\varepsilon\bigg(\frac{\partial u_0}{\partial x_j}\bigg)+\varepsilon\frac{\partial}{\partial x_i}\bigg\{a_{ij}^\varepsilon\bigg(\frac{\partial}{\partial x_j}
\phi_{S,(d+1)kl}\bigg)^\varepsilon\frac{\partial}{\partial x_k}K_\varepsilon\bigg(\frac{\partial u_0}{\partial x_l}\bigg)\bigg\}
\\
&+\varepsilon^2\frac{\partial}{\partial x_i}\bigg\{a_{ij}^\varepsilon
\phi_{S,(d+1)kl}^\varepsilon\frac{\partial^2}{\partial x_j\partial x_k}K_\varepsilon\bigg(\frac{\partial u_0}{\partial x_l}\bigg)\bigg\}.\\
\endaligned
\end{equation*}
Since $w_\varepsilon=0$ on $\partial_pQ_1$, it follows that
\begin{equation*}
\aligned
\int_{Q_1}|\nabla w_\varepsilon|^2 \leq &C\int_{Q_1}|\nabla u_0-K_\varepsilon(\nabla u_0)||\nabla w_\varepsilon|+C\varepsilon\|\chi_S\|_{L^\infty}\int_{Q_1}|K_\varepsilon(\nabla u_0)||w_\varepsilon|\\
&+C\langle |\nabla\chi-\nabla\chi_S|
\rangle\int_{Q_1}|K_\varepsilon(\nabla u_0)||\nabla w_\varepsilon|+
C\varepsilon\|\phi_S\|_{L^\infty}
\int_{Q_1}|\nabla K_\varepsilon(\nabla u_0)||\nabla w_\varepsilon|\\
&+C\|\nabla F_S\|_{L^\infty}\int_{Q_1}|\nabla K_\varepsilon(\nabla u_0)||w_\varepsilon|+CS^{-2}\|f_S\|_{L^\infty}\int_{Q_1}|\nabla K_\varepsilon(\nabla u_0)|| w_\varepsilon|\\
&+C\varepsilon\|\chi_S\|_{L^\infty}\int_{Q_1}|\nabla K_\varepsilon(\nabla u_0)||\nabla w_\varepsilon|+
C\varepsilon^2\|\phi_S\|_{L^\infty}\int_{Q_1}|\partial_t K_\varepsilon(\nabla u_0)||\nabla w_\varepsilon|\\
&+C\varepsilon\|\partial_t F_S\|_{L^\infty}\int_{Q_1}|\partial_t K_\varepsilon(\nabla u_0)||w_\varepsilon|
+C\varepsilon S^{-2}\|f_S\|_{L^\infty}\int_{Q_1}|\partial_t K_\varepsilon(\nabla u_0)||w_\varepsilon|\\
&+C\varepsilon\int_{Q_1}|(\nabla\phi_S)^\varepsilon\nabla K_\varepsilon(\nabla u_0)||\nabla w_\varepsilon|
+C\varepsilon^2\|\phi_S\|_{L^\infty}\int_{Q_1}|\nabla^2 K_\varepsilon(\nabla u_0)||\nabla w_\varepsilon|\\
=&I_1+I_2+\cdots+I_{12}.
\endaligned
\end{equation*}
By the same argument as in the proof of \cite[Lemma 3.6]{GS-2020-ARMA}, we have
\begin{equation*}
I_1\leq C\left\{\int_{Q_1\setminus Q_{1-3\delta}}|\nabla u_0|^2+\varepsilon^2\int_{Q_{1-2\delta}}\left(|\nabla^2 u_0|^2+|\partial_t u_0|^2\right)\right\}^{1/2}\left(\int_{Q_1}|\nabla w_\varepsilon|^2\right)^{1/2},
\end{equation*}
where we have used the fact that $\delta\geq \varepsilon$. By the standard regularity estimates for parabolic systems with constant coefficients,
\begin{equation}\label{regularity}
\int_{Q_{1-2\delta}}\left(|\nabla^2 u_0|^2+|\partial_t u_0|^2\right)\leq C
\left\{\int_{Q_{1-\delta}}\frac{|\nabla u_0(y,s)|^2\,dyds}{|{\rm dist}_p((y,s),\partial_p Q_1)|^2}+\int_{Q_1}|F|^2\right\},
\end{equation}
where ${\rm dist}_p((y,s),\partial_p Q_1)$ denotes the parabolic distance from $(y,s)$ to $\partial_pQ_1$. It follows that
\begin{equation*}
\aligned
I_1&\leq C\left\{\int_{Q_1\setminus Q_{1-3\delta}}|\nabla u_0|^2+\varepsilon^2
\int_{Q_{1-\delta}}\frac{|\nabla u_0(y,s)|^2\,dyds}{|{\rm dist}_p((y,s),\partial_p Q_1)|^2}+\varepsilon^2\int_{Q_1}|F|^2\right\}^{1/2}\left(\int_{Q_1}|\nabla w_\varepsilon|^2\right)^{1/2}\\
&\leq C\left\{\delta^{\frac{1}{2}-\frac{1}{q}}\left(\dashint_{Q_1}|\nabla u_0|^q\right)^{1/q}+\varepsilon\left(\dashint_{Q_1}|F|^2\right)^{1/2}\right\}
\left(\int_{Q_1}|\nabla w_\varepsilon|^2\right)^{1/2},
\endaligned
\end{equation*}
where $q>2$ and we have used H\"older's inequality for the last step.

By Lemma \ref{6.42}, Lemma \ref{7.47} and Poincar\'e's inequality, we see that
\begin{equation*}
\aligned
I_2+\cdots+I_7\leq & C\{\Theta_\sigma(S)+\langle|\nabla\chi_S-\nabla\chi|\rangle\}
\|\nabla u_0\|_{L^2(Q_1)}\|\nabla w_\varepsilon\|_{L^2(Q_1)}\\
&+ C\{\Theta_\sigma(S)+\Theta_1(S)\}
\|\nabla K_\varepsilon(\nabla u_0)\|_{L^2(Q_1)}
\|\nabla w_\varepsilon\|_{L^2(Q_1)}.
\endaligned
\end{equation*}
Note that
\begin{equation*}
\aligned
\|\nabla K_\varepsilon(\nabla u_0)\|_{L^2(Q_1)}
&\leq \|S_\varepsilon(\nabla\eta_\delta\nabla u_0)\|_{L^2(Q_1)}
+\|S_\varepsilon(\eta_\delta\nabla^2 u_0)\|_{L^2(Q_1)}\\
&\leq C\delta^{-1}\|\nabla u_0\|_{L^2(Q_1\setminus Q_{1-3\delta})}
+C\|\nabla^2 u_0\|_{L^2(Q_{1-2\delta})}\\
&\leq C\delta^{-\frac{1}{2}-\frac{1}{q}}\left(\dashint_{Q_1}|\nabla u_0|^q\right)^{1/q}+C\left(\dashint_{Q_1}|F|^2\right)^{1/2}
\endaligned
\end{equation*}
for some $q>2$, where we have used H\"older's inequality and \eqref{regularity} for the last step.
As a result,
\begin{equation*}
\aligned
I_2+\cdots+I_7\leq &C\delta^{-1}\{\Theta_\sigma(S)+\Theta_1(S)+\langle|\nabla\chi_S-\nabla\chi|\rangle\}\\
&\cdot\left\{\delta^{\frac{1}{2}-\frac{1}{q}}\left(\dashint_{Q_1}|\nabla u_0|^q\right)^{1/q}+\delta\left(\dashint_{Q_1}|F|^2\right)^{1/2}\right\}
\left(\int_{Q_1}|\nabla w_\varepsilon|^2\right)^{1/2}.
\endaligned
\end{equation*}
Similarly, by Lemma \ref{7.47} and Poincar\'e's inequality, we have
\begin{equation*}
\aligned
I_8+I_9+I_{10}\leq & C\varepsilon\{\Theta_\sigma(S)+\Theta_1(S)\}
\|\partial_t K_\varepsilon(\nabla u_0)\|_{L^2(Q_1)}
\|\nabla w_\varepsilon\|_{L^2(Q_1)}.
\endaligned
\end{equation*}
Observe that
\begin{equation*}
\aligned
\partial_t K_\varepsilon(\nabla u_0)= S_\varepsilon(\partial_t\eta_\delta\nabla u_0)+ \nabla S_\varepsilon(\eta_\delta\partial_t u_0)-
S_\varepsilon(\nabla\eta_\delta \partial_t u_0).
\endaligned
\end{equation*}
Hence by Lemma \ref{lemma-S-1}, \eqref{regularity} and H\"older's inequality,
\begin{equation*}
\aligned
\|\partial_t K_\varepsilon(\nabla u_0)\|_{L^2(Q_1)}&\leq
C\left\{\delta^{-2}\left(\int_{Q_1\setminus Q_{1-3\delta}}|\nabla u_0|^2\right)^{1/2}
+\varepsilon^{-1}\left(\int_{Q_{1-2\delta}}|\partial_t u_0|^2\right)^{1/2}\right\}\\
&\leq C\varepsilon^{-1}\delta^{-1}\left\{\delta^{\frac{1}{2}-\frac{1}{q}}\left(\dashint_{Q_1}|\nabla u_0|^q\right)^{1/q}+\delta\left(\dashint_{Q_1}|F|^2\right)^{1/2}\right\}.
\endaligned
\end{equation*}
This gives
\begin{equation*}
\aligned
I_8+I_9+I_{10}\leq & C\delta^{-1}\{\Theta_\sigma(S)+\Theta_1(S)\}\\
&\cdot\left\{\delta^{\frac{1}{2}-\frac{1}{q}}\left(\dashint_{Q_1}|\nabla u_0|^q\right)^{1/q}+\delta\left(\dashint_{Q_1}|F|^2\right)^{1/2}\right\}
\left(\int_{Q_1}|\nabla w_\varepsilon|^2\right)^{1/2}.
\endaligned
\end{equation*}
To estimate $I_{12}$, by using Lemma \ref{lemma-S-1}, Lemma \ref{7.47} and \eqref{regularity}, we see that
\begin{equation*}
\aligned
I_{12}\leq & C\delta^{-1}\Theta_\sigma(S)\left\{\int_{Q_1\setminus Q_{1-3\delta}}|\nabla u_0|^2+\delta^2\int_{Q_{1-2\delta}}|\nabla^2 u_0|^2\right\}^{1/2}\left(\int_{Q_1}|\nabla w_\varepsilon|^2\right)^{1/2}\\
&\leq C\delta^{-1}\Theta_\sigma(S)\left\{\delta^{\frac{1}{2}-\frac{1}{q}}\left(\dashint_{Q_1}|\nabla u_0|^q\right)^{1/q}+\delta\left(\dashint_{Q_1}|F|^2\right)^{1/2}\right\}
\left(\int_{Q_1}|\nabla w_\varepsilon|^2\right)^{1/2}.
\endaligned
\end{equation*}
Finally, by using Lemma \ref{lemma-S-3} and \eqref{regularity}, we have
\begin{equation*}
\aligned
I_{11}
&\leq C\varepsilon\sup_{(x,t)\in\mathbb{R}^{d+1}}\left(\dashint_{Q_1(x,t)}
|\nabla\phi_S|^2\right)^{1/2}\left(\int_{Q_1}|\nabla w_\varepsilon|^2\right)^{1/2}\\
&~~\cdot\left\{
\delta^{-1}\left(\int_{Q_1\setminus Q_{1-3\delta}}|\nabla u_0|^2\right)^{1/2}+\left(\int_{Q_{1-2\delta}}|\nabla^2 u_0|^2\right)^{1/2}\right\}\\
&\leq C\varepsilon\delta^{-1}\left\{\delta^{\frac{1}{2}-\frac{1}{q}}\left(\dashint_{Q_1}|\nabla u_0|^q\right)^{1/q}+\delta\left(\dashint_{Q_1}|F|^2\right)^{1/2}\right\}
\left(\int_{Q_1}|\nabla w_\varepsilon|^2\right)^{1/2}.
\endaligned
\end{equation*}
Above all, we obtain that
\begin{equation*}
\aligned
\|\nabla w_\varepsilon\|_{L^2(Q_1)}\leq &C\{1+\delta^{-1}\{\Theta_\sigma(S)+\Theta_1(S)+
\langle|\nabla\chi_S-\nabla\chi|\rangle+\varepsilon\}\}\\
&\left\{\delta^{\frac{1}{2}-\frac{1}{q}}\left(\dashint_{Q_1}|\nabla u_0|^q\right)^{1/q}+\delta\left(\dashint_{Q_1}|F|^2\right)^{1/2}\right\}.
\endaligned
\end{equation*}
By choosing $\delta=\Theta_\sigma(S)+\Theta_1(S)+
\langle|\nabla\chi_S-\nabla\chi|\rangle+\varepsilon$ and $\gamma=\frac{1}{2}-\frac{1}{q}$, this gives
\begin{equation*}
\aligned
\|\nabla w_\varepsilon\|_{L^2(Q_1)}\leq C\delta^{\gamma}\left\{\left(\dashint_{Q_1}|\nabla u_0|^q\right)^{1/q}+\left(\dashint_{Q_1}|F|^2\right)^{1/2}\right\}.
\endaligned
\end{equation*}
Since $w_\varepsilon=0$ on $\partial_pQ_1$, by using Poincare's inequality, we obtain
\begin{equation}\label{est-w-8}
\aligned
\|w_\varepsilon\|_{L^2(Q_1)}\leq C\delta^{\gamma}\left\{\left(\dashint_{Q_1}|\nabla u_0|^q\right)^{1/q}+\left(\dashint_{Q_1}|F|^2\right)^{1/2}\right\}.
\endaligned
\end{equation}

It remains to estimate $\|w_\varepsilon-(u_\varepsilon-u_0)\|_{L^2(Q_1)}$. In view of Lemma \ref{6.42}, Lemma \ref{7.47} and Lemma \ref{lemma-S-1}, we see that
\begin{equation}\label{est-remain-term}
\aligned
\|w_\varepsilon-(u_\varepsilon-u_0)\|_{L^2(Q_1)}
&\leq\varepsilon\left\| \chi_{S,j}^\varepsilon K_\varepsilon\left(\frac{\partial u_0}{\partial x_j}\right)\right\|_{L^2(Q_1)}
+\varepsilon^2\left\| \phi_{S,(d+1)ij}^\varepsilon\frac{\partial}{\partial x_i}K_\varepsilon\left(\frac{\partial u_0}{\partial x_j}\right)\right\|_{L^2(Q_1)}
\\&\leq C\Theta_\sigma(S)\left(\dashint_{Q_1}|\nabla u_0|^2\right)^{1/2}.
\endaligned
\end{equation}

  Finally, by the Meyers-type estimates for parabolic systems \cite{ABM-2018},
\begin{equation*}
\left(\dashint_{Q_1}|\nabla u_\varepsilon|^q\right)^{1/q}\leq C\left\{\left(\dashint_{Q_{3/2}}|\nabla u_\varepsilon|^2\right)^{1/2}
+\left(\dashint_{Q_2}|F|^2\right)^{1/2}\right\},
\end{equation*}
for some $q>2$ and $C>0$, depending on $d$ and $\mu$. This, together with \eqref{approx-1}, gives
\begin{equation}\label{Meyer}
\left(\dashint_{Q_1}|\nabla u_0|^q\right)^{1/q}\leq C\left\{\left(\dashint_{Q_{3/2}}|\nabla u_\varepsilon|^2\right)^{1/2}
+\left(\dashint_{Q_2}|F|^2\right)^{1/2}\right\}
\end{equation}
for some $q>2$ and $C>0$, depending on $d$, $q$, and $\mu$.
Combining \eqref{est-w-8} with \eqref{est-remain-term} and H\"older's inequality, and then using \eqref{Meyer}, Caccioppoli's inequality as well as $\Theta_\sigma(S)\leq C_\sigma[\Theta_1(S)]^\sigma$, we obtain \eqref{est-approximation}.
\end{proof}

\begin{lemma}\label{lem-standard}
Let $H(r)$ and $h(r)$ be two nonnegative and continuous functions on the interval $[0,1]$. Let $0<\delta<\frac{1}{4}$. Suppose that there exists a constant $C_0$ such that
\begin{equation}\label{cond-1}
\max_{r\leq t\leq 2r}H(t)\leq C_0 H(2r)\quad {\rm and}\quad
\max_{r\leq t,s\leq 2r}|h(t)-h(s)|\leq C_0 H(2r)
\end{equation}
for any $r\in [\delta,1/2]$. Suppose further that
\begin{equation}\label{cond-2}
H(\theta r)\leq \frac{1}{2}H(r)+C_0\omega
\left(\frac{\delta}{r}\right)\left\{H(2r)+h(2r)\right\}
\end{equation}
for any $r\in [\delta,1/2]$, where $\theta\in (0,1/4)$ and $\omega(t)$ is a nonnegative and nondecreasing function on $[0,1]$ such that $\omega(0)=0$ and \begin{equation}\label{cond-3}
\int_0^1\frac{\omega(t)}{t}\,dt<\infty.
\end{equation}
Then
\begin{equation*}
\max_{\delta\leq r\leq 1}\left\{H(r)+h(r)\right\}\leq C\left\{H(1)+h(1)\right\},
\end{equation*}
where $C$ depends only on $C_0$, $\theta$, and the function $\omega(t)$.
\end{lemma}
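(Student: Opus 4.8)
The plan is to discretize at the geometric scales $r_k=\theta^k$, reduce \eqref{cond-1}--\eqref{cond-2} to a single recursive inequality, and then close that recursion by an induction which isolates the finitely many scales lying within a bounded factor of $\delta$. Set $N_\theta=\lceil\log_2(1/\theta)\rceil$, let $K$ be the largest integer with $\theta^K\ge 2^{N_\theta}\delta$, and put $a_k=H(\theta^k)$, $b_k=h(\theta^k)$ for $0\le k\le K$. First I would record the consequences of \eqref{cond-1}: chaining the factor-$2$ estimates across the at most $N_\theta$ dyadic scales between $\theta^k$ and $\theta^{k-1}$ (all of which remain $\ge\delta$ for $k\le K$) gives, with $C_1$ depending only on $C_0$ and $\theta$,
\begin{equation*}
H(2\theta^k)+h(2\theta^k)\le C_1\{a_{k-1}+b_{k-1}\},\qquad a_{k+1}\le C_1 a_k,\qquad b_{k+1}\le b_k+C_1 a_k .
\end{equation*}
Inserting the first bound into \eqref{cond-2} with $r=\theta^k$ yields the basic recursion
\begin{equation*}
a_{k+1}\le \tfrac12 a_k+C_2\,\omega(\delta\theta^{-k})\{a_{k-1}+b_{k-1}\},\qquad 1\le k\le K-1 .
\end{equation*}

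Next I would sum this recursion. Telescoping the third inequality above gives $b_k\le b_0+C_1\sum_{i<k}a_i$, so with $S_m:=\sum_{k=1}^m a_k$ one has $a_{k-1}+b_{k-1}\le C_3(a_0+b_0+S_m)$ for $k\le m+1$, and summing the basic recursion from $k=1$ to $m-1$ produces
\begin{equation*}
S_m\le a_1+\tfrac12 S_m+C_4\,\Lambda_m\,(a_0+b_0+S_m),\qquad \Lambda_m:=\sum_{k=1}^m\omega(\delta\theta^{-k}).
\end{equation*}
Here the Dini hypothesis \eqref{cond-3} enters: by the choice of $K$ and monotonicity of $\omega$ one has $\delta\theta^{-k}\le\theta^{K-k}$, and $\int_0^1\omega(t)t^{-1}\,dt<\infty$ forces $\sum_{i\ge0}\omega(\theta^i)<\infty$, so I can fix an integer $N_0$ — depending only on $C_4$, $\theta$, $\omega$, and \emph{not} on $\delta$ — with $\Lambda_{K-N_0}\le\sum_{i\ge N_0}\omega(\theta^i)\le(4C_4)^{-1}$. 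Taking $m=K-N_0$ then absorbs the $S_m$ term on the right and, using $a_1=H(\theta)\le CH(1)$ from \eqref{cond-1}, gives $\sum_{k=0}^{K-N_0}a_k\le C\{H(1)+h(1)\}$; consequently $b_k\le b_0+C_1\sum_{i<k}a_i\le C\{H(1)+h(1)\}$ for $k\le K-N_0$, whence
\begin{equation*}
\max_{0\le k\le K-N_0}(a_k+b_k)\le C\{H(1)+h(1)\}.
\end{equation*}

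Since $N_0$ is a fixed number, the remaining scales $K-N_0\le k\le K$ cost only a bounded factor: combining the basic recursion with the crude bound $\omega(\delta\theta^{-k})\le\omega(1)$ and with $b_{k+1}\le b_k+C_1 a_k$ gives $c_{k+1}\le C_5(c_k+c_{k-1})$ for $c_k:=a_k+b_k$, so iterating at most $N_0$ times yields $\max_{0\le k\le K}c_k\le(2C_5)^{N_0}\max_{0\le k\le K-N_0}c_k\le C\{H(1)+h(1)\}$, with $C$ still independent of $\delta$. Finally, for an arbitrary $r\in[\delta,1]$ I would pick $k$ with $\theta^k\le r\le\theta^{k-1}$ and chain \eqref{cond-1} once more to get $H(r)+h(r)\le C\{H(\theta^{k-1})+h(\theta^{k-1})\}\le C\{H(1)+h(1)\}$; the boundedly many dyadic scales between $\delta$ and $\theta^K$ are handled directly in the same way, since $\theta^K\le 2^{N_\theta}\delta/\theta$, and this completes the proof.

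I expect the summation step to be the crux. The error series $\sum_k\omega(\delta\theta^{-k})$ is finite by the Dini condition but is \emph{not} small, so it cannot be absorbed directly into the contraction factor $\tfrac12$; the resolution is the observation that the scales on which $\omega(\delta/r)$ is of order one are precisely the finitely many scales within a fixed factor of $\delta$, and that their number $N_0$ can be chosen uniformly in $\delta$ using the convergence of $\sum_i\omega(\theta^i)$, after which those scales contribute only a fixed multiplicative constant. The remaining ingredients — the chaining of \eqref{cond-1} across $N_\theta$ dyadic scales, the passage from geometric scales to all $r\in[\delta,1]$, and the behaviour near $r\approx\delta$ — are routine bookkeeping.
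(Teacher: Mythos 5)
Your proof is correct and is the standard iteration argument for this type of Dini-continuity lemma; the paper supplies no argument of its own (it simply cites Shen's monograph \cite{Shen-book}), and your reconstruction is the argument one finds there. You have correctly isolated the one nontrivial point: the Dini hypothesis makes $\sum_{k}\omega(\delta\theta^{-k})$ bounded \emph{uniformly in $\delta$} but not small, so the cumulative error cannot simply be absorbed into the contraction factor $\tfrac12$, and the fix is exactly as you say --- the $O(1)$ part of the sum lives on a number $N_0$ of scales within a fixed factor of $\delta$, with $N_0$ independent of $\delta$ because $\sum_{i}\omega(\theta^i)<\infty$, after which those $N_0$ scales contribute only a fixed multiplicative constant through the crude two-term recursion $c_{k+1}\leq C_5(c_k+c_{k-1})$. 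The remaining steps you defer to bookkeeping (chaining \eqref{cond-1} over the $N_\theta$ dyadic steps, passing from the discrete scales $\theta^k$ to all $r\in[\delta,1]$) are indeed routine; the only thing worth stating explicitly is the degenerate case $2^{N_\theta}\delta>1$, where your $K$ is undefined but $1/\delta$ is already bounded and chaining \eqref{cond-1} finitely many times gives the conclusion outright.
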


\begin{proof}
See \cite{Shen-book}.
\end{proof}

\begin{theorem}
Let $S\geq 1$. Then for any $\sigma\in(0,1)$,
\begin{equation}\label{est-grad-correct}
\langle|\nabla\chi_S-\nabla\chi|^2\rangle^{1/2}\leq C_\sigma\int_{S/2}^{\infty}\frac{\Theta_\sigma(r)}{r}\,dr,
\end{equation}
where $C_\sigma$ depends only on $\sigma$ and $A$.
\end{theorem}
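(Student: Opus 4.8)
The plan is to run a dyadic telescoping argument on the approximate correctors, reducing everything to a one-step comparison estimate between the correctors at scales $S$ and $2S$. Note first that $\langle|\nabla\chi_S-\nabla\chi|^2\rangle^{1/2}=\|\nabla\chi_S-\nabla\chi\|_{B^2}$. Since $\|\nabla\chi-\nabla\chi_R\|_{B^2}\to 0$ as $R\to\infty$ (the theorem just proved), writing $\nabla\chi_S=\nabla\chi_{2^{K+1}S}+\sum_{k=0}^{K}(\nabla\chi_{2^kS}-\nabla\chi_{2^{k+1}S})$ and letting $K\to\infty$ gives
\begin{equation*}
\langle|\nabla\chi_S-\nabla\chi|^2\rangle^{1/2}=\|\nabla\chi_S-\nabla\chi\|_{B^2}\leq\sum_{k=0}^{\infty}\|\nabla\chi_{2^kS}-\nabla\chi_{2^{k+1}S}\|_{B^2}.
\end{equation*}
So it suffices to prove the one-step bound $\|\nabla\chi_S-\nabla\chi_{2S}\|_{B^2}\leq C_\sigma\Theta_\sigma(S)$ for every $S\geq 1$.

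To obtain it, fix $1\leq j\leq d$, $1\leq\beta\leq m$ and set $w=\chi_{S,j}^\beta-\chi_{2S,j}^\beta$. Subtracting equation \eqref{6.10} at scale $2S$ from the one at scale $S$, and using the identity $S^{-2}\chi_{S,j}^\beta-(2S)^{-2}\chi_{2S,j}^\beta=S^{-2}w+\tfrac{3}{4}S^{-2}\chi_{2S,j}^\beta$, we find
\begin{equation*}
\partial_s w+\mathcal{L}_1 w+S^{-2}w=-\tfrac{3}{4}\,S^{-2}\chi_{2S,j}^\beta\quad\text{in }\R^{d+1}.
\end{equation*}
By Corollary \ref{6.9}, both $\chi_S$ and $\chi_{2S}$ satisfy the local boundedness condition \eqref{6.2-1}, hence so does $w$; therefore $w$ is the unique solution furnished by Lemma \ref{6.1} with parameter $S$, and estimate \eqref{6.3} applies with $f=-\tfrac{3}{4}S^{-2}\chi_{2S,j}^\beta$ and $g=0$:
\begin{equation*}
\sup_{(x,t)\in\R^{d+1}}\dashint_{Q_S(x,t)}|\nabla w|^2\leq C\sup_{(x,t)\in\R^{d+1}}\dashint_{Q_S(x,t)}S^{2}\cdot S^{-4}|\chi_{2S,j}^\beta|^2\leq C\,S^{-2}\|\chi_{2S}\|_{L^\infty(\R^{d+1})}^2.
\end{equation*}
By Lemma \ref{6.42} and the monotonicity of $\Theta_\sigma$, $\|\chi_{2S}\|_{L^\infty(\R^{d+1})}\leq C(2S)\Theta_\sigma(2S)\leq C S\Theta_\sigma(S)$, so $\sup_{(x,t)}\dashint_{Q_S(x,t)}|\nabla w|^2\leq C\Theta_\sigma(S)^2$. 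Averaging this bound over translates of $Q_S$ and letting the radius tend to infinity yields $\|\nabla w\|_{B^2}\leq C\Theta_\sigma(S)$, which is the claimed one-step estimate.

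Plugging it into the telescoping inequality and using that $\Theta_\sigma$ is nonincreasing (so that $\Theta_\sigma(2^kS)\log 2=\int_{2^{k-1}S}^{2^kS}\Theta_\sigma(2^kS)r^{-1}\,dr\leq\int_{2^{k-1}S}^{2^kS}\Theta_\sigma(r)r^{-1}\,dr$) gives
\begin{equation*}
\langle|\nabla\chi_S-\nabla\chi|^2\rangle^{1/2}\leq C\sum_{k=0}^{\infty}\Theta_\sigma(2^kS)\leq\frac{C}{\log 2}\int_{S/2}^{\infty}\frac{\Theta_\sigma(r)}{r}\,dr,
\end{equation*}
which is \eqref{est-grad-correct}. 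The only genuinely delicate points are the bookkeeping of the zeroth-order term when subtracting the two corrector equations and the verification that $w$ meets the hypotheses of Lemma \ref{6.1} so that \eqref{6.3} may be invoked; both are handled as above using Corollary \ref{6.9}, and the uniformity of the constant in Lemma \ref{6.42} in the scale $S$ is what makes the geometric sum collapse to the stated logarithmic integral.
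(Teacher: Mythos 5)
Your proof is correct, and it takes a genuinely different route from the paper's. The paper establishes the one-step comparison $\langle|\nabla\chi_S-\nabla\chi_{2S}|^2\rangle^{1/2}\leq C\{\Theta_\sigma(S)+\Theta_\sigma(2S)\}$ by working with the weak formulation from Lemma \ref{Lem-6.4}: it takes a sequence of trigonometric polynomials $\varphi_n\to w=\chi_S-\chi_{2S}$ in the topology controlled by $\|\cdot\|_{B^2}$, $\|\nabla\cdot\|_{B^2}$, $\|\partial_t\cdot\|_{\mathcal{K}^*+B^2}$, uses skew-symmetry $(\partial_t\varphi_n,\varphi_n)=0$ together with the duality bound to pass to the limit and kill the time-derivative term, and then applies Cauchy's inequality and Lemma \ref{6.42}. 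This machinery is needed precisely because $\partial_t\chi_S$ need not lie in $B^2(\mathbb{R}^{d+1})$. You sidestep the whole issue by subtracting the two strong-form corrector equations, observing the algebraic identity $S^{-2}\chi_S-\tfrac14 S^{-2}\chi_{2S}=S^{-2}w+\tfrac34 S^{-2}\chi_{2S}$, and then invoking the uniform estimate \eqref{6.3} of Lemma \ref{6.1} with $f=-\tfrac34 S^{-2}\chi_{2S}$ and $g=0$ (uniqueness applies since Corollary \ref{6.9} gives $w$ the required local $L^2$-boundedness). The covering argument converting $\sup_{(x,t)}\dashint_{Q_S(x,t)}|\nabla w|^2\leq C\Theta_\sigma(S)^2$ into $\|\nabla w\|_{B^2}\leq C\Theta_\sigma(S)$ is standard and valid. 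Both approaches then telescope identically. Your PDE-subtraction route is arguably more elementary and avoids the $\mathcal{K}^*+B^2$ approximation machinery entirely; what it uses instead is the explicit damping term $S^{-2}w$ produced by the subtraction, which makes Lemma \ref{6.1} directly applicable. The two approaches buy essentially the same estimate with essentially the same constants.
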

\begin{proof}
Fix $1\leq j\leq d$ and $1\leq \beta\leq m$. Let $u=\chi_{S,j}^\beta$, $v=\chi_{2S,j}^\beta$, and $w=u-v$. It follows from Lemma \ref{Lem-6.4} that
\begin{equation}\label{error}
-\langle \partial_t \varphi\cdot w\rangle+\langle A\nabla w\cdot\nabla\varphi\rangle=\frac{1}{4S^2}\langle v\cdot\varphi\rangle
-\frac{1}{S^2}\langle u\cdot\varphi\rangle
\end{equation}
 for any $\varphi=(\varphi^\alpha)\in H^1_{{\rm loc}}(\mathbb{R}^{d+1})$ such that $\varphi,\nabla\varphi, \partial_t \varphi\in B^2(\mathbb{R}^{d+1})$.

 Choosing a sequence of trigonometric polynomials $\varphi_{n}\in {\rm \tilde{T}rig}(\R^{d+1})$ such that
 \begin{equation}\label{est-9.2}
  \|\varphi_{n}-w\|_{B^2}+ \|\nabla \varphi_{n}-\nabla w\|_{B^2}+\|\partial_t \varphi_{n}-\partial_t w\|_{\mathcal{K}^*+B^2}\rightarrow 0
  \end{equation}
as $n\rightarrow \infty$. Since $\varphi_n\in {\rm \tilde{T}rig}(\R^{d+1})$,
\begin{equation*}
\langle \partial_t \varphi_n\cdot w\rangle
=(\partial_t \varphi_n, \varphi_n)
+\langle \partial_t \varphi_n\cdot (w-\varphi_n)\rangle.
\end{equation*}
 In view of \eqref{skew-symmetric}, we have $(\partial_t\varphi_n,\varphi_n)=0$. Moreover, by a similar argument as in \eqref{est-u,S,n-3} and using \eqref{est-9.2}, we see that
\begin{equation*}
|\langle \partial_t \varphi_n\cdot (w-\varphi_n)\rangle|\leq \|\partial_t\varphi_n\|_{\mathcal{K}^*+B^2}\{\|w-\varphi_n\|_{B^2}
+\|\nabla w-\nabla \varphi_n\|_{B^2}\}\to 0 \quad \text{as}~~n\to \infty.
\end{equation*}
As a result,
\begin{equation*}
\langle \partial_t \varphi_n\cdot w\rangle\to 0\quad{\rm as}~~n\to \infty.
\end{equation*}
Hence by taking $\varphi=\varphi_n$ as test function in \eqref{error} and letting $n\to \infty$, we obtain
\begin{equation*}
\langle A\nabla w\cdot\nabla w\rangle=\frac{1}{4S^2}\langle v\cdot w\rangle
-\frac{1}{S^2}\langle u\cdot w\rangle.
\end{equation*}
This, together with Cauchy's inequality and \eqref{est-6.42}, gives
\begin{equation}\label{est-9.3}
\langle |\nabla w|^2\rangle^{1/2}\leq CS^{-1}\{\langle |v|^2 \rangle^{1/2}+\langle |u|^2 \rangle^{1/2}\}\leq C\{\Theta_\sigma(S)+\Theta_\sigma(2S)\}.
\end{equation}
Since $\Theta_\sigma(S)$ is decreasing, we see that
\begin{equation}\label{est-9.4}
\Theta_\sigma(S)+\Theta_\sigma(2S)\leq C_\sigma\int_{S/2}^S\frac{\Theta_\sigma(r)}{r}\,dr.
\end{equation}
In view of \eqref{est-9.3} and \eqref{est-9.4}, we obtain
\begin{equation*}
\langle |\nabla \chi_S-\nabla\chi_{2S}|^2\rangle^{1/2}\leq C_\sigma\int_{S/2}^S\frac{\Theta_\sigma(r)}{r}\,dr.
\end{equation*}
It follows that
\begin{equation*}
\sum_{k=0}^{\infty}\langle |\nabla \chi_{2^kS}-\nabla\chi_{2^{k+1}S}|^2\rangle^{1/2}\leq C_\sigma\int_{S/2}^\infty\frac{\Theta_\sigma(r)}{r}\,dr.
\end{equation*}
Since $\langle|\nabla \chi_S-\nabla\chi|^2\rangle\to 0$ as $S\to \infty$, this gives \eqref{est-grad-correct}.
\end{proof}

\begin{lemma}\label{Lemma-decay}
Suppose that there exist $C_0>0$ and $N> 1+\frac{1}{\gamma}$ such that
\begin{equation*}
\rho(R)\leq C[\log R]^{-N}\quad{\rm for~any}~R\geq 2,
\end{equation*}
where $\rho(R)$ is defined by \eqref{AP-1}. Then there exists $\sigma\in(0,1)$, such that
\begin{equation*}
\int_0^1\frac{[\eta(t)]^\gamma}{t}\,dt<\infty,
\end{equation*}
where $\eta(t)$ is defined by \eqref{def-eta-1}.
\end{lemma}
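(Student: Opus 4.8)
The plan is to convert the logarithmic decay of $\rho$ into a single logarithmic rate for $\eta$, using the inequality $\Theta_\sigma(S)\le C_\sigma[\Theta_1(S)]^\sigma$ recorded before Lemma \ref{6.42} together with the scale-integrated bound \eqref{est-grad-correct}, and then to evaluate $\int_0^1[\eta(t)]^\gamma t^{-1}\,dt$ by the substitution $u=\log(1/t)$.

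First I would bound $\Theta_1$. In the infimum $\Theta_1(S)=\inf_{0<R\le S}\{\rho(R)+R/S\}$, take $R=S[\log S]^{-N}$; for $S$ large this is admissible, and since $\log R=\log S-N\log\log S\ge\tfrac12\log S$ we get $\rho(R)\le C_0[\log R]^{-N}\le C[\log S]^{-N}$, while $R/S=[\log S]^{-N}$. Hence there is $S_0$ with $\Theta_1(S)\le C[\log S]^{-N}$ for all $S\ge S_0$, and consequently $\Theta_\sigma(S)\le C_\sigma[\Theta_1(S)]^\sigma\le C_\sigma[\log S]^{-N\sigma}$ there (for $1\le S\le S_0$ all these quantities are bounded by a constant).

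Next I would estimate the corrector term in \eqref{def-eta-1}. By Cauchy--Schwarz, $\langle|\nabla\chi_S-\nabla\chi|\rangle\le\langle|\nabla\chi_S-\nabla\chi|^2\rangle^{1/2}$, so \eqref{est-grad-correct} gives $\langle|\nabla\chi_S-\nabla\chi|\rangle\le C_\sigma\int_{S/2}^\infty\Theta_\sigma(r)r^{-1}\,dr$. Provided $N\sigma>1$, the substitution $u=\log r$ shows $\int_{S/2}^\infty[\log r]^{-N\sigma}r^{-1}\,dr\le C_\sigma[\log(S/2)]^{1-N\sigma}$ for $S$ large, whence $\sup_{S\ge t^{-1}}\langle|\nabla\chi_S-\nabla\chi|\rangle\le C_\sigma[\log(1/t)]^{1-N\sigma}$ for $t$ small. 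Combining this with $[\Theta_1(t^{-1})]^\sigma\le C[\log(1/t)]^{-N\sigma}$ and the term $t$, and using that $1-N\sigma>-N\sigma$ so the middle power dominates, I obtain $\eta(t)\le C_\sigma\big([\log(1/t)]^{1-N\sigma}+t\big)$ for $t\in(0,t_0)$, while $\eta$ is bounded on $[t_0,1]$.

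Finally I would conclude. By subadditivity of $x\mapsto x^\gamma$ on $[0,\infty)$ for $\gamma\in(0,1)$, $[\eta(t)]^\gamma\le C_\sigma\big([\log(1/t)]^{\gamma(1-N\sigma)}+t^\gamma\big)$ on $(0,t_0)$. The term $t^\gamma$ contributes $\int_0^1 t^{\gamma-1}\,dt<\infty$; for the other, $u=\log(1/t)$ turns $\int_0^{t_0}[\log(1/t)]^{\gamma(1-N\sigma)}t^{-1}\,dt$ into $\int_{\log(1/t_0)}^\infty u^{\gamma(1-N\sigma)}\,du$, which is finite precisely when $\gamma(N\sigma-1)>1$, i.e. $\sigma>\frac{1+1/\gamma}{N}$. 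Since the hypothesis $N>1+1/\gamma$ means $\frac{1+1/\gamma}{N}<1$, any $\sigma\in(\tfrac{1+1/\gamma}{N},1)$ works, and then automatically $N\sigma>1+1/\gamma>1$, so the requirement $N\sigma>1$ used above holds as well. The only delicate point is the exponent bookkeeping: \eqref{est-grad-correct} costs one power of $\log$ and the final integral costs another, so one needs $\gamma(N\sigma-1)>1$ rather than merely $N\sigma>1$, and it is exactly here that the assumption $N>1+1/\gamma$ enters; everything else is routine, including tracking the thresholds $S_0$ and $t_0$ separating the large-scale regime from the trivially bounded one.
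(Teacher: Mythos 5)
Your argument is correct and follows essentially the same route as the paper's: bound $\Theta_1$, hence $\Theta_\sigma$, by a power of $\log$, feed this into \eqref{est-grad-correct} to control $\sup_{S\ge t^{-1}}\langle|\nabla\chi_S-\nabla\chi|\rangle$, and check convergence of the resulting $\log$-power integral. The one place you diverge is the intermediate bound on $\Theta_\sigma$: the paper bounds it directly from the definition by taking $R=\sqrt{S}$, which gives $\Theta_\sigma(S)\le\rho(\sqrt{S})+S^{-\sigma/2}\le C_\sigma[\log S]^{-N}$ with the full exponent $-N$, whereas you go through $\Theta_\sigma\le C_\sigma[\Theta_1]^\sigma$ and obtain the weaker $[\log S]^{-N\sigma}$. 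This costs a factor of $\sigma$ in the exponent, so your final constraint is $\gamma(N\sigma-1)>1$ rather than the paper's $\gamma(N-1)>1$ (with $\sigma$ chosen $\ge 1-1/N$ so the $[\Theta_1]^\sigma$ term is absorbed); both yield the same hypothesis $N>1+1/\gamma$ after optimizing over $\sigma$, and both are valid. Your exponent bookkeeping and the passage from $\langle|f|\rangle$ to $\langle|f|^2\rangle^{1/2}$ via Cauchy--Schwarz are fine, as is the choice $R=S[\log S]^{-N}$ (though $R=\sqrt{S}$ would have been marginally cleaner).
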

\begin{proof}
By the definition of $\Theta_\sigma(S)$, we see that
\begin{equation*}
\Theta_\sigma(S)\leq \rho(\sqrt{S})+\left(\frac{1}{\sqrt{S}}
\right)^\sigma\leq C_\sigma[\log S]^{-N}
\end{equation*}
for $S\geq 2$. It follows from the above estimate and \eqref{est-grad-correct} that
\begin{equation*}
\aligned
\eta(t)&\leq t+[\Theta_1(t^{-1})]^\sigma+\sup_{S\geq t^{-1}}\langle|\nabla\chi_S-\nabla \chi|\rangle\\
&\leq t+[\Theta_1(t^{-1})]^\sigma+C_\sigma\int_{(2t)^{-1}}^{\infty}
\frac{\Theta_\sigma(r)}{r}\,dr\\
&\leq t+C_\sigma[\log(1/t)]^{1-N}
\endaligned
\end{equation*}
for $t\in(0,\frac{1}{2})$. Since $N> 1+\frac{1}{\gamma}$, we see that
\begin{equation*}
\int_0^1\frac{\left[\eta(t)\right]^\gamma}{t}\,dt\leq C+C\int_{0}^{1/2}\frac{\left[\log(1/t)\right]^{\gamma(1-N)}}{t}\,dt<\infty.
\end{equation*}
This completes the proof.
\end{proof}

\begin{lemma}\label{lem-psi-u0-in}
Suppose that $u_0$ is a weak solution of $(\partial_t+\mathcal{L}_0)u_0=F$ in $Q_r$, where $0<r\leq 1$ and $F\in L^p(Q_r)$ for some $p>d+2$. Then there exists $\theta\in(0,1/4)$, depending only on $d$, $\mu$, $m$ and $p$, such that
\begin{equation}\label{est-u0}
\aligned
\frac{1}{\theta r}\inf_{\substack{E\in\mathbb{R}^{m\times d}\\ \beta\in\mathbb{R}^m}}&\left(\dashint_{Q_{\theta r}}|u_0-E\cdot x-\beta|^2\right)^{1/2}
+\theta r\left(\dashint_{Q_{\theta r}}|F|^p\right)^{1/p}\\
&\leq\frac{1}{2}\left\{\frac{1}{r}\inf_{\substack{E\in\mathbb{R}^{m\times d}\\ \beta\in\mathbb{R}^m}}\left(\dashint_{Q_r}|u_0-E\cdot x-\beta|^2\right)^{1/2}
+r\left(\dashint_{Q_r}|F|^p\right)^{1/p}\right\}.
\endaligned
\end{equation}
\end{lemma}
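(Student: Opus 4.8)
The plan is to prove \eqref{est-u0} by a Campanato-type comparison argument. By the rescaling $u_0(x,t)\mapsto u_0(rx,r^2t)$, $F(x,t)\mapsto r^2F(rx,r^2t)$, it suffices to treat $r=1$. Let $(E_0,\beta_0)$ attain the infimum $\Psi:=\inf_{E,\beta}\big(\dashint_{Q_1}|u_0-E\cdot x-\beta|^2\big)^{1/2}$ (it does, the functional being a coercive quadratic in $(E,\beta)$), and set $v=u_0-E_0\cdot x-\beta_0$. Since affine functions of $x$ are annihilated by $\partial_t+\mathcal{L}_0$, the function $v$ solves $(\partial_t+\mathcal{L}_0)v=F$ in $Q_1$ and $\big(\dashint_{Q_1}|v|^2\big)^{1/2}=\Psi$.

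Next I would freeze the forcing term by comparison. Let $h$ be the weak solution of $(\partial_t+\mathcal{L}_0)h=0$ in $Q_{1/2}$ with $h=v$ on $\partial_p Q_{1/2}$, and put $\varphi=v-h$, so that $(\partial_t+\mathcal{L}_0)\varphi=F$ in $Q_{1/2}$ with $\varphi=0$ on $\partial_p Q_{1/2}$. The energy estimate together with Poincar\'e's inequality gives $\|\varphi\|_{L^2(Q_{1/2})}\le C\|F\|_{L^p(Q_{1/2})}$. Since $p>d+2$, the interior parabolic $W^{2,1}_p$ estimate and the embedding $W^{2,1}_p\hookrightarrow C^{1+\alpha,(1+\alpha)/2}$ with $\alpha=1-\tfrac{d+2}{p}\in(0,1)$ then yield $\|\varphi\|_{C^{1+\alpha,(1+\alpha)/2}(Q_{1/4})}\le C\|F\|_{L^p(Q_{1/2})}$. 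For the homogeneous part, $h$ is smooth in $Q_{1/4}$, and the interior estimates for the constant-coefficient (strongly elliptic) system $\partial_t+\mathcal{L}_0$ give $\|\nabla^2h\|_{L^\infty(Q_{1/4})}+\|\partial_th\|_{L^\infty(Q_{1/4})}\le C\|h\|_{L^2(Q_{1/2})}\le C\big(\Psi+\|F\|_{L^p(Q_1)}\big)$ (everything at unit scale, with averages, so constants are harmless).

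Then, for $\theta\in(0,1/4)$ to be fixed, I would test the left-hand infimum of \eqref{est-u0} at scale $\theta$ against the affine function $L=L_h+L_\varphi$, where $L_h(x)=h(0,0)+\nabla h(0,0)\cdot x$ and $L_\varphi(x)=\varphi(0,0)+\nabla\varphi(0,0)\cdot x$ are the first-order Taylor polynomials. Writing $v-L=(h-L_h)+(\varphi-L_\varphi)$, a second-order Taylor expansion of $h$ about $(0,0)$ and the parabolic scaling $|x|\le\theta$, $|t|\le\theta^2$ in $Q_\theta$ give $\big(\dashint_{Q_\theta}|h-L_h|^2\big)^{1/2}\le C\theta^2\big(\Psi+\|F\|_{L^p(Q_1)}\big)$, while the parabolic $C^{1+\alpha}$ bound for $\varphi$ gives $\big(\dashint_{Q_\theta}|\varphi-L_\varphi|^2\big)^{1/2}\le C\theta^{1+\alpha}\|F\|_{L^p(Q_1)}$. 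Dividing by $\theta$ and adding the forcing term via $\theta\big(\dashint_{Q_\theta}|F|^p\big)^{1/p}\le\theta^{1-\frac{d+2}{p}}\big(\dashint_{Q_1}|F|^p\big)^{1/p}$, one arrives at
\begin{equation*}
\frac1\theta\inf_{E,\beta}\Big(\dashint_{Q_\theta}|v-E\cdot x-\beta|^2\Big)^{1/2}+\theta\Big(\dashint_{Q_\theta}|F|^p\Big)^{1/p}\le C\theta^{\alpha}\Big\{\Psi+\Big(\dashint_{Q_1}|F|^p\Big)^{1/p}\Big\},
\end{equation*}
using $\alpha=1-\tfrac{d+2}{p}<1$. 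Choosing $\theta$ so small that $C\theta^{\alpha}\le\tfrac12$ gives \eqref{est-u0} at $r=1$, and undoing the rescaling finishes the argument.

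A word on the difficulty: there is no deep obstacle, everything resting on classical interior Schauder/$L^p$ and energy estimates for constant-coefficient parabolic systems, which may be quoted. The one point requiring care is to make the forcing contribution decay with a \emph{positive} power of $\theta$ rather than blow up like $\theta^{-(d+2)/2}$; this is precisely where the hypothesis $p>d+2$ enters, ensuring both that $\varphi$ is parabolically $C^{1+\alpha}$ — so that subtracting its \emph{first-order} Taylor polynomial, not merely a constant, controls $\big(\dashint_{Q_\theta}|\varphi-L_\varphi|^2\big)^{1/2}$ by $\theta^{1+\alpha}$ — and that the rescaled $L^p$ average of $F$ carries the favorable factor $\theta^{1-(d+2)/p}$.
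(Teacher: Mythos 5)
Your proof is correct and follows essentially the same route as the paper: after rescaling to $r=1$, both arguments reduce to showing that a first-order Taylor polynomial of $u_0$ approximates $u_0$ on $Q_\theta$ to order $\theta^{1+\alpha_p}$ with $\alpha_p=1-\tfrac{d+2}{p}$, absorb the $F$-term via the favorable factor $\theta^{1-(d+2)/p}$, and then pick $\theta$ small; the paper simply quotes the classical interior $C^{1+\alpha_p}$ estimate for the constant-coefficient system $\partial_t+\mathcal{L}_0$ as a black box, whereas you rederive it by splitting $v$ into a caloric part $h$ (controlled by interior smoothness) and a particular part $\varphi$ with zero lateral data (controlled by $W^{2,1}_p\hookrightarrow C^{1+\alpha_p}$). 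The remaining difference — the paper centers the Taylor polynomial at the origin and then uses the translation invariance under affine shifts to pass to the infimum, while you work with the $L^2$-minimizing affine function directly — is cosmetic.
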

\begin{proof}
By translation and dilation, we may assume that $Q_r=Q_1(0,0)$. Let $E_0=\nabla u_0(0,0)$ and $\beta_0=u_0(0,0)$. Then by the classical $C^{1+\alpha_p}$ estimates for $\partial_t+\mathcal{L}_0$,
\begin{equation*}
\aligned
|u_0(x,t)-E_0\cdot x-\beta_0|\leq C(|x|+|t|^{1/2})^{1+\alpha_p}
\left\{\left(\dashint_{Q_1}|u_0|^2\right)^{1/2}
+\left(\dashint_{Q_1}|F|^p\right)^{1/p}\right\}
\endaligned
\end{equation*}
for any $(x,t)\in Q_{1/2}$, where $\alpha_p=1-\frac{d+2}{p}$. It follows that
\begin{equation}\label{est-u0-1}
\aligned
\frac{1}{\theta }\inf_{\substack{E\in\mathbb{R}^{m\times d}\\ \beta\in\mathbb{R}^m}}&\left(\dashint_{Q_{\theta }}|u_0-E\cdot x-\beta|^2\right)^{1/2}
+\theta \left(\dashint_{Q_{\theta }}|F|^p\right)^{1/p}\\
&\leq C\theta^{\alpha_p}
\left\{\left(\dashint_{Q_1}|u_0|^2\right)^{1/2}
+\left(\dashint_{Q_1}|F|^p\right)^{1/p}\right\}.
\endaligned
\end{equation}
Since $(\partial_t+\mathcal{L}_0) (E\cdot x+\beta)=0$ in $Q_1$ for any $E\in\mathbb{R}^{m\times d}$ and $\beta\in\mathbb{R}^m$, by choosing $\theta$ so small that $C\theta^{\alpha_p}<\frac{1}{2}$ and replace $u_0$ in \eqref{est-u0-1} by $u_0-E\cdot x-\beta$, we obtain \eqref{est-u0}.
\end{proof}

\begin{lemma}\label{lem-8.6}
Suppose that $u_\varepsilon$ is a weak solution of $(\partial_t+\mathcal{L}_\varepsilon)u_\varepsilon=F$ in $Q_2$, where $F\in L^p(Q_2)$ for some $p>d+2$. Let $\theta\in(0,1/4)$ be given by Lemma \ref{lem-psi-u0-in}. Then for any $\varepsilon\leq r\leq1$,
\begin{equation}\label{est-ue-in}
\aligned
\frac{1}{\theta r}\inf_{\substack{E\in\mathbb{R}^{m\times d}\\ \beta\in\mathbb{R}^m}}&\left(\dashint_{Q_{\theta r}}|u_\varepsilon-E\cdot x-\beta|^2\right)^{1/2}+\theta r\left(\dashint_{Q_{\theta r}}|F|^p\right)^{1/p}\\
\leq&\frac{1}{2}\left\{\frac{1}{r}\inf_{\substack{E\in\mathbb{R}^{m\times d}\\ \beta\in\mathbb{R}^m}}\left(\dashint_{Q_r}|u_\varepsilon-E\cdot x-\beta|^2\right)^{1/2}
+r\left(\dashint_{Q_r}|F|^p\right)^{1/p}\right\}\\
&+C\left[\eta\left(\frac{\varepsilon}{r}\right)\right]^\gamma
\left\{\frac{1}{r}\left(\dashint_{Q_{2r}}|u_\varepsilon|^2\right)^{1/2}
+r\left(\dashint_{Q_{2r}}|F|^p\right)^{1/p}\right\},
\endaligned
\end{equation}
where $\gamma$ and $\eta(t)$ are given by Lemma \ref{lem-approx} and $C$ depends only on $d$, $\mu$, $m$ and $p$.
\end{lemma}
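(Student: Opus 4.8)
The plan is to combine the homogenization approximation Lemma \ref{lem-approx} with the one‑step decay Lemma \ref{lem-psi-u0-in} for the homogenized operator, in the standard way for quantitative homogenization (cf. \cite{AS-2016,Shen-book}). By translation and dilation we may take $(x_0,t_0)=(0,0)$. For a function $u$ and $\rho>0$ write
$$
\Phi(\rho;u)=\frac{1}{\rho}\inf_{\substack{E\in\mathbb{R}^{m\times d}\\ \beta\in\mathbb{R}^m}}\left(\dashint_{Q_\rho}|u-E\cdot x-\beta|^2\right)^{1/2},
$$
so that \eqref{est-ue-in} is the assertion $\Phi(\theta r;u_\varepsilon)+\theta r(\dashint_{Q_{\theta r}}|F|^p)^{1/p}\le\frac12\{\Phi(r;u_\varepsilon)+r(\dashint_{Q_r}|F|^p)^{1/p}\}+C[\eta(\varepsilon/r)]^\gamma\{\tfrac1r(\dashint_{Q_{2r}}|u_\varepsilon|^2)^{1/2}+r(\dashint_{Q_{2r}}|F|^p)^{1/p}\}$. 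First I would record the elementary quasi‑triangle property of $\Phi$: for $Q_{\rho'}\subset Q_\rho$ and any $u,v$,
$$
\Phi(\rho';u)\le\Phi(\rho';v)+\frac{1}{\rho'}\left(\dashint_{Q_{\rho'}}|u-v|^2\right)^{1/2},\qquad \left(\dashint_{Q_{\rho'}}|u-v|^2\right)^{1/2}\le\Big(\tfrac{\rho}{\rho'}\Big)^{\frac{d+2}{2}}\left(\dashint_{Q_\rho}|u-v|^2\right)^{1/2},
$$
which follow by testing the infimum defining $\Phi(\rho';u)$ against the optimal pair for $\Phi(\rho';v)$ and by enlarging the averaging domain.

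Next I would invoke Lemma \ref{lem-approx} to produce a weak solution $u_0$ of $(\partial_t+\mathcal{L}_0)u_0=F$ on the cube where the comparison is needed, so that
$$
\left(\dashint_{Q_{r}}|u_\varepsilon-u_0|^2\right)^{1/2}\le C\big[\eta(\varepsilon/r)\big]^{\gamma}\left\{\left(\dashint_{Q_{2r}}|u_\varepsilon|^2\right)^{1/2}+r^2\left(\dashint_{Q_{2r}}|F|^2\right)^{1/2}\right\};
$$
concretely this means applying Lemma \ref{lem-approx} at a slightly enlarged scale (so that its interior comparison cube becomes $Q_r$), after which the radii are reconciled by a routine rescaling together with the monotonicity of $\eta$. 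Since $u_0$ then solves the homogenized equation on $Q_r\supset Q_{\theta r}$, Lemma \ref{lem-psi-u0-in} supplies $\theta\in(0,1/4)$, depending only on $d,m,\mu,p$, with
$$
\Phi(\theta r;u_0)+\theta r\left(\dashint_{Q_{\theta r}}|F|^p\right)^{1/p}\le\frac12\left\{\Phi(r;u_0)+r\left(\dashint_{Q_r}|F|^p\right)^{1/p}\right\}.
$$

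Finally I would chain these estimates. Using the quasi‑triangle inequality with $Q_{\theta r}\subset Q_r$,
$$
\Phi(\theta r;u_\varepsilon)\le\Phi(\theta r;u_0)+\frac{C_\theta}{r}\left(\dashint_{Q_{r}}|u_\varepsilon-u_0|^2\right)^{1/2},
$$
and separately $\Phi(r;u_0)\le\Phi(r;u_\varepsilon)+\tfrac1r(\dashint_{Q_r}|u_\varepsilon-u_0|^2)^{1/2}$; inserting the decay estimate for $u_0$ and then the approximation estimate, and bounding the error terms with H\"older's inequality $(\dashint|F|^2)^{1/2}\le(\dashint|F|^p)^{1/p}$ (legitimate since $p>d+2>2$) together with $r^2\le r$, yields exactly \eqref{est-ue-in}. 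The only genuinely delicate point is the first step above: Lemma \ref{lem-approx} only controls $u_\varepsilon-u_0$ on a cube strictly interior to the one on which $u_0$ solves the homogenized problem, so Lemma \ref{lem-psi-u0-in} must be applied on the full cube where $u_0$ is defined, and the radii kept carefully in step; the remaining work is bookkeeping, since the substantive analytic input — the two‑scale expansion bound for $\|\nabla w_\varepsilon\|_{L^2}$ — has already been carried out in Lemma \ref{lem-approx}. (This one‑step estimate is then fed into Lemma \ref{lem-standard} to prove Theorem \ref{thm-interior-Lip}.)
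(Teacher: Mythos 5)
Your proposal is essentially the same argument as the paper's: fix $\varepsilon\le r\le 1$, take $u_0$ from Lemma \ref{lem-approx}, apply the one-step decay of Lemma \ref{lem-psi-u0-in} to $u_0$, pass from $u_0$ back to $u_\varepsilon$ by the quasi-triangle inequality for the excess $\Phi$, and absorb the two error terms $\frac{1}{\theta r}(\dashint_{Q_{\theta r}}|u_\varepsilon-u_0|^2)^{1/2}$ and $\frac{1}{r}(\dashint_{Q_{r}}|u_\varepsilon-u_0|^2)^{1/2}$ via the $L^2$ approximation estimate and H\"older ($p>d+2>2$). So the structure matches exactly.

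One small remark on the ``delicate point'' you flag: you are right that the \emph{statement} of Lemma \ref{lem-approx} only controls $u_\varepsilon-u_0$ on $Q_{r/2}$, while the chaining needs it on $Q_r$. The fix you sketch (run Lemma \ref{lem-approx} at an enlarged scale so the comparison cube becomes $Q_r$) has a hitch: applied at scale $2r$, it requires $u_\varepsilon$ to solve on $Q_{4r}$ and produces right-hand averages over $Q_{4r}$, which for $r$ close to $1$ exceeds the given domain $Q_2$ and in any case does not reduce to the $Q_{2r}$-averages in \eqref{est-ue-in}. The cleaner observation — which is in fact how the paper's proof is internally consistent — is that the \emph{proof} of Lemma \ref{lem-approx} already yields the error bound over the full cube $Q_r$: the $L^2$ estimate \eqref{est-w-8} for $w_\varepsilon$ and the bound \eqref{est-remain-term} for $w_\varepsilon-(u_\varepsilon-u_0)$ are both stated on $Q_1$ after rescaling, and the Meyers and Caccioppoli steps close the loop with $Q_{3/2}$ and $Q_2$ on the right. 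With that sharpened form of Lemma \ref{lem-approx} (same hypotheses, left-hand average over $Q_r$ instead of $Q_{r/2}$), your chaining goes through with no further scale adjustment, and the rest of your argument is correct as written.
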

\begin{proof}
Fix $\varepsilon\leq r\leq 1$. Let $u_0$ be the weak solution of $(\partial_t+\mathcal{L}_0)u_0=F$ in $Q_r$, given by Lemma \ref{lem-approx}. Note that by Lemma \ref{lem-psi-u0-in},
\begin{equation*}
\aligned
&\frac{1}{\theta r}\inf_{\substack{E\in\mathbb{R}^{m\times d}\\ \beta\in\mathbb{R}^m}}\left(\dashint_{Q_{\theta r}}|u_\varepsilon-E\cdot x-\beta|^2\right)^{1/2}+\theta r\left(\dashint_{Q_{\theta r}}|F|^p\right)^{1/p}\\
&\leq\frac{1}{\theta r}\inf_{\substack{E\in\mathbb{R}^{m\times d}\\ \beta\in\mathbb{R}^m}}\left(\dashint_{Q_{\theta r}}|u_0-E\cdot x-\beta|^2\right)^{1/2}
+\theta r\left(\dashint_{Q_{\theta r}}|F|^p\right)^{1/p}
+\frac{1}{\theta r}\left(\dashint_{Q_{\theta r}}
|u_\varepsilon-u_0|^2\right)^{1/2}\\
&\leq\frac{1}{2}
\left\{\frac{1}{r}\inf_{\substack{E\in\mathbb{R}^{m\times d}\\ \beta\in\mathbb{R}^m}}\left(\dashint_{Q_{ r}}|u_0-E\cdot x-\beta|^2\right)^{1/2}
+r\left(\dashint_{Q_r}|F|^p\right)^{1/p}\right\}
+\frac{1}{\theta r}\left(\dashint_{Q_{\theta r}}
|u_\varepsilon-u_0|^2\right)^{1/2}\\
&\leq\frac{1}{2}\left\{\frac{1}{r}\inf_{\substack{E\in\mathbb{R}^{m\times d}\\ \beta\in\mathbb{R}^m}}\left(\dashint_{Q_r}|u_\varepsilon-E\cdot x-\beta|^2\right)^{1/2}
+r\left(\dashint_{Q_r}|F|^p\right)^{1/p}\right\}
+\frac{C_\theta}{r}\left(\dashint_{Q_{r}}
|u_\varepsilon-u_0|^2\right)^{1/2}\\
&\leq\frac{1}{2}\left\{\frac{1}{r}\inf_{\substack{E\in\mathbb{R}^{m\times d}\\ \beta\in\mathbb{R}^m}}\left(\dashint_{Q_r}|u_\varepsilon-E\cdot x-\beta|^2\right)^{1/2}
+r\left(\dashint_{Q_r}|F|^p\right)^{1/p}\right\}\\
&\quad+C\left[\eta\left(\frac{\varepsilon}{r}\right)\right]^\gamma
\left\{\frac{1}{r}\left(\dashint_{Q_{2r}}|u_\varepsilon|^2\right)^{1/2}
+r\left(\dashint_{Q_{2r}}|F|^p\right)^{1/p}\right\},
\endaligned
\end{equation*}
where we have used Lemma \ref{lem-approx} for the last step. This gives \eqref{est-ue-in}.
\end{proof}

We are now in a position to give the proof of Theorem \ref{thm-interior-Lip}.

\begin{proof}[Proof of Theorem \ref{thm-interior-Lip}]
By translation and dilation we may assume that $R=2$ and $(x_0,t_0)=(0,0)$.  We prove the Theorem by applying Lemma \ref{lem-standard}. Let $h(t)=|E_t|$, where $E_t\in \mathbb{R}^{m\times d}$ is a matrix such that
\begin{equation*}
\frac{1}{t}\inf_{\substack{E\in\mathbb{R}^{m\times d}\\ \beta\in\mathbb{R}^m}}\left(\dashint_{Q_t}|u_\varepsilon-E\cdot x-\beta|^2\right)^{1/2}=\frac{1}{t}\inf_{\beta\in\mathbb{R}^m}
\left(\dashint_{Q_t}|u_\varepsilon-E_t\cdot x-\beta|^2\right)^{1/2},
\end{equation*}
and
\begin{equation*}
H(t)=\frac{1}{t}\inf_{\substack{E\in\mathbb{R}^{m\times d}\\ \beta\in\mathbb{R}^m}}\left(\dashint_{Q_t}|u_\varepsilon-E\cdot x-\beta|^2\right)^{1/2}+t\left(\dashint_{Q_t}|F|^p\right)^{1/p}.
\end{equation*}
In view of \eqref{est-ue-in}, we have
\begin{equation*}
H(\theta r)\leq \frac{1}{2}H(r)+C_0\left[\eta
\left(\frac{\varepsilon}{r}\right)\right]^\gamma\left\{H(2r)+h(2r)\right\}
\end{equation*}
for $\varepsilon\leq r\leq 1$. This gives \eqref{cond-2} with $\omega(t)=[\eta(t)]^\gamma$. Since $A$ satisfies the decay assumption \eqref{decay-cond}, by Lemma \ref{Lemma-decay}, we know that $\omega(t)$ satisfies \eqref{cond-3}.

It is easy to see that $H(r)$ satisfies the first inequality in \eqref{cond-1}. To verify the second, observe that for any $r\leq t,s\leq 2r$,
\begin{equation}\label{sec-cond}
\aligned
|h(t)-h(s)|&\leq |E_t-E_s|
\\&\leq\frac{C}{r}\inf_{\beta\in\mathbb{R}^m}
\left(\dashint_{Q_r}|(E_t-E_s)\cdot x-\beta|^2\right)^{1/2}
\\&\leq\frac{C}{r}\inf_{\beta\in\mathbb{R}^m}
\left(\dashint_{Q_t}|u_\varepsilon-E_t\cdot x-\beta|^2\right)^{1/2}+
\frac{C}{r}\inf_{\beta\in\mathbb{R}^m}
\left(\dashint_{Q_s}|u_\varepsilon-E_s\cdot x-\beta|^2\right)^{1/2}\\
&\leq C\{H(t)+H(s)\}
\\&\leq CH(2r),
\endaligned
\end{equation}
where $C$ depends only on $d$. As a result, by Lemma \ref{lem-standard}, we obtain
\begin{equation}\label{8.20}
\aligned
\frac{1}{r}\inf_{\beta\in\mathbb{R}^m}
\left(\dashint_{Q_r}|u_\varepsilon-\beta|^2\right)^{1/2}
&\leq H(r)+h(r)\\
&\leq C\{H(1)+h(1)\}\\
&\leq C\left\{\left(\dashint_{Q_1}|u_\varepsilon|^2\right)^{1/2}
+\left(\dashint_{Q_1}|F|^p\right)^{1/p}\right\}.
\endaligned
\end{equation}
Combining \eqref{8.20} with Caccioppoli's inequality and then using the fact that $(\partial_t+\mathcal{L}_\varepsilon)\beta=0$ for any $\beta\in\mathbb{R}^m$, we see that
\begin{align*}
\left(\dashint_{Q_{r/2}}|\nabla u_\varepsilon|^2\right)^{1/2}
&\leq C\left\{\inf_{\beta\in\mathbb{R}^m}\left(\dashint_{Q_1}|u_\varepsilon-\beta|^2\right)^{1/2}
+\left(\dashint_{Q_1}|F|^p\right)^{1/p}\right\}.
\end{align*}
This, together with Poincar\'e's inequality, gives \eqref{est-Lip}.
\end{proof}

\section{Boundary Lipschitz estimates}
This section is devoted to the large-scale boundary Lipschitz estimates for $\partial_t+\mathcal{L}_\varepsilon$. To prove Theorem \ref{thm-bound-Lip}, we localize the boundary of $\Omega$. Let $\psi: \mathbb{R}^{d-1}\to\mathbb{R}$ be a $C^{1,\alpha}$ function such that $\psi(0)=0$ and $\|\psi\|_{C^{1,\alpha}(\mathbb{R}^{d-1})}\leq M$. Define
\begin{equation*}
\aligned
&T_r=\{(x^\prime,x_d):|x^\prime|<r~~{\rm and}~~\psi(x^\prime)<x_d<100\sqrt{d}(M+1)\}\times(-r^2,0),\\
&I_r=\{(x^\prime,\psi(x^\prime)):|x^\prime|<r\}\times(-r^2,0),
\endaligned
\end{equation*}
where $0<r<\infty$.

We begin with an approximation result, which is a boundary version of Lemma \ref{lem-approx}.
\begin{lemma}\label{lem-approx-bound}
Suppose that $A$ satisfies \eqref{ellipticity} and is uniformly almost periodic. Let $u_\varepsilon$ be a weak solution of
\begin{equation*}
(\partial_t+\mathcal{L}_\varepsilon)u_\varepsilon=F\quad{\rm in}~~T_{2r}\quad{\rm and}\quad u_\varepsilon=f\quad{\rm on}~~I_{2r}
\end{equation*}
for some $0<r\leq 1$, where $F\in L^2(T_{2r})$ and $f\in C^{1+\alpha}(I_{2r})$. Then there exists a weak solution of
\begin{equation*}
(\partial_t+\mathcal{L}_0)u_0=F\quad{\rm in}~~T_{r}\quad{\rm and}\quad u_0=f\quad{\rm on}~~I_{r},
\end{equation*}
such that
\begin{equation}\label{est-approx-boundary}
\left(\dashint_{T_r}|u_\varepsilon-u_0|^2\right)^{1/2}\leq
C\left[\eta\left(\frac{\varepsilon}{r}\right)\right]^\gamma\left\{
\left(\dashint_{T_{2r}}| u_\varepsilon|^2\right)^{1/2}
+r^2\left(\dashint_{T_{2r}}|F|^2\right)^{1/2}
+\|f\|_{C^{1+\alpha}(I_{2r})}\right\},
\end{equation}
where $\eta(t)$ is given by \eqref{def-eta-1}, $\gamma=\frac{1}{2}-\frac{1}{q}$ for some $q>2$ and $C$ depends only on $d,\mu,\alpha$ and $\Omega$.
\end{lemma}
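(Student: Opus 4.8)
The plan is to run the scheme of the interior approximation result, Lemma \ref{lem-approx}, with the cubes $Q_r$ replaced by the boundary regions $T_r$ and the flat-ish boundary piece $I_r$, and with each interior regularity input for $\partial_t+\mathcal{L}_\varepsilon$ and $\partial_t+\mathcal{L}_0$ replaced by its up-to-the-boundary counterpart. By translation and dilation it suffices to treat $r=1$. First I would let $u_0$ be the weak solution of $(\partial_t+\mathcal{L}_0)u_0=F$ in $T_1$ with $u_0=u_\varepsilon$ on $\partial_p T_1$; since $I_1\subset\partial_p T_1$ and $u_\varepsilon=f$ there, this $u_0$ automatically satisfies $u_0=f$ on $I_1$, as required. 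Then $u_0-u_\varepsilon$ solves $(\partial_t+\mathcal{L}_0)(u_0-u_\varepsilon)=\mathrm{div}[(\widehat A-A^\varepsilon)\nabla u_\varepsilon]$ in $T_1$ with zero data on $\partial_p T_1$, and a boundary $W^{1,q}$ (Meyers-type) estimate for the constant-coefficient system on the Lipschitz region $T_1$ gives $\dashint_{T_1}|\nabla u_0|^q\le C\dashint_{T_1}|\nabla u_\varepsilon|^q$ for some $q>2$.

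Next comes the two-scale expansion. I would define $w_\varepsilon$ exactly by \eqref{w} with $S=\varepsilon^{-1}$ and $K_\varepsilon(f)=S_\varepsilon(\eta_\delta f)$, where now $\eta_\delta\in C_0^\infty(\mathbb{R}^{d+1})$ is a cutoff supported in $T_1$, equal to $1$ on the subregion at parabolic distance $\ge 3\delta$ from $\partial_p T_1$, vanishing within parabolic distance $2\delta$, with $|\nabla\eta_\delta|\le C/\delta$ and $|\partial_t\eta_\delta|+|\nabla^2\eta_\delta|\le C/\delta^2$, and $\delta>\varepsilon$ to be chosen. The crucial point is that $\eta_\delta$ vanishes near \emph{all} of $\partial_p T_1$, including near $I_1$, so $K_\varepsilon(\nabla u_0)$ and hence every corrector term in \eqref{w} vanishes near $\partial_p T_1$; since also $u_\varepsilon-u_0=0$ there, we get $w_\varepsilon=0$ on $\partial_p T_1$, and Poincaré's inequality applies to $w_\varepsilon$ on $T_1$. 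Computing $(\partial_t+\mathcal{L}_\varepsilon)w_\varepsilon$ through \eqref{equation-w}, \eqref{7.10} and \eqref{7.11} produces the same twelve terms $I_1,\dots,I_{12}$ as in the proof of Lemma \ref{lem-approx}; each is estimated in exactly the same way, using the corrector bounds of Lemma \ref{6.42} and Lemma \ref{7.47}, the smoothing bounds of Lemma \ref{lemma-S-1} and Lemma \ref{lemma-S-3}, Theorem \ref{7.35}, together with the weighted second-order bound $\int_{\mathrm{supp}\,\eta_\delta}(|\nabla^2 u_0|^2+|\partial_t u_0|^2)\le C\big\{\int_{T_1}|\nabla u_0(y,s)|^2\,|\mathrm{dist}_p((y,s),\partial_p T_1)|^{-2}\,dyds+\int_{T_1}|F|^2\big\}$, valid because on $\mathrm{supp}\,\eta_\delta$ the solution $u_0$ of the smooth homogenized system stays at positive parabolic distance from $\partial_p T_1$. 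This yields $\|\nabla w_\varepsilon\|_{L^2(T_1)}\le C\{1+\delta^{-1}(\Theta_\sigma(S)+\Theta_1(S)+\langle|\nabla\chi_S-\nabla\chi|\rangle+\varepsilon)\}\{\delta^{1/2-1/q}(\dashint_{T_1}|\nabla u_0|^q)^{1/q}+\delta(\dashint_{T_1}|F|^2)^{1/2}\}$; choosing $\delta=\Theta_\sigma(S)+\Theta_1(S)+\langle|\nabla\chi_S-\nabla\chi|\rangle+\varepsilon$ and $\gamma=1/2-1/q$, and using $w_\varepsilon=0$ on $\partial_p T_1$ with Poincaré, gives $\|w_\varepsilon\|_{L^2(T_1)}\le C\delta^\gamma\{(\dashint_{T_1}|\nabla u_0|^q)^{1/q}+(\dashint_{T_1}|F|^2)^{1/2}\}$.

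To convert back, as in Lemma \ref{lem-approx} one has $\|w_\varepsilon-(u_\varepsilon-u_0)\|_{L^2(T_1)}\le C\Theta_\sigma(S)\,(\dashint_{T_1}|\nabla u_0|^2)^{1/2}$ by Lemma \ref{6.42}, Lemma \ref{7.47} and Lemma \ref{lemma-S-1}. Then I would control $(\dashint_{T_1}|\nabla u_0|^q)^{1/q}$ by $(\dashint_{T_1}|\nabla u_\varepsilon|^q)^{1/q}$ (step one), by a uniform boundary Meyers reverse-Hölder estimate for $u_\varepsilon$ with Dirichlet datum $f$, and by a boundary Caccioppoli inequality with inhomogeneous data, obtaining the right-hand side $(\dashint_{T_{3/2}}|u_\varepsilon|^2)^{1/2}+\|F\|_{L^2(T_{3/2})}+\|f\|_{C^{1+\alpha}(I_{3/2})}$. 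Finally, $\Theta_\sigma(S)\le C_\sigma[\Theta_1(S)]^\sigma$ and $\Theta_1(S)\le[\Theta_1(S)]^\sigma$ (since $0\le\Theta_1(S)\le 1$ and $\sigma<1$) give $\delta\le C\eta(\varepsilon)$ with $\eta$ as in \eqref{def-eta-1}, and restoring $r$ yields \eqref{est-approx-boundary}.

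The hard part will be the boundary regularity inputs near the bent piece $I_1$, uniform in $\varepsilon$: one needs a uniform boundary $W^{1,q}$/Meyers estimate for $(\partial_t+\mathcal{L}_\varepsilon)u_\varepsilon=F$ in $T_{2r}$ with $u_\varepsilon=f\in C^{1+\alpha}(I_{2r})$ (this is where the $C^{1,\alpha}$ structure of $\Omega$ and the uniform boundary Hölder estimate of Theorem \ref{boundary holder}, equivalently Theorem \ref{Holder-0}, enter), the boundary $W^{1,q}$ estimate for the homogenized constant-coefficient system on the Lipschitz region $T_1$, and a boundary Caccioppoli inequality accommodating the inhomogeneous datum $f$. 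Each is standard once the uniform boundary Hölder estimate is in hand, but keeping the constants dependent only on $d,\mu,\alpha,\Omega$ and arranging that the transition layer of $\eta_\delta$ near $I_1$ contributes only the $\delta^\gamma$ factor is the delicate bookkeeping; the remainder is the same term-by-term estimate as in Lemma \ref{lem-approx}.
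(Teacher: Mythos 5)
Your proposal is correct and follows essentially the same route as the paper: define $u_0$ by solving the homogenized problem in $T_1$ with Cauchy--Dirichlet data $u_\varepsilon$ on $\partial_p T_1$, form the two-scale error $w_\varepsilon$ from \eqref{w} with a cutoff $\eta_\delta$ vanishing near all of $\partial_p T_1$ (so that $w_\varepsilon=0$ on $\partial_p T_1$ and Poincar\'e applies), run the term-by-term energy estimate as in Lemma~\ref{lem-approx} using Lemmas~\ref{6.42}, \ref{7.47}, \ref{lemma-S-1}, \ref{lemma-S-3} and Theorem~\ref{7.35}, close with the uniform boundary Meyers and Caccioppoli inequalities to replace $(\dashint_{T_1}|\nabla u_0|^q)^{1/q}$ by the data, and absorb the leftover corrector terms. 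In fact your write-up makes explicit two points the paper leaves implicit (that the cutoff must also die off in a $\delta$-collar of $I_1$ for $w_\varepsilon\in L^2(0,T;H_0^1)$, and exactly which boundary regularity inputs---uniform boundary Meyers, boundary Caccioppoli with $f$-data, and $W^{1,q}$ for the homogenized system on a Lipschitz region---are needed), which is a welcome clarification.
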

\begin{proof}
By rescaling we may assume that $r=1$. Let $u_0$ be the weak solution of
\begin{equation*}
(\partial_t+\mathcal{L}_0)u_0=F\quad{\rm in}~~T_{1}\quad{\rm and}\quad u_0=u_\varepsilon\quad{\rm on}~~\partial_pT_{1}.
\end{equation*}
Define $w_\varepsilon$ as in the proof of Lemma \ref{lem-approx}, then by a similar argument as in the proof of Lemma \ref{lem-approx}, we obtain
\begin{equation}\label{Meyer-bound-1}
\left(\dashint_{T_1}|\nabla w_\varepsilon|^2\right)^{1/2}\leq C[\eta\left(\varepsilon\right)]^\gamma
\left\{\left(\dashint_{T_1}|\nabla u_0|^q\right)^{1/q}
+\left(\dashint_{T_1}|F|^2\right)^{1/2}\right\},
\end{equation}
where $\gamma=\frac{1}{2}-\frac{1}{q}>0$ and $q>2$.
By the Meyers-type estimates and Caccioppoli's inequality for $\partial_t+\mathcal{L}_\varepsilon$, we have
\begin{equation}\label{Meyer-bound}
\aligned
\left(\dashint_{T_1}|\nabla u_0|^q\right)^{1/q}&\leq C\left(\dashint_{T_1}|\nabla u_\varepsilon|^q\right)^{1/q}\\
&\leq C\left\{\left(\dashint_{T_2}|u_\varepsilon|^2\right)^{1/2}
+\left(\dashint_{T_2}|F|^2\right)^{1/2}
+\|f\|_{C^{1+\alpha}(I_2)}\right\},
\endaligned
\end{equation}
where $q>2$ and $C>0$ depend only on $d$, $\mu$, $\alpha$ and $M$.  Since $w_\varepsilon=0$ on $\partial_p T_1$, in view of Poincar\'e's inequality, \eqref{Meyer-bound-1} and \eqref{Meyer-bound}, we obtain
\begin{equation}\label{est-app-b-1}
\left(\dashint_{T_1}| w_\varepsilon|^2\right)^{1/2}\leq C[\eta\left(\varepsilon\right)]^\gamma
\left\{\left(\dashint_{T_2}| u_\varepsilon|^2\right)^{1/2}
+\left(\dashint_{T_2}|F|^2\right)^{1/2}+\|f\|_{C^{1+\alpha}(I_{2})}\right\}.
\end{equation}
Finally, by Lemma \ref{6.42}, Lemma \ref{7.47}, Lemma \ref{lemma-S-3} and a similar argument as in the proof of Lemma \ref{lem-approx}, we have
\begin{equation*}
\aligned
&\left(\dashint_{T_1}| \varepsilon \chi_{S}^\varepsilon
K_\varepsilon \left(\nabla u_0\right)
+\varepsilon^2 \phi_{S}^\varepsilon
\nabla^2K_\varepsilon \left(\nabla u_0\right)|^2\right)^{1/2}\\
&\leq C\Theta_\sigma(S)\left\{\left(\dashint_{T_1}|\nabla u_0|^2\right)^{1/2}
+ \left(\int_{T_{1-2\delta}}
|\nabla^2 u_0|^2\right)^{1/2}
+\delta^{-1}\left(\int_{T_1\setminus T_{1-3\delta}}
|\nabla u_0|^2\right)^{1/2}\right\}
\\
&\leq C\Theta_\sigma(S)\left\{\left(\dashint_{T_1}|\nabla u_0|^2\right)^{1/2}
+ \left(\int_{T_{1-2\delta}}|\nabla^2 u_0|^2\right)^{1/2}
+\delta^{-\frac{1}{2}-\frac{1}{q}}\left(\dashint_{T_1}
|\nabla u_0|^q\right)^{1/q}\right\}
\\
&\leq C[\eta\left(\varepsilon\right)]^\gamma
\left\{\left(\dashint_{T_2}| u_\varepsilon|^2\right)^{1/2}
+\left(\dashint_{T_2}|F|^2\right)^{1/2}+\|f\|_{C^{1+\alpha}(I_{2})}\right\},
\endaligned
\end{equation*}
where we have used \eqref{Meyer-bound} for the last step. This, together with \eqref{est-app-b-1}, gives \eqref{est-approx-boundary}.
\end{proof}

For a function $u$ in $T_r$, define
\begin{equation*}
\Psi(r;u)=\frac{1}{r}\inf_{\substack{E\in\mathbb{R}^{m\times d}\\ \beta\in\mathbb{R}^m}}\left\{\left(\dashint_{T_r}|u-E\cdot x-\beta|^2\right)^{1/2}+\|u-E\cdot x-\beta\|_{C^{1+\alpha}(I_r)}\right\}.
\end{equation*}

\begin{lemma}\label{lem-psi-u0}
Suppose that $u_0$ is a weak solution of $(\partial_t+\mathcal{L}_0)u_0=F$ in $T_r$, where $0<r\leq 1$ and $F\in L^p(T_r)$ for some $p>d+2$. Then there exists $\theta\in(0,1/4)$, depending only on $d$, $\mu$, $\alpha$, $p$ and $M$, such that
\begin{equation*}
\Psi(\theta r;u_0)+\theta r\left(\dashint_{T_{\theta r}}|F|^p\right)^{1/p}\leq\frac{1}{2}\left\{\Psi(r;u_0)
+r\left(\dashint_{T_r}|F|^p\right)^{1/p}\right\}.
\end{equation*}
\end{lemma}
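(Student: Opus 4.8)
The plan is to follow the argument for the interior analogue, Lemma~\ref{lem-psi-u0-in}, replacing the classical interior $C^{1+\alpha_p}$ estimate by its boundary counterpart for the constant-coefficient operator $\partial_t+\mathcal{L}_0$ up to the $C^{1,\alpha}$ portion $I_r$ of the boundary. First I would reduce to $r=1$: under the parabolic dilation $(x,t)\mapsto(rx,r^2 t)$ the set $T_r$ associated with $\psi$ is carried onto $T_1$ associated with $\psi_r(x')=r^{-1}\psi(rx')$, whose $C^{1,\alpha}$-norm is still at most $M$, and both $\Psi(r;\cdot)$ and $r(\dashint_{T_r}|F|^p)^{1/p}$ transform with the single factor $r^{-1}$; so the general case follows from the case $r=1$. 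Since the inequality is vacuous when $\Psi(1;u_0)=\infty$, I may assume $u_0|_{I_1}\in C^{1+\alpha}(I_1)$, so that $u_0$ solves a constant-coefficient parabolic system in $T_1$ with $C^{1+\alpha}$ Dirichlet data on $I_1$ and right-hand side $F\in L^p$, $p>d+2$ (one may take the effective exponent to be $\bar\alpha=\min\{\alpha,\,1-(d+2)/p\}$, and at the cost of shrinking $\alpha$ throughout I assume $\bar\alpha=\alpha$).

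The core step is to show that there is $C>0$ such that, for all $\theta\in(0,1/4)$,
\begin{equation*}
\Psi(\theta;u_0)+\theta\Big(\dashint_{T_\theta}|F|^p\Big)^{1/p}\le C\,\theta^{\alpha}\Big\{\Big(\dashint_{T_1}|u_0|^2\Big)^{1/2}+\Big(\dashint_{T_1}|F|^p\Big)^{1/p}\Big\}.
\end{equation*}
For this I would invoke the boundary parabolic Schauder estimate in the $C^{1,\alpha}$ domain $T_1$, giving $u_0\in C^{1+\alpha}(\overline{T_{1/2}})$ with $\|u_0\|_{C^{1+\alpha}(T_{1/2})}\le CM_0$, where $M_0=(\dashint_{T_1}|u_0|^2)^{1/2}+(\dashint_{T_1}|F|^p)^{1/p}+\|u_0\|_{C^{1+\alpha}(I_1)}$, and then take as competitor the first-order Taylor polynomial $P(x)=\nabla u_0(0,0)\cdot x+u_0(0,0)$ of $u_0$ at the boundary point $(0,0)$. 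Setting $g=u_0-P$, which solves the same type of equation with the same $F$ because $\partial_t+\mathcal{L}_0$ annihilates affine functions, the Schauder bound yields $|g(x,t)|\le CM_0(|x|+|t|^{1/2})^{1+\alpha}$ on $T_{1/2}$, hence $(\dashint_{T_\theta}|g|^2)^{1/2}\le CM_0\theta^{1+\alpha}$; moreover $\nabla g$ is $\alpha$-Hölder up to $I_{1/2}$ with $\nabla g(0,0)=0$, which bounds $\|g\|_{L^\infty(I_\theta)}$, $\theta\|\nabla_{\rm tan}g\|_{L^\infty(I_\theta)}$, $\theta^{1+\alpha}\|\nabla_{\rm tan}g\|_{\Lambda^{\alpha,\alpha/2}(I_\theta)}$ and $\theta^{1+\alpha}\langle g\rangle_{1+\alpha;I_\theta}$ all by $CM_0\theta^{1+\alpha}$, modulo the subtlety noted below. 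Thus $\Psi(\theta;u_0)\le CM_0\theta^{\alpha}$, and the $F$-term is routine: $\theta(\dashint_{T_\theta}|F|^p)^{1/p}\le C\theta^{1-(d+2)/p}(\dashint_{T_1}|F|^p)^{1/p}\le C\theta^{\alpha}(\dashint_{T_1}|F|^p)^{1/p}$.

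Finally, I would upgrade this to the self-improving form by applying the core estimate to $u_0-E\cdot x-\beta$ for arbitrary $E\in\R^{m\times d}$, $\beta\in\R^m$: this function again solves $(\partial_t+\mathcal{L}_0)(u_0-E\cdot x-\beta)=F$, while $\Psi(\theta;\cdot)$ is unchanged under subtraction of an affine function and $\Psi(1;u_0)\ge\inf_{E,\beta}(\dashint_{T_1}|u_0-E\cdot x-\beta|^2)^{1/2}$. Taking the infimum over $E,\beta$ on the right-hand side gives
\begin{equation*}
\Psi(\theta;u_0)+\theta\Big(\dashint_{T_\theta}|F|^p\Big)^{1/p}\le C\,\theta^{\alpha}\Big\{\Psi(1;u_0)+\Big(\dashint_{T_1}|F|^p\Big)^{1/p}\Big\},
\end{equation*}
and choosing $\theta\in(0,1/4)$ with $C\theta^{\alpha}\le\tfrac12$ completes the proof for $r=1$, hence for all $r$ by scaling. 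The main obstacle is the boundary input — establishing or citing the $C^{1+\alpha}$ Schauder estimate for the constant-coefficient parabolic system up to the $C^{1,\alpha}$ surface $I_1$ — together with the flagged subtlety that on the curved surface $I_\theta$ the tangential gradient of the fixed affine function $P$ is not constant; however, since the unit normal along $I_\theta$ oscillates by only $O(\theta^{\alpha})$ by the $C^{1,\alpha}$ bound on $\psi$, this discrepancy contributes a term of the same order $\theta^{\alpha}M_0$ and does not spoil the decay. The remainder is bookkeeping parallel to Lemma~\ref{lem-psi-u0-in}.
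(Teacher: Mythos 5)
Your proposal is correct and takes exactly the route the paper relies on by citing \cite[Lemma 6.3]{GS-2020-ARMA}: parabolic rescaling to $r=1$, boundary $C^{1+\alpha}$ Schauder estimates for the constant-coefficient operator up to the $C^{1,\alpha}$ graph, a first-order Taylor competitor at the boundary point $(0,0)$, and the upgrade to the self-improving form via affine invariance of the equation and of $\Psi$. One small slip: your displayed ``core step'' inequality should carry the $\|u_0\|_{C^{1+\alpha}(I_1)}$ term on the right (it already appears in your $M_0$, so this is just a typo in the display), and this term is then correctly absorbed by the infimum over affine competitors in the upgrade step, so the argument goes through as you describe.
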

\begin{proof}
See \cite[Lemma 6.3]{GS-2020-ARMA}.
\end{proof}

\begin{lemma}
Suppose that $u_\varepsilon$ is a weak solution of $(\partial_t+\mathcal{L}_\varepsilon)u_\varepsilon=F$ in $T_2$ and $u_\varepsilon=f$ on $I_2$, where $F\in L^p(T_2)$ for some $p>d+2$. Let $\theta\in(0,1/4)$ be given by Lemma \ref{lem-psi-u0}. Then for any $\varepsilon\leq r\leq1$,
\begin{equation}\label{est-ue}
\aligned
\Psi(\theta r;u_\varepsilon)&+\theta r\left(\dashint_{T_{\theta r}}|F|^p\right)^{1/p}\\
\leq&\frac{1}{2}\left\{\Psi(r;u_\varepsilon)
+r\left(\dashint_{T_r}|F|^p\right)^{1/p}\right\}\\
&+C\left[\eta\left(\frac{\varepsilon}{r}\right)\right]^\gamma
\left\{\frac{1}{r}\left(\dashint_{T_{2r}}|u_\varepsilon|^2\right)^{1/2}
+r\left(\dashint_{T_{2r}}|F|^p\right)^{1/p}+r^{-1}\|f\|_{C^{1+\alpha}(I_{2r})}\right\},
\endaligned
\end{equation}
where $C$ depends only on $d$, $\mu$, $\alpha$, $p$ and $M$.
\end{lemma}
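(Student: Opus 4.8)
The plan is to follow verbatim the scheme of the interior counterpart, Lemma~\ref{lem-8.6}, with Lemma~\ref{lem-approx} and Lemma~\ref{lem-psi-u0-in} replaced by their boundary versions Lemma~\ref{lem-approx-bound} and Lemma~\ref{lem-psi-u0}. Fix $\varepsilon\leq r\leq 1$. Since $r\leq 1$, we have $T_{2r}\subseteq T_2$ and $I_{2r}\subseteq I_2$, so $u_\varepsilon$ solves $(\partial_t+\mathcal{L}_\varepsilon)u_\varepsilon=F$ in $T_{2r}$ with $u_\varepsilon=f$ on $I_{2r}$. First I would invoke Lemma~\ref{lem-approx-bound} to produce a weak solution $u_0$ of $(\partial_t+\mathcal{L}_0)u_0=F$ in $T_r$ with $u_0=f$ on $I_r$ and
\begin{equation*}
\left(\dashint_{T_r}|u_\varepsilon-u_0|^2\right)^{1/2}\leq
C\left[\eta\left(\frac{\varepsilon}{r}\right)\right]^\gamma\left\{
\left(\dashint_{T_{2r}}| u_\varepsilon|^2\right)^{1/2}
+r^2\left(\dashint_{T_{2r}}|F|^2\right)^{1/2}
+\|f\|_{C^{1+\alpha}(I_{2r})}\right\}.
\end{equation*}

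The key observation is that $u_\varepsilon=u_0=f$ on $I_r$, so for every $E\in\mathbb{R}^{m\times d}$ and $\beta\in\mathbb{R}^m$ the difference $(u_\varepsilon-E\cdot x-\beta)-(u_0-E\cdot x-\beta)$ vanishes on $I_{\theta r}\subseteq I_r$; hence the $C^{1+\alpha}(I_{\theta r})$ contributions in $\Psi(\theta r;\cdot)$ for $u_\varepsilon$ and for $u_0$ coincide, and only the $L^2$ term over $T_{\theta r}$ produces an error. Therefore
\begin{equation*}
\Psi(\theta r;u_\varepsilon)\leq \Psi(\theta r;u_0)+\frac{1}{\theta r}\left(\dashint_{T_{\theta r}}|u_\varepsilon-u_0|^2\right)^{1/2}
\leq \Psi(\theta r;u_0)+\frac{C_\theta}{r}\left(\dashint_{T_{r}}|u_\varepsilon-u_0|^2\right)^{1/2},
\end{equation*}
where in the last step I use $T_{\theta r}\subseteq T_r$ together with $|T_r|\leq C\theta^{-(d+2)}|T_{\theta r}|$. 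The same reasoning gives $\Psi(r;u_0)\leq \Psi(r;u_\varepsilon)+\frac1r(\dashint_{T_r}|u_\varepsilon-u_0|^2)^{1/2}$. Next I apply Lemma~\ref{lem-psi-u0} to $u_0$, namely
\begin{equation*}
\Psi(\theta r;u_0)+\theta r\left(\dashint_{T_{\theta r}}|F|^p\right)^{1/p}\leq\frac12\left\{\Psi(r;u_0)+r\left(\dashint_{T_r}|F|^p\right)^{1/p}\right\},
\end{equation*}
and chain the three displays to obtain
\begin{equation*}
\Psi(\theta r;u_\varepsilon)+\theta r\left(\dashint_{T_{\theta r}}|F|^p\right)^{1/p}\leq\frac12\left\{\Psi(r;u_\varepsilon)+r\left(\dashint_{T_r}|F|^p\right)^{1/p}\right\}+\frac{C}{r}\left(\dashint_{T_r}|u_\varepsilon-u_0|^2\right)^{1/2}.
\end{equation*}
Substituting the estimate of Lemma~\ref{lem-approx-bound}, dividing the right-hand side by $r$, and passing from $(\dashint_{T_{2r}}|F|^2)^{1/2}$ to $(\dashint_{T_{2r}}|F|^p)^{1/p}$ by H\"older's inequality on the normalized average ($p\geq 2$) yields \eqref{est-ue}.

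I do not anticipate a genuine difficulty: the argument is completely parallel to Lemma~\ref{lem-8.6}. The only points needing care are, first, verifying that the $C^{1+\alpha}(I_r)$ component of $\Psi$ contributes no extra error term, which holds precisely because $u_\varepsilon-u_0$ vanishes on $I_r$; second, tracking the scaling of the averages over the truncated regions $T_{\theta r}\subseteq T_r\subseteq T_{2r}$ so that the constant $C_\theta$ absorbed into the final $C$ depends only on $d,\mu,\alpha,p$ and $M$; and third, the routine H\"older passage between the $L^2$ and $L^p$ norms of $F$. None of these is a substantial obstacle, so the bulk of the work is already contained in Lemmas~\ref{lem-approx-bound} and~\ref{lem-psi-u0}.
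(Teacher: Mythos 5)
Your proposal is correct and follows exactly the argument that the paper itself invokes, namely the transplantation of the proof of Lemma~\ref{lem-8.6} with Lemma~\ref{lem-approx-bound} in place of Lemma~\ref{lem-approx} and Lemma~\ref{lem-psi-u0} in place of Lemma~\ref{lem-psi-u0-in}. You also make explicit the one boundary-specific point that the paper leaves implicit: since $u_\varepsilon=u_0=f$ on $I_r$, the $C^{1+\alpha}(I_{\theta r})$ parts of $\Psi$ for $u_\varepsilon$ and $u_0$ coincide, so only the $L^2$ term over $T_{\theta r}$ contributes an error.
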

\begin{proof}
With Lemma \ref{lem-approx-bound} and Lemma \ref{lem-psi-u0} at our disposal, the proof follows the same line of argument used for Lemma \ref{lem-8.6}. We omit the details.
\end{proof}

We are now ready to give the proof of Theorem \ref{thm-bound-Lip}.

\begin{proof}[Proof of Theorem \ref{thm-bound-Lip}]
By translation and dilation we may assume that $R=2$ and $(x_0,t_0)=(0,0)$. Moreover, by a simple covering argument, it suffices to show that if $u_\varepsilon$ is a weak solution of $(\partial_t+\mathcal{L}_\varepsilon)u_\varepsilon=F$ in $T_2$ and $u_\varepsilon=f$ on $I_2$, then for $\varepsilon\leq r<2$,
\begin{equation}\label{local-est}
\aligned
\left(\dashint_{T_r}|\nabla u_\varepsilon|^2\right)^{1/2}\leq C\left\{\left(\dashint_{T_2}|\nabla u_\varepsilon|^2\right)^{1/2}+\left(\dashint_{T_2}|F|^p\right)^{1/p}
+\|f\|_{C^{1+\alpha}(I_2)}\right\}.
\endaligned
\end{equation}
To show \eqref{local-est}, we apply Lemma \ref{lem-standard} with $h(t)=|E_t|$, where $E_t\in \mathbb{R}^{m\times d}$ is a matrix such that
\begin{equation*}
\Psi(t;u_\varepsilon)=\frac{1}{t}\inf_{\beta\in\mathbb{R}^m}\left\{
\left(\dashint_{T_t}|u_\varepsilon-E_t\cdot x-\beta|^2\right)^{1/2}
+\|f-E_t\cdot x-\beta\|_{C^{1+\alpha}(I_t)}\right\},
\end{equation*}
and
\begin{equation*}
H(t)=\Psi(t;u_\varepsilon)+t\left(\dashint_{T_t}|F|^p\right)^{1/p}.
\end{equation*}
In view of \eqref{est-ue}, we have
\begin{equation*}
H(\theta r)\leq \frac{1}{2}H(r)+C_0\left[\eta
\left(\frac{\varepsilon}{r}\right)\right]^\gamma\left\{H(2r)+h(2r)\right\}
\end{equation*}
for $\varepsilon\leq r\leq 1$. This gives \eqref{cond-2} with $\omega(t)=[\eta(t)]^\gamma$. Since $A$ satisfies the decay assumption \eqref{decay-cond}, by Lemma \ref{Lemma-decay}, we know that $\omega(t)$ satisfies \eqref{cond-3}.

It is easy to see that $H(r)$ satisfies the first inequality in \eqref{cond-1}. By the same argument as in \eqref{sec-cond}, we see that $H(r)$ satisfies the second inequality in \eqref{cond-1}, with $C_0$ depends only on $d$, $\alpha$ and $M$. As a result, by Lemma \ref{lem-standard}, we obtain
\begin{align*}
\frac{1}{r}\inf_{\beta\in\mathbb{R}^m}
\left(\dashint_{T_r}|u_\varepsilon-\beta|^2\right)^{1/2}
&\leq H(r)+h(r)\\
&\leq C\{H(1)+h(1)\}\\
&\leq C\left\{\left(\dashint_{T_1}|u_\varepsilon|^2\right)^{1/2}
+\left(\dashint_{T_1}|F|^p\right)^{1/p}
+\|f\|_{C^{1+\alpha}(I_1)}\right\}.
\end{align*}
This, together with Caccioppoli's inequality and the fact that $(\partial_t+\mathcal{L}_\varepsilon)\beta=0$ for any $\beta\in\mathbb{R}^m$, implies
\begin{align*}
\left(\dashint_{T_{r/2}}|\nabla u_\varepsilon|^2\right)^{1/2}
&\leq C\left\{\inf_{\beta\in\mathbb{R}^m}\left(\dashint_{T_1}|u_\varepsilon-\beta|^2\right)^{1/2}
+\left(\dashint_{T_1}|F|^p\right)^{1/p}
+\|f\|_{C^{1+\alpha}(I_1)}\right\}.
\end{align*}
The desired estimate \eqref{local-est} follows readily from this and Poincar\'e's inequality.
\end{proof}

\section*{Statements and Declarations}
{\bf Conflict of interest} The authors declare that there is no conflict of interest.

\end{document}